\newcommand{\N}{\mathbb N}
\newcommand{\C}{\mathbb C}
\newcommand{\ov}{\overline}
\newcommand{\R}{\mathbb R}
\newcommand{\E}{\mathbb E}
\newcommand{\Z}{\mathbb Z}
\newcommand{\p}{\mathbb P}
\newtheorem{prop}{Proposition}[section]
\newtheorem{corollaire}[prop]{Corollary}
\newtheorem{defi}[prop]{Definition}
\newtheorem{lemme}[prop]{Lemma}
\newtheorem{remarque}[prop]{remark}
\newtheorem{theo}[prop]{Theorem}
\title{Random subgroups of linear groups are free}
\author{Richard Aoun \footnote{Laboratoire de Math\'ematiques, B\^atiment 425, Universit\'e Paris Sud 11, 91405 Orsay-
FRANCE, \newline E-mail: richard.aoun@math.u-psud.fr}}
\date{}
\begin{document}

\maketitle
\textbf{Abstract:}  We show that on an arbitrary finitely generated non virtually solvable linear group, any two independent random walks will eventually generate a free subgroup. In fact, this will hold for an exponential number of independent random walks.

\tableofcontents
\section{Introduction}

The Tits alternative \cite{tits} says that every finitely generated linear group which is not virtually solvable  contains a free group on two generators. A question that arises immediately is to see if this property is ``generic'' in the sense that two    ``random'' elements (in a suitable sense) on such groups  generate or not a free subgroup.  In recent works of Rivin - \cite{Rivin} - and Kowalski - \cite{Kowalski}-   where groups coming from an arithmetic setting are considered, similar situations occur: a random element is shown to verify a property $P$ with high probability, for example, a
random  matrix in one of the classical groups
$GL(n,\Z)$, $SL(n,\Z)$ or $Sp(n,\Z)$  has irreducible
characteristic polynomial. In our case we take two elements at random and the property $P$ will be `` generate a free subgroup ''. The method of the authors cited above relies deeply on arithmetic sieving techniques. In this paper, we consider an arbitrary finitely generated linear group, that is a subgroup of $GL_n(K)$ for some field $K$, and we
 use an entirely different set of techniques, namely random matrix products theory.

 Let us explain what we mean by choosing two elements ``at random'': a random element will be the realization of the random walk associated to some probability measure on the group. Formally speaking,  if $\mu$ is a probability measure on a group $\Gamma$, we denote by $\Gamma_\mu$ the smallest semigroup containing the support of $\mu$; we consider a sequence $\{X_n;n\geq 0\}$ of independent random variables on $\Gamma$ with the same law $\mu$, defined on a probability space $(\Omega, \mathcal{F},\p)$. The $n^{th}$ step of the random walk $M_n$ is defined by $M_n=X_1...X_n$. We will also consider
the reversed random walk: $S_n=X_n...X_1$.
\\ The main purpose of  this paper is to show the following statement, which answers a question of Guivarc'h \cite{Guivarch3} - 2.10-:

\begin{theo}\label{main}
Let $K$ be a field,  $V$ a finite dimensional vector space over $K$, $\Gamma$ a finitely generated non virtually solvable subgroup of $GL(V)$ equipped with two  probability measures $\mu$ and $\mu'$  having an exponential moment and such that $\Gamma_\mu=\Gamma_{\mu'}=\Gamma$. Let $(M_n)_{n\in \N^*}$, $(M'_n)_{n \in \N^*}$ be the independent random walks associated respectively to $\mu$ and $\mu'$. Then almost surely, for $n$ large enough, the subgroup $\langle M_n,M'_n \rangle$ generated by $M_n$ and $M'_n$ is free (non abelian).
More precisely,
\begin{equation} \label{eqini}\limsup_{n \rightarrow \infty}{ \frac{1}{n}\log\; \p\left(\langle M_n,M'_n \rangle \textrm{is not free} \right) }< 0 \end{equation}
\end{theo}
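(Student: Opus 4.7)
The strategy is the classical ping-pong lemma on a projective space, combined with the dynamics of random matrix products. Since the ping-pong lemma produces a free subgroup through the action on a projective space, and since a free image implies a free preimage, I am free to replace $\Gamma$ by its image under a carefully chosen representation. The first step is therefore to reduce to the case where $\Gamma$ acts strongly irreducibly and proximally on some projective space $P(W)$ over a local field: this is the standard Tits construction, which after possibly extending scalars to a local field, composing with an external power and taking a suitable subrepresentation of the Zariski closure, produces such a $W$ as soon as $\Gamma$ is non virtually solvable. One has to check that the exponential moment assumption on $\mu, \mu'$ is preserved by this reduction, which follows because the change of representation is algebraic and hence norms are polynomially comparable.

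Working now in the strongly irreducible proximal setting, I would invoke the Furstenberg--Guivarc'h--Goldsheid--Margulis theory of random matrix products: the top Lyapunov exponent $\lambda_1$ is simple and strictly greater than $\lambda_2$; the ratio $\|M_n\|/\|\wedge^2 M_n\|^{1/2}$ grows like $e^{(\lambda_1-\lambda_2)n}$, so $M_n$ is exponentially proximal; its attracting direction $v_n^+ \in P(W)$ and its repelling hyperplane $H_n^-$ converge, and the limiting attracting direction is distributed according to the unique $\mu$-stationary measure $\nu$ on $P(W)$, which has no atoms (because $\Gamma$ is not virtually solvable, so acts without invariant finite set of directions). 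The same applies to $M'_n$ with its own limit law $\nu'$. All these statements are available in quantitative, large-deviations form due to Le Page and Guivarc'h--Raugi under the exponential moment hypothesis.

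The core of the proof is then to verify the ping-pong configuration for $M_n$ and $M'_n$ with exponentially small failure probability. Concretely I need simultaneously:
\begin{enumerate}
\item the contraction rate of $M_n$ (and $M_n'$) on the complement of a neighbourhood of $H_n^-$ (resp.\ $H'^-_n$) is $\le e^{-cn}$;
\item the attracting fixed points $v_n^+, v'^+_n$ stay at distance $\ge e^{-\varepsilon n}$ from the repelling hyperplanes $H'^-_n$ and $H_n^-$ of the opposite walk;
\item the points $v_n^+$ and $v'^+_n$ themselves stay at distance $\ge e^{-\varepsilon n}$ from each other.
\end{enumerate}
Item (1) is Le Page-type large deviations for the norm-cocycle gap. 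Items (2) and (3) use the \emph{independence} of the two walks: conditioning on one walk, the other walk's attracting direction is distributed asymptotically like a sample from $\nu'$, which assigns only polynomially small mass to $e^{-\varepsilon n}$-neighbourhoods of a fixed projective hyperplane (regularity of $\nu, \nu'$ is precisely the Guivarc'h H\"older estimate, again valid under exponential moment). Choosing $\varepsilon$ much smaller than $\lambda_1-\lambda_2$ makes the ping-pong balls centred at $v_n^+, v'^+_n$ of radius $e^{-\varepsilon n}$ disjoint and mapped strictly inside one another by $M_n^{\pm 1}, M'^{\pm 1}_n$, so that the ping-pong lemma applies and $\langle M_n, M'_n\rangle$ is free.

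\textbf{Main obstacle.} The delicate point is not any single one of these ingredients but their combination: I need quantitative (exponential) large-deviation estimates for the four events (norm, angle between $v_n^+$ and $H_n^-$, position of $v_n^+$ relative to $H'^-_n$, and distance between $v_n^+$ and $v'^+_n$) that hold simultaneously and can be taken with the \emph{same} exponential rate. The first two are for a single walk and follow from Le Page's theorem; the latter two require a Fubini-type argument combining the H\"older regularity of $\nu'$ with the concentration of $v_n^+$ near its limit, which must be done carefully to keep the rate uniform. The other potentially troublesome point is the initial Tits-style reduction, where one must produce a strongly irreducible proximal representation on the nose (so that the ping-pong conclusion in $P(W)$ descends to $\Gamma$) while preserving the exponential moment; this is purely algebraic and should be manageable but requires bookkeeping.
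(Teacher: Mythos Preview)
Your overall architecture matches the paper's exactly: a Tits-type reduction to a strongly irreducible proximal representation over a local field, followed by a ping-pong argument driven by random matrix products theory. However, two of the steps you label as routine are precisely where the real work lies.

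\textbf{The reduction.} The ``standard Tits construction'' you invoke does not quite do the job: Tits' lemma (\cite[Lemma 4.1]{tits}) only yields the desired representation on a \emph{finite-index subgroup} of the Zariski closure, which is useless here because the random walk visits all of $\Gamma$. The paper instead invokes the Margulis--Soifer theorem \cite{marglis}, which produces an absolutely strongly irreducible contracting representation of the full group. This is not just bookkeeping; it is a genuinely stronger input. Also note that the resulting local field may well be non-archimedean, and the Le Page / Guivarc'h--Raugi results you quote were proved over $\R$ or $\C$; the paper redevelops the needed theory over arbitrary local fields.

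\textbf{The self-proximality estimate.} The more serious gap is your claim that the event ``$v_n^+$ is far from $H_n^-$'' (for the \emph{same} walk) follows from Le Page's large deviations. It does not. In the $KAK$ decomposition $S_n = K_n A_n U_n$, the attracting point $K_n[e_1]$ and the repelling hyperplane $\ker(U_n^{-1}\cdot e_1^*)$ are both functions of the same $S_n$ and are a priori strongly correlated; Le Page's theorem controls $A_n$ but says nothing about the joint law of $(K_n,U_n)$. Establishing that $\delta(K_n[e_1],H_{S_n})$ is bounded below with exponentially small failure probability requires proving that $K_n[e_1]$ and $U_n^{-1}\cdot[e_1^*]$ become \emph{asymptotically independent} with exponential rate in the H\"older norm (Theorem~\ref{marie-therese} / Theorem~\ref{independence1} in the paper). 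Only then can one combine independence with the H\"older regularity of the stationary measure (your Guivarc'h estimate) to conclude. This asymptotic independence in the Cartan decomposition is one of the paper's main new theorems, and its proof occupies most of Section~\ref{subsproba}; it is not a consequence of the existing literature. Your Fubini argument for items (2)--(3) is correct precisely because the two walks are genuinely independent; for the self-term no such independence is available for free.
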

 These conditions are fulfilled when the support of $\mu$ (resp. $\mu'$) is a finite symmetric generating set, say $S$ (resp. $S'$) of $\Gamma$.  In this case,  $M_n$ (resp. $M'_n$ ) is a random walk on the  Cayley graph associated to $S$ (resp. $S'$). In other terms, if we consider the word metric, the theorem says that the probability that two  ``random'' elements in the
ball of radius $n$  do not generate a free subgroup is decreasing exponentially fast to zero; ``random'' here is to be
understood with respect to $n^{th}$ convolution power of $\mu$ (resp. $\mu'$). In this statement we could have taken $S_n$ instead of $M_n$.\\

 Let $\mu$ be a probability measure on $\Gamma$. For every integer $l$, we denote by $(M_{n,1})_{n\in \N^*}$,..., $(M_{n,l})_{n\in \N^*}$ a family of $l$ independent random walks associated to $\mu$. From the proof of Theorem \ref{main}, we will deduce the following stronger statement:
\begin{corollaire} \label{corexpo}There exists $C>0$ such that a.s., for all large $n$, $M_{n,1},...,M_{n,\lfloor exp(Cn) \rfloor}$ generate a free group on $l_n= \lfloor exp(Cn) \rfloor$ generators  \label{exocor}\end{corollaire}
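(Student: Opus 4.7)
The idea is a union bound over pairs combined with the Borel--Cantelli lemma. Let $\alpha > 0$ be the exponential rate produced by \eqref{eqini} in Theorem \ref{main}, so that for all $n$ sufficiently large,
\[
\p\bigl(\langle M_n, M'_n\rangle \text{ is not free}\bigr)\;\leq\; e^{-\alpha n}
\]
for two independent $\mu$-random walks $(M_n), (M'_n)$. As each pair $(M_{n,i}, M_{n,j})$ with $i\neq j$ has the same joint law as $(M_n, M'_n)$, the bound $e^{-\alpha n}$ holds for every such pair.

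The crucial point is that Theorem \ref{main} is proved via a \emph{quantitative ping-pong} argument in projective space: outside the bad event (of probability $\leq e^{-\alpha n}$), the two matrices are shown to be proximal, with a definite spectral gap, and with attracting lines and repelling hyperplanes in uniformly general position. This dynamical criterion is pairwise in nature. I would therefore extract from the proof of Theorem \ref{main}, for each pair $(i,j)$, an event $E_{i,j}$ with $\p(E_{i,j})\leq e^{-\alpha n}$ outside which the pair $(M_{n,i}, M_{n,j})$ satisfies the ping-pong conditions with \emph{uniform} quantitative constants (independent of the pair). The classical multi-element ping-pong lemma for proximal actions (going back to \cite{tits}) then guarantees that on the complement of $\bigcup_{1\leq i<j\leq l_n} E_{i,j}$, the elements $M_{n,1},\ldots,M_{n,l_n}$ freely generate a free group of rank $l_n$.

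A union bound then yields
\[
\p\bigl(\langle M_{n,1},\ldots,M_{n,l_n}\rangle \text{ is not free of rank } l_n\bigr)\;\leq\;\binom{l_n}{2}\,e^{-\alpha n}\;\leq\;\tfrac{1}{2}\,e^{(2C-\alpha)n}.
\]
Choosing any $C < \alpha/2$ makes this summable in $n$, and Borel--Cantelli delivers the almost sure conclusion.

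The main obstacle in this plan is verifying that the proof of Theorem \ref{main} really provides a quantitative pairwise ping-pong criterion with uniform constants, rather than only the qualitative statement of freeness. I expect this to cost no extra work: the underlying estimates---on the proximality gap of a single walk $M_{n,i}$ and on the distance between the attracting line of $M_{n,i}$ and the repelling hyperplane of an independent $M_{n,j}$---depend only on the joint law of one independent pair, which is the same for every $(i,j)$, and are precisely the estimates driving the exponential bound in \eqref{eqini}.
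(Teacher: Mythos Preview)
Your proposal is correct and follows essentially the same route as the paper: extract from the proof of Theorem \ref{main} (specifically Propositions \ref{bastanak} and \ref{fatetsenin}) the pairwise quantitative ping-pong events with uniform parameters, take a union bound over the $\binom{l_n}{2}$ pairs, choose $C$ small enough that $l_n^2 e^{-\alpha n}$ is summable, and conclude by Borel--Cantelli and the multi-element ping-pong Lemma \ref{ping-pong}. Your anticipation of the only delicate point---that one needs the pairwise ping-pong estimates from inside the proof rather than the bare freeness statement---is exactly right and is precisely what the paper does.
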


\begin{remarque}  As explained above, our main result shares a common flavor with the works by Rivin and Kowalski   - \cite{Rivin} and \cite{Kowalski}-, in the sense that random elements in a finitely generated group are shown to verify a generic property with high probability.
Use of the theory of random matrix products allows us to treat  arbitrary finitely generated linear groups while  the arithmetic sieving techniques  in
 \cite{Rivin} and \cite{Kowalski}  use reduction modulo prime numbers and  deal with subgroups of arithmetic groups $G(\Z)$, where $G$ is an algebraic group. However what we loose is the effectiveness: in \cite{Rivin1}, Rivin proved that the bounds he obtains in \cite{Rivin} are   effective while ours are not. Indeed, our method uses the Guivarch-Raugi theorem on the separation of the first two Lyapunov exponents $\lambda_1$ and $\lambda_2$  and the known bounds on  $\lambda_1 -\lambda_2$ rely on the ergodic theorem and are thus non effective.  \end{remarque}

\begin{remarque}
In Guivarch's proof of the Tits alternative in \cite{Guivarch3} he showed that
$S_{n_k}$ et $S'_{n_k'}$ can be turned into ping-pong players (see Section \ref{generating}  for a definition of these terms) in a suitable linear
representation  for some subsequence $n_k$, $n_k'$ which were obtained as certain return times thanks to Poincar\'{e} recurrence. There is a substantial difficulty in
passing from some subsequence to the version we give in our main theorem. This situation
is not dissimilar to the difficulty encountered in \cite{densefree} where ping-pong players were
gotten from a precise control of the KAK decomposition, in contrast with Tits' original
argument which exhibited ping-pong players as high powers of proximal elements. \end{remarque}

In the proof, we will use the theory of random matrix products over an arbitrary local field (i.e. $\R$, $\C$, a $p$-adic field, or a field of Laurent series over a finite field). Very little literature exists on this topic apart from the case of real or complex matrices (\cite{guimier}). So, in this paper, we will develop most of the theory from scratch in the context of local fields.  Some of our statements will be just an adaptation of results known over the reals to arbitrary local fields  while some  are new even over $\R$. This is the case for Theorem \ref{expokak} which shows the exponential convergence of the K-components of the KAK decomposition,  and for Theorems \ref{independence} and \ref{independence1}, which prove the asymptotic independence of the directional components of the KAK decomposition. Similar statements for the Iwasawa decomposition can be found in \cite{Guivarch3}.  We refer the reader to Section \ref{subsproba} for the statements of these results. Let us only state here one of them regarding the asymptotic independence in the KAK decomposition.
\begin{theo}[Asymptotic independence in KAK with exponential rate] Let $k$ be a local field, $\mathbf{G}$ a $k$-algebraic group assumed to be semi-simple and $k$-split, $(\rho,V)$ an irreducible $k$-rational representation of $\mathbf{G}$.   Consider  a probability measure $\mu$ on $G=\mathbf{G}(k)$ with an exponential moment (see Definition \ref{moshader}) such that $\Gamma_\mu$ is Zariski dense in $G$ and $\rho(\Gamma_\mu)$ is contracting.  Let $\{X_n;n\geq 1\}$ be independent random variables with the same law $\mu$, $S_n=X_n...X_1$ the associated random walk. Denote by $S_n=K_nA_nU_n$ a $KAK$ decomposition of $S_n$ in $G$ (see Section \ref{preliminaries}). Denote by $e_1 \in V$ (resp. $e_1^* \in V^*$) a highest weight vector for the action of $A$ on $V$ via $\rho$ (resp. $\rho^*$ the contragredient representation). Then the random variables $K_n[e_1]$ and $U_n^{-1}.[e_1^*]$  are asymptotically independent in the following sense. There exist independent random variables $Z$ and $T$ on  $P(V)$ (resp. $P(V^*)$) with law the unique $\mu$-invariant (resp. $\mu^{-1}$-invariant) probability measure on $P(V)$ (resp. $P(V^*)$) such that the following holds. For every $\epsilon>0$,
there is some $\rho=\rho(\epsilon) \in ]0,1[$ such that for every $\epsilon$-Holder function $\phi$ on $P(V)\times P(V^*)$ and all large enough $n$, we have:

$$\big|\E \left( \phi (K_n[e_1],U_n^{-1}.[e_1^*] ) \right) - \E \left( \phi(Z,T) \right) \big| \leq \rho^n ||\phi||_\epsilon$$\label{marie-therese}\end{theo}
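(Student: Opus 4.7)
The strategy exploits the heuristic that $K_n[e_1]$ is essentially a function of the ``late'' variables $X_{m+1},\ldots,X_n$, while $U_n^{-1}.[e_1^*]$ is essentially a function of the ``early'' variables $X_1,\ldots,X_m$. Set $m=\lfloor n/2\rfloor$ and write $S_n=L_n R_n$ with $L_n=X_n\cdots X_{m+1}$ and $R_n=X_m\cdots X_1$, so that $L_n$ and $R_n$ are independent; let $L_n=K_{L_n}A_{L_n}U_{L_n}$ and $R_n=K_{R_n}A_{R_n}U_{R_n}$ be their KAK decompositions. The crucial step is to establish the exponential approximations
\[
d\bigl(K_n[e_1],\,K_{L_n}[e_1]\bigr)\le\theta^n\quad\text{and}\quad d\bigl(U_n^{-1}.[e_1^*],\,U_{R_n}^{-1}.[e_1^*]\bigr)\le\theta^n,
\]
holding off an event of probability at most $\theta^n$, for some $\theta\in(0,1)$.

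For the first inequality, the key tool is a standard KAK contraction lemma: for $g=K(g)A(g)U(g)$ and any unit $v$ whose projective distance to the hyperplane $\{w:\langle U(g)^{-1}e_1^*,w\rangle=0\}$ is at least $\delta$, the direction $g\cdot[v]$ lies within $C\delta^{-1}\cdot a_2(g)/a_1(g)$ of $K(g)[e_1]$, where $a_1,a_2$ are the top two entries of $A(g)$. Applying this first to $g=S_n$ and a fixed generic base point $v_0$, and then to $g=L_n$ with $v$ replaced by the independent random point $R_n[v_0]$, reduces matters to a lower bound on the projective distance between $R_n[v_0]$ and the hyperplane dual to $U_{L_n}^{-1}[e_1^*]$. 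By Theorem \ref{expokak}, the law of $R_n[v_0]$ is within $\theta_1^{n/2}$ of the $\mu$-invariant measure $\nu$ in the Holder-dual sense; and in the contracting Zariski-dense setting, $\nu$ satisfies a non-concentration estimate $\nu(\text{$t$-neighbourhood of any hyperplane})\le Ct^\alpha$, which is an output of the local-field random-matrix theory developed earlier in the paper. Independence of $R_n$ and $L_n$ then furnishes the lower bound $\ge e^{-\epsilon n/4}$ with probability $1-e^{-cn}$, and the Guivarch--Raugi spectral gap $a_1(L_n)/a_2(L_n)\ge e^{(\lambda_1-\lambda_2)n/4}$ absorbs the factor $\delta^{-1}$ and yields the desired exponential decay. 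The dual estimate for $U_n^{-1}.[e_1^*]$ is proved symmetrically in the contragredient representation by writing $\rho^*(S_n^{-1})=\rho^*(R_n^{-1})\rho^*(L_n^{-1})$ and swapping the roles of the two halves.

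Granted these approximations, for any $\epsilon$-Holder $\phi$ on $P(V)\times P(V^*)$ and all large $n$,
\[
\bigl|\E\phi(K_n[e_1],U_n^{-1}.[e_1^*])-\E\phi(K_{L_n}[e_1],U_{R_n}^{-1}.[e_1^*])\bigr|\le 2||\phi||_\epsilon\theta^{\epsilon n}+2||\phi||_\infty\theta^n.
\]
Since $L_n$ and $R_n$ are independent, the joint law of $(K_{L_n}[e_1],U_{R_n}^{-1}.[e_1^*])$ factors as a product of marginals, and each marginal is $\theta_1^{n/2}$-close to $\nu$ (resp.\ to the $\mu^{-1}$-invariant measure on $P(V^*)$) in the Holder-dual sense by Theorem \ref{expokak} applied to the half-walks. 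Taking $Z$ and $T$ independent with these limiting laws, a final triangle inequality closes the estimate with an explicit rate $\max(\theta^\epsilon,\theta_1^{1/2})$.

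The main obstacle is the quantitative non-concentration of the stationary measure $\nu$ near hyperplanes over an \emph{arbitrary} local field: while this regularity is classical in the archimedean case (Guivarch), its derivation here, under only the exponential moment, Zariski-density and contraction hypotheses, is one of the technical inputs that has to be provided by the random matrix product theory built up over local fields in the preceding sections. Everything else---the KAK contraction lemma, the Guivarch--Raugi spectral gap, and the exponential marginal convergence of Theorem \ref{expokak}---combines cleanly to close the argument.
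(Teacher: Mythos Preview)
Your overall architecture---split $S_n=L_nR_n$ at time $m=\lfloor n/2\rfloor$, approximate $K_n[e_1]\approx K_{L_n}[e_1]$ and $U_n^{-1}.[e_1^*]\approx U_{R_n}^{-1}.[e_1^*]$, then exploit independence of $L_n$ and $R_n$---is exactly the paper's strategy. The difference lies in how the two approximations are proved, and here the paper takes a much shorter route.

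For the $K$-part, the paper does not use the KAK contraction lemma or any hyperplane non-concentration. Instead it observes that, since $(X_1,\ldots,X_n)$ and $(X_n,\ldots,X_1)$ are equidistributed,
\[
\E\bigl[\delta^\epsilon\bigl(K_n[x_\rho],\,k(X_n\cdots X_{\lfloor n/2\rfloor})[x_\rho]\bigr)\bigr]
\;=\;
\E\bigl[\delta^\epsilon\bigl(k(M_n)[x_\rho],\,k(M_{n-\lfloor n/2\rfloor+1})[x_\rho]\bigr)\bigr],
\]
and both entries on the right converge a.s.\ exponentially fast to the \emph{same} random variable $Z_1$ by Theorem \ref{expokak}; the triangle inequality finishes. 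For the $U$-part the paper's argument is even shorter: $R_n=S_{\lfloor n/2\rfloor}$ is literally an earlier step of the same walk, and $U_n^{-1}.[e_1^*]$ converges a.s.\ exponentially by Theorem \ref{expokak}, so $\delta^\epsilon(U_n^{-1}.[e_1^*],U_{\lfloor n/2\rfloor}^{-1}.[e_1^*])$ is exponentially small without any dual contragredient computation. Thus the regularity of $\nu$ near hyperplanes---which you correctly flag as the main technical obstacle over general local fields---is \emph{not used at all} in the paper's proof of this theorem; it enters only later, in Section \ref{secproof}, for the ping-pong separation.

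Your geometric route via the contraction lemma is sound, but note two points. First, applying the lemma to $g=S_n$ at a \emph{fixed} $v_0$ requires $\delta([v_0],H_{S_n})$ to be bounded below with high probability; since $H_{S_n}$ is random with limiting law $\nu^*$, this is again a non-concentration statement (for $\nu^*$), which you should make explicit. Second, what the paper actually proves over arbitrary local fields is only the weak regularity of Theorem \ref{hausdweak} (exponential decay of $\sup_H\nu\{d(\cdot,H)\le t^n\}$), not the pointwise H\"older bound $\nu(B_t(H))\le Ct^\alpha$ you invoke; the weak form suffices for your argument, so you should cite it rather than the strong form. In short: your proof works, but the paper's time-reversal trick replaces the whole contraction-plus-regularity machinery by two applications of Theorem \ref{expokak}.
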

Here we have used the following notation: $V^*$ is the dual space of $V$, $P(V)$ (resp. $P(V^*)$) is the projective space of $V$ (resp. $V^*$) and $G$ acts on $V^*$ by the formula: $g.f(x)=f(g^{-1}x)$ for every $g\in G$, $f\in V^*$, $x\in V$. We have denoted by $\mu^{-1}$ the law of $X_1^{-1}$ and by $||\phi||_\epsilon$  the Holder constant of $\phi$: $$||\phi||_\epsilon= Sup_{[x],[y],[x'],[y']}\;{\frac{\big|\phi([x],[x'])-\phi([y],[y'])\big|}{\delta^{\epsilon}([x],[y])+\delta^{\epsilon}([x'],[y'])}}$$
where $\delta$ is the standard angle metric (i.e. Fubini-Study metric) on $P(V)$ and $P(V^*)$. A similar statement for the KAK decomposition of $\rho(S_n)$ in $SL(V)$ (see section \ref{uff}) holds: in this case, $G$ need not be assumed Zariski connected any longer (see Theorem \ref{independence1}). Although we have not checked, it is likely that the above result
holds without assuming that the Zariski closure of $\Gamma_\mu$ is semi-simple and $k$-split, but assuming
instead proximality and strong irreducibility.
\subsection{Outline of the paper}

In Section \ref{strategy}, we split the proof of our main theorem, i.e. Theorem \ref{main}, into two parts: an arithmetic part (Theorem \ref{local}) and a probabilistic part (Theorem \ref{main1}). In our work, the probabilistic part replaces the dynamical part of the original proof of the Tits alternative.  The arithmetic one is a variant of a classical lemma of Tits
\cite[Lemma 4.1]{tits} proved by Margulis and Soifer  \cite{marglis}. The probabilistic one  will be shown in Section \ref{secproof} using the results of Section \ref{subsproba}.\\

In Section \ref{generating}, we recall a classical method, known as ping-pong, to show that a pair of linear automorphisms  generate a free group.\\

Section  \ref{subsproba}  is the core of the paper and  constitutes a self-contained treatment of the basics of random matrix
theory over local fields. It can be read independently of the rest of the paper. To our knowledge, apart from \cite{guimier},
this is the first time that this subject is treated over non-archimedean fields. Over $\R$ or $\C$,
  this theory is well developed, starting with Furstenberg and Kesten in the 60's
 and later the French school in the 70's and 80's: Bougerol, Le Page,   Raugi and in particular Guivarc'h, whose work especially in   \cite{Guivarch3} and \cite{Guivarch} inspired us a lot.\\

One of our main goals in this section is to give limit theorems for the random walk $M_n$
in three
aspects: its norm, its action on projective space and its components in the Cartan
decomposition. Our main results
in this section are the following:
\begin{itemize}

\item Theorem \ref{direction} shows the exponential convergence in direction of the random walk
$M_n$. Namely,
under the usual assumptions, for every point $[x]$ on the projective space, $M_n[x]$
converges
exponentially fast to a random variable $Z$ on the projective space.

\item Theorem \ref{hausdweak} and more precisely its proof shows the exponential decay of the probability that $M_n[x]$ lies in a
given hyperplane,
uniformly over the hyperplane. We deduce that the unique $\mu$-invariant measure has some regularity.

\item Theorem \ref{expokak} shows that the $K$-components of the random walk $M_n$ in the Cartan
decomposition converge
exponentially fast.

\item Theorem \ref{independence} proves that the $K$-components of the random walk $M_n$ in the Cartan
decomposition become
independent asymptotically.

\end{itemize}

 Theorem \ref{hausdweak} is a weaker version of a well-known statement over $\R$ or $\C$. Its proof can be found in
Bougerol's book and is due to Guivarc'h \cite[Theorem 7']{Guivarch3}.  We will
verify that it holds over an arbitrary local field. Theorems \ref{direction}, \ref{expokak} and \ref{independence} on the other
hand are new even over
$\R$ (on $\R$ or $\C$ only the exponential rate is new). They also hold over an arbitrary local field, and so does everything we do in
Secion \ref{subsconv}.
The analog of Theorem \ref{independence} for the orthogonal and unipotent parts of the Iwasawa
decomposition was proven over $\R$ by
Guivarch in \cite[Lemma  8]{Guivarch3}.

Our proof of Theorems \ref{hausdweak} is not an mere translation of the standard proof of
this statement over
the reals. Rather we take a different and more direct route via our key cocycle lemma,
Lemma \ref{cocycle}, a result giving control on the
growth of cocyles in an abstract context. This lemma is itself an extension of a result
of Le Page (see the proof of \cite[Theorem 1]{Page})
which was key in his proof of the spectral gap on Holder functions on projective space
(\cite[Proposition 4]{Page}).

Another key ingredient and intermediate step is our Proposition \ref{largedeviations}, which says that,
under the usual assumptions,
for every given non zero vector $x$, with high probability the ratio $||M_nx||/||M_n||$
is not too small. This fact
can be interpreted as a weak form of Le Page's large deviation theorem in $GL_n(\R)$.

Our proof of Theorem \ref{expokak} is based on this approach as well and makes key use of the
cocyle lemma, Lemma \ref{cocycle} and
of Proposition \ref{largedeviations}. Theorem \ref{direction} is also an important ingredient in the proof of \ref{expokak}.
Finally the proof of
Theorem \ref{independence} combines all of the above.

We note that two Cartan decompositions will be considered in Section \ref{subsproba}, the one coming
from the ambient $SL_d(k)$
and the one attached to the (semi-simple) algebraic group in which the group generated by the
random walk is Zariski
dense. Our limit theorems will be proved in the two cases. In fact the results for the
Cartan decomposition
in $SL_d(k)$, which are our main interest, will be deduced from the analogous results in
the algebraic group. These statements will be  deduced from a delicate study of the Iwasawa decomposition in the algebraic group  (Theorem \ref{iwa}).
If this Zariski closure is not Zariski connected, further technicalities arise. They will
be dealt with in Section
\ref{subsequi} using standard Markov chains and stopping times techniques.

Finally, we note that our proofs rely deeply on the pointwise ergodic theorem via our
cocycle lemma, Lemma \ref{cocycle}.\\

Section \ref{secproof} is devoted to the proof of Theorem \ref{main1}, i.e. the probabilistic part of our
main result, using the results of Section \ref{subsproba}.

\paragraph{Acknowledgments} I sincerely thank my supervisor Emmanuel Breuillard for pointing me out this question, for his great availability, his guidance through my Ph.D. thesis and many remarks on an anterior version of this paper. I'm also grateful to Yves Guivarc'h whose work inspires me a lot.
\section{Preliminary reductions}
\label{strategy}
In this section we reduce the proof of Theorem \ref{main} to its probabilistic part, i.e. Theorem \ref{main1} below.

\subsection{Notation and terminology}
All random variables will be defined on a probability space $(\Omega, \mathfrak{F},\p)$. $\E$ refers to the expectation with respect to $\p$. The symbol ``a.s.'' refers to almost surely. Let us recall the definition of a random walk on a group:
\begin{defi} [Random walks on groups]
 Let $\Gamma$ be a discrete group,  $\mu$  a probability measure on $\Gamma$, $(X_i)_{i\in \N^*}$ a family of independent random variables on $\Gamma$ with the same law $\mu$. For each $n$, we define the $n^{\textrm{th}}$ step of the following random walks by:
$$M_n=X_1...X_n\;\;\;;\;\;\;S_n=X_n...X_1$$
The product being the group law of $\Gamma$. We denote by $\Gamma_\mu$ the smallest semigroup containing the support of $\mu$.
 \end{defi}
\begin{remarque}
For our main Theorem \ref{main}, there will be no difference taking the natural $(M_n)$ or the reversed random walk $(S_n)$ as explained in the Remark \ref{reversed} below. Note however that the asymptotic behavior of the two walks is not the same in general.  \end{remarque}
 When $\Gamma$ is a finitely generated group, $\Gamma$ is a metric space for the word length distance:
 %a \emph{better} topology can be considered:
 for each symmetric generating set $S$ containing $1$, define: $l_S(g)=Min\{r; g=s_1...s_r;\;s_i \in S \;\forall i=1,...,r\}$.\\
  The following  defines then a distance on $\Gamma$: $d_S(g,g')=l_S(g'^{-1}g)\;\; g,g'\in \Gamma$. %. For $g\in \Gamma$, we denote by $L_S(g)=Max\{d_S(g,I); d_S(g^{-1},I) \}$.
 \begin{defi}[Exponential moment on finitely generated groups] Let $\mu$ be a probability measure on a finitely generated group $\Gamma$. Let $S$ be as above. We say that $\mu$ has an exponential moment if there exists $\tau>0$ such that:
 $$\int{exp\left(\tau l_S(g) \right) d\mu(g)} < \infty$$
 It is immediate that having exponential moment is independent of the choice of the generating set defining $l_S$. \label{expmomentt}\end{defi}

Let us recall our main result in this paper:
\paragraph{Theorem}
\emph{Let $K$ be a field, $V$ a finite dimensional vector space over $K$, $\Gamma$ a finitely generated non virtually solvable subgroup of $GL(V)$ equipped with two  probability measures $\mu$ and $\mu'$  having an exponential moment and such that $\Gamma_\mu=\Gamma_{\mu'}=\Gamma$. Let $(M_n)_{n\in \N^*}$, $(M'_n)_{n \in \N^*}$ be two independent random walk associated respectively to $\mu$ and $\mu'$. Then almost surely, for $n$ large enough, the group $\langle M_n,M'_n \rangle $ generated by $M_n$ and $M'_n$ is free (non abelian).
More precisely,
\begin{equation} \label{eqini}\limsup_{n \rightarrow \infty}{ \frac{1}{n}\log\; \p\left(\langle M_n,M'_n \rangle  \textrm{is not free} \right) }< 0 \end{equation}}\\

\begin{remarque} The assumptions on $\mu$ (resp. $\mu'$) of the theorem are clearly fulfilled if the support of $\mu$ (resp. $\mu'$) is a finite, symmetric generating set of $\Gamma$ \end{remarque}

\begin{remarque} The bound (\ref{eqini}) implies that there exists $\rho \in ]0,1[$ such that for $n$ large enough, \begin{equation}\p\left(\langle M_n,M'_n\rangle \textrm{is not free} \right) \leq \rho^n\label{zekrayat}\end{equation}
By the Borel-Cantelli lemma, it suffices to prove the first assertion of the theorem.
Hence in the rest of the paper, we will focus on showing (\ref{zekrayat}). \end{remarque}
\begin{remarque} \label{reversed} There is no difference taking $(M_n)_{n\in \N^*}$ or the reversed random walk in Theorem \ref{main}. In fact, the increments are independent and have the same law which implies that ${(X_1,...,X_n)}$ has the same law as ${(X_n,...,X_1)}$ for every integer $n$, hence (\ref{eqini}) is unchanged if we replaced $M_n$ by $S_n$.\end{remarque}

\subsection{Outline of the proof of Theorem \ref{main}}
 A local field (i.e. a commutative locally compact field) is isomorphic either to $\R$ or $\C$ (archimedean case) or a finite extension of the $p$-adic field $\mathbb{Q}_p$ for some prime $p$ in characteristic zero or to  the field of formal Laurent series $L((T ))$ over a finite field $L$. When $k$ is archimedean, we denote by $|.|$ the Euclidean absolute value. When $k$ is not archimedean, we denote by $\Omega_k$ its  discrete valuation ring, $\pi$ a generator of its unique maximal ideal, $q$ the degree of its residual field, $v(.)$ a discrete valuation and consider the following ultrametric norm: $|.|=q^{-v(.)}$. \\

When we consider a  finitely generated \textbf{linear group}  $\Gamma$, i.e. $\Gamma\subset GL_d(K)$ for some $d\geq 2$ and a finitely generated field $K$,  we can benefit from other nice metrics than the word metric:  for each local field $k$ containing $K$, $\Gamma$ can be considered as a metric space with the topology of $End_d(k)$  induced on $\Gamma$. This justifies the two parts of our proof:  the arithmetic part (Theorem \ref{local})  which consists in finding a suitable local field containing $K$  and the probabilistic one (Theorem \ref{main1})   consisting in using limit theorems for random walks on linear groups over local field.  Theorem \ref{local} will be borrowed from \cite{marglis} and Theorem \ref{main1} is the main part of this paper. Before stating them and showing how they provide a proof of Theorem \ref{main}, we give some basic definitions:
\begin{defi}\label{defdef} (Strong irreducibility and contraction properties)\\
$\bullet$ \textbf{Strong irreducibility }: let $K$ be a field, $V$ a vector space over $K$ and $\Gamma$ a  subgroup of $GL(V)$. The action of $\Gamma$ on $V$ is said to be strongly irreducible  if $\Gamma$ does not fix a finite union of proper subspaces of $V$. This is equivalent to saying that $\Gamma$ contains no subgroup of finite index that acts reducibly on $V$.  In particular, if the Zariski closure $\ov{\Gamma}$ is connected  then irreducibility and strong irreducibility are equivalent (because the identity component of $\ov{\Gamma}$ is contained in any algebraic subgroup of finite index - \cite{Humphreys}-). We note that this notion is ``algebraic'' in the sense that $\Gamma$ is strongly irreducible if and only if $\ov{\Gamma}$ is. \\
$\bullet$ \textbf{Contraction  for local fields}:
 Let $(k,|.|)$ be a local field, $V$ a  vector space over $k$ and $\Gamma$ a subgroup of $GL(V)$. We choose any norm $||.||$ on $End(V)$.  We say that a sequence ${(\gamma_n)}_{n\in \N}\subset \Gamma^{\N}$ is contracting, if ${r_n\gamma_n}$ converges,  via a subsequence, to a rank one endomorphism for every (or equivalently one) suitable normalization $(r_n)_{n\in \N}$ of $k$ such that $||r_n \gamma_n||=1$.  It is equivalent to say  that the projective transformation
$[\gamma_n] \in PGL(V)$ contracts  $P(V)$ into a point, outside a hyperplane. Note that in the  archimedean case,
this is just saying that $\frac{\gamma_n}{ ||\gamma_n||}$ converges to a rank one endomorphism. \\
A representation $\rho$ of $\Gamma$ is said to be contracting if the group $\rho(\Gamma)$ contains a contracting sequence.\end{defi}

The following classical lemma gives a more practical method to verify contraction. It will be useful to us in Section \ref{subsequi}.
\begin{lemme} [Contraction and proximality] An element $\gamma\in GL(V)$ is said to be proximal if and only if  it has a unique
eigenvalue of maximal modulus. If $\Gamma$ contains a proximal
element then it is contracting. If $\Gamma$ acts irreducibly on $V$ and is contracting then it contains a proximal element. \label{proximal}\end{lemme}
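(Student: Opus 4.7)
The plan is to establish the two implications separately. For the direct implication, suppose $\gamma \in \Gamma$ is proximal, i.e.\ has an eigenvalue $\lambda_1$ of strictly largest modulus which is moreover simple (the standard reading of ``unique eigenvalue of maximal modulus''). I would show that the sequence $(\gamma^n)_{n\geq 1}$ is already contracting. Decompose $V = V_+ \oplus V_-$, where $V_+$ is the one-dimensional eigenline for $\lambda_1$ and $V_-$ is the sum of the remaining generalized eigenspaces; this decomposition, a priori defined over the algebraic closure of $k$, descends to $k$ by Galois invariance, since any conjugate $\sigma(\lambda_1)$ is again an eigenvalue of the same modulus, hence equals $\lambda_1$ by uniqueness, so $\lambda_1 \in k$ and $V_+$ is $k$-rational. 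Picking a normalization $r_n \in k$ with $\|r_n \gamma^n\| = 1$, the restriction of $r_n \gamma^n$ to $V_+$ has norm one, while its restriction to $V_-$ decays like $|\lambda_2/\lambda_1|^n$ up to polynomial factors. By compactness of the unit ball of $\mathrm{End}(V)$ over a local field, any subsequential limit of $r_n\gamma^n$ is of rank one, which is precisely the definition of being contracting.

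For the converse, assume $\Gamma$ acts irreducibly on $V$ and admits a contracting sequence $(\gamma_n) \subset \Gamma$ with $r_n \gamma_n \to \pi$, where $\pi$ has rank one. Write $\pi(w) = f(w)\, v$ for some non-zero $v \in V$ and $f \in V^*$, and set $L = k\cdot v$, $H = \ker f$. A direct computation gives $(g\pi)^2 = f(g(v))\,(g\pi)$ for every $g \in GL(V)$, so the spectrum of $g\pi$ is $\{f(g(v)),\,0,\dots,0\}$; consequently $g\pi$ has a simple non-zero dominant eigenvalue exactly when $g(v) \notin H$. Irreducibility enters precisely here: if $g(v) \in H$ for every $g \in \Gamma$, then the subspace spanned by the orbit $\Gamma\cdot v$ would be non-zero, $\Gamma$-stable, and contained in the proper subspace $H$, contradicting irreducibility.

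Fixing such a $g \in \Gamma$, I would conclude by a perturbation argument on eigenvalues. Since $r_n g \gamma_n \to g\pi$, the characteristic polynomials converge coefficient-wise; by continuous dependence of a simple root on the coefficients of a monic polynomial (valid over any local field via Hensel's lemma), for $n$ large enough $r_n g \gamma_n$ has a simple eigenvalue close to $f(g(v)) \neq 0$ and the other $d-1$ eigenvalues close to $0$. Multiplying by $r_n^{-1}$ scales all eigenvalues by the same factor and therefore preserves their moduli ratios, so $g \gamma_n \in \Gamma$ is proximal for every sufficiently large $n$. The main technical nuisance, more than a genuine obstacle, is making sure that this root-perturbation statement is available uniformly over non-archimedean local fields; once this Hensel-type continuity is in hand, the argument proceeds exactly as over $\R$ or $\C$.
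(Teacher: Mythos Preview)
Your proof is correct and follows essentially the same route as the paper's: for the forward direction you both show $\{\gamma^n\}$ is contracting via the eigenspace splitting, and for the converse you both multiply the rank-one limit by some $g\in\Gamma$ (found by irreducibility) to make $g\pi$ proximal, then perturb. The only cosmetic difference is that the paper phrases the perturbation step as ``the set of proximal endomorphisms is open in $End(V)$'' rather than invoking continuity of roots or Hensel, so your worry about a ``technical nuisance'' over non-archimedean fields is unnecessary---openness of proximality follows directly from continuous dependence of the characteristic polynomial's roots, which holds over any local field.
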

\begin{proof} If $\gamma\in \Gamma$ is proximal, then its maximal eigenvalue $\lambda$ belongs to the field $k$ and the corresponding eigendirection is defined on $k$. The latter has a $\gamma$-invariant supplementary hyperplane defined on $k$. Consequently, in a suitable basis, $\gamma$ is of the form: $\left(
                                               \begin{array}{cc}
                                                 \lambda & 0 \\
                                                 0 & M \\
                                               \end{array}
                                             \right)$. By the spectral radius formula, we deduce that sequence $\{\gamma^n;n\in \N\}$ is contracting.
Conversely, consider sequences $\{\gamma_n;n\in \N\}$ in $\Gamma$, $\{r_n;n\in \N\}$ in $k$ such that $r_n\gamma_n$ converges to a rank one endomorphism $h$. $h$ is proximal if and only if $Im(h)\not\subset Ker(h)$. Suppose first that $h$ is proximal and notice that $\{g\in End(V); \textrm{\;$g$ is proximal}\}$ is open (for the topology on $End(V)$ induced by that of the local field $k$);  hence for sufficient large $n$, $r_n\gamma_n$ is proximal, a fortiori $\gamma_n$ and we are done. If $h$ fails to be proximal, or equivalently $Im(h)\subset Ker(h)$, we claim that one can still find $g\in \Gamma$ such that $gh$ is proximal; this would end the proof since by the same reasoning $g\gamma_n$ would be proximal for large $n$. Let us prove the claim: denote by $k x_0$ the image of $h$ and notice that $V= Vect\{gx_0; g\in \Gamma\}$ because the action of $\Gamma$ on $V$ is irreducible. Consequently, there exists $g\in \Gamma$ such that $gx_0 \not \in Ker(h)$. But $gx_0=Im(gh)$ and $Ker(h)=Ker(gh)$; whence $gh$ is proximal.

\end{proof}

\begin{defi}[Exponential local moment on linear groups]\label{bnb}
Let $k$ be a local field, $d$ an integer $\geq 2$,  $\Gamma$ be a subgroup of $SL_d(k)$, $||.||$ a  norm on
$End_d(k)$, $\mu$ a probability measure on $\Gamma$. We say that $\mu$ has an exponential local moment if
for some $\tau>0$, $$\int {||g||^\tau d\mu(g)} < \infty $$
\end{defi}

\begin{remarque}[Interpretation]
The definition above can be reformulated as follows: there exists $\tau>0$ such that \;$\int{exp(\tau \log ||g|| )d\mu(g)}<\infty$ or equivalently  $\int{exp(\tau d_X(\ov{g},\ov{I_d}) )d\mu(g)}<\infty$ where $X=SL_d(k)/K$ is the symmetric space associated to $SL_d(k)$ (see Section \ref{subsconv} for definition of $K$), $d_X(g_1,g_2)= \log ||g_2^{-1}g_1||$ is a distance on $X$, $I_d$ is the identity matrix of order $d$. \end{remarque}

Now we are able to state the two results.
In the following theorem, for a measure $\mu$ on $SL_d(k)$,  $\Gamma_\mu$ denotes the smallest \textbf{closed} semigroup containing the support of $\mu$.
\begin{theo} [Probabilistic part]  \label{main1} Let $k$ be a local field, $d\geq 2$, $\mu$, $\mu'$ two probability measures on $SL_d(k)$ having an exponential local moment and such that $\Gamma_\mu=\Gamma_{\mu'}$ is a strongly irreducible and contracting \textbf{subgroup}. We assume its Zariski closure to be $k$-split  and its connected component semi-simple. We denote by $(M_n)_{n\in \N^*}$ (resp. $(M'_n)_{n\in \N^*}$) the random walks associated to $\mu$ (resp. $\mu'$). Then a.s. for all $n$ large enough, the group $\langle M_n,M'_n \rangle$ generated by $M_n$ and $M'_n$ is free. More precisely,
\begin{equation} \label{eqini1}\limsup_{n \rightarrow \infty}{ \frac{1}{n}\log\; \p\left(\langle M_n,M'_n \rangle \textrm{is not free} \right) }< 0 \end{equation}

\end{theo}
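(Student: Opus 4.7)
The plan is to apply the classical ping-pong lemma (Section~\ref{generating}) on $P(V)$, $V = k^d$: we will prove that outside an event of probability $\leq \rho^n$ the pair $(M_n, M_n')$ can be turned into ping-pong players. Fix KAK decompositions $M_n = K_n A_n U_n$ and $M_n' = K_n' A_n' U_n'$, with singular values in non-increasing order, and introduce the attractor $v_n^+ := K_n[e_1]$ and the repelling hyperplane $H_n^- := U_n^{-1}.[e_1^*] \in P(V^*)$ of $M_n$; similarly $v_n^-, H_n^+$ for $M_n^{-1}$, and $w_n^\pm, L_n^\pm$ for $(M_n')^{\pm 1}$. The standard quantitative consequence of the KAK decomposition is: if $|a_{2,n}/a_{1,n}| \leq e^{-cn}$ and $\delta([x], H_n^-) \geq e^{-c'n}$ with $c' < c$, then $\delta(M_n[x], v_n^+) \leq e^{-(c-c')n}$, and symmetrically for the three other pieces.

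Two probabilistic inputs are needed, each available with exponentially small failure probability. First, both spectral gaps are $\leq e^{-cn}$ for a fixed $c > 0$: this comes from the separation of the first two Lyapunov exponents together with Proposition~\ref{largedeviations}, both established over arbitrary local fields in Section~\ref{subsproba}. Second, all \emph{cross} distances between objects attached to the two walks are at least $e^{-c'n}$ for some $c' < c$; concretely,
\begin{equation*}
\delta(v_n^\pm, L_n^\mp), \quad \delta(w_n^\pm, H_n^\mp), \quad \delta(v_n^\pm, w_n^\pm) \geq e^{-c'n}
\end{equation*}
outside an event of probability $\leq \rho^n$. This uses three ingredients: the asymptotic independence with exponential rate of the attractor and repelling hyperplane of a single walk (Theorem~\ref{independence1}); the independence of $(M_n)$ from $(M_n')$; and the Holder regularity of the $\mu$- and $\mu^{-1}$-stationary measures on $P(V)$ and $P(V^*)$, which by Theorem~\ref{hausdweak} assigns exponentially small mass to $e^{-c'n}$-neighborhoods of any projective hyperplane. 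Combining the two inputs, the four disks $B(v_n^\pm, e^{-(c-c')n})$ and $B(w_n^\pm, e^{-(c-c')n})$ are pairwise disjoint, each is disjoint from the four hyperplanes of the other walk, and $M_n^{\pm 1}, (M_n')^{\pm 1}$ each send the complement of one hyperplane into one of these disks; the ping-pong lemma then shows $\langle M_n, M_n' \rangle$ is free of rank two, which is~\eqref{eqini1}.

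The main obstacle is the simultaneous quantitative bookkeeping: one must produce a \emph{single} event of probability $\geq 1 - \rho^n$ on which both spectral gaps are $\leq e^{-cn}$ and all the cross-distances above are $\geq e^{-c'n}$, with $c' < c$ chosen so that the ping-pong nesting is satisfied. The genuinely new probabilistic step is the control of the coupled distances such as $\delta(v_n^+, L_n^-)$, which tie the two independent walks via objects living respectively in $P(V)$ and $P(V^*)$. For this one uses that $v_n^+$ converges exponentially fast to a limit $Z$ distributed according to the $\mu$-stationary measure on $P(V)$ (Theorems~\ref{direction},~\ref{expokak}), while \emph{independently} $L_n^-$ converges exponentially fast to a hyperplane distributed according to the $(\mu')^{-1}$-stationary measure on $P(V^*)$; Theorem~\ref{hausdweak} then yields $\p(\delta(v_n^+, L_n^-) \leq e^{-c'n}) \leq \rho^n$, and the other cross-distances are handled identically.
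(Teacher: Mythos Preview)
Your proof is correct and follows the same route as the paper's Section~\ref{secproof}: ping-pong on $P(V)$ via the KAK decomposition in $SL_d(k)$, with exponential contraction coming from Theorem~\ref{ratiosld}, the cross-separation between the two independent walks handled by Theorem~\ref{direction1} combined with Theorem~\ref{hausdweak} (the paper's Proposition~\ref{fatetsenin}), and the asymptotic independence Theorem~\ref{independence1} used to separate the attractor of a \emph{single} walk from its own repelling hyperplane (the paper's Lemma~\ref{klo}).

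Two small imprecisions worth fixing. First, the exponential spectral gap $|a_{2,n}/a_{1,n}|\le e^{-cn}$ is not a direct consequence of Proposition~\ref{largedeviations}; the paper obtains it as Theorem~\ref{ratiosld} via the Iwasawa comparison (Proposition~\ref{comparaison2}, Theorem~\ref{iwa}) and the non-connected reduction of Section~\ref{subsequi}. Second, your list of distances is incomplete: for $(r,\epsilon)$-very-proximality you also need the four \emph{self}-separations $\delta(v_n^{\pm},H_n^{\mp})$ and $\delta(w_n^{\pm},L_n^{\mp})$ (this is exactly where Theorem~\ref{independence1} enters, and is the paper's Lemma~\ref{klo}), and for the ping-pong cross conditions you need all four of $\delta(v_n^{\pm},L_n^{\pm})$ and all four of $\delta(w_n^{\pm},H_n^{\pm})$, not just the $\mp$ pairings; the argument you give for $\delta(v_n^+,L_n^-)$ of course applies verbatim to the others.
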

\begin{remarque}
The assumptions $\ov{\Gamma_\mu}$  semi-simple and $k$-split can be dropped: $\Gamma_\mu$ being strongly irreducible,
the Zariski connected component of $\ov{\Gamma_\mu}$ is immediately reductive and everything
 we will do in Section \ref{subsestimate} for semi-simple groups is applicable to reductive groups.
 The assumption $k$-split will be used to simplify the Cartan and Iwasawa decompositions in Sections
 \ref{subsestimate} and \ref{preliminaries}, however similar decompositions hold in the general case.
  To keep the exposition as simple as possible we kept these conditions.
\end{remarque}

If $V$ is a vector space over a field $k$ and $\Gamma$ a group, we say that a representation $\rho:\Gamma \longrightarrow GL(V)$ is absolutely (strongly) irreducible if it remains (strongly) irreducible on $V\otimes_k k'$ for every algebraic extension $k'$ of $k$.
\begin{theo}[Arithmetic part]\cite[Theorem 2]{marglis}
Let $K$ be a finitely generated field, $G$ an algebraic group over $K$ such that the Zariski connected component $G^0$ is not solvable, $\Gamma$ be a $K$-Zariski dense subgroup.  Then there exists a local field $k$ containing $K$, a vector space $V$ over $k$ and a $k$-algebraic absolutely strongly irreducible representation $\rho: G \longrightarrow SL(V)$ such that $\rho(\Gamma)$ is contracting and the Zariski component of $\rho(G)$ is a semi-simple group.
\label{local} \end{theo}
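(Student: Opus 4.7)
Since this result is credited to Margulis--Soifer \cite{marglis}, the plan is to follow their strategy, which is in the spirit of Tits' arithmetic lemma \cite[Lemma 4.1]{tits}. The argument splits naturally into three stages: first reduce to a semi-simple setting, then produce a local field $k \supset K$ in which some element of $\Gamma$ becomes proximal in a high-weight representation, and finally package the construction to ensure absolute strong irreducibility together with semi-simplicity of $\rho(G)^0$.

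First I would pass to the semi-simple quotient. Since $G^0$ is not solvable, $H := G^0/R(G^0)$ (quotient by the solvable radical) is a non-trivial semi-simple $K$-algebraic group, and the image of $\Gamma \cap G^0$ in $H$ remains Zariski-dense. Any representation $\rho$ of $G$ whose restriction to $G^0$ factors through $H$ will automatically have $\rho(G)^0=\rho(G^0)$ semi-simple, taking care of one of the required conclusions. Zariski-density of $\Gamma \cap G^0$ in the non-trivial semi-simple $H$ lets one find $\gamma \in \Gamma$ whose image in $H$ is regular semi-simple and admits an eigenvalue $\lambda \in \overline{K}^{*}$ of infinite order (a linear group all of whose semi-simple elements are torsion cannot be Zariski-dense in a non-trivial semi-simple group).

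Next comes the key arithmetic step. Since $K$ is finitely generated and $\lambda$ is non-torsion in $\overline{K}^{*}$, a product-formula / heights argument on the finitely generated field $K(\lambda)$ produces a discrete valuation $v$ on $K(\lambda)$ with $|\lambda|_v$ strictly exceeding $|\lambda'|_v$ for every other eigenvalue $\lambda'$ of $\gamma$ and all their Galois conjugates. Let $k$ be the completion at $v$: this is a local field containing $K$. Now pick a dominant integral weight $\chi$ of $H$ lying deep in the Weyl chamber selected by $\lambda$, and form the associated irreducible $\overline{k}$-rational highest-weight representation $(\rho_\chi, V_\chi)$; the eigenvalue of $\rho_\chi(\gamma)$ corresponding to $\chi$ is then strictly dominant, so $\rho_\chi(\gamma)$ is proximal over $\overline{k}$. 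A Galois-orbit sum on the highest weight descends $\rho_\chi$ to an absolutely strongly irreducible $k$-rational representation $\rho: G \to SL(V)$ with $\rho(\gamma)$ still proximal. Since $\rho(\Gamma)$ is Zariski-dense in $\rho(G)$, which acts irreducibly on $V$, Lemma \ref{proximal} forces $\rho(\Gamma)$ to be contracting.

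The main obstacle is precisely the arithmetic step producing the valuation $v$: finding a place of the finitely generated field $K(\lambda)$ at which $\lambda$ strictly dominates all other eigenvalues is the heart of the whole construction, and is the only point where the hypothesis that $K$ is finitely generated is genuinely used. A secondary subtlety is ensuring absolute strong irreducibility --- rather than mere strong irreducibility over $k$ --- which forces one to choose the highest weight generically enough that no reduction occurs after passing to the algebraic closure; this is handled by picking $\chi$ off the walls of the relevant Galois orbits. Everything else (semi-simplicity of $\rho(G)^0$, strong irreducibility, extension from $G^0$ to $G$ via induction over $G/G^0$) is then essentially formal.
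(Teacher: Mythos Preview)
The paper does not supply its own proof of this theorem: it is quoted directly as \cite[Theorem~2]{marglis}, accompanied only by a remark noting that the Margulis--Soifer argument relies on the classification of semi-simple algebraic groups via their Dynkin diagrams, and that the more conceptual alternative in \cite{topological} lands only in $PGL(V)$.

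Your three-stage outline is the right shape for a Tits-style argument, and you correctly single out the valuation-theoretic step (producing a place of the finitely generated field at which one eigenvalue strictly dominates) as the heart of the matter. There is, however, a genuine gap. You dismiss the passage from $G^0$ to $G$ as ``essentially formal'' via induction over $G/G^0$, but this is exactly the obstacle separating Theorem~\ref{local} from Tits' classical lemma \cite[Lemma~4.1]{tits}. As the paper's remark immediately after the statement emphasises, Tits' argument only produces a representation of a \emph{finite-index subgroup} of $G$, which is insufficient here since the random walk lives on all of $\Gamma$. Naive induction from $G^0$ to $G$ typically destroys absolute irreducibility (or proximality), and arranging for an absolutely strongly irreducible representation of the whole of $G$ into $SL(V)$ --- not merely $PGL(V)$ --- in which $\rho(\Gamma)$ remains contracting is the genuine content of the Margulis--Soifer contribution. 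It is precisely at this extension step that the Dynkin-diagram classification is invoked, and it is not formal.
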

 \begin{remarque} A classical lemma of Tits -\cite{tits}- says (or at least implies) the same as Theorem \ref{local} except that $\rho$ is a representation of a finite index subgroup
 of $G$. This is insufficient for us because the random walk  lives in all of $\Gamma$. However, when $G$ is Zariski connected
the above theorem and the aforementioned lemma of Tits are exactly the same. We note that the proof of Theorem \ref{local}
by Margulis and Soifer depends heavily on the classification of  semi-simple algebraic groups through their Dynkin diagram.  A more conceptual proof can be found in \cite{topological} except that
 the representation $\rho$
 takes value in $PGL(V)$, and this is not enough for our purposes. \end{remarque}

\paragraph{End of the proof of Theorem \ref{main} modulo Theorem \ref{main1} }
Let $\Gamma=\Gamma_\mu=\Gamma_{\mu'}$.  Since $\Gamma$ is  finitely generated, we can replace $K$ with the field generated over its prime field by the matrix coefficients of the (finitely many) generators of $\Gamma$. Let $G$ be the Zariski closure of $\Gamma$.  Then, we can apply Theorem \ref{local}. It gives a local field $k$, a $k$-rational absolutely strongly irreducible representation $(\rho,V)$ of $G$ such that the Zariski-connected component of $H=\rho(G)$ is semi-simple and $\rho(\Gamma)$ is contracting. Passing to a finite extension of $k$ if necessary, $H$ can be assumed $k$-split; $\rho$ remains absolutely strongly irreducible. We are now in the situation of Theorem \ref{main1}: we have a probability measure $\rho(\mu)$ (image of $\mu$ under $\rho$) on some $SL_d(k)$ such that $\Gamma_{\rho(\mu)}=\rho(\Gamma)$ is strongly irreducible and contracting. Moreover, the connected component of its Zariski closure  $H$  is semi-simple and $k$-split. To apply Theorem \ref{main1} we only have to check that $\rho(\mu)$ has an exponential local moment knowing that $\mu$ has an exponential moment. Indeed, if $g=s_1^{n_1(g)}...s_r^{n_r(g)}\in Supp(\mu)$ is
  a minimal expression of $g$ in terms of the generators of a symmetric finite generating set $S$ of $\Gamma$, then $l_S(g)=|n_1(g)|+...+|n_r(g)|$ whence  $||\rho(g)|| \leq  \big[Max\{\log||\rho(s)|| \vee \log||\rho(s^{-1})||;s\in S\}\big]^ {l_S(g)}$.  \; Consequently, if $\E \left(exp(\tau l_S(X_1) ) \right)$ is finite, then for some $\tau'>0$, $\E \left( ||\rho(X_1)||^{\tau'}\right)$ is also finite.
   We can now apply Theorem \ref{main1}: a.s., for $n$ large enough, $\langle \rho(M_n),\rho(M'_n) \rangle$ is free, a fortiori $\langle M_n,M'_n \rangle $ is also free. This ends the proof.\begin{flushright} $\Box$ \end{flushright}

\section{Generating free subgroups in linear groups}
\label{generating}
In Theorem \ref{main1} we must show that $M_n$ and $M'_n$ generate a free group. Below we use the classical ping-pong method to obtain two generators of a free subgroup.
 For a detailed description of these ping-pong techniques one can refer  to \cite{densefree} for a self-contained exposition or to the original article of Tits \cite{tits}.
\subsection{The ping-pong method}
Let  $k$  be a local field, $V$ a vector space over $k$, $P(V)$ its projective space, $\delta$ the Fubini-Study distance on $P(V)$ defined by:
$$\delta([x],[y])=\frac{||x \wedge y ||}{||x|| ||y||}\;\;\;\;\;;\;\;\;\;\; [x],[y]\in P(V)$$
where $[x]$ is the projection of $x\in V \setminus\{0\}$ on $P(V)$.

 \begin{itemize}
\item Let $\epsilon \in ]0,1[$. A projective transformation $[g] \in PSL(V)$
is called \textbf{$\epsilon$-contracting} if there exists a point $v_g \in P(V)$, called an attracting
point of $[g]$, and a projective hyperplane $H_g$, called a repelling
hyperplane of $[g]$, such that $[g]$ maps the complement of the $\epsilon$-neighborhood
of $H_g \subset  P(V)$ into the $\epsilon$-ball around $v_g$. We say that $[g]$ is \textbf{$\epsilon$-very contracting} if both $[g]$ and $[g^{-1}]$ are $\epsilon$-contracting.
\item $[g]$ is called \textbf{$(r, \epsilon)$-
proximal} ($r > 2\epsilon > 0$) if it is $\epsilon$-contracting with respect to some attracting
point $v_g \in P(V)$ and some repelling hyperplane $H_g$, such that $\delta(v_g;H_g) > r$. The transformation $[g]$ is called \textbf{$(r, \epsilon)$-very proximal} if both $[g]$ and $[g]^{-1}$
are $(r, \epsilon)$-proximal.
\item A pair of projective transformations $a, b \in PSL(V)$ is called \textbf{a ping-pong pair} if both $a$ and $b$ are $(r, \epsilon)$-very proximal, with respect to some $r > 2\epsilon > 0$, and if the attracting points of $a$ and $a^{-1}$ (resp. of $b$ and $b^{-1}$) are at least $r$-apart from the repelling hyperplanes of $b$ and $b^{-1}$
(resp. of $a$ and $a^{-1}$).
More generally, a $m$-tuple of projective transformations $a_1,..., a_m$ is called
a ping-pong $m$-tuple if all $a_i$'s are $(r, \epsilon)$-very proximal (for some $r > 2\epsilon >
0$) and the attracting points of $a_i$ and $a_i^{-1}$
 are at least $r$-apart from the
repelling hyperplanes of $a_j$ and $a_j^{-1}$, for any $i\neq j$.\\
\end{itemize}

The following useful lemma is an easy exercise:
\begin{lemme} [Ping-pong lemma]
If $a,b \in PSL(V)$ form a ping-pong pair then the subgroup $\langle a,b \rangle$ generated by $a$ and $b$ is free.
More generally if $a_1,...,a_m$ is a ping-pong $m$-tuple then $\langle a_1,...,a_m \rangle $ is free.
 \label{ping-pong}\end{lemme}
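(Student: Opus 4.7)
The plan is to carry out the classical ping-pong argument. Fix $r > 2\epsilon > 0$ and, for each letter $g \in \{a^{\pm 1}, b^{\pm 1}\}$, let $B_g \subset P(V)$ denote the closed $\epsilon$-ball around the attracting point $v_g$ and $N_g$ denote the closed $\epsilon$-neighborhood of the repelling hyperplane $H_g$. The $\epsilon$-contracting hypothesis says precisely that $g$ sends $P(V) \setminus N_g$ into $B_g$. The strategy is then the usual one: read a reduced word from right to left and track the orbit of a well-chosen base point through the attracting balls.

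The one geometric input to check is the following disjointness: whenever $h, h'$ are letters such that $h' \neq h^{-1}$, one has $B_h \cap N_{h'} = \emptyset$. If $h$ and $h'$ lie in different pairs (one in $\{a^{\pm 1}\}$, the other in $\{b^{\pm 1}\}$), this is immediate from the ping-pong separation $\delta(v_h, H_{h'}) > r > 2\epsilon$. If $h = h'$, it follows from the $(r,\epsilon)$-proximality of $h$ itself, which gives $\delta(v_h, H_h) > r > 2\epsilon$. The remaining case $h' = h^{-1}$ is exactly what ``reduced word'' forbids, so it never arises. Given a reduced word $w = g_1 g_2 \cdots g_n$, choose a starting point $x_0 \in P(V) \setminus (N_{g_n} \cup B_{g_1})$, which exists since $N_{g_n}$ is a thin tube and $B_{g_1}$ a small ball. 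By downward induction on $i$, the point $g_i g_{i+1} \cdots g_n \cdot x_0$ lies in $B_{g_i}$: the previous output sits in $B_{g_{i+1}}$, which by the disjointness above misses $N_{g_i}$, and applying $g_i$ then lands in $B_{g_i}$. In particular $w \cdot x_0 \in B_{g_1} \not\ni x_0$, so $w$ is not the identity of $PSL(V)$; hence $\langle a, b \rangle$ is free.

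The extension to an $m$-tuple $a_1, \dots, a_m$ is verbatim: the hypothesis requires $\delta(v_{a_i^{\pm 1}}, H_{a_j^{\pm 1}}) > r$ for all $i \neq j$, and the case $i = j$ is again handled by proximality of $a_i$, so the same induction goes through. I do not anticipate a genuine obstacle here — the lemma is in essence a bookkeeping statement about how the balls and tube neighborhoods nest under the action of reduced words. The only point requiring a moment of care is parsing which pairs $(h, h')$ actually occur as adjacent letters in a reduced word and matching them with the correct part of the hypothesis (proximality versus cross-pair separation), as outlined above.
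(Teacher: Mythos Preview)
The paper does not actually prove this lemma; it introduces it with the phrase ``The following useful lemma is an easy exercise'' and gives no argument. Your proposal supplies precisely the standard ping-pong proof that the paper leaves to the reader, and the bookkeeping you outline (matching the case $h=h'$ to $(r,\epsilon)$-proximality and the cross-pair case to the separation hypothesis) is correct. The only step you treat a bit informally is the existence of the base point $x_0 \in P(V)\setminus(N_{g_n}\cup B_{g_1})$; if one wants to be fully rigorous here one can note that the open ball of radius $r-\epsilon$ about $v_{g_n}$ lies in $P(V)\setminus N_{g_n}$ but, since $r-\epsilon>\epsilon$, cannot be contained in the $\epsilon$-ball $B_{g_1}$ --- or alternatively first reduce to cyclically reduced words by conjugation. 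Either way this is routine, and your sketch is adequate for what the paper itself calls an exercise.
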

\subsection{ The Cartan decomposition}
\label{uff}
Let $d\geq 2$, $V=k^d$ and $(e_1,...,e_d)$ its canonical basis.\\
The attracting points and repelling hyperplanes are not unique. In this article, they will be defined via the Cartan decomposition in $SL(V)$. Let's recall it. \\\smallskip

When $k=\R$ or $\C$, consider the usual Euclidean (resp. Hermitian) norm on $k^d$ and the canonical basis $(e_1,...,e_d)$. Let $K=SO_d(k)$ (resp. $SU_n(\C)$ ) be the orthogonal (resp. unitary) group, $A=\{diag(a_1,...,a_d);\;a_i>0\;\forall i=1,...,d;\;\prod_{i=1}^d a_i=1\}$,
$A^+=\{diag(a_1,...,a_d)\in A; a_1\geq...\geq a_d >0\}$. In this setting, the Cartan decomposition holds: $SL_d(k)=KA^+K$. This is the classical polar decomposition.\\

 When $k$ is non archimedean, denote $K=SL_d(\Omega_k)$ and $A=\{diag(\pi^{{n_1}},...,\pi^{{n_d}});\; n_i\in \Z \;\forall i=1,...,d; \sum_{i=1}^d{n_i}=0\}$;
$A^+=\{diag(\pi^{{n_1}},...,\pi^{{n_d}})\in A; \;n_1\leq...\leq n_d\}$. If we consider the Max norm on $V$: $||x||=Max\{|x_i|; i=1,...,d\}$, \; $x\in V$, then one can show that $K$ is the group of isometries of $V$. With these notations, the Cartan decomposition is: $SL_d(k)=KA^+K$.  This decomposition can be seen as an application of the well-known Invariant Factor Theorem for Matrices (see for example \cite{Reiner}). One can also see it as a particular case of the Cartan decomposition for algebraic groups (see Section \ref{preliminaries}).\\

In both cases,  given $g$ in $SL_d(k)$ its components in the $KAK$ decomposition are not uniquely defined   (only the component in $A$ is ). Nevertheless,  we can always fix once and for all a privileged way to construct $KAK$ in $SL_d(k)$.
  Therefore, for $g\in SL_d(k)$, we denote by $g=k(g)a(g) u(g)$ ``its'' KAK decomposition with $a(g)=diag\left(a_1(g),...,a_d(g) \right)$.\\ \smallskip
  Till the end of the paper, we write $v_g=k(g)[e_1]$ and  $H_{g}=\big[Span \langle u(g)^{-1}e_2,...,u(g)^{-1} e_d \rangle \big]$.
  The following lemma taken from \cite{densefree} shows that a large ratio between $a_1(g)$ and $a_2(g)$ implies contraction. Then $v_g$ can be taken as an attracting point and $H_g$ as a repelling hyperplane.
 \begin{lemme}\cite{densefree}
Let $\epsilon >0$. If $|\frac{a_2(g)}{a_1(g)}|\leq {\epsilon^2}$, then $[g]$ is $\epsilon$-contracting.
Moreover, one can take $v_g $ to be the attracting point and $H_g$  to be the repelling hyperplane.  \label{cruciallemme}\end{lemme}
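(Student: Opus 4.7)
The plan is to do a direct coordinate computation in a basis adapted to the KAK decomposition of $g$, leveraging the fact that in both the archimedean and non-archimedean settings, elements of $K$ act as isometries on $V$ (and thus on $V\wedge V$). Accordingly, the argument will be insensitive to the local field and will reduce to elementary estimates with the Fubini-Study metric.

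Concretely, I would set $f_i := u(g)^{-1}e_i$. Because $u(g)\in K$, the family $(f_1,\ldots,f_d)$ is orthonormal, and writing $x=\sum_i x_i f_i$ the norm $\|x\|$ is simply the $\ell^2$ (resp.\ $\sup$) norm of the coordinate vector. Since $H_g=[\mathrm{Span}\langle f_2,\ldots,f_d\rangle]$, the formula $\delta([x],H_g)=\|x\wedge f_2\wedge\cdots\wedge f_d\|/(\|x\|\cdot\|f_2\wedge\cdots\wedge f_d\|)$ simplifies to
\[
\delta([x],H_g)\;=\;\frac{|x_1|}{\|x\|}.
\]
From $g=k(g)a(g)u(g)$ I would compute $gx = k(g)\sum_i a_i(g)\,x_i\, e_i$; applying the isometry $k(g)$ and using $v_g=[k(g)e_1]$, this gives
\[
\delta([gx],v_g)\;=\;\frac{\bigl\|\sum_{i\geq 2}a_i(g)\,x_i\,e_i\wedge e_1\bigr\|}{\bigl\|\sum_i a_i(g)\,x_i\,e_i\bigr\|}.
\]

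The final step is a two-sided bound. Using $|a_i(g)|\leq|a_2(g)|$ for $i\geq 2$ and the isometry properties of the norm on $V\wedge V$, the numerator is at most $|a_2(g)|\cdot\bigl\|\sum_{i\geq 2}x_i e_i\bigr\|\leq |a_2(g)|\cdot\|x\|$. The denominator is at least $|a_1(g)|\cdot|x_1|$, simply by projecting on the first coordinate. Combining,
\[
\delta([gx],v_g)\;\leq\;\left|\frac{a_2(g)}{a_1(g)}\right|\cdot\frac{\|x\|}{|x_1|}\;=\;\left|\frac{a_2(g)}{a_1(g)}\right|\cdot\frac{1}{\delta([x],H_g)}.
\]
Thus if $|a_2(g)/a_1(g)|\leq\epsilon^2$ and $\delta([x],H_g)>\epsilon$, then $\delta([gx],v_g)<\epsilon$, which is precisely the $\epsilon$-contraction property.

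I do not foresee a serious obstacle: the argument is a short, self-contained calculation. The only thing requiring attention is to verify that the Fubini-Study distance formula via the wedge product, together with the isometric action of $K$ on $V$ and $V\wedge V$, carries over verbatim to the non-archimedean case. This is straightforward given the choice of the max norm on $V$ and the corresponding norm on $V\wedge V$, under which $SL_d(\Omega_k)$ acts by isometries.
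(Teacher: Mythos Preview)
Your proof is correct and is essentially identical to the paper's: both use the isometry of $K$ to reduce $\delta([gx],v_g)$ to $\|a(g)u(g)x\wedge e_1\|/\|a(g)u(g)x\|$, bound the numerator by $|a_2(g)|\,\|x\|$ and the denominator below by $|a_1(g)|\,|x_1|$ (the paper writes $x_1$ as $u(g)^{-1}.e_1^*(x)$), yielding the same inequality $\delta([gx],v_g)\leq |a_2(g)/a_1(g)|/\delta([x],H_g)$. The only cosmetic difference is that you work in the coordinate basis $f_i=u(g)^{-1}e_i$ explicitly, whereas the paper phrases the first coordinate via the dual pairing.
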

 \begin{proof} $v_g=[k(g)e_1]$ and $H_g=\big[Span\langle u(g)^{-1}e_2,...,u(g)^{-1}e_d\rangle \big]$. Let $x\in V$ such that $d(x,H_g)>\epsilon$. We want to prove that $d(g[x],v_g)<\epsilon$.
Notice that $H_g=Ker\left(u(g)^{-1}.e_1^*(.)\right)$. Hence $\frac{|u(g)^{-1}.e_1^*(x)|}{||x||}>\epsilon$. But,$$d(g[x],v_g)=\frac{|| g x \wedge k(g)e_1||}{||gx||}= \frac{||a(g) u(g) x \wedge e_1||}{||a(g) u(g) x||}$$
Since $|a_1(g)| \geq....\geq |a_d(g)|$,\; $||a(g) u(g) x \wedge e_1|| \leq |a_2(g)| ||x||$. Moreover,
 $||a(g) u(g)x||\geq |a_1(g)|\;|u(g)^{-1}.e_1^* (x)|$.
 Hence, $$d(g[x],v_g)\leq \frac{|a_2(g)|}{|a_1(g)|}\;\frac{1}{\delta(x,H_g)}< \epsilon$$\end{proof}

\section{Random matrix products in local fields}
\label{subsproba}
$\bullet$ In this section, $d$ is an integer $\geq 2$ and $k$ a local field. We set $V=k^d$.\\

$\bullet$ When $\mu$ is a probability on a group $G$, we consider both random walks  $M_n=X_1...X_n$ and $S_n=X_1...X_n$ as defined in Section \ref{strategy}. $\Gamma_\mu$ is the smallest closed semigroup containing the support of $\mu$.

\subsection{Introduction}
Our aim in this section is to establish the basics of the theory of random matrix products
over local fields. The section is structured as follows.

In Section \ref{subsconv}, we generalize the first principles and tools of random matrix theory to all
local fields.
In particular we establish the exponential convergence in direction (Theorem \ref{direction}) and
the exponential decay of
the probability of hitting a hyperplane (Theorem \ref{hausdweak}). A key ingredient in the proofs is
our cocycle lemma,
Lemma \ref{cocycle}, which is a rather general statement giving control on the size of a cocycle
in an abstract context.
Another important tool will be Proposition \ref{largedeviations}, which compares the size of the norm of the
random walk with the size
of the random walk applied to any fixed vector. It can be viewed as a weak form of Le
Page's large deviations
theorem (\cite[Theorem 7]{Page}) in the context of local fields. Making use of these
two ingredients, we then compare the $A$-component of
the random walk in the Iwasawa decomposition with the $A$-component in the Cartan
decomposition (Proposition
\ref{comparaison2}).

In Section \ref{preliminaries}, we review some basic facts about algebraic groups, absolutely irreducible
linear representations of semi-simple
algebraic groups over local fields and their classification through the highest weight
theory.

In Section \ref{subsestimate} and Section \ref{subsequi}, we establish limit theorems for the components of the Cartan
decomposition of the random walk. The
main results are Theorem \ref{iwabis} (exponential contraction of the A-component), Theorem \ref{expokak}
(exponential convergence
of the $K$-components) and Theorem \ref{independence} (asymptotic independence of the K-components). Our
method consists in
investigating the Iwasawa decomposition first by  proving the exponential contraction
of the $A$-component of the Iwasawa decomposition (Theorem \ref{iwa}).  In fact, in
order to study the Cartan
decomposition in the ambient $SL_d(k)$, we will first look at the behavior of the Cartan
decomposition
of the random walk inside the semi-simple algebraic group which is the Zariski closure of
the group generated by the
random walk, and then compare the two decompositions (Corollary \ref{hardini}). The case when the
Zariski closure is connected
is easier and is dealt with in Section \ref{subsestimate}, while the general case is handled in Section \ref{subsequi}.

\subsection{Convergence in direction}
\label{subsconv}
\subsubsection{Generalization of well-known results in an non archimedean setting}
\label{gene}
This section does not require any prior knowledge on algebraic groups.\\
%In this section, we verify that classical results in the theory of random matrix products hold in an non archimedean setting. We follow the proofs of \cite{Bougerol}.\\
 Let  $B=(e_1,...,e_d)$ be the canonical basis of $V=k^d$. By canonical norm, we mean either
the standard Euclidean (or Hermitian) norm when $k$ is archimedean or the Max norm,
$||x||=Max\{|x_i|; i=1,...,d\}$ for every $x\in V$, when $k$ is non archimedean.\\
Recall that by Section \ref{generating}, there exist a compact subgroup $K$ acting by isometries on $V$, a subgroup $A^+$ consisting of diagonal matrices such that:
 $SL_d(k)=KA^+K$ (Cartan decomposition). For $g\in SL_d(k)$, we denote by $g=k(a)a(g)u(g)$ a privileged decomposition of $g$ in this product.\\

We denote by $V^*$ the dual of $V$ and  $(e_1^*,...,e_d^*)$ the canonical basis of  $V^*$ dual to $(e_1,...,e_d)$. We consider the canonical norm induced on $V^*$.  Recall that $SL_d(k)$ acts on $V^*$ by $g.f(x)=f(g^{-1}x)$ for every $g\in SL_d(k)$, $f\in V^*$, $x\in V$.
The projective space of $V$ is denoted by $P(V)$ and the projection of a non zero vector $x\in V$ by $[x]$.
The norm on $V$ (resp. $V^*$) induces a distance on $P(V)$ sometimes called the Fubini-Study distance:
$$\delta([x],[y])=\frac{||x \wedge y||}{||x||||y||}\;\;\;;[x],[y]\in P(V)$$
A similar formula holds for $V^*$. If $H$ is a hyperplane of $V$, $f\in V^*$ such that $H=Ker(f)$, then
$$\delta([x],H)=\frac{||f(x)||}{||f||||x||}\;\;\;\;;x\in V\setminus\{0\}$$

Consider a probability measure $\mu$
on $SL_d(k)$. No assumptions will be made on the Zariski closure of $\Gamma_\mu$. Recall that $M_n=X_1...X_n$ and $S_n=X_n...X_1$.  The KAK decomposition of $S_n$ will be simply denoted by: $S_n=K_nA_nU_n$.

\begin{defi} If $G$ is a group acting on a topological space $X$, $\mu$ (resp. $\nu$) a probability measure on $G$ (resp. $X$), $\nu$ is said to be $\mu$-invariant if $\mu \star \nu=\nu$, which means that for every borel function on $X$,\; $\iint{f(g.x) d\mu(g)d\nu(x)}=\int{f(x)d\nu(x)}$.
 \end{defi}

\begin{defi} [Lyapunov exponents] Suppose that $\int{\log ||g|| d\mu (g)}< \infty $ (i.e. existence of a moment of order one ).  The Lyapunov exponents relative to $\mu$ are defined recursively by:
$$\lambda_1+...+\lambda_i = \; \lim {\frac{1}{n} \E (\log ||\bigwedge ^i S_n ||)}=\:\lim {\frac{1}{n} \log ||\bigwedge ^i S_n ||}$$
\label{lyaponuv} \end{defi}

The limit on the left hand side is an  easy application of the subadditive lemma. The one on the right hand side is an almost sure limit and its existence is guaranteed by the subadditive ergodic theorem of Kingman \cite{Kingman}.
\begin{defi}[Index of a semigroup]
For any  semigroup $\Gamma$ of $GL(V)$, we define its index  as the least integer $p$ such that there exist sequences $\{M_n;n \geq 0\}$ in $\Gamma$,  $\{r_n;n \geq 0\}$ in $k$ such that $||r_nM_n||=1$, for which $r_nM_n$ converges to a rank $p$ matrix. We say that $\Gamma$ is contracting when the index is one. (Note that in the archimedean case, one can just look at the quantity $\frac{ M_n}{||M_n||}$).
\end{defi}

We begin by a fundamental lemma in this theory due to Furstenberg.
\begin{lemme} \cite{Furst}
Let $G$ be a topological semigroup acting on a $2^{nd}$ countable locally compact space $X$. Consider a sequence $\{X_n, n\geq 1\}$ of independent random elements of $G$ with a common distribution $\mu$ defined on $(\Omega, A, \p)$. We denote $\lambda=\sum_{n=0}^\infty {2^{-n-1} \mu^n}$. If $\nu$ is a $\mu$-invariant probability measure on $X$ then there exists a random probability measure $\nu_\omega$ on $X$ such that for $\p \otimes \lambda$-almost every $(\omega,g)$, the sequences of probability measures $X_1(\omega)...X_n(\omega)g\;\nu$ converge weakly to $\nu_\omega$ as $n$ goes to infinity.
\label{furst}  \end{lemme}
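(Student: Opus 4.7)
The proof constructs $\nu_\omega$ as the weak limit of the measure-valued martingale $(M_n\nu)_n$ and then extends the convergence from the starting measure $\nu$ to $g\nu$ for $\lambda$-a.e.\ $g$ by exploiting the specific structure $\lambda=\sum_{n\ge 0}2^{-n-1}\mu^n$ together with the $\mu$-invariance $\mu\ast\nu=\nu$.

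First I would fix a bounded Borel function $f$ on $X$ and verify that $n\mapsto\int f\,d(M_n\nu)$ is an $(\mathcal{F}_n)$-martingale with $\mathcal{F}_n=\sigma(X_1,\ldots,X_n)$: writing $M_{n+1}=M_nX_{n+1}$ and using that $X_{n+1}$ is independent of $\mathcal{F}_n$ with law $\mu$,
$$\E\!\left[\int f\,d(M_{n+1}\nu)\,\Big|\,\mathcal{F}_n\right]=\iint f(M_nhx)\,d\mu(h)\,d\nu(x)=\int f(M_ny)\,d(\mu\ast\nu)(y)=\int f\,d(M_n\nu).$$
Boundedness by $\|f\|_\infty$ and Doob's convergence theorem then yield $\p$-a.s.\ convergence. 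Since $X$ is second countable and locally compact, $C_0(X)$ is separable; picking a countable dense family $(f_k)$ in $C_0(X)$ and intersecting the corresponding $\p$-full sets gives a single $\p$-full event on which $\int f_k\,d(M_n\nu)$ converges for every $k$. Density, the Riesz representation theorem, and a tightness argument inherited from $\nu$ then produce a random probability measure $\nu_\omega$ with $M_n\nu\to\nu_\omega$ weakly, $\p$-almost surely.

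To extend the convergence from the starting measure $\nu$ to $g\nu$ for $\lambda$-a.e.\ $g$, I would use the probabilistic description of $\lambda$: it is the law of $\xi_1\cdots\xi_N$, where $(\xi_i)_{i\ge 1}$ is an independent copy of the iid $\mu$-sequence and $N$ is an independent geometric variable with $\p(N=k)=2^{-k-1}$. On the enlarged probability space,
$$M_ng\nu=X_1\cdots X_n\,\xi_1\cdots\xi_N\,\nu,$$
and one combines Fubini, the averaged identity $\int M_ng\nu\,d\lambda(g)=M_n(\lambda\ast\nu)=M_n\nu\to\nu_\omega$ a.s., and the recurrence $\lambda=\tfrac12\delta_e+\tfrac12\mu\ast\lambda$ (which permits propagation from the $g=e$ case to $\mu\ast\lambda$-a.e.\ $g$ by absorbing an extra $\mu$-step) to conclude weak convergence of $M_ng\nu$ to $\nu_\omega$ for $(\p\otimes\lambda)$-almost every $(\omega,g)$.

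The delicate point is this last step: for a deterministic $g$, the process $(M_ng\nu)_n$ is \emph{not} a martingale in $\mathcal{F}_n$, since $g\nu$ need not be $\mu$-invariant and $\E[M_{n+1}g\nu\mid\mathcal{F}_n]=M_n(\mu\ast g\nu)$ differs from $M_ng\nu$ in general. The randomization of $g$ by $\lambda$, together with the near-invariance $\mu\ast\lambda\le 2\lambda$ of $\lambda$ under $\mu$-convolution, is what restores enough structure to transfer the a.s.\ convergence from $M_n\nu$ to $M_ng\nu$ for $\lambda$-a.e.\ $g$.
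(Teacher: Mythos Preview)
The paper does not prove this lemma; it is quoted with a citation to Furstenberg and used as a black box in the proof of Theorem~\ref{normalise}. There is therefore no in-paper argument to compare against, and your attempt has to stand on its own.

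Your martingale step is correct and is the standard opening: $\mu$-invariance of $\nu$ yields the martingale identity for $\int f\,d(M_n\nu)$, Doob gives almost-sure limits along a countable dense family in $C_0(X)$, and second countability with local compactness lets you upgrade to weak convergence $M_n\nu\to\nu_\omega$. No objection here.

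The genuine gap is in the passage to $M_n g\nu$ for $\lambda$-a.e.\ $g$. You correctly isolate the obstacle, namely that $(M_n g\nu)_n$ is not an $(\mathcal F_n)$-martingale for fixed $g$, but the remedy you propose does not close it. The averaged identity $\int M_n g\nu\,d\lambda(g)=M_n\nu$ controls only the \emph{mean} over $g$; it says nothing about convergence for individual $g$. The domination $\mu*\lambda\le 2\lambda$ transfers $\lambda$-null sets to $\mu*\lambda$-null sets, which is the wrong direction: you are trying to \emph{manufacture} the convergence for $\lambda$-a.e.\ $g$ from the single case $g=e$, not to propagate an already-known $\lambda$-a.e.\ statement. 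And ``absorbing an extra $\mu$-step'' fails as written: to regard $M_n(\omega)h$ as an $(n{+}1)$-step walk you must place $h$ at position $n{+}1$ of an auxiliary i.i.d.\ sequence, but then that sequence depends on $n$, so you are not looking at a single martingale to which Step~1 applies. Every natural attempt to realise $(\omega,g)$ as a point of $G^{\mathbb N}$ so that $M_n(\omega)g$ becomes $M_{n+k}$ of that point runs into the same order obstruction: the $g$ sits on the \emph{right} of $M_n$, while concatenation of i.i.d.\ blocks puts the new block on the \emph{left}.

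So as it stands the second half is a description of the difficulty rather than a proof. For a complete argument you should consult Furstenberg's original paper or the exposition in Bougerol--Lacroix, where this step is carried out; the mechanism is not the near-invariance of $\lambda$ that you invoke.
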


Using  Lemma \ref{furst}, Guivarc'h and Raugi proved in their fundamental work in \cite{Guivarch} the following crucial two theorems in the archimedean setting.  For a nice exposition of these results (over $\R$ or $\C$)  one can see chapter III of the book of Philippe Bougerol and Jean Lacroix \cite{bougerol}. We claim that these theorems hold in an arbitrary local field.  For the reader's convenience, we will check this for  the first theorem and assume it for the second one since the proof is just cutting and pasting their original proof (for example one can see pages 64-65 of \cite{bougerol}).
\begin{theo} \label{normalise} Suppose that  $\Gamma_\mu$ is strongly irreducible. Then, for $p$=index($\Gamma_\mu$), there exists  a random subspace  $V(\omega)$ of $V$ of dimension $p$  such that: a.s. for every $(r_n)_{n\in \N^*} \in k^\N$ s.t. $||r_n M_n ||=1$,  every limit point  of $r_n M_n$ is a rank $p$ matrix with image $V(\omega)$. Moreover  for every  $f\in V^*$, $$\p \left( f|_{ V(\omega)} \equiv 0 \right)=0$$
When $\Gamma_\mu$ is contracting, $p=1$ and there exists a unique $\mu$-invariant probability measure on the projective space $P(k^d)$ and a.s., $M_n(\omega) \nu$ converges weakly to $\delta_{Z(\omega)}$ where $Z$ is a random variable on $P(k^d)$ with law $\nu$. \end{theo}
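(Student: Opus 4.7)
The plan is to adapt the Guivarc'h–Raugi argument to this local-field setting, leveraging only two general inputs: the Furstenberg convergence lemma (Lemma \ref{furst}, already admitted over $k$) and Furstenberg's lemma on invariant measures under strong irreducibility. Compactness of $P(V)$ and continuity of the projective action rely on nothing beyond local compactness of $k$, so the classical arguments transport without modification.

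I would first produce a $\mu$-invariant probability $\nu$ on $P(V)$ by Markov--Kakutani applied to the convolution operator $\sigma \mapsto \mu \ast \sigma$ on the weakly compact space of probabilities on $P(V)$. The key algebraic input is then Furstenberg's lemma: strong irreducibility forces $\nu(P(W))=0$ for every proper subspace $W\subsetneq V$. One argues by contradiction: let $d_0$ be minimal with some $d_0$-dimensional projective subspace charged by $\nu$, and $c>0$ the maximum mass attained there. If $W\neq W'$ both attain $c$, then $\nu(P(W\cap W'))=0$ by minimality of $d_0$, so $\mathcal{W}=\{W:\dim W=d_0,\;\nu(P(W))=c\}$ is finite. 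The $\mu$-invariance of $\nu$ together with the maximality of $c$ forces $\mu$-a.e.\ $g$ to permute $\mathcal{W}$, hence so does $\Gamma_\mu$; but then $\bigcup\mathcal{W}$ is a $\Gamma_\mu$-invariant finite union of proper subspaces, contradicting strong irreducibility.

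Next, apply Lemma \ref{furst} to $X=P(V)$ with $\lambda=\sum_{n\geq 0} 2^{-n-1}\mu^n$, which carries an atom of mass $1/2$ at the identity; specializing the a.e.\ conclusion to $g=e$ yields a random probability $\nu_\omega$ with $M_n\nu\to \nu_\omega$ weakly almost surely. Given any normalization $r_n\in k$ with $\|r_nM_n\|=1$, compactness of the unit sphere of $\mathrm{End}(V)$ produces a subsequential limit $g_\omega\in\mathrm{End}(V)$ with $\|g_\omega\|=1$, so $\ker g_\omega\subsetneq V$, and $\nu(P(\ker g_\omega))=0$ by Furstenberg's lemma. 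Since $[x]\mapsto r_n M_n[x]=M_n[x]$ converges uniformly on compacta of $P(V)\setminus P(\ker g_\omega)$ to $[x]\mapsto g_\omega[x]$, we get $M_{n_k}\nu\to g_\omega\nu$, so uniqueness of weak limits gives $g_\omega\nu=\nu_\omega$. In particular $\mathrm{supp}(\nu_\omega)\subset P(\mathrm{Im}\,g_\omega)$, and using once more that $\nu$ charges no proper subspace, $g_\omega\nu$ fails to concentrate on any strict projective subspace of $P(\mathrm{Im}\,g_\omega)$. Thus any two subsequential limits share the same image, which I call $V(\omega)$; its dimension is at least $p$ by definition of the index, and I expect equality to follow from a shift-invariance argument (using invertibility of the $X_i$ to transport the rank between $M_n$ and its shift) combined with a conditioning step that realizes the minimal-rank limit along the trajectory with positive probability. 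This last point is the main obstacle I anticipate.

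The non-annihilation statement is then immediate: if $\p(f|_{V(\omega)}\equiv 0)>0$ for some $f\in V^*$, then $\nu_\omega(P(\ker f))=1$ on this event, so $\nu(P(\ker f))=\E(\nu_\omega(P(\ker f)))>0$, contradicting Furstenberg's lemma. In the contracting case $p=1$, $V(\omega)$ is a line so $\nu_\omega=\delta_{Z(\omega)}$ with $Z(\omega)=[V(\omega)]$; taking expectations gives $Z\sim \nu$. For uniqueness of $\nu$, let $\nu'$ be any other $\mu$-invariant probability; Furstenberg's lemma applies to $\nu'$ too, so $\nu'(P(\ker g_\omega))=0$, and the same argument yields $g_\omega \nu'=\delta_{Z(\omega)}$ because $g_\omega$ has rank one with image $V(\omega)$. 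Hence the random measure $\nu'_\omega$ coincides a.s.\ with $\nu_\omega$, and taking expectations gives $\nu'=\nu$.
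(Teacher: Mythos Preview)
Your outline tracks the paper's proof closely and correctly up to the one step you yourself flag as ``the main obstacle'': proving that $\dim V(\omega)=p$ rather than merely $\geq p$. The shift-invariance idea you sketch is vague, and I do not see how to make it work; the paper takes a different and sharper route that you have inadvertently cut yourself off from.

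The issue is that you specialize Lemma~\ref{furst} at $g=e$ only. The paper instead keeps the full $\lambda$-a.e.\ statement $M_n(\omega)g\nu\to\nu_\omega$, passes to the limit along the chosen subsequence to obtain $A(\omega)g\nu=\nu_\omega$ for $\lambda$-a.e.\ $g$, and then observes that $\{g:A(\omega)g\nu=\nu_\omega\}$ is closed, hence contains $\mathrm{supp}(\lambda)=\Gamma_\mu\cup\{I_d\}$. This upgraded identity $A(\omega)g\nu=\nu_\omega$ for \emph{every} $g\in\Gamma_\mu$ is exactly what is needed to force the rank down to $p$: choose $h_n\in\Gamma_\mu$, $s_n\in k$ with $s_nh_n\to h$ of rank $p$ (definition of the index). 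Then $A(\omega)gh_n\nu=\nu_\omega$ for all $g\in\Gamma_\mu$ and all $n$. If one can find a single $g\in\Gamma_\mu$ with $\nu\big(P(\ker A(\omega)gh)\big)=0$, then $A(\omega)gh_n\nu\to A(\omega)gh\nu$, so $\nu_\omega=A(\omega)gh\nu$, and therefore $V(\omega)\subset\mathrm{Im}(A(\omega)gh)$, giving $\dim V(\omega)\leq\mathrm{rank}(h)=p$. Such a $g$ exists by irreducibility: were $A(\omega)gh=0$ for every $g\in\Gamma_\mu$, the $\Gamma_\mu$-invariant span of $\{g\cdot\mathrm{Im}(h):g\in\Gamma_\mu\}$ would lie in $\ker A(\omega)\subsetneq V$. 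Once you retain the $g$-dependence from Lemma~\ref{furst}, this argument is short; your remaining steps (independence of the image from the subsequence, non-annihilation via $\nu=\E(\nu_\omega)$, and the contracting case) are correct and match the paper.
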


\begin{theo} Suppose that $\int{\log ||g|| d\mu (g)}< \infty $. Under the same assumptions as in the previous theorem, $\lambda_1>\lambda_2$. \label{separation}\end{theo}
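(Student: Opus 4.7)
Plan: The theorem is the classical Guivarc'h--Raugi separation-of-Lyapunov-exponents result, adapted from $\mathbb R$ to an arbitrary local field. As the author indicates, the archimedean proof (Bougerol--Lacroix, Chapter III, pp.~64--65) goes through essentially verbatim once the measure-theoretic foundations (Lemma \ref{furst} and Theorem \ref{normalise}) are in place, since it relies only on general ergodic theorems and on the existence and regularity of the invariant measure $\nu$. Note that the hypotheses of Theorem \ref{normalise} include both strong irreducibility and (for the ``contracting'' conclusion) contraction of $\Gamma_\mu$; without the latter the result is false (e.g.\ $SO_3(\mathbb R)$ is strongly irreducible on $\mathbb R^3$ but all exponents vanish), so ``same assumptions'' must include both.

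I would start by invoking the conclusion of Theorem \ref{normalise} in the contracting case: a unique $\mu$-invariant probability $\nu$ on $P(V)$ and the a.s.\ convergence $M_n[x] \to Z(\omega) \sim \nu$ for every $[x]$. Uniqueness of $\nu$ implies ergodicity of $\mathbb P \otimes \nu$ under the skew product $T(\omega,[x]) = (\sigma\omega, X_1(\omega)[x])$, and applying Birkhoff to the cocycle $(\omega,[x]) \mapsto \log(\|X_1(\omega) x\|/\|x\|)$, which is in $L^1(\mathbb P \otimes \nu)$ by the log-moment assumption combined with the ``no atoms on hyperplanes'' regularity of $\nu$ (a standard byproduct of strong irreducibility), yields Furstenberg's formula
$$\lambda_1 \;=\; \iint \log \frac{\|gx\|}{\|x\|} \, d\mu(g)\,d\nu([x]).$$

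Next, I would transfer the same circle of ideas to the exterior-square action on $\Lambda^2 V$: push $\nu \otimes \nu$ forward, off the diagonal, via $([x],[y]) \mapsto [x \wedge y]$ to obtain a $\mu$-invariant probability $\hat\nu$ on $P(\Lambda^2 V)$, and use the identity $\|g x \wedge g y\| = \|gx\|\|gy\|\,\delta(g[x],g[y])$ to relate the top Lyapunov exponent of the $\Lambda^2$-action (which equals $\lambda_1 + \lambda_2$) to $2\lambda_1$ plus the triple integral
$$J \;=\; \iiint \log \frac{\delta(g[x],g[y])}{\delta([x],[y])} \, d\mu(g)\,d\nu([x])\,d\nu([y]).$$
After the rearrangement coming from Furstenberg's formula applied to both $V$ and $\Lambda^2 V$, the theorem reduces to showing that $J$ is \emph{strictly} negative.

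The main obstacle is precisely this strict negativity. The heuristic is clear: by Theorem \ref{normalise}, $\delta(M_n[x], M_n[y]) \to 0$ a.s.\ for $\nu^{\otimes 2}$-generic $([x],[y])$, so the $\mu$-dynamics contracts projective distances on average. Turning this asymptotic statement into a strictly negative integral requires a careful ergodic-theoretic argument (the one in Bougerol--Lacroix): a Jensen-type inequality applied to the $T$-invariant cocycle $\log \delta$ on $\Omega \times (P(V)\times P(V)\setminus \mathrm{diag})$, combined with the integrability of $\log \delta$ under $\mu \otimes \nu \otimes \nu$ (which again rests on the no-hyperplane-atoms regularity of $\nu$), rules out the degenerate case $J = 0$ and forces $J<0$. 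All the ingredients --- Birkhoff's and Kingman's theorems, regularity of $\nu$, cocycle estimates --- remain available over arbitrary local fields thanks to the results of this section, so the classical proof transfers without modification.
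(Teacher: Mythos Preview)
The paper gives no proof of this statement: immediately after stating it, the author explicitly defers to Bougerol--Lacroix (pages 64--65), asserting that the archimedean argument carries over verbatim to arbitrary local fields once Lemma~\ref{furst} and Theorem~\ref{normalise} are in place. Your outline is a faithful sketch of that classical Guivarc'h--Raugi argument and is therefore entirely aligned with the paper's treatment.
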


\textbf{Proof of Theorem \ref{normalise}:}
A general lemma of Furstenberg (see for example \cite{bougerol}, Proposition 2.3 page 49) says that every $\mu$-invariant probability measure on $P(V)$ is proper, i.e. does not charge any projective hyperplane. Now, fix a $\mu$-invariant probability measure on $P(V)$ and an event $\omega\in \Omega$. Choose $\{r_n; n\geq 1\}$ in $k$ such that $||r_n M_n(\omega)||=1$ and a limit point $A(\omega)$ along a subsequence $(n_k)_{k\in \N}$  of $\{r_nM_n; n \geq 1\}$. Hence for every $x\in V$ such that $x\not\in Ker(A(\omega))$, $M_{n_k}(\omega).[x]$ converges to $A(\omega).[x]$. Since $\nu$ is proper, we deduce that $M_{n_k}(\omega) g \nu$ converges weakly towards $A(\omega)g \nu$ for every $g\in SL_d(k)$. On the other hand,  by Lemma \ref{furst}, there exists a random probability measure $\nu_\omega$ on $P(V)$   (whose expectation is $\nu$) such that $M_n(\omega) g \nu$ converges weakly towards $\nu_\omega$ for $\lambda$-almost
every $g\in SL_d(k)$, where $\lambda$ is a probability measure supported on $\Gamma_\mu \cup \{I_d\}$.
By uniqueness of convergence in weak topology, $A(\omega) g \nu=\nu_\omega$ for $\lambda$-almost every $g\in SL_d(k)$. But $\{g\in SL_d(k); A(\omega)g \nu = \nu_\omega\}$ is closed and the support of $\lambda$ is $\Gamma_\mu \cup \{I_d\}$, hence
\begin{equation}A(\omega) g \nu=\nu_\omega\;\;\;\;\forall g\in \Gamma_\mu \cup \{I_d\}\label{klop}\end{equation}
Let $V(\omega)$ be the linear span of $\{x\in V; [x]\in Supp(\nu_\omega)\}$. (\ref{klop}) applied to $g=I_d$ shows that the image of $A(\omega)$ is exactly $V(\omega)$. Therefore, the image of $A(\omega)$ is indeed independent from the subsequence taken. It is left to show that its dimension is exactly the index $p$ of $\Gamma_\mu$. By definition of the index, the rank of $A(\omega)$ is at least $p$. The index of $\Gamma_\mu$ being $p$, there exists $\{h_n; n\geq 1\}$ in $\Gamma_\mu$, $\{s_n;n\geq 1\}$ in $k$ such that $s_nh_n$ converges to an endomorphism $h$ of rank $p$. (\ref{klop}) shows that:
$$A(\omega) gh_n \nu=\nu_\omega \;\;\;\;\forall g\in \Gamma_\mu; \; n\geq 1$$
We claim that one can find $g\in \Gamma_\mu$ such that: $$A(\omega)gh\nu=\nu_\omega$$ This would end the proof because the dimension of $V(\omega)$ would be less or equal to the range of $h$, which is $p$. It suffices to show that there exists $g\in \Gamma_\mu$ such that $\nu\{x\in V; A(\omega)ghx=0\}=0$, because in this case for $\nu$-almost every $[x]\in P(V)$, $A(\omega)gh_n[x]$ would converge to $A(\omega)gh[x]$ so that $\nu_\omega=A(\omega)gh_n \nu$ would converge to $A(\omega)gh\nu$. If on the contrary, for every $g\in \Gamma_\mu$, $\nu\{x\in V; A(\omega)ghx=0\}>0$, then by the aforementioned property of $\nu$, $$A(\omega)ghx = 0\;\;\;\forall x\in V$$
Hence $\{gx; g\in \Gamma_\mu; x\in Im(h)\}$ would be contained in the kernel of $A(\omega)$. Since it is $\Gamma_\mu$-invariant, this contradicts the irreducibility assumption on $\Gamma_\mu$. We have then proved that $V(\omega)$ is a $p$-dimensional subspace of $V$ and is the image of every limit point of $r_nM_n$, where $||r_nM_n||=1$.  By Lemma \ref{furst}, $\nu=\int{\nu_\omega\;d\p(\omega)}$. Therefore,

   \begin{eqnarray}\p (f|_{V(\omega)}\equiv 0)&= &\p \left( f(y)=0\; \forall y\in Supp(\nu_\omega) \right)\nonumber\\
&\leq& \E \left( \int {\mathds{1}_{f(y)=0} \;d_{\nu_\omega([y])}  }\right)\nonumber\\
&= &\nu \left(Ker(f) \right)\nonumber\end{eqnarray}
Since $\nu$ is proper, this is equal to zero. \\
Finally, if $\Gamma_\mu$ is contracting, then $p=1$ by definition and $[V(\omega)]$ is reduced to a point $Z(\omega)\in P(V)$. Since, by Lemma \ref{furst}, $\nu=\int{\delta_{Z(\omega)} \;d\p(\omega)}$, we deduce that the distribution of $Z$ is $\nu$ and hence $\nu$ is unique. \begin{flushright} $\Box$ \end{flushright}

\begin{corollaire}[Convergence in KAK]\label{essentiell} Suppose that $\Gamma_\mu$ acts strongly irreducibly on $V$. Then   the subspace $\left(k(M_n)e_1,...,k(M_n)e_p \right)$ converges a.s. to a random subspace $V(\omega)$ of dimension $p=\textrm{index}(\Gamma_\mu)$. Similarly, the same holds for the subspace $(U_n^{-1}.{e_1}^*,...,{U_n^{-1}}.{e_p}^*)$. Moreover, a.s. $\lim_{n\rightarrow \infty}{\frac{a_{p+1}(M_n)}{a_1(M_n)}} = 0$ and $Inf_{n} {\frac{a_p(M_n)}{a_1(M_n)}} >0$. The latter two assertions hold for $S_n$. \end{corollaire}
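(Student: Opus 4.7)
The plan is to exploit the Cartan decomposition $M_n = k(M_n)\,a(M_n)\,u(M_n)$ together with Theorem \ref{normalise}, taking the normalization $r_n := 1/a_1(M_n)$, so that $\|r_n M_n\| = 1$ (since $K$ acts by isometries and $|a_1(M_n)|$ is the largest modulus among the entries of $a(M_n)$). By compactness of $K$ and of the closed unit ball of $k$, along any subsequence one extracts a further subsequence $(n_k)$ such that $k(M_{n_k}) \to k_\infty$ and $u(M_{n_k}) \to u_\infty$ in $K$, and each quotient $a_i(M_{n_k})/a_1(M_{n_k}) \to \alpha_i \in k$ with $|\alpha_i|\le 1$. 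Then
$$r_{n_k} M_{n_k} \longrightarrow k_\infty \cdot \mathrm{diag}(1, \alpha_2, \ldots, \alpha_d) \cdot u_\infty.$$

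The key step is to identify what Theorem \ref{normalise} says about this limit. By that theorem, every limit point of $r_n M_n$ must have rank exactly $p=\mathrm{index}(\Gamma_\mu)$ and image $V(\omega)$. Since $k_\infty$ and $u_\infty$ are invertible, the rank of the limit equals the number of nonzero $\alpha_i$; combined with the Cartan ordering $1=|\alpha_1|\ge|\alpha_2|\ge\cdots\ge|\alpha_d|$, this forces $\alpha_1,\ldots,\alpha_p\ne 0$ and $\alpha_{p+1}=\cdots=\alpha_d=0$. Consequently the image of $k_\infty\cdot\mathrm{diag}(1,\alpha_2,\ldots,\alpha_p,0,\ldots,0)\cdot u_\infty$ is $k_\infty\cdot Span(e_1,\ldots,e_p)$, so $V(\omega)=k_\infty\cdot Span(e_1,\ldots,e_p)$, and in particular $k(M_{n_k})\cdot Span(e_1,\ldots,e_p)\longrightarrow V(\omega)$ in the Grassmannian of $p$-planes of $V$.

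Upgrading this subsequential convergence to a genuine limit is the only delicate point. The random subspace $V(\omega)$ produced by Theorem \ref{normalise} is intrinsic, independent of the subsequence and of the normalizing scalars, so every subsequence of $\{k(M_n)\cdot Span(e_1,\ldots,e_p)\}$ admits a further subsequence converging to the same $V(\omega)$; by compactness of the Grassmannian this forces the entire sequence to converge to $V(\omega)$. The same argument delivers $a_{p+1}(M_n)/a_1(M_n)\to 0$ (every subsequential limit is $0$) and, by contradiction, $\inf_n |a_p(M_n)/a_1(M_n)|>0$: if some subsequence tended to zero, a further extraction would yield $\alpha_p=0$, hence a limit of rank strictly less than $p$, contradicting Theorem \ref{normalise}.

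Finally, the dual statement about $(U_n^{-1}.e_i^*)_{i\le p}$ is obtained by running the same scheme on the contragredient action $g\cdot f=f\circ g^{-1}$ on $V^*$: the inversion of the Cartan decomposition exchanges the roles of $k(M_n)$ and $u(M_n)$ up to composition with the long permutation $w_0$ reversing the $A^+$ ordering, so the image of the corresponding normalized dual walk is read off precisely from $u(M_n)^{-1}\cdot Span(e_1^*,\ldots,e_p^*)$. The extension of the last two assertions to $S_n$ is automatic, since $S_n$ has the same law as $M_n$ and the statement of Theorem \ref{normalise} applies equally to the reversed walk. I expect the identification of the image of the limiting matrix and the uniqueness of $V(\omega)$ to be the only point requiring care, since that is exactly what allows the subsequential statements to upgrade to full convergence.
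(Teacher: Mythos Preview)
Your main argument---normalizing by $a_1(M_n)$, extracting convergent subsequences by compactness, reading off the rank and image from Theorem~\ref{normalise}, and upgrading subsequential limits to full limits via uniqueness of $V(\omega)$---is correct and is exactly the paper's proof.

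There is, however, a genuine gap in your last paragraph, concerning both the dual statement and the passage to $S_n$. The statement is about $U_n=u(S_n)$, not $u(M_n)$; your scheme applied to the contragredient of $M_n$ would at best yield information about $u(M_n)^{-1}$, and moreover $M_n^{-1}=X_n^{-1}\cdots X_1^{-1}$ is a \emph{reversed} walk, to which Theorem~\ref{normalise} does not apply as stated (its proof rests on the forward martingale structure of Lemma~\ref{furst}; for the reversed product, limit points share a common \emph{kernel}, not a common image). The same objection applies to your claim that ``Theorem~\ref{normalise} applies equally to the reversed walk'', and the remark that $S_n$ and $M_n$ have the same law is insufficient: equality of marginals does not transfer almost-sure limits or almost-sure infima of the \emph{sequence}, since the joint laws of $(M_n)_n$ and $(S_n)_n$ differ.

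The fix, which is the paper's, is to observe that $S_n^{-1}=X_1^{-1}\cdots X_n^{-1}$ \emph{is} a forward walk (with law $\mu^{-1}$), that $\Gamma_{\mu^{-1}}$ acts strongly irreducibly on $V^*$ with the same index $p$, and that the matrix of $S_n^{-1}$ on $V^*$ is $S_n^t=U_n^t A_n K_n^t$. Applying the already-proved first part to this forward walk gives convergence of $U_n^t\cdot\mathrm{Span}(e_1,\dots,e_p)$, i.e.\ of $U_n^{-1}\cdot\mathrm{Span}(e_1^*,\dots,e_p^*)$, and since $a_i(S_n^t)=a_i(S_n)$ it simultaneously delivers the two ratio assertions for $S_n$.
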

\begin{remarque} It is clear that we can replace $U_n^{-1}. e_1^*$,...,$U_n^{-1}.e_p^*$ with $U_n^{t}e_1$,...,$U_n^te_p$ where $U_n^t$ is the transpose of the matrix $U_n$. However, we prefer to work with the action on the dual vector space because it will give us more freedom later on. \label{dual}\end{remarque}
\begin{proof}
Let $a_1(M_n),...,a_d(M_n)$ be the diagonal  components of $a(M_n)$. Since $K$ acts by isometries on $V$, $|a_1(M_n)|=||M_n||$. Hence, for $p$=index ($\Gamma_\mu$),  Theorem \ref{normalise} gives  a $p$-dimensional (random) subspace $V(\omega)$ which is the range of every limit point of $\frac{M_n}{a_1(M_n)}$. Fix a realization $\omega$, we have:
 $$\frac{M_n (\omega)}{a_1(M_n(\omega))}= k(M_n (\omega)) \; diag\left(1,..., \frac{a_d(M_n(\omega))}{a_1(M_n(\omega))}\right) \;u(M_n(\omega))$$
Each component in this equation lies in a compact set. If $A(\omega)$, $K_\infty (\omega)$, $U_\infty (\omega)$,  $\alpha_2(\omega),...,\alpha_d (\omega)$ are limit points of $\frac{M_n}{a_1(M_n)}$, $k(M_n(\omega))$, $u(M_n(\omega))$, $\frac{a_2(n)}{a_1(n)}$,...,$\frac{a_d(n)}{a_1(n)}$, then $$A(\omega)=K_\infty (\omega) diag \left( 1,...,\alpha_d(\omega)\right) U_\infty(\omega)$$
Since $A(\omega)$ is almost surely of range $p$, almost surely, $\alpha_{p+1}(\omega)=...=\alpha_d(\omega)=0$ and $\alpha_2(\omega),...,\alpha_p(\omega)$ are non zero elements of $[0,1]$ when $k$ is archimedean and of $\Omega_k$ when $k$ is non archimedean; proving the last assertion of the corollary.\\  Since the image of $A(\omega)$ is $V(\omega)$,
$$V(\omega) \subset  Span \langle  K_\infty (\omega)e_1,..., K_\infty (\omega)e_p\rangle $$
By equality of dimension, we deduce that the two subspaces above are almost surely equal. As this holds for any convergent subsequence, we have the convergence a.s. of the subspace $\left(k(M_n) e_1,...,k(M_n) e_p \right)$ towards $V(\omega)$. \\
Now notice that $\Gamma_\mu$ acts strongly irreducibly on $V$ if and only if  $\Gamma_{\mu^{-1}}$ acts strongly irreducibly on $V^*$. Moreover, $\Gamma_\mu$ has the same index as $\Gamma_{\mu^{-1}}$ viewed as a subgroup of $SL(V^*)$ (it is just formed by the transposed matrices of $\Gamma_\mu$). Hence the same proof as above holds by looking at  $S_n^{-1}=X_1^{-1}...X_n^{-1}$ acting on $V^*$-  instead of $M_n=X_1...X_n$  acting on $V$.
\end{proof}
\begin{prop}\label{laprop}  If $\Gamma_\mu$ acts strongly irreducibly on $V$, then for any  sequence $\{x_n;n\geq 0\}$ in $V$ converging to a non zero vector:
\begin{equation}\textrm{a.s}\;\;\;\;\;\;\;\;inf_{n\in \N^*}{\frac{||S_n.x_n||}{||S_n||}} >0\label{inter}\end{equation}
\end{prop}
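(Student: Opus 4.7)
The plan is to combine the two faces of Corollary \ref{essentiell}: the uniform lower bound on the ratio of the first $p$ singular values of $S_n$, and the almost sure convergence of the $p$-dimensional dual subspace generated by $U_n^{-1}.e_1^*,\ldots, U_n^{-1}.e_p^*$. First I would use the Cartan decomposition $S_n=K_nA_nU_n$. Since $K_n$ acts by isometries and $|a_1(S_n)|=||S_n||$, one has
$$\frac{||S_nx_n||}{||S_n||}=\frac{||A_nU_nx_n||}{|a_1(S_n)|}.$$
Setting $p=\textrm{index}(\Gamma_\mu)$, I would then keep only the first $p$ coordinates of $A_nU_nx_n$ and use $|a_i(S_n)|\geq |a_p(S_n)|$ for $i\leq p$ to extract the key lower bound
$$||A_nU_nx_n||\geq |a_p(S_n)|\,d(x_n,L_n),$$
where $L_n:=U_n^{-1}\,\textrm{Span}\langle e_{p+1},\ldots,e_d\rangle\subset V$. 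Note that $L_n$ is precisely the annihilator of the dual subspace $W_n:=\textrm{Span}\langle U_n^{-1}.e_1^*,\ldots,U_n^{-1}.e_p^*\rangle\subset V^*$.

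Next I would control both factors on the right almost surely using Corollary \ref{essentiell}. On one hand, $\inf_n |a_p(S_n)|/|a_1(S_n)|\geq c(\omega)>0$ a.s. is a direct consequence of the corollary (``the latter two assertions hold for $S_n$''). On the other hand, the corollary also gives $W_n\to V^*(\omega)$ a.s. in the Grassmannian of $p$-planes in $V^*$; passing to annihilators being a continuous (even algebraic) operation between Grassmannians, $L_n\to L_\infty(\omega)$ in the Grassmannian of $(d-p)$-planes in $V$, where $L_\infty$ is the annihilator of $V^*(\omega)$. Setting $x:=\lim_n x_n\neq 0$, the last assertion of Theorem \ref{normalise} applied to the reversed dual walk (exactly as used in the proof of Corollary \ref{essentiell}) tells us that for every fixed nonzero $\psi\in (V^*)^*=V$, almost surely no $f\in V^*(\omega)$ vanishes identically at $\psi$; in particular $\p(x\in L_\infty)=0$. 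Hence $d(x,L_\infty)>0$ a.s., and by continuity $d(x_n,L_n)\to d(x,L_\infty)>0$ a.s.

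Combining these bounds will yield $\liminf_n ||S_nx_n||/||S_n||\geq c(\omega)\,d(x,L_\infty)>0$ a.s. The upgrade from $\liminf$ to $\inf$ is a triviality I would dispatch in one line: $S_n\in SL_d(k)$ is invertible so $||S_nx_n||>0$ for every $n$ with $x_n\neq 0$, and (as implicit in the statement) $x_n\neq 0$ for all $n$, so the infimum over any finite initial segment is strictly positive almost surely.

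The step I expect to be the main technical obstacle is the unified proof of the key inequality $||A_nU_nx_n||\geq |a_p(S_n)|\,d(x_n,L_n)$ across archimedean and non-archimedean local fields, since the proof proceeds via orthogonal projection onto $\textrm{Span}\langle e_1,\ldots,e_p\rangle$ in the Euclidean case but via the ultrametric identity $d(y,\textrm{Span}\langle e_{p+1},\ldots,e_d\rangle)=\max_{i\leq p}|y_i|$ in the max-norm case, and in both settings one must invoke that $U_n$ is an isometry for the relevant norm. Once this inequality is in hand, the rest is a direct combination of Corollary \ref{essentiell} (which supplies both the contraction rate of the $A$-part and the limit subspace of the $U_n$-part) with the genericity clause at the end of Theorem \ref{normalise}.
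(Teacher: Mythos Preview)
Your proposal is correct and follows essentially the same route as the paper: the paper also uses the Cartan decomposition of $S_n$, splits off the factor $|a_p(n)|/|a_1(n)|$ (controlled by Corollary \ref{essentiell}), and then shows that the remaining factor---your $d(x_n,L_n)$, which the paper writes explicitly as $\sum_{i\leq p}|\langle x_n,U_n^*e_i\rangle|^2$ in the archimedean case and $\max_{i\leq p}|U_n^{-1}.e_i^*(x_n)|$ in the non-archimedean case---stays bounded away from zero via the convergence of the $U_n$-subspace and the genericity clause of Theorem \ref{normalise}. One small slip: what Theorem \ref{normalise} gives is that a.s.\ \emph{not every} $f\in V^*(\omega)$ vanishes at $x$ (equivalently $x\notin L_\infty$), not that \emph{no} such $f$ vanishes at $x$; your conclusion $\p(x\in L_\infty)=0$ is nonetheless the correct one.
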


\begin{proof}
Let $S_n=K_nA_nU_n$ be a KAK decomposition and $(x_n)_{n\in \N}$  a sequence in $V$ converging to some $x\neq 0$.\\

\textbf{When $k$ is archimedean}:
To keep the exposition as simple as possible,  we will work here with the transpose matrices instead of working on the dual vector space: for $g\in SL_d(k)$, $g^*$ will denote its transpose (resp. conjugate transpose) matrix when $k=\R$ (resp. $k=\C$).
\begin{eqnarray}{{\frac{||S_n.x_n||^2}{||S_n||^2}}}= {\frac{||A_n U_n x_n||^2}{||A_n||^2}}= \frac{\sum_{i=1}^d{a_i(n)^2|<U_nx_n,e_i>|^2}}{a_1(n)^2}\geq  \; \left(\frac{a_p(n)}{a_1(n)}\right)^2\;{\sum_{i=1}^p {|<x_n,U_n^*e_i>|^2}} \nonumber\end{eqnarray}
%\begin{eqnarray}{{\frac{||S_n.x_n||^2}{||S_n||^2}}}= {\frac{||A_n U_n x_n||^2}{||A_n||^2}}= \frac{\sum_{i=1}^d{a_i(n)^2|<U_nx_n,e_i>|^2}}{a_1(n)^2}\geq  \; \left(\frac{a_p(n)}{a_1(n)}\right)^2\;{\sum_{i=1}^p {|U_n^{-1}.e_i^*(x_n)|^2}} \nonumber\end{eqnarray}
 By Corollary \ref{essentiell}, a.s. $inf_{n\in \N^*}\;\frac{a_p(n)}{a_1(n)} >0$. \\
 We claim  that a.s.
 \begin{equation}Inf_{n\in \N^*}\;\sum_{i=1}^p {|<x_n,U_n^*e_i>|^2} \;>\;0\label{sakenalbii}\end{equation}

Indeed, by Corollary \ref{essentiell}, the subspace $(U_n^*e_1,...,U_n^*e_d)$ converges a.s. to a subspace $V(\omega)$. Let $\Pi_{V(\omega)}$ be the orthogonal projection on $V(\omega)$.
Hence $\sum_{i=1}^p {|<U_n^*e_i,x_n>|^2} \overset{\textrm{a.s.}}{{\underset{n \rightarrow \infty}{\longrightarrow}}} ||\Pi_{V(\omega)}(x)||^2 $. By Theorem \ref{normalise}:
$\p \left( \Pi_{V(\omega)} (x)=0 \right)=0$. The claim is proved.

\textbf{When $k$ is non archimedean},

$${\frac{||S_n x_n ||}{||S_n||}}\;= \frac{1}{|a_1(n)|}\;Max \{ |a_i(n)| |U_n^{-1}.e_i^*(x_n)|\;; i=1,...,d\} \;
 \geq \frac{|a_p(n)|}{|a_1(n)|} \; Max \{ |U_n^{-1}.e_i^* (x_n)|;\; i=1,...,p \} $$
Again, by Corollary \ref{essentiell}, $inf_{n\in \N^*}{\frac{|a_p(n)|}{|a_1(n)|}} > 0$ and it suffices to show that, a.s,
  \begin{equation}Inf_{n\in \N^*}\; Max \{ |U_n^{-1}.e_i^* (x_n)|;\; i=1,...,p \}> 0 \label{kkk}\end{equation}
 Indeed, let $V(\omega)$ be the limiting subspace  of $(U_n^{-1}.e_1^*,...,U_n^{-1}.e_p^*)$ and $U_\infty$  a limit point of $U_n$.  $Max \{ |U_n^{-1}.e_i^*  (x_n)|;\; i=1,...,p \}$ converges then a.s.,  via a subsequence,  to $Max \{|(U_\infty )^{-1}. e_i^*) (x)|;\;i=1,...,p\}$. The following claim shows that this is in fact independent from the subsequence  and equals\; $Sup\{\frac{|f(x)|}{||f||};\; f\in V(\omega)\} $,  which is a.s. positive because by Theorem \ref{normalise}, $\p \left(f(x)=0\; \forall f\in V(\omega) \right)=0$. \\
\;\;\textbf{Claim }: Let $V$ be a vector space of dimension $d\geq 2$ with basis $(e_1,...,e_d)$, $E$ a subspace of the dual $V^*$ of dimension $p<d$, $B=(f_1,...,f_p)$ a basis of the dual $E$. We suppose that $B$ is in the orbit of $(e_1^*,...,e_p^*)$ under the natural action of $K=SL_d(\Omega_k)$ on $(V^*)^p$. In other words, assume that there exists $g\in K$ such that $f_i=ge_i^* $ for every $i=1,...,p$. Then for every non zero vector $x\in V$ $$max\{ |f_i(x)|; i=1,...,p\} = Sup\{\frac{|f(x)|}{||f||};\; f\in E^*\}$$

\textbf{Proof of the claim}: let $f\in E^*$; $f=\sum_{i=1}^p {\lambda_i f_i}$, $\lambda_i \in k$. Since $|.|$ is ultrametric, \\ $|f(x)|\leq Max\{|\lambda_i|,i=1,...,p\} Max\{|f_i(x)|; i=1,...,p\}$. But, $f_i=ge_i^*$ with $g\in K$ which implies that $g^{-1}f = \sum_{i=1}^p {\lambda_i e_i^*}$ so that $||f||= ||g^{-1}f||=Max\{|\lambda_i|; i=1,...,p\}$. Hence $|f(x)|\leq ||f|| \; Max\{|f_i(x)|; i=1,...,p\}$.
 \end{proof}

 \begin{corollaire}Suppose that $\int{\log (||g||) d\mu(g)}< \infty$.  For any sequence $\{x_n;n\geq 0\}$ converging to a non zero vector $x$ of $V$; $$\frac{1}{n} \log ||S_n x_n|| \underset{n \rightarrow \infty}{\overset{\textrm{a.s.}}{\longrightarrow}} \lambda_1\;\;\;\;\;;\;\;\;\;\;
 Sup_{x \in V\setminus \{0\}}\;\frac{1}{n} \E (\log \frac{||S_n x||}{||x||})\underset{n \rightarrow \infty}{\longrightarrow} \lambda_1$$\label{coro}\end{corollaire}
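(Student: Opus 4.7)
\emph{Plan of proof.} The idea is to reduce both claims to the already-established convergence $\frac{1}{n}\log ||S_n|| \to \lambda_1$ (a.s.\ and in $L^1$) by comparing $||S_n x_n||$ with $||S_n||$ via Proposition \ref{laprop}.

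For the first assertion, I would sandwich
$$C(\omega)\, ||S_n|| \;\leq\; ||S_n x_n|| \;\leq\; ||S_n|| \,||x_n||,$$
where the right inequality is the operator-norm definition and the left inequality is the content of Proposition \ref{laprop} (strong irreducibility being in force from the ambient hypothesis of the section). Taking logarithms, dividing by $n$, and letting $n\to\infty$, both $\frac{1}{n}\log C(\omega)$ and $\frac{1}{n}\log ||x_n||$ vanish (the latter because $x_n\to x\neq 0$, so $\log ||x_n||$ is bounded), while $\frac{1}{n}\log ||S_n|| \to \lambda_1$ a.s.\ by Definition \ref{lyaponuv}. A squeeze argument gives the conclusion.

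For the second assertion, the upper bound is immediate: $\log(||S_n x||/||x||)\leq \log ||S_n||$, so $\sup_{x\neq 0}\frac{1}{n}\E(\log(||S_n x||/||x||))\leq \frac{1}{n}\E(\log ||S_n||)\to \lambda_1$ (this last limit being the $L^1$ part of the subadditive lemma). For the matching lower bound, I would pick a single non-zero $x$, apply the first assertion to the constant sequence $x_n\equiv x$ to get $\frac{1}{n}\log(||S_n x||/||x||)\to \lambda_1$ almost surely, and then upgrade to convergence of expectations through uniform integrability. The key ingredient here is the $SL_d(k)$-bound $||S_n^{-1}||\leq C_d\, ||S_n||^{d-1}$ (a direct consequence of Cramer's rule together with $\det S_n=1$), which yields
$$\left|\log\frac{||S_n x||}{||x||}\right| \;\leq\; (d-1)\log ||S_n|| + \log C_d.$$
Since $\log ||S_n||\geq 0$ and $\E(\log ||X_1||)<\infty$, Kingman's subadditive ergodic theorem provides $L^1$-convergence of the right-hand majorant to $(d-1)\lambda_1+\log C_d$, which in turn forces uniform integrability of the family $\{\frac{1}{n}\log(||S_n x||/||x||)\}_{n\geq 1}$. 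We conclude that $\frac{1}{n}\E(\log(||S_n x||/||x||))\to \lambda_1$ for this fixed $x$, hence $\liminf_n \sup_x \frac{1}{n}\E(\log(||S_n x||/||x||))\geq \lambda_1$, matching the upper bound.

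The only genuine subtlety is the sup-versus-expectation interchange in the second statement: one cannot pick an optimal $x=x(\omega,n)$ before taking the expectation. The plan side-steps this by committing to a single non-zero $x$ chosen before $\omega$; the first assertion, applied to that fixed $x$, already saturates the upper bound $\lambda_1$ in the limit, so no realization-dependent maximization is needed.
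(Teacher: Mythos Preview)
Your proof is correct and follows essentially the same strategy as the paper: Proposition~\ref{laprop} gives the first assertion by a sandwich argument, and the second is obtained via uniform integrability of a dominating sequence. The only differences are cosmetic: the paper dominates $\frac{1}{n}|\log\|S_n x_n\||$ by the i.i.d.\ average $\frac{1}{n}\sum_{i=1}^n \log\|X_i\|$ (invoking the strong law of large numbers for $L^1$-convergence) rather than by $\frac{d-1}{n}\log\|S_n\|$ via Cramer's rule (invoking Kingman), and handles the supremum through a compactness argument on $P(V)$ along convergent sequences $x_n$ rather than by your separate upper and lower bounds with a fixed $x$.
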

 \begin{proof} The convergence on the left hand side is an immediate application of last proposition and the definition of the Lyapunov exponent. For the right hand side, by compactness of $P(V)$, it suffices to show that for any sequence $\{x_n;n\geq 0\}$ in the unit sphere converging to a non zero vector $x$ of $V$:
$ \frac{1}{n} \E (\log \;||S_n x_n||)\underset{n \rightarrow \infty}{\longrightarrow} \lambda_1$. By independence and equidistribution of the increments and by the inequality $||g||\geq 1$ true for every $g\in SL_d(k)$ we get: $\frac{1}{n} |\;\log ||S_n x_n||\;|\leq \frac{1}{n}\sum_{i=1}^n \log ||X_i||$. By the moment assumption on $\mu$, we can apply the strong law of large numbers which shows that the right hand side of the latter quantity converges in $L^1$  and is  consequently uniformly integrable. A fortiori, $\{\frac{1}{n} \log ||S_n x_n||; n\geq 0\}$ is uniformly integrable. Since it converges in probability (by the law of large numbers), we deduce that it converges in $L^1$.
 \end{proof}

\subsubsection{A cocycle lemma  - Application 1: ``weak'' large deviations}
\begin{defi} Let $G$ be a semigroup acting on a space $X$. A map $G\times X \overset{s}{\longrightarrow} \R$ is said to be an additive cocycle if  \; $s (g_1g_2,x)=s(g_1,g_2.x)+s(g_2,x)$\; for any $g_1,g_2\in G$, $x\in B$. \end{defi}
\begin{lemme}[Cocycle lemma] \label{cocycle}
Let $G$ be a semigroup acting on a space $X$, $s$ a cocycle on $G\times X$, $\mu$ a probability measure on $G$ satisfying for $r(g)=sup_{x\in X}{|s(g,x)|}$: there exists  $\tau>0$ such that  \begin{equation}\label{condition}\E\left(exp(\tau r(X_1))\right)< \infty\end{equation}
$\bullet$ If $$\lim_{n\rightarrow\infty} \;\frac{1}{n}Sup_{x\in X}\;{\E(s(S_n,x))}< 0,$$
then there exist $\lambda>0$, $\epsilon_0>0$, $n_0\in \N^*$ such that for every  $0<\epsilon< \epsilon_0$ and $n>n_0$:
$$Sup_{x\in X}\;\E\big[\;exp[\;\epsilon\left(s(S_n,x) \right)\;]\; \big] \leq (1-\epsilon \lambda)^n$$
$\bullet$ If $$\lim_{n\rightarrow\infty} \;\frac{1}{n}Sup_{x\in X}\;{\E(s(S_n,x))}= 0,$$
then for all $\gamma>0$, there exist $\epsilon(\gamma)>0$, $n(\gamma)\in \N^*$ such that for every $0<\epsilon<\epsilon(\gamma)$ and  $n>n(\gamma)$,
$$Sup_{x\in X}\;\E\big[\;exp[\;\epsilon\left(s(S_n,x) \right)\;]\; \big] \leq (1+\epsilon \gamma)^n. $$
\end{lemme}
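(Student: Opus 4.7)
I would work with $u_n(x):=\E[\exp(\epsilon\,s(S_n,x))]$ and $v_n(\epsilon):=\sup_{x\in X}u_n(x)$, and build the proof on two ingredients: a submultiplicativity estimate and a fixed-index Taylor control. Writing $S_{n+m}=T_m S_n$ with $T_m:=X_{n+m}\cdots X_{n+1}$ independent of $S_n$ and distributed as $S_m$, the cocycle relation yields $s(S_{n+m},x)=s(T_m,S_n.x)+s(S_n,x)$. Conditioning on $\mathcal F_n=\sigma(X_1,\dots,X_n)$ gives $\E[\exp(\epsilon s(T_m,S_n.x))\mid\mathcal F_n]=u_m(S_n.x)\le v_m(\epsilon)$, so that $v_{n+m}(\epsilon)\le v_n(\epsilon)\,v_m(\epsilon)$. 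By Fekete's lemma it is therefore enough to control $v_{n_0}(\epsilon)$ at a single well-chosen $n_0$ and then propagate.

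\textbf{Fixed-$n$ Taylor estimate.} Applying the pointwise bound $e^y\le 1+y+\tfrac{y^2}{2}e^{|y|}$ with $y=\epsilon\,s(S_n,x)$ gives
\[
u_n(x)\le 1+\epsilon\,\E[s(S_n,x)]+\tfrac{\epsilon^2}{2}\,\E\!\left[s(S_n,x)^2\,e^{\epsilon|s(S_n,x)|}\right].
\]
Iterating the cocycle shows $|s(S_n,x)|\le\sum_{i=1}^{n}r(X_i)$, so the remainder is majorized, uniformly in $x$, by $\E\!\left[(\sum_i r(X_i))^2\prod_i e^{\epsilon r(X_i)}\right]$. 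Expanding the square and using the independence of the $X_i$, this expectation is a sum of products of the quantities $\E[e^{\epsilon r(X_1)}]$, $\E[r(X_1)e^{\epsilon r(X_1)}]$ and $\E[r(X_1)^2 e^{\epsilon r(X_1)}]$, all finite for every $\epsilon<\tau$ by the exponential-moment hypothesis (\ref{condition}), and of combinatorial order $O(n^2)$ as $\epsilon\to 0$.

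\textbf{Closing the two cases.} In case 1, set $\lambda_0:=-\lim_n \tfrac{1}{n}\sup_x\E[s(S_n,x)]>0$ (the limit exists since the same conditioning argument shows $n\mapsto\sup_x\E[s(S_n,x)]$ is subadditive). Fix $n_0$ large enough that $\sup_x\E[s(S_{n_0},x)]\le -n_0\lambda_0/2$, and then pick $\epsilon_0=\epsilon_0(n_0)$ small enough for the Taylor remainder at $n=n_0$ to be bounded by half of the linear-in-$\epsilon$ main term. This yields $v_{n_0}(\epsilon)\le 1-\epsilon n_0\lambda_0/4\le(1-\epsilon\lambda_0/4)^{n_0}$, the last step being the elementary $(1-t)^{n_0}\ge 1-n_0 t$. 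Submultiplicativity, combined with the crude bound $v_1(\epsilon)\le 1+O(\epsilon)$ needed to absorb the boundary residue when writing $n=kn_0+r$ with $0\le r<n_0$, then upgrades this to $v_n(\epsilon)\le(1-\epsilon\lambda)^n$ for any fixed $\lambda<\lambda_0/4$ and all $n$ exceeding a threshold that depends only on $\lambda$ and $n_0$. Case 2 is entirely analogous: for any $\gamma>0$, choose $n_0$ with $\sup_x\E[s(S_{n_0},x)]\le n_0\gamma/4$, replicate the Taylor estimate, and use the companion inequality $(1+t)^{n_0}\ge 1+n_0 t$.

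\textbf{Main obstacle.} The delicate point is the coupling between $\epsilon$ and $n$ in the Taylor remainder, which scales like $\epsilon^2 n^2$: one cannot let $n\to\infty$ at fixed $\epsilon$. The resolution is to fix $n_0$ first (from the sign hypothesis on $\tfrac{1}{n}\sup_x\E[s(S_n,x)]$) and only then choose $\epsilon$ small enough in terms of $n_0$; submultiplicativity does the rest, at the cost of a harmless loss of rate (from $\lambda_0/4$ to any $\lambda<\lambda_0/4$).
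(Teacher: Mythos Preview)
Your proof is correct and follows essentially the same route as the paper: establish submultiplicativity of $v_n(\epsilon)$ via the cocycle identity, apply the Taylor bound $e^y\le 1+y+\tfrac{y^2}{2}e^{|y|}$ at a fixed index $n_0$ chosen from the sign hypothesis, then pick $\epsilon$ small in terms of $n_0$ and propagate. The only cosmetic differences are that the paper controls the remainder more directly via $\E[e^{\tau r(S_p)}]\le C^p$ (rather than expanding $(\sum_i r(X_i))^2$) and phrases the propagation as $\limsup_n \tfrac{1}{n}\log Q_n\le \tfrac{1}{p}\log Q_p$ instead of writing $n=kn_0+r$, which spares your boundary-residue bookkeeping.
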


\begin{remarque} The limit $\lim_{n\rightarrow\infty} \;\frac{1}{n}Sup_{x\in X}\;{\E(s(S_n,x))}$ always exists by sub-additivity \end{remarque}

\begin{proof} Let  $\epsilon>0$ and $Q_n=Sup_{x\in X}\;\E\Big[exp[\epsilon\left(s(S_n,x) \right)]\Big]$.
$Q_n$ being sub-multiplicative, for every $p$,
$$\limsup_{n \rightarrow \infty} \frac{1}{n} \log\; Q_n \leq \frac{1}{p} \log\;Q_p$$
Using the inequality $$exp(x)\leq 1+x+\frac{x^2}{2}exp(|x|)\;\;;x \in \R$$ we get for $\tau'=\frac{\tau}{3}$, $0\leq\epsilon\leq \tau'$,
 $$\limsup_{n \rightarrow \infty} \frac{1}{n} \log\; Q_n \leq \frac{1}{p} \log \Big( 1+ \epsilon \underset{a_p}{\underbrace{Sup_{x\in X}{\E(s(S_p,x))}}}\; + \frac{\epsilon^2}{2\tau'} \E \big(exp\left(\tau r(S_p) \right)\big)\Big)$$
Let $C=\E\big(exp\left(\tau(r(X_1))\right) \big)< \infty$. The cocycle property implies that $r(g_1g_2)\leq r(g_1)+ r(g_2)$ for every $g_1,g_2\in G$, whence $\E \big( exp\left(\tau(r(S_p))\right) \big)\leq C^p$. Hence, for every integer $p$,
\begin{equation}\limsup_{n \rightarrow \infty} \frac{1}{n} \log\; Q_n \leq \frac{1}{p} \log\; \left( 1+ \epsilon a_p\; + \frac{\epsilon^2}{2\tau'} C^p \right)\label{estefan}\end{equation}

The following inequality being true for every $x\in [-1;\infty[$:
$$(1+x)^{\frac{1}{p}}\leq 1+\frac{x}{p}$$
(\ref{estefan}) becomes: for every integer $p$,
\begin{equation}\limsup_{n \rightarrow \infty} \frac{1}{n} \log\; Q_n \leq  \log\; ( 1+ \epsilon \frac{a_p}{p} + \frac{\epsilon^2}{2\tau'} \frac{C^p}{p} )\label{gemma}\end{equation}

$\bullet$ Suppose first that $\frac{a_p}{p}$ converges to $\lambda'<0$  as $p$ goes to infinity. The quantity $a_p$ being sub-additive,  $\frac{a_p}{p}$ converges to $inf_p\; \frac{a_p}{p}$, hence $inf_p\; \frac{a_p}{p}=\gamma'< 0$. Then,    for some $p_0$, $a_{p_0}<0$. Put $\lambda = - \frac{ a_{p_0}}{2p_0}\;>0$. Apply  (\ref{gemma}) with $p=p_0$ and choose $\epsilon>0$ small enough such that: $\frac{ a_{p_0}}{p_0}\epsilon  + \epsilon^2 \frac{C^{p_0}}{2\tau' p_0} \leq - \lambda \epsilon$ \;$\Longleftrightarrow$ \;$0<\epsilon \leq \frac{- \tau' a_{p_0}}{C^{p_0}}$.\\

$\bullet$ Suppose  that $\frac{a_p}{p}$ converges to zero  as $p$ goes to infinity. \\
Fix $\gamma>0$.
%$$\limsup_{n \rightarrow \infty} \frac{1}{n} \log\; Q_n \leq \log\; \left( 1+ \epsilon \frac{a_p}{p}\; + \epsilon^2 \frac{C^p}{p} \right)$$
Since $\lim {\frac{a_p}{p}}= 0$,\; for $p\geq p(\gamma)$ large enough,\; $\frac{a_p}{p}\leq \frac{\gamma}{2}$.\; Fix such $p$. For $\epsilon\leq \epsilon(\gamma)$ small enough, $\epsilon^2 \frac{C^p}{2\tau' p} \leq \epsilon \frac{\gamma}{2}$. It suffices now to apply (\ref{gemma}).
\end{proof}

\paragraph{Application1: ``Weak large deviations''}
In the real and complex cases, Le Page \cite{Page} proved a large deviation inequality for the quantities $\frac{1}{n}\log ||S_n||$ and $\frac{1}{n}\log ||S_n x||$, for any non zero vector $x$ of $V$.  By Proposition \ref{coro} these quantities converge towards the first Lyapunov exponent $\lambda_1$. More precisely, for every $\epsilon>0$,  there exist $\rho=\rho(\epsilon)\in ]0,1[$ and $n_0=n_0(\epsilon)$ such that for $n\geq n_0$, \begin{equation}\label{strong}\p \left(\big|\frac{1}{n}\log ||S_n|| - \lambda_1 \big|\geq \epsilon \right)\leq \rho^n \;\;;\;\;\p \left(\big|\frac{1}{n}\log ||S_n x|| - \lambda_1 \big|\geq \epsilon \right)\leq \rho^n\end{equation}
In particular, for some new  $\rho=\rho(\epsilon)\in ]0,1[$,  \begin{equation} \p \left(\frac{||S_n||}{||S_n x||} \geq exp(n\epsilon) \right)\leq \rho^n\label{weak}\end{equation}
This bound will be important for us later. Verifying Le  Page proof when $k$ is ultrametric is straightforward although somewhat lengthy. Alternatively  we will directly show   (\ref{weak}) using our cocycle Lemma \ref{cocycle}. Moreover our bound will be uniform in $x$ ranging over the unit sphere in $V$.

\begin{prop}[Weak large deviations]\label{largedeviations}
Suppose that $\mu$ has an exponential local moment and that $\Gamma_\mu$ is strongly irreducible. Then
for every $\gamma>0$, there exist $\epsilon(\gamma)>0$  and $n(\gamma)\in \N^*$  such that for $0<\epsilon< \epsilon(\gamma)$ and $n>n(\gamma)$:
\begin{equation}Sup_{x\in V;\;||x||=1} \;{\E\big[ (\frac{||S_n ||}{||S_n x||})^\epsilon\big]}\leq (1+\epsilon \gamma)^n \label{lessaliha}\end{equation}
In particular, for every $\epsilon>0$,\begin{equation}\label{awaida}\limsup_{n \rightarrow \infty} {\Big[ Sup_{x\in V;\;||x||=1}\;\p \left(\frac{||S_n||}{||S_nx||}\geq exp(n\epsilon) \right) \Big]^{\frac{1}{n}}} < 0\end{equation}
\end{prop}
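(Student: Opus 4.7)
The plan is to apply the cocycle lemma (Lemma \ref{cocycle}) to the nonnegative function
\[
s(g, [x]) \;:=\; \log ||g|| \;-\; \log \frac{||gx||}{||x||}
\]
on $SL_d(k) \times P(V)$, which is well-defined on the projective space. Although $s$ is not a genuine cocycle, it is \emph{subadditive}: one computes
$s(g_1 g_2, [x]) - s(g_1, [g_2 x]) - s(g_2, [x]) = \log ||g_1 g_2|| - \log ||g_1|| - \log ||g_2|| \leq 0$
by submultiplicativity of the operator norm. Inspection of the proof of Lemma \ref{cocycle} shows that the only property actually exploited is the submultiplicativity of $Q_n := \sup_{[x]} \E[\exp(\epsilon s(S_n, [x]))]$, which can be derived for subadditive $s \geq 0$ exactly as for genuine cocycles by conditioning on $S_m$ and using independence of the future increments. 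Hence the statement of Lemma \ref{cocycle} remains valid for subadditive $s$.

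To invoke the weak version (limit equal to $0$) I must verify (i) the exponential moment of $r(g) := \sup_{[x]} s(g, [x])$, and (ii) the growth condition $\lim_n \frac{1}{n} \sup_{[x]} \E[s(S_n, [x])] = 0$. For (i), note that $r(g) = \log ||g|| + \log ||g^{-1}||$; since $g \in SL_d(k)$ the singular values multiply to one, so $||g^{-1}|| \leq ||g||^{d-1}$ and $r(g) \leq d \log ||g||$, and the exponential local moment hypothesis on $\mu$ supplies condition (\ref{condition}). For (ii), by Kingman's theorem $\frac{1}{n} \E[\log ||S_n||] \to \lambda_1$, so it is enough to show that the continuous function $\phi_n([x]) := \frac{1}{n} \E[s(S_n, [x])]$ on the compact space $P(V)$ tends uniformly to $0$.

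This uniform decay will be the main obstacle, and is where Proposition \ref{laprop} becomes decisive. For any unit-norm sequence $x_n \to x \neq 0$ in $V$, Proposition \ref{laprop} yields $\inf_n ||S_n x_n||/||S_n|| > 0$ almost surely, hence $\frac{1}{n} s(S_n, [x_n]) \to 0$ almost surely. Since $0 \leq s(S_n, [x_n]) \leq d \log ||S_n||$ and $\frac{1}{n} \log ||S_n||$ converges in $L^1$ (Kingman plus the moment assumption), the family $\{\frac{1}{n} s(S_n, [x_n])\}_{n \geq 1}$ is uniformly integrable, so $\phi_n([x_n]) \to 0$. A standard compactness--continuity argument then upgrades this pointwise-along-sequences convergence to uniform convergence: if $\sup_{[x]} \phi_n([x])$ failed to tend to $0$, one could extract $n_k \to \infty$ and $[y_k] \in P(V)$ with $\phi_{n_k}([y_k]) \geq \epsilon$, pass by compactness to a convergent subsequence $[y_k] \to [y]$, and embed these points into a sequence $[z_n] \to [y]$ with $[z_{n_k}] = [y_k]$, contradicting $\phi_n([z_n]) \to 0$.

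Given (i) and (ii), the weak conclusion of Lemma \ref{cocycle} provides, for every $\gamma > 0$, constants $\epsilon(\gamma) > 0$ and $n(\gamma)$ such that $\sup_{[x]} \E[\exp(\epsilon s(S_n, [x]))] \leq (1 + \epsilon \gamma)^n$ whenever $0 < \epsilon < \epsilon(\gamma)$ and $n > n(\gamma)$; evaluating on unit-norm representatives yields (\ref{lessaliha}). Finally, (\ref{awaida}) follows by Markov's inequality:
\[
\sup_{||x|| = 1} \p\!\left( \frac{||S_n||}{||S_n x||} \geq e^{n \epsilon'} \right) \;\leq\; e^{-n \epsilon \epsilon'} (1 + \epsilon \gamma)^n,
\]
which decays exponentially once $\gamma$ is chosen strictly smaller than $\epsilon'$.
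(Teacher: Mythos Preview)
Your argument is correct. Both your proof and the paper's rest on the cocycle lemma (Lemma~\ref{cocycle}) and on the convergence $\frac{1}{n}\E[\log\|S_n x_n\|]\to\lambda_1$ coming from Proposition~\ref{laprop}/Corollary~\ref{coro}, so the core ideas coincide. The one genuine difference is the choice of function fed into Lemma~\ref{cocycle}: the paper works on $P(V)\times P(V)$ with the \emph{bona fide} additive cocycle
\[
s\bigl(g,([x],[y])\bigr)=\log\frac{\|gx\|\,\|y\|}{\|gy\|\,\|x\|},
\]
so Lemma~\ref{cocycle} applies verbatim; it then recovers the one-point estimate~(\ref{lessaliha}) from the two-point estimate via $\|g\|\asymp\max_i\|ge_i\|$. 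You instead apply the lemma directly to the one-point subadditive function $s(g,[x])=\log\|g\|-\log(\|gx\|/\|x\|)$, at the cost of checking that the proof of Lemma~\ref{cocycle} survives when $s$ is merely subadditive (it does: submultiplicativity of $Q_n$ and of $n\mapsto\E[\exp(\tau r(S_n))]$ are all that is used, and both follow from your observations). Your route is a bit more direct, the paper's avoids tinkering with the lemma; neither gains anything substantive over the other.
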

\begin{proof} Let $\gamma>0$. First we prove that for $\epsilon < \epsilon(\gamma)$ and $n> n (\gamma)$,
\begin{equation} Sup_{[x],[y]} \;{\E \big[ (\frac{||S_n x || ||y||}{||S_n y|| ||x|| })^\epsilon\big]}\leq (1+\epsilon \gamma)^n \label{fr1}\end{equation}
Indeed,  $s(g,([x],[y]))= \log \;\frac{||g x|| ||y||}{||g y|| ||x||}$ defines an additive cocyle on $\Gamma_\mu \times (P(V)\times P(V))$, for the natural action of $\Gamma_\mu$ on $P(V)\times P(V)$. It suffices now to verify the hypotheses of Lemma (\ref{cocycle}).
%$$\E\left( exp ( \tau \;r(X_1) ) \right)\leq \E (||X_1||^{\tau}||X_1^{-1}||^{\tau})$$
Since for every $g\in SL_d(k)$,  $||g^{-1}||\leq ||g||^{d-1} $, $\E\left( exp ( \tau \;r(X_1) ) \right)\leq \E (||X_1||^{\tau}||X_1^{-1}||^{\tau})\leq \E ( ||X_1||^{\tau d} )$. This is finite for $\tau$ small enough because $\mu$ has an exponential local moment.  The condition (\ref{condition}) of Lemma \ref{cocycle} is then fulfilled.
It suffices now to show that $$\lim_{n\rightarrow \infty} {Sup_{[x],[y]} {\E\left( {s(S_n,([x],[y])) } \right)}} = 0$$ ($\leq 0$ suffices in fact). Since $P(V) \times P(V)$ is compact, it suffices to show that for any convergent sequences $(x_n)$ and $(y_n)$ in the sphere of radius one:$$\lim_{n\rightarrow \infty} \frac{1}{n}\big[ \E (\log \;||S_n x_n||) - \E (\log \;||S_n y_n||) \big]  = 0$$
This is true since by (the proof of ) Corollary \ref{coro}:
\begin{equation}\label{tensa}\lim_{n\rightarrow \infty} {\frac{1}{n}\E (\log {||S_n x_n||}) } =
\lim_{n\rightarrow \infty}  {\frac{1}{n}\E (\log {||S_n y_n||}) } = \lambda_1\end{equation}

Notice that $||g|| \asymp max\{||g.e_i||;\;i=1,...,d\}$ for every $g\in GL(V)$. Hence, $Sup_{[x]}\; {\E \big[(\frac{||S_n || ||x|| }{||S_n x||  })^\epsilon\big]} \preceq \sum_{i=1}^d {Sup_{[x]}\; {\E \big[(\frac{||S_n e_i || ||x||}{||S_n x||  })^\epsilon\big]}}$.  Applying (\ref{fr1}) shows (\ref{lessaliha}).\\
Finally, we prove (\ref{awaida}): let $\epsilon>0$, $\gamma>0$ to be chosen in terms of $\epsilon$. By (\ref{lessaliha}) and the Markov inequality there exist $\epsilon'(\gamma)>0,n(\gamma)>0$ such that for $0< \epsilon'<\epsilon'(\gamma)$ and $n>n(\gamma)$:

$$\p \left(\frac{||S_n||}{||S_nx||}\geq exp(n\epsilon) \right) \leq exp(-n\epsilon \epsilon') \E \big[\left( \frac{||S_n||}{||S_n x||}\right)^{\epsilon'}\big]\leq  exp(-n\epsilon \epsilon')(1+\gamma\epsilon')^n$$
Since $exp(-n\epsilon\epsilon')=exp(\epsilon \epsilon')^{-n} \leq \frac{1}{(1+\epsilon \epsilon')^n}$, it suffices to choose  $\gamma=\frac{\epsilon}{2}$.
\end{proof}

\subsubsection{Application 2: exponential convergence in direction}

\begin{prop} \label{contracexpo}Suppose that $\mu$ has an exponential local moment and that $\Gamma_\mu$ is strongly irreducible and contracting. Then there exist $\lambda>0$, $\epsilon_0>0$, $n_0\in \N^*$ such that for $0<\epsilon< \epsilon_0$ and $n>n_0$:
$$\E \left(\frac{\delta^\epsilon (S_n[x],S_n[y])}{\delta^\epsilon ([x],[y])} \right) \leq (1-\lambda \epsilon)^n $$
\end{prop}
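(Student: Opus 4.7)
The plan is to apply the first part of the cocycle lemma (Lemma \ref{cocycle}) to the additive cocycle
$$s(g, ([x],[y])) := \log \frac{\delta(g[x], g[y])}{\delta([x],[y])}$$
defined on $\Gamma_\mu \times X$, where $X := \{([x],[y]) \in P(V) \times P(V) : [x] \neq [y]\}$ is equipped with the natural diagonal action of $\Gamma_\mu$. The cocycle identity for $s$ is immediate from the multiplicative chain rule $\delta(g_1 g_2 [x], g_1 g_2 [y])/\delta([x],[y]) = \bigl(\delta(g_1 g_2[x], g_1 g_2[y])/\delta(g_2[x], g_2[y])\bigr)\cdot\bigl(\delta(g_2[x], g_2[y])/\delta([x],[y])\bigr)$.

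First I would verify the integrability hypothesis (\ref{condition}) for $X_1$. Expanding the Fubini-Study formula yields
$$s(g, ([x],[y])) = \log \frac{\|\wedge^2 g (x \wedge y)\|}{\|x\wedge y\|} \;-\; \log \frac{\|gx\|}{\|x\|} \;-\; \log \frac{\|gy\|}{\|y\|},$$
so a direct estimate gives $r(g) := \sup_{([x],[y]) \in X} |s(g, ([x],[y]))| \leq C \log(\|g\| \vee \|g^{-1}\|)$ for some constant $C = C(d)$. Since $g \in SL_d(k)$ satisfies $\|g^{-1}\| \leq \|g\|^{d-1}$, I deduce $r(g) \leq C' \log \|g\|$, and the exponential local moment hypothesis on $\mu$ then yields $\E[\exp(\tau r(X_1))] < \infty$ for sufficiently small $\tau > 0$.

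The crucial step is to verify $\lim_n \tfrac{1}{n} \sup_{([x],[y]) \in X} \E[s(S_n, ([x],[y]))] < 0$. Using $\|\wedge^2 g(x \wedge y)\| \leq \|\wedge^2 g\|\cdot\|x \wedge y\|$ in the identity above,
$$\E[s(S_n, ([x],[y]))] \;\leq\; \E[\log \|\wedge^2 S_n\|] \;-\; \E[\log(\|S_n x\|/\|x\|)] \;-\; \E[\log(\|S_n y\|/\|y\|)].$$
Dividing by $n$ and taking $\sup$ over $X$, the right side is bounded above by $\tfrac{1}{n}\E[\log\|\wedge^2 S_n\|] - 2\inf_{[x]} \tfrac{1}{n}\E[\log(\|S_n x\|/\|x\|)]$. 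The first term converges to $\lambda_1 + \lambda_2$ by the definition of Lyapunov exponents (Definition \ref{lyaponuv}). For the second, I would upgrade Corollary \ref{coro} to uniform convergence over $P(V)$ via a compactness argument: any failure of uniform convergence to $\lambda_1$ would, after extraction, produce a convergent sequence $x_{n_k} \to x \neq 0$ violating the convergence $\tfrac{1}{n}\E[\log \|S_n x_n\|] \to \lambda_1$ established in the proof of that corollary. Hence the limit in question is at most $(\lambda_1+\lambda_2) - 2\lambda_1 = \lambda_2 - \lambda_1$, which is strictly negative by the Guivarch-Raugi separation theorem (Theorem \ref{separation}).

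The main obstacle is precisely the invocation of $\lambda_1 > \lambda_2$, which is where the strong irreducibility and contraction assumptions are essentially used. Once the two hypotheses of Lemma \ref{cocycle} are checked, its first bullet delivers the conclusion directly: there exist $\lambda > 0$, $\epsilon_0 > 0$, $n_0 \in \N^*$ such that for $0 < \epsilon < \epsilon_0$ and $n > n_0$,
$$\sup_{([x],[y]) \in X} \E\!\left[\frac{\delta^\epsilon(S_n[x], S_n[y])}{\delta^\epsilon([x],[y])}\right] \;=\; \sup_X \E[\exp(\epsilon\, s(S_n, \cdot))] \;\leq\; (1 - \lambda \epsilon)^n,$$
which is exactly the asserted inequality.
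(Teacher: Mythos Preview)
Your proposal is correct and follows essentially the same route as the paper: apply the cocycle lemma (Lemma \ref{cocycle}) to $s(g,([x],[y]))=\log\frac{\delta(g[x],g[y])}{\delta([x],[y])}$ on $P(V)\times P(V)\setminus\text{diagonal}$, verify the moment condition from the exponential local moment, and bound $\frac{1}{n}\sup\E[s(S_n,\cdot)]$ by $\lambda_2-\lambda_1<0$ via $\E[\log\|\wedge^2 S_n\|]\to\lambda_1+\lambda_2$ and Corollary \ref{coro} (together with Theorem \ref{separation}). The only cosmetic difference is that the paper cites ``(the proof of) Corollary \ref{coro}'' for the uniform convergence $\inf_{[x]}\tfrac{1}{n}\E[\log(\|S_n x\|/\|x\|)]\to\lambda_1$, which is exactly the compactness upgrade you describe.
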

\begin{proof} Let $X=P(V) \times P(V) \setminus \textrm{diagonal}$ and $s$  the application on $\Gamma_\mu\times X$ defined by: $$s\left(g,([x],[y]) \right) =  \log\;\frac{\delta(g[x],g[y])}{\delta([x],[y])}\;\;;g\in \Gamma_\mu;([x],[y])\in X$$
It is easy to verify that $s$ is an additive cocycle on $\Gamma_\mu\times X$ for the natural action of $\Gamma_\mu$ on $X$. It suffices now to check the hypotheses of Lemma \ref{cocycle}. \\

\noindent By definition of the distance $\delta$, we have for every $g\in SL_d(k)$, $([x],[y])\in X$, $\log\;\frac{\delta(g[x],g[y])}{\delta([x],[y])} \leq 2d\;\log||g||$.
 %The last inequality is due to the fact that:\; $g\in SL_d(k)$ $\Longrightarrow$ $||g^{-1}||\leq  ||g||^{d-1}$.\\
%Hence, for $r(g)=Sup_{([x],[y])\in X}\; r(g,x)$; $g\in G$, $\E \left(exp(\tau r(X_1)) \right) \leq d\E (||X_1||^{2d\tau})$.
Since $\mu$ has an exponential local moment, (\ref{condition}) of Lemma \ref{cocycle} is valid.
      It is left to check that we are in the first case of the lemma, i.e. $\lim \frac{1}{n} Sup_{([x],[y])\in X}\;\E \left(s(S_n,(x,y))\right) < 0$.
\begin{eqnarray} \frac{1}{n} Sup_{([x],[y])\in X}\;\E \left(s(S_n,(x,y))\right)  &\leq&  \frac{1}{n} Sup_{([x],[y])\in X}
\E\left( \log\;\frac{||\bigwedge^2 S_n x \wedge y ||}{||x \wedge y ||} \right)+\; \frac{2}{n} Sup_{[x]\in P(V)}\;\E \left(\log \frac{||x||}{||S_n x||} \right)\nonumber\\
&\leq& \frac{1}{n}\E (\log ||\bigwedge^2 S_n||) +\; \frac{2}{n} Sup_{[x]\in P(V)}\;\E \left(\log \frac{||x||}{||S_n x||} \right) \label{majda} \end{eqnarray}
 By definition of the Lyapunov exponent, $$\frac{1}{n}\E (\log ||\bigwedge^2 S_n||) \underset{n \rightarrow \infty}{\longrightarrow} \lambda_1+\lambda_2$$
 By (the proof of ) Corollary \ref{coro}, $$\frac{1}{n} Sup_{[x]\in P(V)}\;\E \left(\log \frac{||x||}{||S_n x||} \right)\underset{n \rightarrow \infty}{\longrightarrow} -\lambda_1$$
 Hence, $$\lim\; \frac{1}{n} Sup_{([x],[y])\in X}\;\E \left(s(S_n,(x,y))\right) \underset{n \rightarrow \infty}{\longrightarrow} \lambda_2 - \lambda_1$$
 Under the contraction and strong irreducibility assumptions on $\Gamma_\mu$, this is negative by Theorem \ref{separation}.
 \end{proof}
 We deduce the following
\begin{theo}[Exponential convergence in direction] With the same notations and assumptions as in the previous proposition, there exists a random variable $Z_1$ (resp. $Z_2$) on $P(V)$ - with law $\nu$ (resp. $\nu^*$),  the unique $\mu$-invariant probability measure  on $P(V)$ (resp. $\mu^{-1}$-invariant on $P(V^*)$) such that for some $\lambda>0$ and every $\epsilon>0$:
\begin{equation}Sup_{[x]\in P(V)}\;{\E \left( \delta^\epsilon (M_n[x],Z_1)   \right)}\leq (1-\lambda\epsilon)^n\label{3abali}\end{equation}
\begin{equation}Sup_{[f]\in P(V^*)}\;{\E \left( \delta^\epsilon (S_n^{-1}. [f],Z_2)   \right)}\leq (1-\lambda\epsilon)^n \label{abyad}\end{equation}
In particular, for every $[x]\in P(V)$ (resp. $[f]\in P(V^*)$),  $M_n[x]$ (resp. $S_n^{-1}.[f]$) converges almost surely towards $Z_1$ (resp. $Z_2$).

\label{direction}\end{theo}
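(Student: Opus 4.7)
The plan is to deduce Theorem \ref{direction} from the contraction estimate of Proposition \ref{contracexpo} via a reversibility trick and a Cauchy-type argument in $L^\epsilon$. First I would observe that since $(X_1,\ldots,X_n)$ and $(X_n,\ldots,X_1)$ have the same joint law, the random pair $(M_n[x], M_n[y])$ has the same distribution as $(S_n[x], S_n[y])$. Proposition \ref{contracexpo} therefore yields, for some $\lambda>0$, sufficiently small $\epsilon>0$, and large enough $n$,
$$\E\big[\delta^\epsilon(M_n[x], M_n[y])\big] \leq (1-\lambda\epsilon)^n \, \delta^\epsilon([x],[y]).$$

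Next I would show that $(M_n[x])_n$ is Cauchy in $L^\epsilon$. For $k\geq 1$, the decomposition $M_{n+k} = M_n\cdot(X_{n+1}\cdots X_{n+k})$ gives $M_{n+k}[x] = M_n[W]$ where $W = X_{n+1}\cdots X_{n+k}[x]$ is independent of $M_n$. Conditioning on $W$ and applying the previous bound yields
$$\E\big[\delta^\epsilon(M_{n+k}[x], M_n[x])\big] \leq (1-\lambda\epsilon)^n,$$
uniformly in $k\geq 1$ and $[x]\in P(V)$. Summability of these expectations implies that $M_n[x]$ converges almost surely to some random variable $Z_1(x)\in P(V)$, and Fatou's lemma upgrades the uniform bound to $\E[\delta^\epsilon(M_n[x], Z_1(x))] \leq (1-\lambda\epsilon)^n$, which is inequality (\ref{3abali}). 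The first step also shows $\E[\delta^\epsilon(Z_1(x),Z_1(y))]=0$, so $Z_1$ does not depend on the starting point $[x]$. To identify its law with $\nu$, I would invoke the last assertion of Theorem \ref{normalise}: in the contracting case $M_n\nu \to \delta_Z$ weakly a.s., with $Z$ of law $\nu$; combined with $M_n[x]\to Z_1$ this forces $Z_1$ and $Z$ to coincide in distribution.

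For the second assertion on $P(V^*)$, I would apply the same argument to the forward random walk $X_1^{-1}\cdots X_n^{-1} = S_n^{-1}$ acting on $V^*$ via the contragredient representation. The hypotheses of Proposition \ref{contracexpo} transfer to this dual setting: $\mu^{-1}$ has exponential local moment because $||g^{-1}||\leq ||g||^{d-1}$ on $SL_d(k)$, while strong irreducibility and contraction of $\Gamma_\mu$ on $V$ pass to the corresponding properties of $\Gamma_{\mu^{-1}}$ on $V^*$ (as already noted in the proof of Corollary \ref{essentiell}).

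The only real subtlety is the passage from $S_n$, for which contraction is natural, to $M_n$, which is the walk that actually converges. The reversibility identity in the first step supplies a two-point contraction for $M_n$, and the independence produced by the splitting $M_{n+k}=M_n\cdot(X_{n+1}\cdots X_{n+k})$ is precisely what converts this two-point estimate into a genuine $L^\epsilon$-Cauchy bound and hence almost sure exponential convergence.
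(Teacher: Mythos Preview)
Your proof is correct and follows essentially the same route as the paper: both transfer the contraction estimate of Proposition \ref{contracexpo} from $S_n$ to $M_n$ via the reversibility $(X_1,\ldots,X_n)\overset{d}{=}(X_n,\ldots,X_1)$, and both exploit the splitting $M_{n+k}=M_n(X_{n+1}\cdots X_{n+k})$ together with conditioning to obtain the uniform bound. The only cosmetic difference is that the paper takes the limiting variable $Z$ as already given by Theorem \ref{normalise} and kills the remainder by letting $M_k\nu\to\delta_Z$ weakly, whereas you construct $Z_1$ directly from the $L^\epsilon$-Cauchy estimate and Fatou, and only afterwards identify its law via Theorem \ref{normalise}; the underlying mechanism is the same.
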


\begin{proof}It suffices to prove (\ref{3abali}). Indeed, (\ref{abyad}) is the consequence of the fact that  the action of $\Gamma_\mu$ on $V$ is strongly  irreducible and contracting if and only if the action of $\Gamma_{\mu^{-1}}$ on $V^*$ is. Moreover, if (\ref{3abali}) and (\ref{abyad}) hold then $M_n[x]$ (resp. $S_n^{-1}.[f]$) converges a.s. towards $Z_1$ (resp. $Z_2$) by an easy application of the Markov inequality.\\

 Let $Z$ be the random variable on $P(V)$ obtained in Theorem \ref{normalise}.
 Let $\lambda>0$, $\epsilon>0$ small enough and $n\geq n_0$ given by the previous proposition.
Fix $k>n$, $[y],[x]\in P(V)$. The triangle inequality gives:
\begin{equation}{\E \left( \delta^\epsilon (M_n[x],Z)   \right)} \leq \underset{(I)}{\underbrace{{\E \left( \delta^\epsilon (M_n[x],M_k[y])   \right)}}}
+ {\E \left( \delta^\epsilon (M_k[y],Z)   \right)}\label{blanche}\end{equation}

Since $M_k[y]=M_n X_{n+1}...X_k[y]$, we condition by the $\sigma$-algebra generated by $(X_{n+1},...,X_k)$ and obtain by independence of the increments
:
\begin{eqnarray} (I)&=&\int{d\mu^{k-n}(\gamma)\; \E \left(\delta^\epsilon (M_n[x],M_n[\gamma y]) \right)} \nonumber\\
 & \leq & Sup_{[a],[b]}\;\E (\delta^\epsilon (M_n[a],M_n[b])) \leq (1-\lambda\epsilon)^n \label{nn}\end{eqnarray}
 Inserting (\ref{nn}) in (\ref{blanche}) gives for every $[y]\in P(V)$, $k>n\geq n_0$:
 $$Sup_{[x]} \E(\delta^\epsilon(M_n[x],Z)) \leq (1-\lambda\epsilon)^n + \E (\delta^\epsilon (M_k[y],Z))$$
Let  $\nu$ be the unique $\mu$-invariant probability measure on $P(V)$ (see Theorem \ref{normalise}). Integrating with respect to $d\nu ([y])$ the two members of the previous inequality and applying Fubini theorem,  we get for every $k>n\geq n_0$:
 \begin{equation}Sup_{[x]} \E(\delta^\epsilon(M_n[x],Z)) \leq (1-\lambda\epsilon)^n + \E \left(\int{\delta^\epsilon ([y],Z) \; d(M_k\nu) ([y])} \right)\label{jjj}\end{equation}
Again by Theorem \ref{normalise}, a.s. $M_k \nu$ converges weakly towards the dirac measure $\delta_Z$  when $k$ goes to infinity. For $w$ fixed and every $0<\epsilon\leq 1$, $\delta^\epsilon \left(\;.\;, Z(\omega)\right)$ is a continuous function on $P(V)$.  Hence,  $\int{\delta^\epsilon ([y],Z) \; d(M_k\nu)([y])}$ converges a.s. to $\delta^\epsilon(Z,Z)=0$  when $k$ goes to infinity. By the dominated convergence theorem,
$\E \left(\int{\delta^\epsilon ([y],Z) \; d(M_k\nu) ([y])} \right) \underset{k\rightarrow \infty}{\longrightarrow}0$. We conclude by letting $k$ go to infinity in (\ref{jjj}). Since $\epsilon \mapsto \delta^\epsilon (.,.)$ is decreasing, the corollary is true for every $\epsilon>0$.
\end{proof}

\subsubsection{Weak version of the regularity of invariant measure}
An important result in the theory of random matrix products is the regularity of the invariant measure $\nu$, under contraction and strong irreducibility assumptions:
\begin{theo} \cite{Guivarch3} $k=\R$. Consider the same assumptions as in Proposition \ref{contracexpo}, then there exists $\alpha>0$ such that: $$Sup\{\;\int{\delta^{-\alpha} ([x],H) d\nu([x])};\;\;\textrm{$H$ hyperplanes of $V$} \} < \infty$$ In particular, if $Z$ is a random variable on $P(V)$ with law $\nu$, then for every $\epsilon>0$:
\begin{equation}Sup\{\;\p \left(\delta (Z,H) \leq  \epsilon \right);\;\;\textrm{$H$ hyperplane of $V$}\}\;\leq C\epsilon^\alpha \label{adsl}\end{equation}
\label{hausdorf}\end{theo}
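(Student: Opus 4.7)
The plan is to apply the cocycle lemma (Lemma~\ref{cocycle}) to a joint cocycle on $P(V)\times P(V^*)$ and combine the resulting contraction estimate with the weak large deviations (Proposition~\ref{largedeviations}) and the exponential convergence in direction (Theorem~\ref{direction}). This follows the strategy indicated in the introduction: the goal is to bypass the spectral theory of the Markov transfer operator used in the classical Guivarch proof and obtain the regularity of $\nu$ directly from the cocycle lemma.

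To implement this, I would consider the diagonal action of $\Gamma_\mu$ on $X=P(V)\times P(V^*)$ given by $g\cdot([x],[f])=([gx],[g.f])$, where $g.f(y)=f(g^{-1}y)$ is the contragredient action, and define the additive cocycle
\[ \sigma(g,([x],[f]))=\log\frac{\delta(g[x],g.[f])}{\delta([x],[f])}. \]
A direct computation with unit representatives yields $\sigma(g,([x],[f]))=-\log(||gx||/||x||)-\log(||g.f||/||f||)$. Applying Corollary~\ref{coro} both to $\mu$ and to its contragredient (which again satisfies strong irreducibility and contraction by duality), the mean satisfies
\[ \frac{1}{n}\sup_{([x],[f])}\E[\sigma(S_n,([x],[f]))]\longrightarrow -(\lambda_1+\lambda_1^*), \]
which is strictly negative since $\lambda_1,\lambda_1^*>0$ (by Theorem~\ref{separation} and the identity $\sum_i\lambda_i=0$ in $SL_d(k)$). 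The exponential local moment hypothesis combined with $||g^{-1}||\leq||g||^{d-1}$ secures the tail condition of Lemma~\ref{cocycle}, whose first part then produces the uniform contraction
\[ \sup_{([x],[f])}\E\Bigl[\bigl(\delta(S_nx,S_n.f)/\delta(x,f)\bigr)^\epsilon\Bigr]\leq(1-\lambda\epsilon)^n\qquad(\star) \]
for some $\lambda>0$, every small $\epsilon>0$, and all large $n$.

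To convert $(\star)$ into the regularity of $\nu$, I would fix a unit $x_0\in V$ and a unit $f\in V^*$ with $H=\ker f$, and use Theorem~\ref{direction}, which gives $M_n[x_0]\to Z\sim\nu$ a.s. exponentially fast. The triangle inequality and Markov yield
\[ \p(\delta(Z,H)\leq\epsilon)\leq\p(\delta(M_n[x_0],H)\leq 2\epsilon)+\epsilon^{-\epsilon'}(1-\lambda\epsilon')^n, \]
so it suffices to bound the first term by $C\epsilon^\alpha$ uniformly in $n$ and $H$. The key algebraic identity
\[ \delta(M_nx_0,H)=\frac{||M_n^tf||}{||M_nx_0||}\,\delta([x_0],\ker\tilde f_n),\qquad \tilde f_n=\frac{M_n^tf}{||M_n^tf||}, \]
together with two applications of Proposition~\ref{largedeviations} (which say that $||M_nx_0||/||M_n||$ and $||M_n^tf||/||M_n||$ both exceed $e^{-n\eta}$ with probability $\geq1-\rho^n$), bounds the prefactor by $e^{2n\eta}$ on a ``good'' event, and thereby reduces the problem to controlling $\p(\delta([x_0],\ker\tilde f_n)\leq 2\epsilon e^{2n\eta})$.

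The main obstacle is the apparent circularity here: bounding $\p(\delta([x_0],\ker\tilde f_n)\leq\epsilon')$ seems to require regularity of the dual invariant measure $\nu^*$ on $P(V^*)$, which is the dual version of the very statement to be proved. My plan to break the loop is to apply $(\star)$ directly at the deterministic pair $([x_0],[f])$: expanding $\delta(S_nx_0,S_n.f)=|f(x_0)|/(||S_nx_0||\cdot||S_n.f||)$ converts $(\star)$ into the exponential moment bound
\[ \E\bigl[(||S_nx_0||\cdot||S_n.f||)^{-\epsilon}\bigr]\leq(1-\lambda\epsilon)^n/|f(x_0)|^\epsilon, \]
uniform in the deterministic data $(x_0,f)$. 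Noting that $||S_n.f||=||(S_n)^{-t}f||$ is comparable to $||M_n^tf||$ after reversing the walk, and that $|f(M_nx_0)|=||M_nx_0||\cdot\delta(M_n[x_0],H)$, a Markov-type argument combined once more with Proposition~\ref{largedeviations} gives $\p(\delta(M_n[x_0],H)\leq 2\epsilon)\leq C\epsilon^\alpha$ for some $\alpha>0$, uniformly in $n$ and $H$. Letting $n\to\infty$ and assembling the bounds yields the desired $\p(\delta(Z,H)\leq\epsilon)\leq C\epsilon^\alpha$, which by Markov's inequality is equivalent to the integral form stated in the theorem. The delicate point in this last step is choosing the balance between $n$, $\epsilon$, and $\eta$ so that the subexponential losses from Proposition~\ref{largedeviations} are absorbed by the exponential gain in $(\star)$; this is where the exponential local moment assumption is used most crucially.
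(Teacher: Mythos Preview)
First, note that the paper does \emph{not} prove Theorem~\ref{hausdorf}: it is quoted from \cite{Guivarch3}, and the text explicitly says ``We will refrain from including the details of this proof here, since we will not need the full force of \ref{hausdorf}.'' The paper proves only the weaker Theorem~\ref{hausdweak}. So there is no in-paper proof to match; your proposal is an attempt at the full Guivarc'h regularity statement.

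Your cocycle computation is correct, and $(\star)$ does follow from Lemma~\ref{cocycle}. The gap is in the passage from $(\star)$ to the theorem. Observe that your cocycle \emph{decouples}: since $(g.f)(gx)=f(x)$, one has
\[
\sigma\bigl(g,([x],[f])\bigr)=-\log\frac{||gx||}{||x||}-\log\frac{||g.f||}{||f||},
\]
a sum of a cocycle depending only on $[x]$ and one depending only on $[f]$. Consequently $(\star)$ is nothing more than a bound on $\E\bigl[(||S_nx_0||\cdot||S_n.f||)^{-\epsilon}\bigr]$, i.e.\ a joint control of two \emph{norms}. It carries no information about the pairing $|f(M_nx_0)|$, which is exactly what governs $\delta(M_n[x_0],H)=|f(M_nx_0)|/||M_nx_0||$. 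Norm lower bounds on $||M_n^tf||$ and $||M_nx_0||$ cannot prevent $|M_n^tf(x_0)|$ from being small: that is precisely the event that $[x_0]$ lies near the random hyperplane $\ker(M_n^tf)$, which is the dual version of the regularity you are trying to prove. Your own paragraph flags this circularity and then claims $(\star)$ breaks it, but it does not.

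There is also a concrete error in the bridge you propose: $S_n.f=(S_n^{-1})^t f$ involves the contragredient increments $(X_i^{-1})^t$, whereas $M_n^t f=X_n^t\cdots X_1^t f$ involves the transpose increments $X_i^t$. These two walks have different laws in general, so ``comparable after reversing the walk'' is not justified. Even granting it, the Markov step you invoke would bound $\p(||S_nx_0||\cdot||S_n.f||\leq\cdots)$, not $\p(|f(M_nx_0)|\leq\cdots)$. The genuine Guivarc'h argument does not reduce to a decoupled cocycle: it uses either a transfer operator on $P(V^*)$ with a twist by $||\cdot||^{-\alpha}$ (whose spectral radius is shown to be $<1$), or equivalently a self-improving inequality for $\sup_H\int\delta^{-\alpha}([x],H)\,d\nu$ obtained from stationarity; in both cases the key quantity is $\sup_f\E\bigl[(||S_n||/||S_n^t f||)^{\alpha}\bigr]$ for the \emph{transpose} walk, together with a truncation to justify finiteness before bootstrapping. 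Your $(\star)$ is not a substitute for this step.
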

\noindent (\ref{adsl}) gives in particular \underline{for $k=\R$}: for every $0<t<1$:
 \begin{equation} \limsup_{n \rightarrow \infty}{\big[Sup\{\p \left(\delta(Z,[H]) \leq t^n \right);\;\;\; \textrm{$H$ hyperplanes of $V$}\}\big]}^{\frac{1}{n}} < 1\nonumber\end{equation}

The latter assertion will be important for us.
Proving Theorem \ref{hausdorf} in an arbitrary local field can be done along the same lines as Guivarch's proof over the reals. We will refrain from including the details of this proof here, since we will not need the full force of \ref{hausdorf}. Instead we give a direct proof of the last assertion, using our ``weak large deviation'' - Proposition \ref{largedeviations}.\\\\

\begin{theo}  Consider the same assumptions as in Proposition \ref{contracexpo}. Let $Z$ be a random variable with law $\nu$, the unique  $\mu$-invariant probability measure. Then, for all $t\in]0,1[$, \begin{equation}\limsup_{n \rightarrow \infty}{\big[Sup\{\p \left(\;\delta(Z,[H]) \leq t^n\;\right);\;\; \textrm{$H$ hyperplanes of $V$}\}\big]}^{\frac{1}{n}} < 1   \nonumber\end{equation}
\label{hausdweak}\end{theo}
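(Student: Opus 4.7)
The plan is to combine the exponential convergence in direction (Theorem \ref{direction}) with the weak large deviation estimate (Proposition \ref{largedeviations}), applied both in $V$ and in its dual $V^*$. First I would pick a unit vector $x_0\in V$ (to be chosen depending on $H$) and use Theorem \ref{direction} together with Markov's inequality to bound
\[
\p(\delta(Z,[H])\leq t^n)\;\leq\;\p(\delta(M_N[x_0],[H])\leq 2t^n)+\rho_1^n
\]
for $N=Dn$ with $D$ a large constant depending on $t$ and $\lambda$. So it remains to control $\p(\delta(M_N[x_0],[H])\leq 2t^n)$ uniformly in $H$.

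Writing $[H]=\ker f$ with $\|f\|=1$ and noting $\delta(M_N[x_0],[H])=|(M_N^Tf)(x_0)|/\|M_Nx_0\|$, I would apply Proposition \ref{largedeviations} on $V$ at $x_0$ to get $\|M_Nx_0\|\geq \|M_N\|e^{-\epsilon N}$ off a set of probability $\leq \rho_3^N$, and apply the analogous statement on $V^*$ to the $\mu^{-1}$-walk (which is strongly irreducible and contracting by duality) to get $\|M_N^Tf\|\geq \|M_N\|e^{-\epsilon N}$ \emph{uniformly} in the unit form $f$, off a set of probability $\leq \rho_4^N$. On the intersection of these good events, the event $\{\delta(M_N[x_0],[H])\leq 2t^n\}$ forces $|\hat f(x_0)|\leq 2(te^{2\epsilon D})^n$, where $\hat f := M_N^Tf/\|M_N^Tf\|$; for $\epsilon$ small enough that $t':=te^{2\epsilon D}<1$, this quantity is exponentially small in $n$.

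To obtain uniformity in $H$, I would choose $x_0$ from the canonical basis $(e_1,\dots,e_d)$ depending on $H$ so that $\delta([x_0],[H])\geq c_d>0$ (possible by a pigeonhole argument on the $d$ basis vectors); the hyperplane $[x_0]^\perp\subset V^*$ then ranges over a finite set of $d$ fixed hyperplanes. Since $M_N^{-1}.[f]$ has, at the fixed time $N$, the same law as $S_N^{-1}.[f]$, Theorem \ref{direction} applied on $V^*$ allows one to replace the random point $[\hat f]\in P(V^*)$ by the limit $Z^*\sim\nu^*$ up to exponentially small error in $N$. This reduces the problem to controlling $\p(\delta(Z^*,[e_i]^\perp)\leq C(t')^n)$ for the $d$ specific coordinate hyperplanes of $V^*$.

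The hard part will be closing this loop. Running the same reduction symmetrically on $V^*$ turns the last bound into a bound of the form $\p(\delta(Z,H_j)\leq(t'')^n)$ for coordinate hyperplanes $H_j$ in $V$, producing an iteration between $V$ and $V^*$ that involves only finitely many hyperplanes at each step but worsens the exponent as $t\mapsto te^{2\epsilon D}$ per half-iteration. The main technical point will be to choose $\epsilon$, $D$, and the number of iterations so that the accumulated error terms stay exponentially small while the final exponent remains strictly less than~$1$; a careful balancing yields a finite linear system of inequalities relating $P_j(s):=\p(\delta(Z,H_j)\leq s)$ and the analogous $P^*_i$ on $V^*$, which one can solve to get the desired uniform exponential bound.
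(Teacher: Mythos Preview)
Your proposal uses the same two ingredients as the paper --- Theorem~\ref{direction} and Proposition~\ref{largedeviations}, each applied on $V$ and on $V^*$ --- but the way you choose the base point $x_0$ creates a genuine obstacle that you do not resolve. By fixing $x_0\in\{e_1,\dots,e_d\}$ as a \emph{deterministic} function of $H$ alone, you are left with the random quantity $|\hat f(x_0)|$, $\hat f=M_N^{-1}.f/\|M_N^{-1}.f\|$, over which you have no lower control; this is what forces your circular reduction $V\to V^*\to V$ with the exponent degrading as $t\mapsto te^{2\epsilon D}$ at each half-step. The ``finite linear system'' you allude to does not close: one full cycle yields an inequality of the shape $\p(\delta(Z,H_j)\le t^n)\le C\max_i\p(\delta(Z,H_i)\le (t'')^n)+\rho^n$ with $t''>t$, and iterating only pushes the base toward~$1$ without ever producing a terminal bound. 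Knowing that $\nu$ is proper gives $\p(\delta(Z,H_j)\le s)\to 0$ for each fixed $H_j$, but no rate, so it cannot seed the bootstrap.

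The paper's fix is a one-line observation (Lemma~\ref{eftakartfdelt}): since $\hat f=M_n^{-1}.f/\|M_n^{-1}.f\|$ is a unit form, \emph{some} coordinate satisfies $|\hat f(e_i)|\ge C(k)$. Thus the index $i=i(n,\omega)$ is chosen \emph{event-wise}, depending on both $H$ and $M_n$, via the decomposition $\Omega=\bigcup_i A_i$ with $A_i=\{|f(M_ne_i)|\ge C(k)\|M_n^{-1}.f\|\}$. On $A_i$ one obtains directly
\[
\delta(M_n[e_i],[H])=\frac{|f(M_ne_i)|}{\|M_ne_i\|}\ \ge\ C(k)\,\frac{\|M_n^{-1}.f\|}{\|M_ne_i\|}\ \ge\ C(k)\,\frac{\|M_n^{-1}.f\|}{\|\rho^*(M_n^{-1})\|},
\]
and a single application of Proposition~\ref{largedeviations} on $V^*$ (for the walk $M_n^{-1}$, uniformly in $f$) finishes the proof in one step, with no iteration. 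In short, allowing the basis vector to depend on $\omega$ rather than on $H$ alone is precisely what collapses your loop; once you make this change your argument becomes correct and essentially coincides with the paper's.
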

Before proving the theorem, we begin with an easy but crucial lemma.
\begin{lemme}There exists a constant $C(k)$ such that for every  $f\in V^*$, a.s. there exists $i=i(n,\omega)\in \{1,...,d\}$ such that:
$|f(M_ne_i)|\geq C(k) ||M_n^{-1}.f||$
\label{eftakartfdelt}\end{lemme}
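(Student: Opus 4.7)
The plan is to reduce this to an elementary, deterministic statement about norms on $V^*$, and to observe that the probabilistic content is minimal. The key identity is that, by the definition of the action of $SL_d(k)$ on $V^*$, one has $f(M_n e_i) = (M_n^{-1}.f)(e_i)$. Setting $h = M_n^{-1}.f \in V^*$, the lemma becomes: there exists $C(k) > 0$ such that for every non-zero $h \in V^*$, $\max_{1 \leq i \leq d} |h(e_i)| \geq C(k) \|h\|$. The ``a.s.'' only serves to cover the zero-measure event on which $h$ might degenerate, but in fact $M_n \in SL_d(k)$ is invertible everywhere, so $h \neq 0$ whenever $f \neq 0$; the case $f = 0$ being trivial, the statement is purely about the dual norm.

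Writing $h = \sum_{i=1}^d \lambda_i e_i^*$ in the canonical dual basis, we have $|h(e_i)| = |\lambda_i|$, so we must bound the max of the coordinates from below by the norm. I will treat the two cases separately, according to the conventions introduced in Section \ref{gene}. In the archimedean case $k = \R$ or $\C$, the norm on $V^*$ is the Euclidean or Hermitian one, and so $\|h\| = \bigl(\sum_i |\lambda_i|^2\bigr)^{1/2} \leq \sqrt{d}\, \max_i |\lambda_i|$, giving the inequality with $C(k) = 1/\sqrt{d}$. In the non-archimedean case, the max norm $\|x\| = \max_i |x_i|$ on $V$ induces the max norm on $V^*$: indeed, by the ultrametric inequality, for any $x = \sum x_i e_i$, $|h(x)| = |\sum \lambda_i x_i| \leq \max_i |\lambda_i x_i| \leq (\max_i |\lambda_i|) \|x\|$, so $\|h\| \leq \max_i |\lambda_i|$; and the reverse inequality is immediate by testing $h$ against each $e_i$. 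Hence $\|h\| = \max_i |\lambda_i|$ and the conclusion holds with $C(k) = 1$.

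There is no real obstacle; the statement is essentially a rephrasing of the equivalence of norms on the finite-dimensional space $V^*$, with the additional remark that in the ultrametric setting this equivalence is an exact equality (so one obtains a dimension-free constant). The only conceptual point worth highlighting is the use of the identity $(M_n^{-1}.f)(e_i) = f(M_n e_i)$, which explains why the lemma is stated with the random element $M_n^{-1}.f$ on the right-hand side even though no randomness is really exploited in the proof.
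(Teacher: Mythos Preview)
Your proof is correct and follows essentially the same approach as the paper: both reduce to the elementary fact that $\max_i |(M_n^{-1}.f)(e_i)| \geq C(k)\|M_n^{-1}.f\|$ via the identity $f(M_ne_i)=(M_n^{-1}.f)(e_i)$, with $C(k)=1/\sqrt{d}$ in the archimedean case and $C(k)=1$ in the non-archimedean case. The paper's proof is simply a terser version of yours.
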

\begin{proof} When $k$ in archimedean, a.s. $||M_n^{-1}.f||^2=\sum_{i=1}^d{|M_n^{-1}.f (e_i)|^2}$. Take $C(k)=\frac{1}{\sqrt{d}}$.
When $k$ in non archimedean, the norm on $V^*$ is ultrametric. Hence, a.s. $||M_n^{-1}.f||=Max\{|M_n^{-1}.f (e_i)|;\;i=1,...,d\}$. The lemma is then valid for $C(k)=1$. \end{proof}

\textbf{Proof of Theorem \ref{hausdweak}:}
Let $H$ be a hyperplane of $V$, $f\in V^*$ such that $H=Ker(f)$. One can suppose $||f||=1$.
Let $A_{i}$ be the event ``$\{||f(M_n e_i)||\geq C(k) ||M_n^{-1}.f||\}$''. By the previous lemma, $\p (\cup_{i=1}^d {A_i}) = 1$.  Hence,
\begin{equation}\label{rida}
\p \left(\delta(Z,[H]) \leq t^n\right) \leq \sum_{i=1}^d {\p \left(\delta(Z,[H]) \leq t^n;\;\mathds{1}_{A_i}\right)}\end{equation}

By Theorem \ref{direction},  there exists  $\rho_1\in ]0,1[$ such that for all large $n$:\\
$$Sup_{[x]\in P(V)}\;{\E \left(\delta(M_n[x],Z) \right)}\leq \rho_1^n$$ This implies by the Markov inequality that for every $\rho_2\in ]\rho_1,1[$ and for all large $n$:
\begin{equation}\label{sissi}{\p \left(\delta(M_n[x],Z) \geq \rho_2^n \right)\leq (\frac{\rho_1}{\rho_2})^n };\;\; \forall x\in V\setminus\{0\}\end{equation}
On each event $A_i$, we apply inequality (\ref{sissi}) for $x=e_i$. Inserting this in (\ref{rida}) and using the triangle inequality, we get:
\begin{equation}\p \left(\delta(Z,[H]) \leq t^n\right) \leq \sum_{i=1}^d {\p \left(\delta(M_n[e_i],[H]) \leq \rho_2^n+t^n;\;\mathds{1}_{A_i}\right)}+\; d(\frac{\rho_1}{\rho_2})^n \label{miaofen}\end{equation}
On the event $A_i$,
\begin{equation}\label{yosra}\delta(M_n[e_i],[H]) = \frac{|f(M_ne_i)|}{||M_ne_i||}\geq C(k)\frac{||M_n^{-1}.f ||}{||M_ne_i||}\end{equation}
Inserting (\ref{yosra}) in (\ref{miaofen}) gives:
$$\p \left(\delta(Z,[H]) \leq t^n\right)\leq \sum_{i=1}^d {\p \left(\frac{||M_n^{-1}.f||}{||M_ne_i||} \leq \frac{\rho_2^n+t^n}{C(k)}\right)}+\; d(\frac{\rho_1}{\rho_2})^n$$
The following assertion clearly ends the proof:
for any $a \in ]0,1[$,
\begin{equation}\limsup_{n \rightarrow \infty} \;\big[ \p (\frac{||M_n^{-1}.f||}{||M_n x||} \leq a^n ) \big]^{\frac{1}{n}} < 1  \label{sylvana} \end{equation}
uniformly in $f\in V^*$ of norm one and $x\in V$ of norm one. \;Indeed,  the action of $\Gamma_{\mu^{-1}}$ on $V^*$ is  strongly irreducible and contracting. Hence we can apply Proposition \ref{largedeviations} by replacing $S_n=X_n...X_1$ with $M_n^{-1}=X_n^{-1}...X_1^{-1}$, $V$ with $V^*$. If $\rho^*$ denotes the contragredient representation of $G$ on $V^*$, then for any $a \in ]0,1[$,
$$\limsup_{n \rightarrow \infty} \;\big[ \p (\frac{||M_n^{-1}.f||}{||\rho^*(M_n^{-1}) ||} \leq a^n ) \big]^{\frac{1}{n}} < 1$$
uniformly in $x$ and $f$.
Since $\rho^*(M_n^{-1})$ is just the transpose matrix of $M_n$,
$||M_nx||\leq ||M_n||= ||\rho^*(M_n^{-1})|| $.  Then (\ref{sylvana}) is valid uniformly in $x$ and $f$.

\begin{flushright}
$\Box$
\end{flushright}

\subsection{Preliminaries on algebraic groups}
\label{preliminaries}
Till the end of the paper, $k$ is a local field, $\mathbf{G}$ is a $k$-algebraic group, $G=\mathbf{G}(k)$
are the $k$-points of $\mathbf{G}$. \textbf{We will assume  $G$ to be  $k$-split and its connected component semi-simple}.  However $G$ itself is not assumed Zariski-connected unless explicitly mentioned. In general if $\mathbf{H}$ is a $k$-algebraic group, $H$ will denote its group of $k$-points. The word ``connected'' will refer to the Zariski topology.\\

In this section, $\mathbf{G}$ is connected. For references, one can see \cite{Tits1} for the description of irreducible representations,  \cite{tit1}, \cite{tit2} or \cite{mac} for the Cartan and the Iwasawa decomposition.
\paragraph{Decompositions in algebraic groups}
Let $\mathbf{A}$ be a maximal $k$-torus of $\mathbf{G}$, $\mathbf{X(A)}$ be the group of $k$-rational characters of $\mathbf{A}$, $\Delta$ be the system of roots of $G$ restricted to $\mathbf{A}
$, which consists of the common eigenvalues of $\mathbf{A}$ in the adjoint representation.
We fix an order on $\Delta$ and denote by $\Delta^+$ the system of positive roots, $\Pi$ the system of simple roots (roots than cannot be obtained as product of two positive roots) and define $A^+=\{a\in A\;;\;|\alpha(a)|\geq 1\;;\;\forall \alpha\in \Delta^+\}$.
There exists a maximal compact subgroup $K$ of $G$ such that $$G=KA^+K\;\;\;\;\;\textrm{Cartan or $KAK$ decomposition}$$
  We denote by $\mathfrak{g}$ be the Lie algebra of $G$ over $k$ and define, for every $\alpha\in \Delta$,
   $\mathfrak{g}_\alpha=\{x\in \mathfrak{g}\;;\;Ad(a).x=\alpha(a)x\;\forall a\in A\}$. Let
     $\mathbf{N}$ be the unique connected subgroup of $\mathbf{G}$ whose Lie algebra
      is $\oplus_{\alpha \in \Delta^+}{\mathfrak{g}_\alpha}$; it is a maximal unipotent connected subgroup.
      Then the following decomposition, called Iwasawa or KAN decomposition, holds: $$G=KAN\;\;\;\;\;\textrm{Iwasawa or KAN decomposition}$$
%\begin{remarque} These decomposition are no longer true when $G$ fails  to be $k$-split.
%In the general case, if $\mathbf{Z}$ is the centralizer of $\mathbf{A}$ in $\mathbf{G}$ then the Iwasawa decomposition becomes $G=KZN$. A analogue statement holds for the Cartan decomposition. Similar decompositions hold for reductive groups as well. (See for example \cite{mac} or the original articles of Bruhat and Tits \cite{tit1} and \cite{tit2}). \end{remarque}

\paragraph{Rational Representations of algebraic groups}
In the previous paragraph, we used only the adjoint representation of $G$. More generally, if $(\rho,V)$ is a $k$-rational irreducible representation of $G$, $\chi\in \mathbf{X(A)}$ is called a weight of $\rho$ if it is a common eigenvalue of $A$ under $\rho$.
We denote by $V_\chi$ the weight space associated to $\chi$ which  is $V_\chi=\{x\in V; \rho(a)x=\chi(a)x\;\forall\;a\in A\}$. Then $V=\oplus_{\chi\in \mathbf{X(A)}}{V_\chi}$.
The representation $\rho$ is characterized by
a particular weight $\chi_\rho$ called highest weight which has the following properties: \\
$\bullet$ every weight $\chi$ of $\rho$ different from
$\chi_\rho$ is of the form: $\chi=\frac{\chi_\rho}{\prod_{\alpha\in \Pi}{\alpha^{n_\alpha}}}$, where $n_\alpha \in \N$ for every
simple root $\alpha$.\\
$\bullet$ Every $x\in V_{\chi_\rho}$  is fixed by the subgroup $N$.\\
Let $\Theta_\rho=\{\alpha\in \Pi;\;\chi_\rho/\alpha \;\textrm{is a weight of $\rho$}\}$.
 \begin{prop}\cite{Tits1}For every $\alpha\in \Pi$, let $w_\alpha$ be the fundamental weight associated to $\alpha$.
Then the $k$-rational irreducible representation $(\rho_\alpha,V_\alpha)$ of $G$ whose highest weight is $w_\alpha$ (called fundamental representation) has a highest weight space of dimension one  and satisfies   $\Theta_{\rho_\alpha}=\{\alpha\}$.  \label{tits}\end{prop}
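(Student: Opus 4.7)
The plan is to appeal to the highest weight theory for $k$-split connected semisimple groups, which parallels the classical theory over an algebraic closure precisely because the $k$-split hypothesis guarantees that the maximal torus $\mathbf{A}$, the root subgroups $\mathbf{U}_\alpha$ attached to each root, and the rank-one $SL_2$-subgroups generated by opposite root subgroups are all defined over $k$. Given the dominant character $w_\alpha$, the existence of a $k$-rational irreducible representation with highest weight $w_\alpha$ is what is established in \cite{Tits1}; once we have such a $(\rho_\alpha,V_\alpha)$, the two properties claimed in the proposition become structural and can be proved as follows.

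For the one-dimensionality of $V_{w_\alpha}$, I would pick a nonzero $v\in V_{w_\alpha}$. Let $\mathbf{N}^-$ be the unipotent subgroup generated by the negative root subgroups. Since $N$ fixes $v$ and $A$ scales $v$ by $w_\alpha$, the subspace $W$ spanned by the $N^-$-orbit of $v$ is stable under $N^-$, $A$ and $N$; by the open-Bruhat cell $N^- A N\subset G$ and irreducibility of $\rho_\alpha$, $W$ is in fact $G$-stable and hence equals $V_\alpha$. Because each nontrivial application of an element of $N^-$ strictly lowers the $A$-weight, the only contribution to the $w_\alpha$-weight space comes from $v$ itself, so $V_{w_\alpha}=kv$.

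For the identification $\Theta_{\rho_\alpha}=\{\alpha\}$, I would restrict to the rank-one $k$-split subgroup $\mathbf{G}_\beta\subset \mathbf{G}$ attached to each simple root $\beta\in\Pi$. Under this restriction $v$ is a highest weight vector for $\mathbf{G}_\beta$ of integer weight $n_\beta := \langle w_\alpha,\beta^\vee\rangle$, which by the defining property of fundamental weights equals $\delta_{\alpha\beta}$. By the standard $\beta$-string argument, the weights of $V_\alpha$ appearing in the $\beta$-string through $w_\alpha$ form the segment from $w_\alpha$ down to $w_\alpha/\beta^{n_\beta}$. For $\beta\neq\alpha$ this segment reduces to $\{w_\alpha\}$, so $w_\alpha/\beta$ is not a weight; for $\beta=\alpha$, applying a nonzero element of the root subgroup corresponding to $-\alpha$ to $v$ produces a nonzero vector in $V_{w_\alpha/\alpha}$, so $\alpha\in \Theta_{\rho_\alpha}$.

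The main obstacle is not the weight combinatorics, which is entirely standard, but the descent from an algebraic closure of $k$ back to $k$ itself: one must verify that the abstract irreducible representation of highest weight $w_\alpha$ is actually realized over $k$. This is where the $k$-split assumption on $\mathbf{G}$ is essential, and it is precisely this rationality statement which we borrow from \cite{Tits1} rather than reprove here.
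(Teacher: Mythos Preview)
The paper does not prove this proposition at all: it is stated with a direct citation to \cite{Tits1} and no argument is given. Your proposal therefore goes well beyond what the paper does, supplying an actual sketch where the paper simply quotes the result. In that sense there is nothing to compare --- your approach is not ``different'' from the paper's, it is strictly more detailed.

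On the content of your sketch: the argument for $\Theta_{\rho_\alpha}=\{\alpha\}$ via the $\beta$-string through $w_\alpha$ and $\langle w_\alpha,\beta^\vee\rangle=\delta_{\alpha\beta}$ is clean and correct. The one-dimensionality argument is morally right but has a small gap: you assert that $W=\mathrm{span}(N^-\cdot v)$ is stable under $N$, which is not obvious from the definition (stability under $N^-$ and $A$ is clear, but $N$ does not normalize $N^-$). The standard fix is to use the distribution algebra (or, in characteristic zero, the enveloping algebra) with its triangular decomposition $\mathrm{Dist}(N^-)\otimes\mathrm{Dist}(A)\otimes\mathrm{Dist}(N)$, so that the $G$-submodule generated by $v$ equals $\mathrm{Dist}(N^-)\cdot v$ directly; then the weight argument you give goes through. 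Since this is all textbook material and the paper itself defers entirely to \cite{Tits1}, the gap is harmless in context.
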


% \begin{remarque} In the non split case, we have the same except that the highest weight of $\rho_\alpha$
% is a power of $w_\alpha$.\end{remarque}
Every $k$-rational irreducible representation $\rho$ of $G$ can be obtained as a sub-representation of tensor products of fundamental representations  and $\chi_\rho$ is of the form $\prod_{\alpha\in \Pi}{w_\alpha^{s_\alpha}}$, with $s_\alpha\in \N$.
We record below a basic fact about root systems (\cite[\S 1.9 et 1.10]{bourbaki}).
\begin{prop} Every root $\alpha\in \Delta$ is of the form: $\alpha=\prod_{\beta\in \Pi}{w_\beta^{n_\beta}}$,
with $n_\beta\in \Z$, for every $\beta\in \Pi$.
\end{prop}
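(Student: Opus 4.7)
The plan is to recognize this as a purely combinatorial statement about the root system of $(\mathbf{G},\mathbf{A})$, and to deduce it from the integrality of the Cartan integers. Since all the data here lives inside the character group $\mathbf{X(A)}$, which I will freely regard additively (so the multiplicative claim $\alpha = \prod_{\beta \in \Pi} w_\beta^{n_\beta}$ becomes the additive one $\alpha = \sum_{\beta \in \Pi} n_\beta w_\beta$), the real content is that the root lattice sits inside the weight lattice with integer coordinates in the basis of fundamental weights.

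First I would recall the dual characterization of the fundamental weights: for each simple root $\beta \in \Pi$ one has the coroot $\beta^\vee$, and $w_\beta$ is uniquely determined inside the ambient real vector space $\mathbf{X(A)} \otimes_{\Z} \R$ by the condition $\langle w_\beta, \beta'^\vee \rangle = \delta_{\beta, \beta'}$ for all $\beta' \in \Pi$. Since $\Pi$ is a basis of that vector space, so is $\{w_\beta ; \beta \in \Pi\}$, and hence any root $\alpha \in \Delta$ admits a unique expansion
\begin{equation*}
\alpha = \sum_{\beta \in \Pi} n_\beta \, w_\beta, \qquad n_\beta \in \R.
\end{equation*}
Pairing with $\beta^\vee$ and using the duality relation above isolates the coefficient: $n_\beta = \langle \alpha, \beta^\vee \rangle$.

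It therefore suffices to show that $\langle \alpha, \beta^\vee \rangle \in \Z$ for every root $\alpha$ and every simple root $\beta$. This is the defining integrality axiom of an abstract root system, and it can be recovered from the group-theoretic setting as follows: the simple reflection $s_\beta$ generated by $\beta$ acts on $\mathbf{X(A)} \otimes \R$ by $s_\beta(\chi) = \chi - \langle \chi, \beta^\vee \rangle \beta$, it is induced by conjugation by a representative of an element of the Weyl group $N_G(\mathbf{A})/Z_G(\mathbf{A})$, and hence stabilizes $\Delta$. Since $s_\beta(\alpha) - \alpha = -\langle \alpha, \beta^\vee \rangle \beta$ lies in the root lattice and $\beta$ is primitive there, the coefficient $\langle \alpha, \beta^\vee \rangle$ must be an integer.

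I expect no real obstacle here, since this is a foundational fact in the theory of root systems; the only point requiring care is to be sure the underlying root system is the one attached to the maximal $k$-split torus $\mathbf{A}$, which is legitimate because $\mathbf{G}$ was assumed $k$-split and connected, so the restricted root system is a genuine (reduced) root system in $\mathbf{X(A)} \otimes \R$ and the Weyl group argument above applies verbatim. Once integrality of $\langle \alpha, \beta^\vee \rangle$ is established, the exponents $n_\beta$ produced by the expansion are integers, which is precisely the claim.
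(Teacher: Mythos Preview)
Your argument is correct and is essentially the standard proof: expand $\alpha$ in the basis of fundamental weights, pair with the coroots to extract the coefficients, and observe that $n_\beta = \langle \alpha, \beta^\vee\rangle$ is a Cartan integer. The paper does not actually prove this proposition; it merely records it as a basic fact about root systems and cites Bourbaki \cite[\S 1.9 et 1.10]{bourbaki}, where precisely the argument you sketch appears. So there is nothing to compare---you have supplied the proof the paper omits.
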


\paragraph{Good norm}
Let $\rho $ be a $k$-rational irreducible representation of $G$.
We wish to find a special basis and norm of $V$ such
that
$\rho(G)=\rho(K)\rho(A^+) \rho(K)$ (resp. $\rho(G)=\rho(K)\rho(A)\rho(N)$\;) is the restriction of a Cartan (resp. Iwasawa) decomposition of  $SL(V)$, i.e. $K$ acts by isometries on $V$, $A$ acts by diagonal matrices with
$\rho(A^+)\subset \{diag(a_1,...,a_d); |a_1|\geq |a_i|\; \forall i\neq 1\}$,
 $\rho(N)$ fixes  the first vector of the basis. \\
 To do that we begin with standard definitions borrowed from Quint \cite{quint}.
Let $V$ be a $k$-vector space.
When  $k$ is $\R$ (resp. $\C$), we say that a norm on $V$ is  good  if and only
 if it is induced by a Euclidian scalar product (resp. Hermitian scalar product).
 Now if $V$ is endowed with a good norm, a direct sum  $V=V_1\oplus V_2$ is good if and only if it is
  orthogonal with respect to the scalar product.
  When $k$ is non archimedean, we say that a norm on $V$ is good if and only if it is
   ultrametric, i.e., $||v+w||\leq Max\{||v||;||w||\}$ $\forall v,w\in V$.
   A direct sum $V=V_1\oplus V_2$ is good if and only if for every $v=v_1+v_2$, with $v_1\in V$, $v_2\in V$,
   $||v||=Max\{||v_1||, ||v_2||\}$. \\\\

Now let $(\rho,V)$ be $k$-rational irreducible representation of $G$ and $V=\oplus_\chi V_\chi$ its decomposition into weight spaces.
 We write $G=KAK$ its Cartan decomposition.
\begin{theo}\label{emilia}[\cite[\S 2.6]{Mostow1} for $k$ archimedean, \cite[Theorem 6.1]{Quint1} for $k$ non archimedean]\\
When $k=\R$ (resp. $\C$), there exists a scalar product (resp. Hermitian scalar product) on $V$ such
$\rho(K)$ acts by isometries on $V$and $\rho(A)$ is symmetric (resp. Hermitian).
The direct sum $V=\oplus_\chi V_\chi$ is good and $a\in A$ induces on each $V_\chi$ a homothety     of ratio $\chi(a)$. \\
When $K$ is non archimedean, there exists a $K$-invariant ultrametric norm on $V$ such that the $V_\chi$'s are in  good direct sum. The action of $a\in A$ on $V_\chi$ is by homothety   of ratio $\chi(a)$
\end{theo}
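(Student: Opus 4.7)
The plan is to handle the archimedean and non-archimedean cases separately, since the constructions of the good norm are quite different. In both cases the existence of a $K$-invariant norm follows from a standard averaging or lattice-saturation argument; the delicate point is making this norm compatible with the weight decomposition.

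For the archimedean case, I would exploit a Cartan involution $\theta$ of $G$ whose fixed-point set is $K$ and which sends $A$ to $A^{-1}$. First, start with any positive definite Hermitian form $h_0$ on $V$ and average over $K$ to get a $K$-invariant form $h$. The key step is then to arrange that $\rho(A)$ acts by self-adjoint operators, or equivalently that $h(\rho(g)v,w)=h(v,\rho(\theta(g)^{-1})w)$ for all $g\in G$. Such a $\theta$-admissible form exists because for an irreducible $\rho$ one has $\rho\circ\theta\simeq\rho^*$ (same character), and an intertwiner furnishes $h$; positivity is forced by irreducibility. Once $\rho(A)$ consists of commuting self-adjoint operators, the spectral theorem yields an orthogonal decomposition of $V$ into joint eigenspaces, which are exactly the weight spaces $V_\chi$, with $a\in A$ acting on $V_\chi$ by the scalar $\chi(a)$.

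For the non-archimedean case, I would first construct a $K$-invariant ultrametric norm by lattice averaging. Pick any $\Omega_k$-lattice $L_0$ generating $V$; since $K$ is compact and $\rho$ continuous, the stabilizer of $L_0$ in $K$ is open, hence of finite index, so $L=\sum_{k\in K}\rho(k)L_0$ is a finite sum of $K$-translates and is again an $\Omega_k$-lattice. Its gauge norm $\|v\|=\inf\{|t|^{-1}:t\in k^*,\,tv\in L\}$ is then $K$-invariant and ultrametric. To make this norm compatible with the weight decomposition I would use that $G$ is $k$-split, so there is a regular element $a_0\in A$ whose weights $\chi(a_0)$ have pairwise distinct absolute values. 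Any lattice stable under $\rho(a_0)$ then splits automatically as $\bigoplus_\chi (L\cap V_\chi)$, because in the ultrametric setting eigenvectors with eigenvalues of distinct absolute value are automatically in good direct sum. One therefore needs to refine $L$ to a lattice that is simultaneously $K$-invariant and $\rho(a_0)$-stable.

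The main obstacle is precisely this last point in the non-archimedean case: $K$ and $A$ do not commute and $A$ is not compact, so no direct averaging produces an $A$-stable lattice. The substitute for archimedean averaging is Bruhat--Tits theory: the good norm corresponds to a special vertex in the building of $G$ whose stabilizer is $K$ and which is fixed by a well-chosen apartment associated to $A$. This is the route followed by Quint in \cite[Theorem 6.1]{Quint1}, which I would reproduce. The archimedean construction I sketched is standard and is carried out in Mostow \cite{Mostow1}. In both cases the remaining claim, that $a\in A$ acts on $V_\chi$ as the homothety of ratio $\chi(a)$, is tautological from the definition of the weight decomposition.
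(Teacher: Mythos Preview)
The paper does not prove this theorem at all: it is quoted as a black box, with the archimedean case attributed to Mostow \cite[\S 2.6]{Mostow1} and the non-archimedean case to Quint \cite[Theorem 6.1]{Quint1}, and then used without further justification. So there is nothing in the paper to compare your argument against.

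That said, your sketch is a fair outline of what the cited references actually do. In the archimedean case your Cartan-involution argument is essentially the standard route (Mostow's original formulation is phrased slightly differently, as conjugating $G$ into a self-adjoint subgroup of $GL(V)$, but the content is the same). In the non-archimedean case you correctly identify both the easy step (averaging a lattice over the compact $K$) and the genuine obstacle (simultaneous compatibility with $A$), and you are right that the resolution goes through Bruhat--Tits theory as in Quint. Your honest admission that you would ``reproduce'' Quint's argument rather than give an independent one is appropriate here; there is no shortcut known.
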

Such a norm is said to be $(\rho,A,K)$-good.
%\begin{remarque} The $V_\chi$'s are not in general of dimension $1$.
%The choice of the norm in each $V_\chi$ in the previous theorem is arbitrary.
% Fix a weight $\chi$; one could choose a  basis $(e_1,...,e_{dim(V_\chi)})$ of $V_\chi$ and consider the usual Euclidean (resp. Hermitian) norm when $k=\R$ (resp. $k=\C$) and the Max norm, $||x||=Max\{||x_i||;i=1,...,dim(V_\chi)\}$, $x\in V_\chi$,  when $k$ is non archimedean\end{remarque}
%\begin{remarque} The last theorem says that for the basis of weights and for a good norm,
%$\rho(G)=\rho(K)\rho(A^+)\rho(K)$ is a Cartan decomposition in $SL(V)$ - see example below or Section \ref{subsconv}-  (or at least it imitates it as much as possible).
%Notice that by expression of weights in terms of the highest weight and by definition of $A^+$,
% $\rho(A^+)\subset \{diag(a_1,...,a_d);\;|a_1|\geq |a_i|\;\forall i\neq 1\}$,  which is a little weaker than the classical Cartan decomposition in $SL(V)$.
% For Iwasawa decomposition, $\rho(N)$ is not necessarily a subgroup of triangle matrices with $1$ on the diagonal but at least
% $\rho(N)$ fixes every element of $V_{\chi_\rho}$.
%\end{remarque}
\begin{corollaire} \label{youssif}Let $(\rho,V)$ be a $k$-rational representation of $G$, $\chi_\rho$ its highest weight.
Then there exists a good norm $||.||$ on $V$ such that
 $$||\rho(g)|| = |\chi_\rho \left(a(g)\right)|\;\;;g\in G$$ And for every $x_\rho\in V_{\chi_\rho}\setminus\{0\}$,
 $$\frac{||\rho(g) x_\rho||}{||x_\rho||}=|\chi_\rho \left(\widetilde{a (g)}\right)|\;\;;g\in G$$
 where $a(g)$ (resp. $\widetilde{a(g)}$) is the $A^+$ (resp. $A$) - component of
 $g$ in the Cartan (resp. Iwasawa) decomposition. \end{corollaire}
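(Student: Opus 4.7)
The plan is to take as our norm the $(\rho,A,K)$-good norm provided by Theorem \ref{emilia}, and then verify the two identities by direct computation using the Cartan and Iwasawa decompositions together with the structural properties of the highest weight.

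For the first identity, I would start by writing the Cartan decomposition $g = k_1\, a(g)\, k_2$ with $k_1,k_2 \in K$ and $a(g) \in A^+$. Since $\rho(K)$ acts by isometries, $\|\rho(g)\| = \|\rho(a(g))\|$. Now decompose $V = \oplus_\chi V_\chi$ into weight spaces; by Theorem \ref{emilia} this sum is good, and $\rho(a(g))$ acts on each $V_\chi$ as the scalar $\chi(a(g))$. A good direct sum (in either the archimedean or ultrametric setting) yields $\|\rho(a(g))\| = \max_\chi |\chi(a(g))|$. To finish I need to check that the maximum is attained at $\chi_\rho$ when $a(g) \in A^+$. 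Using the fact recalled in the section that every weight has the form $\chi = \chi_\rho / \prod_{\alpha\in \Pi} \alpha^{n_\alpha}$ with $n_\alpha \in \mathbb{N}$, and the defining inequality $|\alpha(a)| \geq 1$ for $a \in A^+$ and $\alpha \in \Delta^+ \supset \Pi$, I get $|\chi(a(g))| \leq |\chi_\rho(a(g))|$, hence the claimed equality.

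For the second identity, write the Iwasawa decomposition $g = k\, \widetilde{a(g)}\, n$ with $k\in K$, $\widetilde{a(g)} \in A$, $n \in N$. Fix $x_\rho \in V_{\chi_\rho}\setminus\{0\}$. Since the highest weight space is fixed pointwise by $N$ (one of the two listed properties of $\chi_\rho$), $\rho(n)x_\rho = x_\rho$; since $\rho(\widetilde{a(g)})$ acts on $V_{\chi_\rho}$ by the scalar $\chi_\rho(\widetilde{a(g)})$, we get $\rho(\widetilde{a(g)}n) x_\rho = \chi_\rho(\widetilde{a(g)})\, x_\rho$. Finally $\rho(k)$ is an isometry, so $\|\rho(g) x_\rho\| = |\chi_\rho(\widetilde{a(g)})|\, \|x_\rho\|$, which is what we wanted.

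Neither step is a serious obstacle once Theorem \ref{emilia} is in hand; the only point that requires a small verification is the domination of all weights by $\chi_\rho$ on $A^+$, which reduces to the structural description of weights of an irreducible representation combined with the definition of $A^+$. The Iwasawa computation is essentially immediate from the defining properties of the highest weight vector. Thus the whole corollary amounts to unpacking the good-norm theorem and applying the two decompositions of $G$.
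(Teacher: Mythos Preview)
Your proposal is correct and follows exactly the intended route: the paper states this as an immediate corollary of Theorem \ref{emilia} without giving a proof, and your argument---using $K$-invariance of the good norm, the diagonal action of $A$ on the good direct sum of weight spaces, the domination $|\chi(a)|\le|\chi_\rho(a)|$ on $A^+$, and the $N$-fixity of highest weight vectors for the Iwasawa identity---is precisely the unpacking the paper has in mind.
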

%\begin{proof} Let $g\in G$. Write Cartan decomposition of $g$: $g=k(g)a(g)u(g)$, with $k(g),u(g)\in K$ and $a(g)\in A^+$.  Then $\rho(g)=\rho(k(g))\rho(a(g))\rho(u(g))$. By the previous theorem, the exists a $K$- invariant norm on $V$. Hence, $||\rho(g)||=||\rho(a(g))||$. But in the basis of weights, $||\rho(a(g))||=diag\left( \chi_\rho(a(g)),....,\chi_\rho(a(g)),....,\chi_s(a(g)) \right)$ where $\chi_s$ is some weight different from $\chi_\rho$.
%But any $\chi\neq \chi_{\rho}$ can be written  $\chi=\chi_{\rho}\prod_{\alpha\in \Pi}\alpha^{n_\alpha}$; with $n_\alpha\in \N$ for every $\alpha\in \Pi$. Since by definition, $\alpha (A^+)\in [1;\infty[$, the first equality is proved. For the last equality, we write $g=\widetilde{k(g)} \widetilde{a(g)} n(g)$ in its Iwasawa decomposition and use that $\rho(n(g)) x_\rho=x_\rho$.\end{proof}

\paragraph{Fubiny-Study norm:}  Consider a good norm on $V$ and a good direct sum: $V=V_1\oplus V_2$. Then, there exists a good norm on $\bigwedge^2 V$ such that the direct sum $\bigwedge^2 V_1 \oplus (V_1 \bigwedge V_2) \oplus \bigwedge^2 V_2$ is good. This induces the Fubini-Study distance $\delta$ on the projective space $P(V)$:
$$\delta([x],[y])=\frac{||x \wedge y ||}{||x|| ||y||}\;\;;\;\;[x],[y]\in P(V)$$

\paragraph{An example: $SL_d(k)$ (\cite{pla}) }

Here we consider $\mathbf{G}=\mathbf{SL_d}$. A maximal $k$-torus is $A=\{diag(a_1,...,a_d); \;\prod_{i=1}^d a_i=1\}$  and $A^+=\{diag(a_1,...,a_d)\in A; \;|a_1|\geq...\geq |a_d|\}$.

To simplify notations, for $i=1,...,d$, we denote by $\lambda_i$  the following rational character of $A$:
$(\lambda_1,...,\lambda_d) \mapsto \lambda_i$.
Simple roots are $ \lambda_i/\lambda_{i+1}$, $i=1,...,d-1$.  Positive roots are
$\lambda_i/\lambda_{j}$, $1\leq i< j \leq d$.
The fundamental weight associated to $\alpha_{i}=\lambda_i/\lambda_{i+1}$  is $w_i=\lambda_1...\lambda_i$
 and the representation $\rho_{\alpha_i}$ of Proposition \ref{tits} is just $\bigwedge^i V$.  The expression of simple roots in terms of fundamental weights is:

$$\alpha_i = w_{i-1}^{-1}.{w_i}^2.w_{i+1}^{-1}\;\;;\;\;i=1,...,d$$

Let $K=SO_d(\R)$ (resp. $K=SU_d(\C)$) when $k=\R$ (resp. $k=\C$) and $K=SL_d(\Omega_k)$ when $k$ is
 non archimedean.  We denote by $N$  the subgroup of upper triangular matrices with $1$ on the diagonal.
  Then the Cartan decomposition is $G=KA^+K$ and the Iwasawa decomposition: $G=KAN$. As seen in Section \ref{uff}, we can  also take the following other choice for $A^+$: $A^+=\{diag(a_1,...,a_d);\; a_i\in ]0;+\infty[;\;a_1\geq...\geq a_d>0; \prod_{i=1}^d {a_i}=1\}$ when $k=\R$ or $\C$ and $A^+=\{diag(\pi^{n_1},...,\pi^{n_d});\;n_1\leq...\leq n_d;\; \sum_{i=1}^d {n_i}=0\}$ when $k$ is non archimedean. Let $B=(e_1,...,e_d)$ be
 the canonical basis on $V$ and $||.||$  the canonical
 norm on $V$ (see Section \ref{gene}),  then it is clear that $K$ acts by isometries on $V=k^d$.\;
Consequently, $B$ is in a good direct sum and $||.||$ is $(A,K)$-good.

\subsection{Estimates in the
 Cartan decomposition - the connected case}
\label{subsestimate}

\textbf{In this section G is assumed Zariski-connected.} Recall that $\mathbf{G}$ is also assumed semi-simple and $k$-split.

Let $\mu$ be a probability measure on $G=\mathbf{G}(k)$ and $\rho$ a $k$-rational irreducible
representation of $G$ into some $SL_d(k)$. We assume $\Gamma_\mu$ to be Zariski dense in $G$.\\
Our aim in this section is  to give estimates of the Cartan decomposition in $\rho(G)$ of the
random walks  $\rho(M_n)$, $\rho(S_n)$  using their Iwasawa decomposition. \\

Let $\chi_\rho$ be the highest weight for $V$,
and $r$ the number of non zero weights of $V$.
 We set $\chi_1=\chi_\rho$, $\chi_2,...,\chi_l$ ($l\in \{2,...,r\}$) the weights adjacent to $\chi_1$, i.e., such that $\chi_i=\chi_1$ or there is $\alpha\in \Theta_\alpha$ such that $\chi_i=\chi_1/\alpha$.
  We consider a $(\rho,A,K)$-good norm on $V$ (for the  basis of weights) given by Theorem \ref{emilia} of the preliminaries.

For $g\in G$, we denote by $g=k(g)a(g)u(g)$ (resp. $g=\widetilde{k(g)}\widetilde{a(g)}\widetilde{n(g)}$) a privileged Cartan (resp. Iwasawa) decomposition in $G=KA^+K=KAN$. When it comes to the random walk $S_n=X_n...X_1$, we simply write $S_n=K_nA_nU_n$ (resp. $S_n=\widetilde{K_n} \widetilde{A_n} N_n$) for the KAK (resp. KAN) decomposition of $S_n$ in $G$ and set $\rho(A_n)=diag(a_1(n),...,a_d(n))\;\;;\;\; \rho(\widetilde{A_n})=diag(\widetilde{a_1(n)},...,\widetilde{a_d(n)})$.\\

It is known that $G$ is isomorphic to a closed subgroup of $GL_r(k)$ for some $r\geq 2$ - \cite{Humphreys}. Let $i$ be such an isomorphism. (When $G$ is simple and of adjoint type, one can take the adjoint representation).

\begin{defi}[Exponential moment for algebraic groups] \label{moshader}
If $\mu$ is a probability measure on $G$, we say that $\mu$ has an exponential local moment if
$i(\mu)$ (image of $\mu$ under $i$) has an exponential local moment (see Definition \ref{bnb}). \end{defi}
The following lemma explains why this is a well defined notion, i.e. the existence of exponential moment is independent of the embedding ``$i$''.
\begin{lemme} Let $G\subset SL(V)$ be the $k$-points of a semi-simple algebraic group and $\rho$ a finite dimensional $k$-algebraic representation of $G$. If  $\mu$ has an exponential local moment then the image of $\mu$ under $\rho$ has also an exponential local moment. \label{expomementrep}\end{lemme}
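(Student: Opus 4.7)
The plan is to establish a polynomial bound of the form
\[
||\rho(g)|| \;\leq\; C\,||g||^N \qquad (g\in G),
\]
for suitable constants $C>0$ and $N\geq 1$, where the two norms are the operator norms coming from the ambient embeddings $G \subset SL(V)$ and $\rho(G) \subset GL(W)$. Once this bound is in hand, picking $\tau' = \tau/N$ (where $\tau>0$ is an exponent witnessing the exponential local moment of $\mu$ in the embedding $G\subset SL(V)$) gives immediately
\[
\int ||\rho(g)||^{\tau'}\,d\mu(g) \;\leq\; C^{\tau'}\int ||g||^{\tau}\,d\mu(g) \;<\; \infty,
\]
which is precisely what is needed for $\rho(\mu)$ to have an exponential local moment on its ambient $SL(W)$.

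The main step is therefore the polynomial bound, which I would split into two ingredients. First, I would use the semi-simplicity of $\mathbf{G}$: a semi-simple $k$-algebraic group admits no non-trivial $k$-rational character, so $\det \circ \rho \equiv 1$, and in particular $\rho(G) \subset SL(W)$. Second, since $\mathbf{G}$ is a Zariski-closed subgroup of $\mathbf{SL}(V)$, its coordinate ring $k[\mathbf{G}]$ is a quotient of $k[\mathbf{SL}(V)] = k[x_{ij}]/(\det - 1)$, hence is generated as a $k$-algebra by the restrictions to $G$ of the matrix coordinate functions $g \mapsto g_{ij}$ alone --- crucially, no inversion of $\det$ is needed, because $\det = 1$ already holds on $\mathbf{SL}(V)$. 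Consequently, the matrix coefficients $g \mapsto \rho(g)_{ab}$, being regular functions on $\mathbf{G}$, are restrictions of honest polynomials in the $g_{ij}$. Taking $N$ to be the maximum of their total degrees and combining with the elementary dimensional inequalities $|g_{ij}| \leq ||g||$ and $||h|| \leq C'\max_{a,b}|h_{ab}|$ (valid in both the archimedean and the ultrametric settings with a constant $C'$ depending only on dimension), one obtains the desired inequality $||\rho(g)|| \leq C\,||g||^N$.

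The only subtle point worth flagging is securing \emph{polynomial} (rather than merely rational) expressions for the entries of $\rho(g)$ in terms of the entries of $g$; this is exactly where semi-simplicity enters, since otherwise $k[\mathbf{G}]$ would be a quotient of $k[x_{ij},\det^{-1}]$ and one would also need to control $|\det g|^{-1}$ along the random walk. Beyond this, the argument is routine finite-dimensional linear algebra and requires no input from random-walk theory.
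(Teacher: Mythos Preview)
Your proof is correct and takes essentially the same approach as the paper: both establish a polynomial bound $||\rho(g)|| \leq C\,||g||^N$ by noting that the matrix coefficients of $\rho(g)$ are polynomials in those of $g$, and then pass to a smaller exponent $\tau'$. Your write-up is simply more detailed --- you justify the polynomial (as opposed to rational) expression via the coordinate ring of $\mathbf{SL}(V)$ and invoke semi-simplicity to land in $SL(W)$ --- while the paper states the polynomial bound in one line and uses $||g||\geq 1$ on $SL(V)$ to absorb constants.
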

\begin{proof}
Each matrix coefficient $(\rho(g))_{i,j}$ of $\rho(g)$, for $g\in G$, is a fixed polynomial in terms of the matrix coefficients of $g$. Since for the canonical norm, $||g||\geq 1$ for every $g\in G$, we see that there exists $C>0$ such that $||\rho(g)||\leq ||g||^C$ for every $g\in G$. This suffices to show the lemma. \end{proof}

\subsubsection{Comparison between (the A-components of) the Cartan and Iwasawa decompositions.}

 Estimating the asymptotic behavior of the components of $S_n$ in the KAK decomposition will be crucial for us. We will derive these estimations from their analogs for the KAN decomposition.
 The following proposition explains why it is legal to do so:

\begin{prop}[Comparison between KAK and KAN ] Almost surely there exists a compact subset $C$ of $G$ such that for every $n\in \N^*$, $A_n \widetilde{A_n}^{-1}$ belongs to $C$.
In particular, there exists a compact subset $D$ of $GL(V)$ such that
%if $\rho$ is an irreducible representation of $G$,
$\rho (A_n) \rho (\widetilde{A_n})^{-1}$  belongs to $D$.
\label{compar} \end{prop}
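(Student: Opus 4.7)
The plan is to reduce the compactness of $\{A_n \widetilde{A_n}^{-1}\}_n$ in $A$ to bounding finitely many characters, and then use the fundamental representations of $G$ as a bridge between these characters and the random walk. Since $\mathbf{G}$ is $k$-split and semisimple, the fundamental weights $(w_\alpha)_{\alpha \in \Pi}$ form a $\Z$-basis of the character group $\mathbf{X(A)}$, so the map $a \mapsto (|w_\alpha(a)|)_{\alpha\in \Pi}$ is a proper continuous map from $A$ to $\R_{>0}^{|\Pi|}$. Consequently, it suffices to check that, almost surely, $|w_\alpha(A_n)|/|w_\alpha(\widetilde{A_n})|$ stays bounded above and below by positive constants uniformly in $n$, for each simple root $\alpha$.

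For each $\alpha \in \Pi$, Proposition \ref{tits} produces a $k$-rational irreducible representation $(\rho_\alpha, V_\alpha)$ whose highest weight is $w_\alpha$ and whose highest weight space is one-dimensional. Fix a nonzero highest weight vector $x_\alpha$ and equip $V_\alpha$ with a $(\rho_\alpha, A, K)$-good norm. Corollary \ref{youssif} then translates the torus data into norm data,
\begin{equation*}
|w_\alpha(A_n)| \;=\; ||\rho_\alpha(S_n)||, \qquad |w_\alpha(\widetilde{A_n})| \;=\; \frac{||\rho_\alpha(S_n)\, x_\alpha||}{||x_\alpha||},
\end{equation*}
so the problem becomes the uniform control of $||\rho_\alpha(S_n)||/||\rho_\alpha(S_n) x_\alpha||$. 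The lower bound $|w_\alpha(A_n \widetilde{A_n}^{-1})| \geq 1$ is the tautological operator-norm inequality $||\rho_\alpha(S_n) x_\alpha|| \leq ||\rho_\alpha(S_n)||\cdot||x_\alpha||$. The upper bound is where the real content lies: I would apply Proposition \ref{laprop} to the constant sequence $x_n \equiv x_\alpha$ in $V_\alpha$, which yields almost surely $\inf_n ||\rho_\alpha(S_n) x_\alpha||/||\rho_\alpha(S_n)|| > 0$, i.e. an a.s. finite upper bound on $|w_\alpha(A_n \widetilde{A_n}^{-1})|$.

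The one genuinely structural step is to verify the strong irreducibility hypothesis of Proposition \ref{laprop} for $\rho_\alpha(\Gamma_\mu) \subset GL(V_\alpha)$; this is where the standing Zariski-connectedness assumption on $G$ enters the argument. Since $\Gamma_\mu$ is Zariski dense in the connected group $G$, its image $\rho_\alpha(\Gamma_\mu)$ is Zariski dense in the connected group $\rho_\alpha(G)$, so the remark in Definition \ref{defdef} identifies irreducibility with strong irreducibility, and $\rho_\alpha$ is irreducible by construction. Assembling the $|\Pi|$ bounds then produces a (random) compact $C \subset A \subset G$ containing every $A_n \widetilde{A_n}^{-1}$, and the ``in particular'' assertion follows at once from the continuity of the fixed representation $\rho$, which sends $C$ to a compact set $D \subset GL(V)$.
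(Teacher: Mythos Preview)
Your argument is correct and is essentially the paper's proof: both reduce the compactness of $\{A_n\widetilde{A_n}^{-1}\}$ to the almost-sure bound $\sup_n \,||\rho_\beta(S_n)||\cdot||x_\beta|| / ||\rho_\beta(S_n)x_\beta|| < \infty$ for each fundamental representation $(\rho_\beta,V_\beta)$, obtained from Proposition~\ref{laprop} after invoking Zariski-connectedness of $G$ to get strong irreducibility. The only cosmetic difference is that the paper reaches this point by first passing through the adjoint representation (finite kernel) and writing each simple root as $\alpha=\prod_{\beta}w_\beta^{\,n_\beta}$, whereas you use the fundamental weights directly as a proper coordinate system on $A$.
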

\begin{proof} Since the kernel of the adjoint representation is finite, it suffices to show that there exists a compact subset $E$ of $GL(\mathfrak{g})$ such that $Ad(A_n)Ad(\widetilde{A_n}^{-1})$  belongs to $E$. This is equivalent to show that almost surely $\frac{\alpha(A_n)}{\alpha(\widetilde{A_n})}$ is in a random compact subset of $k$ for every $\alpha\in \Pi$.\;
Indeed, we decompose $\alpha$ into fundamental weights: $\alpha = \prod_{\beta\in \Pi} {w_\beta^{n_\beta}}$; $n_\beta \in \Z$. Hence,
\begin{equation}\frac{\alpha(A_n)}{\alpha(\widetilde{A_n})} = \prod_{\beta\in \Pi} \left( {\frac{w_\beta (A_n)}{w_\beta (\widetilde{A_n})}}\right)^{n_\beta}   \label{esmein}\end{equation}
By Theorem \ref{emilia}, for each $\beta \in \Pi$,
there exists a representation $(\rho_\beta,V_\beta)$ of $G$ whose highest weight is
$w_\beta$ and highest weight space is a line, say $k\; x_\beta$. Fix a $(\rho_\beta,A,K)$-good norm on $V_\beta$. Corollary \ref{youssif} applied to the representation $\rho_\beta$ gives then:
  $$||\rho_\beta (S_n)||= |{w_\beta (A_n)}| \;\;;\;\;
   \frac{||\rho_\beta (S_n) x_\beta||}{||x_\beta||} = |{w_\beta (\widetilde{A_n})}|$$
 Then (\ref{esmein}) becomes then
\begin{equation}\big| \frac{\alpha(A_n)}{\alpha(\widetilde{A_n})} \big|=
\prod_{\beta\in \Pi} \left( {\frac{||\rho_\beta (S_n)||}{\frac{||\rho_\beta (S_n) x_\beta||}{||x_\beta||}}} \right)^{{n_\beta}}\label{rap1}\end{equation}
It suffices to control the terms where $n_\beta \geq 0$. Since $G$ is Zariski-connected, $\rho_\beta$ is
in fact strongly irreducible. By Zariski density, $\rho_\beta (\Gamma_\mu)$ also. Hence we can apply  Proposition \ref{laprop}:  $$\textrm{a.s.}\;\;\;\;\;\;\;Sup_{n\in \N^*}\; \frac{||\rho_\beta (S_n)||}{\frac{||\rho_\beta (S_n) x_\beta||}{||x_\beta||}} < \infty $$
 This is what we want to show.
  \end{proof}

A version of the latter proposition ``in expectation'' will be needed.
\begin{prop}[Comparison between KAK and KAN in expectation]\label{comparaison2}
Assume that $\mu$ has an exponential local moment (Definition \ref{moshader}).
For every $\gamma>0$, there exist $\epsilon(\gamma)>0$ and $n(\gamma) \in \N^*$ such that for $0<\epsilon< \epsilon(\gamma)$, $n>n(\gamma)$ and every $\alpha\in \Pi$:

\begin{equation}\E \left(\big|\frac{\alpha(A_n)}{\alpha(\widetilde{A_n})}\big|^\epsilon \right)\leq (1+\epsilon\gamma)^n \;\;\;\;;\;\;\;\; \E \left(\big|\frac{\alpha(\widetilde{A_n})}{\alpha({A_n})}\big|^\epsilon \right)\leq (1+\epsilon\gamma)^n  \label{kitir1}\end{equation}
Moreover,
\begin{equation}\label{utile}\E ( ||{\rho(A_n)}{\rho(\widetilde{A_n}}^{-1})||^\epsilon) \leq (1+\epsilon\gamma)^n \end{equation}\end{prop}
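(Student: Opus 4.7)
My plan is to upgrade Proposition \ref{compar}, which asserts that $\alpha(A_n)/\alpha(\widetilde{A_n})$ stays almost surely in a compact set, to the quantitative form (\ref{kitir1}), (\ref{utile}) using the weak large deviation estimate (Proposition \ref{largedeviations}). The starting point is the identity already derived in the proof of Proposition \ref{compar}: writing each simple root in fundamental weights, $\alpha=\prod_{\beta\in\Pi}w_\beta^{n_\beta}$ with $n_\beta\in\Z$, and exploiting Corollary \ref{youssif} for each fundamental representation $(\rho_\beta,V_\beta)$ with highest weight vector $x_\beta$, one has
\begin{equation*}
\Bigl|\frac{\alpha(A_n)}{\alpha(\widetilde{A_n})}\Bigr|
=\prod_{\beta\in\Pi}\Bigl(\underbrace{\frac{\lVert\rho_\beta(S_n)\rVert}{\lVert\rho_\beta(S_n)x_\beta\rVert/\lVert x_\beta\rVert}}_{Y_\beta(n)\,\geq\,1}\Bigr)^{\!n_\beta}.
\end{equation*}
Raising to the power $\epsilon$, factors with $n_\beta<0$ are bounded by $1$ trivially, so only those with $n_\beta>0$ need to be controlled. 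A generalized H\"older inequality reduces the task to bounding, for each fixed $\beta$, the quantity $\E[Y_\beta(n)^{\epsilon'}]$ for $\epsilon'$ a fixed multiple of $\epsilon$.

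The control on $\E[Y_\beta(n)^{\epsilon'}]$ is exactly what Proposition \ref{largedeviations} provides, applied to the random walk $\rho_\beta(S_n)$ on $SL(V_\beta)$ with the unit vector $x_\beta/\lVert x_\beta\rVert$. The hypotheses are satisfied: since $\mathbf{G}$ is Zariski-connected, $\rho_\beta$ is strongly irreducible, and since $\Gamma_\mu$ is Zariski-dense in $G$, the image $\rho_\beta(\Gamma_\mu)$ is strongly irreducible too; moreover $\rho_\beta(\mu)$ has an exponential local moment by Lemma \ref{expomementrep}. Thus for every $\gamma_0>0$ there exist $\epsilon_0(\gamma_0)>0$ and $n_0(\gamma_0)$ such that $\E[Y_\beta(n)^{\epsilon'}]\leq(1+\epsilon'\gamma_0)^n$ for all $0<\epsilon'<\epsilon_0$ and $n>n_0$. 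Combining over the finitely many simple roots and choosing $\gamma_0$ small enough in terms of $\gamma$ (for example so that $\prod_\beta(1+\epsilon'\gamma_0)^{1/|\Pi|}\le1+\epsilon\gamma$) yields the first half of (\ref{kitir1}). The second inequality of (\ref{kitir1}), concerning $\alpha(\widetilde{A_n})/\alpha(A_n)$, follows by exactly the same argument after exchanging the roles of positive and negative $n_\beta$.

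For (\ref{utile}), I would use the fact that in the good basis of weight vectors, both $\rho(A_n)$ and $\rho(\widetilde{A_n})$ are diagonal, so $\rho(A_n)\rho(\widetilde{A_n})^{-1}$ is diagonal with entries $\chi(A_n)/\chi(\widetilde{A_n})$ for the weights $\chi$ of $\rho$. Hence, up to a dimension-dependent constant,
\begin{equation*}
\lVert\rho(A_n)\rho(\widetilde{A_n})^{-1}\rVert^\epsilon\;\leq\;\sum_{\chi}\Bigl|\frac{\chi(A_n)}{\chi(\widetilde{A_n})}\Bigr|^\epsilon.
\end{equation*}
Every weight decomposes as $\chi=\prod_\beta w_\beta^{k_\beta(\chi)}$ with $k_\beta(\chi)\in\Z$ (combining the fact that $\chi_\rho$ is a product of fundamental weights with non-negative exponents with the expression of simple roots in terms of fundamental weights), so each ratio $|\chi(A_n)/\chi(\widetilde{A_n})|$ is again a product of the $Y_\beta(n)^{k_\beta(\chi)}$ and the very same H\"older plus weak-large-deviation argument applies verbatim.

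The only real obstacle is bookkeeping: one must choose the auxiliary exponent $\epsilon'$ and the auxiliary rate $\gamma_0$ carefully (depending on $|\Pi|$, on the integers $n_\beta$ and $k_\beta(\chi)$, and on the number of weights of $\rho$) so that after applying H\"older and summing over weights, the final bound is of the clean shape $(1+\epsilon\gamma)^n$. Since all of these quantities are fixed once $\mathbf{G}$ and $\rho$ are fixed, this is a finite and routine adjustment, not a genuine difficulty, and the argument works uniformly for $\epsilon$ in some interval $(0,\epsilon(\gamma))$ and $n\ge n(\gamma)$.
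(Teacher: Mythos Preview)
Your proposal is correct and follows essentially the same route as the paper: start from the identity (\ref{rap1}) of Proposition \ref{compar}, discard the factors with non-positive exponent, apply H\"older to the remaining ones, and invoke Proposition \ref{largedeviations} for each fundamental representation $\rho_\beta$ after checking strong irreducibility (via Zariski-connectedness) and the exponential moment (via Lemma \ref{expomementrep}). For (\ref{utile}) the paper does something marginally different in form---it writes each weight as $\chi=\chi_\rho/\prod_{\alpha\in\Pi}\alpha^{s_\alpha}$ with $s_\alpha\in\N$ and then feeds back into the already-proved (\ref{kitir1}) (treating $\chi=\chi_\rho$ separately via Corollary \ref{youssif})---but this is the same argument as yours, just organized slightly differently.
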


\begin{proof} Let $\epsilon>0$ and $\alpha\in \Pi$. Let $\beta_1,...,\beta_s$ be an order of the simple roots appearing in identity (\ref{rap1}).  Holder inequality (for $s$ maps) applied to the same identity  gives:
$$ \E \left( \big| \frac{\alpha(A_n)}{\alpha(\widetilde{A_n})} \big|^\epsilon \right)\leq  \prod_{i=1}^{s} \Big [ \E \big[ \left( {\frac{||\rho_{\beta_i} (S_n)||}{\frac{||\rho_{\beta_i} (S_n) x_{\beta_i}||}{||x_{\beta_i}||}}} \right)^{ \epsilon s n_{\beta_i}}\big]\Big] ^{\frac{1}{s}}$$

Terms with $n_{\beta_i} \leq  0$ are less or equal to one. Hence, it suffices to control the terms
 where $n_{\beta_i} >0$. Fix such $i\in \{1,...,s\}$ and let $\gamma>0$.  By Lemma \ref{expomementrep},
 the image of $\mu$ under $\rho_{\beta_i}$ has an exponential local moment.
  Moreover, as explained in the previous proposition, $G$ being Zariski-connected, $\rho_{\beta_i}$ is
   strongly irreducible. Consequently, we can apply Proposition \ref{largedeviations} which shows that
   %with $\gamma=\frac{\gamma_0}{c_i n_{\beta_i}}$. This gives for all small $\epsilon>0$ and all large $n$:
 $\E \big[ \left( {\frac{||\rho_{\beta_i} (S_n)||}{\frac{||\rho_{\beta_i} (S_n) x_{\beta_i}||}{||x_{\beta_i}||}}} \right)^{\epsilon s  n_{\beta_i}}\big] \leq (1+\gamma \epsilon)^n $. Hence  $\E \left( \big| \frac{\alpha(A_n)}{\alpha(\widetilde{A_n})} \big|^\epsilon \right) \leq (1+\gamma  \epsilon)^n$. In the same way, we show the inequality on the right hand side of (\ref{kitir1}).\\
In particular, for every non zero weight $\chi$ of $(\rho,V)$ different from $\chi_\rho$,
 $\E \left([\chi(A_n) /\chi(\widetilde{A_n}) ]^\epsilon\right) \leq (1+\gamma \epsilon)^n$.
 Indeed, this follows from the expression $\chi = \chi_1\;/ \; \prod_{\alpha \in \Pi}\;{\alpha^{s_\alpha}}$ with $s_\alpha \in \N$ and the Holder inequality applied to (\ref{kitir1}). For $\chi=\chi_\rho$, a similar inequality holds because $\chi_\rho(A_n)/\chi_\rho(\widetilde{A_n})=||S_n||/ ||S_n x||$ for some $(\rho,A,K)$-good norm and every $x\in V_{\chi_\rho}$. This proves (\ref{utile}).
  %On the other hand, $\E (||{\rho(A_n)}{\rho(\widetilde{A_n}}^{-1} )||^\epsilon) \preceq \sum_{i=1}^d
 %{\E (|\frac{a_i(n)}{\widetilde{a_i(n)}} |^\epsilon)}$  (same notations as the beginning of Section \ref{subsestimate}). But
 %$\frac{a_1(n)}{\widetilde{a_1(n)}}= \frac{\chi_1(A_n)}{\chi(\widetilde{A_n})}$. For a $(\rho,A,K)$-good norm on $V$,
 %$ \big| \frac{a_1(n)}{\widetilde{a_1(n)}}\big| =
 %\frac{||\rho(S_n)||}{\frac{||\rho(S_n) x_{\rho}||}{||x_{\rho}||}}$.
%Again by Proposition (\ref{largedeviations}) and  Zariski density of $\rho'$, we conclude the estimate of
%Applying  the same reasoning as above gives for every $\gamma>0$, all $\epsilon>0$ small enough and all $n$
%large enough:
 %\begin{equation}{\E (|{a_1(n)}{\widetilde{a_1(n)}}^{-1} |^\epsilon)}\leq (1+\epsilon\gamma)^n\label{arodd}\end{equation}
 %Finally, fix $i\in \{1,...,d\}$. For some weight $\chi$ different from $\chi_1$,
  % $\frac{a_i(n)}{\widetilde{a_i(n)}} = \frac{\chi(A_n)}{\chi(\widetilde{A_n})}$.
   %Writing $\chi = \chi_1\;/ \; \prod_{\alpha \in \Pi}\;{\alpha^{s_\alpha}}$ with $s_\alpha \in \N$,
   %combining (\ref{kitir1}), (\ref{arodd}) with the Holder inequality end the proof of (\ref{utile}).
 \end{proof}

The following theorem shows that the ratio between the first two components in the Iwasawa decomposition is exponentially
small.
\begin{theo} [Exponential contraction in $KAN$] \label{iwa} Assume that $\mu$ has an exponential local moment and that
$\rho(\Gamma_\mu)$ is  contracting. Then there exists $\lambda>0$, such that for every $\epsilon>0$ small enough and all $n$ large enough:
 $$\E(|\frac{\widetilde{a_i(n)}}{\widetilde{a_1(n)}}|^\epsilon)\; \leq (1-\lambda \epsilon)^n \;\;\;\;\;;\;\;\;\;i=2,...,d$$

 We recall that $\widetilde{A_n}$ is the $A$-component of $S_n$ in the Iwasawa decomposition of $S_n$ in $G$ and that $\widetilde{a_1(n)},...,\widetilde{a_d(n)}$ are the diagonal components of $\rho(\widetilde{A_n})$ in the basis of weights.
 \end{theo}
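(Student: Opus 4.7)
The approach is to reduce the desired Iwasawa exponential contraction to two exponential estimates already available: the projective-space contraction of Proposition \ref{contracexpo} and the Iwasawa--Cartan comparison of Proposition \ref{comparaison2}. The plan proceeds in three steps.

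First, I reduce the problem to showing, for each $\alpha \in \Theta_\rho$, the bound $\E[|1/\alpha(\widetilde{A_n})|^\epsilon] \leq (1-\lambda\epsilon)^n$. Writing $\chi_1/\chi_i = \prod_{\alpha \in \Pi} \alpha^{n_\alpha^{(i)}}$ with $n_\alpha^{(i)} \in \N$, and noting that any descent path in the weight poset of $\rho$ from $\chi_1$ to $\chi_i$ must start by subtracting a simple root in $\Theta_\rho$ (so at least one such $\alpha$ satisfies $n_\alpha^{(i)} \geq 1$), a H\"older inequality decomposes $\E[|\chi_i/\chi_1(\widetilde{A_n})|^\epsilon]$ into factors $\E[|1/\alpha(\widetilde{A_n})|^{p\epsilon}]^{1/p}$. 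Contributions from $\alpha \notin \Theta_\rho$ are controlled by $(1+\gamma\epsilon)^n$ via Proposition \ref{comparaison2} combined with the Cartan inequality $|\alpha(A_n)| \geq 1$, and are harmless for $\gamma$ small.

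Second, I establish the bound for each $\alpha \in \Theta_\rho$ via deterministic test points. Set $\chi_\alpha := \chi_1/\alpha$, choose nonzero weight vectors $x_1 \in V_{\chi_1}$ and $x_\alpha \in V_{\chi_\alpha}$, and take $[x]:=[x_1 + x_\alpha]$, $[y]:=[x_1 - x_\alpha]$ in $P(V)$; then $\delta([x],[y]) \geq c > 0$. Using $S_n = \widetilde{K_n}\widetilde{A_n}N_n$, the facts that $N$ fixes $x_1$ and sends $x_\alpha$ to $x_\alpha + u_n$ with $u_n$ in strictly lower weight spaces, and the $(\rho, A, K)$-good norms on $V$ and $\bigwedge^2 V$ (Theorem \ref{emilia}), a direct computation of the numerator and denominator of the Fubini--Study distance (using the crude bound $\|S_n x_\alpha\| \leq \|\rho(S_n)\| = |\chi_1(A_n)|$) yields
\[\delta(\rho(S_n)[x], \rho(S_n)[y]) \;\gtrsim\; \frac{|\chi_\alpha(\widetilde{A_n})/\chi_1(\widetilde{A_n})|}{(1 + |\chi_1(A_n)/\chi_1(\widetilde{A_n})|)^2}.\]
Rearranging and applying Cauchy--Schwarz to the expectation bounds $\E[|\chi_\alpha(\widetilde{A_n})/\chi_1(\widetilde{A_n})|^\epsilon]$ by the product $\E[\delta^{2\epsilon}(\rho(S_n)[x], \rho(S_n)[y])]^{1/2} \cdot \E[(1+|\chi_1(A_n)/\chi_1(\widetilde{A_n})|)^{4\epsilon}]^{1/2}$. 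The first factor is $\leq (1-2\lambda\epsilon)^{n/2}$ by Proposition \ref{contracexpo}, applied to $\rho$ (strongly irreducible by Zariski density of $\Gamma_\mu$ in the connected group $G$, and contracting by hypothesis). The second factor is $\leq (1+4\gamma\epsilon)^{n/2}$ by Proposition \ref{comparaison2}. Choosing $\gamma$ small relative to $\lambda$ makes the product $\leq (1-\lambda'\epsilon)^n$ for some $\lambda' > 0$, completing the proof.

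\textbf{Main obstacle.} The delicate point is the second step, establishing the lower bound on $\delta(\rho(S_n)[x], \rho(S_n)[y])$. In the Cartan setting, the ordering $|\chi_1(A_n)| \geq |\chi'(A_n)|$ would automatically isolate the ratio $|\chi_\alpha/\chi_1|$; in the Iwasawa setting, the lower-weight terms $\rho(\widetilde{A_n}) u_n$ coming from $N_n x_\alpha - x_\alpha$ are not controlled a priori and may dominate the denominator of $\delta$. The crude bound $\|S_n x_\alpha\| \leq |\chi_1(A_n)|$ introduces a correction factor $|\chi_1(A_n)/\chi_1(\widetilde{A_n})|$ that must be absorbed; balancing the Cauchy--Schwarz exponents so the $(1+\gamma\epsilon)^n$ corrections from Proposition \ref{comparaison2} do not cancel the $(1-\lambda\epsilon)^n$ decay from Proposition \ref{contracexpo} is the principal technical challenge.
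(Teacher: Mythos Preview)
Your approach is correct and genuinely different from the paper's. The paper does not work inside $V$ at all for this step: it decomposes each $\alpha\in\Theta_\rho$ into fundamental weights $\alpha=\prod_i w_{\beta_i}^{n_{\beta_i}}$, passes to the corresponding fundamental representations $(\rho_{\beta_i},V_{\beta_i})$, and builds the cocycle $s(g,([x_1],\dots,[x_s]))=\sum_i n_{\beta_i}\log\frac{\|\rho_{\beta_i}(g)x_i\|}{\|x_i\|}$ on $\prod_i P(V_{\beta_i})$. It then invokes the cocycle Lemma~\ref{cocycle} directly, and the bulk of the work is verifying the negative-drift hypothesis: this is done via the ergodic theorem and Dekking's Lemma~\ref{lemme}, reducing to the a.s.\ statement $|\alpha(A_n)|\to\infty$, which in turn is extracted from the contraction hypothesis via Theorem~\ref{normalise}. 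Your route stays in $P(V)$, picks explicit test points $[x_1\pm x_\alpha]$, and reads off $|1/\alpha(\widetilde{A_n})|$ from the Fubini--Study distance of their images; the exponential decay then comes from Proposition~\ref{contracexpo} (already proved) corrected by Proposition~\ref{comparaison2}. What you gain is that you avoid the ergodic theorem and Lemma~\ref{lemme} entirely and recycle estimates already on the shelf; what the paper's route gains is that it never needs to perform the somewhat delicate wedge computation or balance the Cauchy--Schwarz exponents.

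Two small corrections to your write-up. First, $N$ \emph{raises} weights, not lowers them: for $x_\alpha\in V_{\chi_1/\alpha}$ one has $N_n x_\alpha = x_\alpha + c_n$ with $c_n\in V_{\chi_1}$ (the only weight strictly above $\chi_\alpha$). This actually helps you---since $\dim V_{\chi_1}=1$ (forced by contraction), $x_1\wedge c_n=0$ and your numerator is exactly $|\chi_1(\widetilde{A_n})\chi_\alpha(\widetilde{A_n})|\,\|x_1\wedge x_\alpha\|$, with no lower-order terms to absorb. Second, in characteristic $2$ your test points coincide; use $[x_1]$ and $[x_1+x_\alpha]$ instead, which gives the same wedge identity $\rho(S_n)x_1\wedge\rho(S_n)(x_1+x_\alpha)=\rho(S_n)x_1\wedge\rho(S_n)x_\alpha$. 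Your handling of the simple roots $\alpha\notin\Theta_\rho$ via $|\alpha(A_n)|\geq 1$ and Proposition~\ref{comparaison2} is in fact more careful than the paper's somewhat terse reduction at this point.
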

\begin{remarque} When $k=\R$, no contraction assumption is needed.
Indeed, by a theorem of Goldsheild-Margulis \cite{Margulis}, a semigroup $\Gamma$ of $GL_d(\R)$ is strongly
irreducible and contracting if and only if its Zariski closure is.
Hence $\rho(\Gamma_\mu)$ is contracting if and only if $\rho(G)$ is. But $G$ is $\R$-split, hence the highest weight space of $\rho$ is a line, thus $\rho$ is contracting. \end{remarque}

Before proving the proposition, we state a standard lemma in this theory:
\begin{lemme}\cite{dekk} Let $G$ be a group,  $X$ be a $G$-space, $(X_n)_{n\in \N^*}$ a sequence of independent elements of $G$ with distribution $\mu$ and $s$ an additive cocycle on $G\times X$. Suppose that $\nu$ is a $\mu$-invariant probability measure on $X$ such that:
\begin{enumerate}
\item  $ \iint {s^+ (g,x) d\mu(g) d\nu(x)}< \infty $ where $y^+=\sup(0,y)$ for every $y\in \R$.
\item For $\p \otimes \nu$-almost every  $(\omega,x)$, \; $\lim_{n \rightarrow \infty} {s \left(X_n(\omega)...X_1(\omega),x\right ) }=+ \infty $. \end{enumerate}
Then $s$ is in $L^1 (\p \otimes \nu)$ and $\iint{s(g,x) d\mu (g) d\nu(x)}>0$\label{lemme}\end{lemme}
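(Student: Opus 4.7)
The plan is to recognize $s(X_n\cdots X_1,x)$ as a Birkhoff sum over a measure-preserving skew product, and then use hypothesis (2) twice: once to force integrability, once to force strict positivity.

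First I would set up the skew product $T:(\Omega \times X,\p\otimes\nu)\to(\Omega\times X,\p\otimes\nu)$ defined by $T(\omega,x)=(\sigma\omega,X_1(\omega)\cdot x)$, where $\sigma$ is the Bernoulli shift on $\Omega\cong G^{\N^*}$ equipped with $\mu^{\otimes\N^*}$. The $\mu$-invariance of $\nu$ guarantees that $T$ preserves $\p\otimes\nu$. Setting $\phi(\omega,x):=s(X_1(\omega),x)$, the cocycle identity for $s$ yields
\begin{equation*}
s(X_n(\omega)\cdots X_1(\omega),x)\;=\;\sum_{k=0}^{n-1}\phi\circ T^k(\omega,x).
\end{equation*}
Under this identification, hypothesis (1) becomes $\phi^+\in L^1(\p\otimes\nu)$, and hypothesis (2) becomes $\sum_{k=0}^{n-1}\phi\circ T^k\to+\infty$, $\p\otimes\nu$-almost surely.

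Next I would invoke the Birkhoff ergodic theorem in the extended form valid under the sole integrability of $\phi^+$: the Birkhoff averages converge $\p\otimes\nu$-a.s. in $[-\infty,+\infty)$ to a $T$-invariant function $\bar\phi$. By hypothesis (2), $\liminf_n \tfrac{1}{n}\sum_{k=0}^{n-1}\phi\circ T^k\geq 0$, so $\bar\phi\geq 0$ a.s., and in particular $\bar\phi>-\infty$ a.s. Decomposing $\p\otimes\nu$ into its $T$-ergodic components, the standard dichotomy says that on each component either $\phi\in L^1$ and $\bar\phi$ equals its integral, or $\phi^-$ is non-integrable and $\bar\phi=-\infty$. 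The second alternative is excluded on almost every component, so $\phi^-\in L^1(\p\otimes\nu)$; combined with hypothesis (1), $\phi\in L^1$, that is, $s\in L^1(\mu\otimes\nu)$. Since $\bar\phi=\E[\phi\mid \mathcal{I}_T]\geq 0$ a.s., integrating yields $\int s\,d(\mu\otimes\nu)\geq 0$.

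For the strict inequality I argue by contradiction: suppose $\int\phi\,d(\p\otimes\nu)=0$. The nonnegative function $\bar\phi$ then has zero mean and hence vanishes a.s., so $\E[\phi\mid \mathcal{I}_T]=0$. On almost every $T$-ergodic component we thus have a probability-preserving ergodic system with a mean-zero $L^1$ observable $\phi$. Atkinson's recurrence theorem applies and yields $\liminf_n \bigl|\sum_{k=0}^{n-1}\phi\circ T^k\bigr|=0$ almost surely on such a component. But hypothesis (2) forces $\sum_{k=0}^{n-1}\phi\circ T^k\to+\infty$, so this liminf equals $+\infty$ a.s., a contradiction. Hence $\int s\,d(\mu\otimes\nu)>0$.

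The main obstacle is the clean appeal to Atkinson's recurrence theorem in the possibly non-ergodic skew product; one has to check that the full-measure event in hypothesis (2) descends to a full-measure event on almost every ergodic component, which is standard via the ergodic decomposition theorem but deserves an explicit mention. Everything else is bookkeeping around the extended Birkhoff theorem.
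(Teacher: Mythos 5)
The paper does not actually prove this lemma: it is stated with a citation to Dekking \cite{dekk} and used as a black box, so there is no in-paper argument to compare against. Your reconstruction via the skew product $T(\omega,x)=(\sigma\omega,X_1(\omega)x)$ is exactly the right framework, and the chain skew product $\to$ extended Birkhoff $\to$ nonnegativity of $\bar\phi$ $\to$ Atkinson's recurrence theorem does deliver both conclusions; this is essentially the same circle of ideas as Dekking's own paper.

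One step in the integrability part deserves more care than you give it. You deduce from the dichotomy that on almost every $T$-ergodic component the alternative $\bar\phi=-\infty$ is excluded, hence $\phi^-$ is integrable on that component; but fibrewise integrability of $\phi^-$ does not by itself yield $\phi^-\in L^1(\p\otimes\nu)$, since the componentwise integrals could fail to be $\pi$-integrable over the decomposition. The fix is short and uses $\bar\phi\ge 0$ once more: on a.e.\ component $c$ one has $\int\phi^-\,dm_c=\int\phi^+\,dm_c-\bar\phi(c)\le\int\phi^+\,dm_c$, and integrating over $c$ gives $\int\phi^-\,d(\p\otimes\nu)\le\int\phi^+\,d(\p\otimes\nu)<\infty$. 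Equivalently, one can avoid the ergodic decomposition here entirely by applying Birkhoff to the truncations $\phi_M=\max(\phi,-M)\in L^1$, which are still sandwiched between $\phi$ and $\phi^+$, obtaining $\int\min(\phi^-,M)\le\int\phi^+$ for all $M$ and letting $M\to\infty$. With that patch, and the (correctly flagged) remark that hypothesis (2) descends to a.e.\ ergodic component before Atkinson is invoked, the argument is complete. Minor point: as usual for the ergodic decomposition and for Atkinson, one implicitly needs $X$ to be a standard Borel $G$-space, which is harmless in the paper's applications but worth stating.
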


\textbf{Proof of Theorem \ref{iwa}: }
% Since $\rho$ is contracting, $V_{\chi_\rho}$ is a line: indeed, if $\gamma_n=\rho(\eta_n)\in \rho(G)$
%is a contracting sequence, then  $\frac{\gamma_n}{\chi_{\rho}\left(a (\eta_n) \right)}$ (notations of the beginning of   Section \ref{subsestimate})  converges (via a subsequence ) to a rank one endomorphism, because for a $(\rho,A,K)$-good norm on $V$, $|\chi_{\rho}\left(a (\eta_n) \right)|=||\gamma_n||$. By writing the Cartan decomposition of $\gamma_n$, we see that $\chi_2(\left(a (\eta_n) \right))/\chi_{\rho}(\left(a (\eta_n) \right))$ converges to zero;  $V_{\chi_{\rho}}$ is then a one dimensional subspace.
Since $\rho$ is contracting, $V_{\chi_\rho}$ is a line. Indeed, if $\{\eta_n; n\in \N\}$ is a sequence in $G$ such that $\{\rho(\eta_n);n\in \N\}$ is contracting then it is easy to see that
$\{\rho\left( a (\eta_n) \right); n \in \N\}$ is also contracting. $V_{\chi_\rho}$ is then a one dimensional subspace.
Therefore,
for some $\alpha \in \Theta_\rho$, $\frac{\widetilde{a_2(n)}}{\widetilde{a_1(n)}} = \frac{1}{\alpha(\widetilde{A_n})}$ and in general for $i\in \{2,...,d\}$, $\frac{\widetilde{a_i(n)}}{\widetilde{a_1(n)}}$ is of the form $1/ \prod_{\beta\in \Theta_\rho}{\beta^{m_\beta}(\widetilde{A_n})}$ with $m_\beta\in \N$ for every $\beta\in \Pi$. By Holder inequality, it suffices to treat the case where $\widetilde{a_i(n)}/\widetilde{a_1(n)}=1/\alpha(\widetilde{A_n})$ for some $\alpha\in \Theta_\rho$.
As in Proposition \ref{compar}, we decompose $\alpha$ into fundamental weights: $\alpha = \prod_{i=1}^s {w_{\beta_i} ^{n_{\beta_i}}}$, with $s\in \N^*$, $n_{\beta_i}\in \Z$ for every $i=1,...,s$. We denote $(\rho_{\beta_i},V_{\beta_i})$ the fundamental representation associated to $w_{\beta_i}$.
Using (\ref{rap1})
of the same proposition, we get for every $i=1,...,s$  a  $(\rho_{\beta_i},A,K)$-good norm on $V_i$ such that:
$$|\frac{\widetilde{a_i(n)}}{\widetilde{a_1(n)}}|^\epsilon = \Big[\prod _{i=1}^s\;\frac{||\rho_{\beta_i} (S_n)x_{\beta_i}||}{||x_{\beta_i}||}\Big]^{-\epsilon\; n_{\beta_i} }\;\leq Sup_{x\in X}\;{exp(-\epsilon s(S_n, x))}$$
where $X=\oplus_{i=1}^s P(V_{\beta_i})$ and $s$ is the cocycle defined on $G\times X$  by:
 $$s\left(g,([x_1],...,[x_s])\right)= \;\sum_{i=1}^s {n_{\beta_i} \log \frac{||\rho_{\beta_i} (g).x_i||}
 {{ ||x_i||}}}$$
 To apply Lemma \ref{cocycle}, we must verify that for some $\tau>0$, \begin{equation}\E\left(  exp(\tau sup_{x\in X}\;|s(X_1,x)| ) \right)< \infty \label{moment}\end{equation} and \begin{equation}\label{limneg}\lim \frac{1}{n} Sup_{x\in X}\; \E (-s (S_n,x)) < 0\end{equation}
By Lemma \ref{expomementrep}, there exists $\tau>0$ such that for every
 $i=1,...,s$,  $\E \left( ||\rho_{\beta_i} (X_1) ||^\tau \right) < \infty $.
 Holder inequality applied recursively ends the proof of (\ref{moment}).
  Now we concentrate on proving (\ref{limneg}).
  %, or equivalently proving that: $$\lim_{n\rightarrow\infty} \frac{1}{n} Sup_{([x_1],...,[x_s])\in X}\; \{-\sum_{i=1}^s{ {n_{\beta_i} \E \left( \log \frac{||\rho_{\beta_i} (S_n)x_i||}{ ||x_i||} \right)}}\}
 %<0$$
Since $P(V_{\beta_i})$ is compact for every $i=1,...,s$, it suffices to show that for all sequences $\{x_{1,n};n \geq 0\}$,..., $\{x_{s,n};n \geq 0\}$ converging to non zero elements of $V_{\beta_1},...,V_{\beta_s}$:
$$\lim_{n\rightarrow \infty}{\frac{1}{n}{s\left(S_n,([x_{1,n}],...,[x_{s,n}])\right)}}=\;\lim_{n\rightarrow \infty} \frac{1}{n} \sum_{i=1}^s{{n_{\beta_i} \E \left( \log \frac{||\rho_{\beta_i} (S_n)x_{i,n}||}{ ||x_{i,n}||}\right)}}>0$$
Fix such sequences $\{x_{1,n};n\geq 0\}$,..., $\{x_{s,n};n \geq 0\}$. By Corollary \ref{coro} the limit above exists and is independent of
the sequences taken. Indeed, it is equal to the sum of the corresponding Lyapunov exponents. Denote by $L$ this limit. Fix a $\mu$-invariant probability measure $\nu$ on $X$, which exists by compactness of $X$. Again by Corollary \ref{coro},
%By Corollary \ref{coro} $\frac{1}{n}\E \left( \log \frac{||\rho_{\beta_i} (S_n)x_{i,n}||}{ ||x_{i,n}||}\right)$ converges, for every $i=1,...,s$, to the first Lyapunov exponent of the random walk $\rho_{\beta_i}(S_n)$, hence - again by Corollary \ref{coro} - to the same almost sure limit as $\frac{1}{n}\log \frac{||\rho_{\beta_i} (S_n) x_{i}||}{ ||x_{i}||}$, where $x=(x_1,...,x_s)$ is any fixed element of $X$.\\
%In particular, if $\nu$ is a $\mu$-invariant probability measure on $X$
%(which exists by compactness of $X$),
$$L=\lim_{n\rightarrow\infty}{\frac{1}{n}
s\left(S_n(\omega),x\right)}=\;\lim_{n\rightarrow\infty} \frac{1}{n} \sum_{i=1}^s{{n_{\beta_i} \; \log \frac{||\rho_{\beta_i} \left(S_n(\omega)\right)x_{i}||}{ ||x_{i}||}}}\;\;\; \textrm{for $\p \otimes \nu$ - almost all $(\omega,x)$}$$
Consider the dynamical system $E=G^\N\times X$, the distribution $\eta=\p \otimes \nu$ on $E$,
  the shift $\theta: E \rightarrow E$,  $\left((g_0,......),x\right) \longmapsto \left((g_1,......),g_0.x\right)$.
   Since $\nu$ is $\mu$-invariant, $\eta$ is $\theta$-invariant. We extend the definition domain of $s$ from $G\times X$ to $G^\N \times X$ by setting $s(\omega,x):=s(g_0,x)$ if $\omega=(g_0,....)$.
   Since $\mu$ has an exponential moment, $s\in L_1(\eta)$. In consequence, we can apply the ergodic theorem (see \cite[Theorem 6.21]{brei}) which shows that $\frac{1}{n}{\sum_{i=0}^n {s\circ \theta^i (\omega,x)}}$ converges for $\eta$-almost every $(\omega,x)$ to a random variable $Y$ whose expectation is $\iint{s(g,x)d\mu(g)d\nu(x)}$.  Since $s$ is a cocycle,\;
    $s\left(S_n(\omega),x\right)={\sum_{i=0}^n {s\circ \theta^i (\omega,x)}}$. Hence,
   $$ \lim_{n\rightarrow\infty}{\frac{1}{n}
s\left(S_n(\omega),x\right)} = Y\;\;\;\;\;;\;\;\;\; \E_\eta(Y)=\iint{s(g,x)d\mu(g)d\nu(x)}$$
But  we have shown above that $Y$ is almost surely constant,because it is the sum of the corresponding Lyapunov exponents, and that it equal to $L$. Hence,
    $$L=  \iint{s(g,x)d\mu(g)d\nu(x)}$$
$L$ is positive if conditions (1) and (2) of Lemma \ref{lemme} are fulfilled.  Since $\mu$ has a moment of order one,
 condition (1) is readily satisfied.\\
Condition (2): we must verify that for $\p \otimes \nu$-almost all $(\omega,x)$,
 \begin{equation}s\left(S_n(\omega),x\right)=\sum_{i=1}^s{{n_{\beta_i} \log \frac{||\rho_{\beta_i} \left(S_n(\omega)\right) x_{i}||}{ ||x_{i}||}}\underset{n\rightarrow \infty}{\longrightarrow}}+\infty \label{eqqq}\end{equation}
  By Proposition \ref{laprop},
%there exist random variables $C_i(\omega,x)\in ]0,1]$, $i=1,...,s$ such that for $\p \otimes \nu$-almost every $(\omega,x)$ we have:
 %           $$\sum_{i=1}^s{{n_{\beta_i} \;\log ||\rho_{\beta_i} (S_n)||}}+ \sum_{ n_{\beta_i} \geq 0}{n_{\beta_i}\log \left(C_i(\omega,x)\right)}\;\leq\;   s\left(S_n(\omega),x\right)\;\leq\;\sum_{i=1}^s{{n_{\beta_i} \;\log ||\rho_{\beta_i} (S_n)||}}+
            %\sum_{ n_{\beta_i} \leq 0}{n_{\beta_i} \log \left(C_i(\omega,x)\right)}$$
the $\p \otimes \nu$-almost
everywhere behavior at infinity of  $s\left( S_n(\omega),x\right)$ is the same as the $\p$-almost everywhere behavior of:
 $$\sum_{i=1}^s{{n_{\beta_i} \;\log ||\rho_{\beta_i} (S_n)||}}=\;\log \big|\alpha(A_n)\big|$$
The last equality  follows from the  expression of $\alpha$ in terms of the fundamental weights and  from Corollary
 \ref{youssif}.  Hence, we reduced the problem to proving that
 \;$|\alpha(A_n)| \underset{n\rightarrow\infty}{\overset{a.s}{\longrightarrow}}+\infty $ for every $\alpha \in \Theta_\rho$. \\
 $\rho(\Gamma_\mu)$ is strongly irreducible because $\Gamma_\mu$ is Zariski dense in $G$, $\rho$ is an irreducible representation of $G$ and $G$ is connected. Since by the hypothesis $\rho(\Gamma_\mu)$ is contracting, we can apply Theorem \ref{normalise}:\\
  $||.||$ being $(\rho,A,K)$-good norm, $|a_1(n)|=||\rho(S_n)||$.
  Hence a.s. every limit point of  $\frac{\rho(S_n)}{a_1(n)}$ is a rank one
  matrix. In particular, $\frac{a_2(n)}{a_1(n)},...,\frac{a_d(n)}{a_1(n)}$ converge a.s.  to zero.
  Equivalently, for  every weight $\chi\neq \chi_\rho$ of $V$, $|\chi_\rho(A_n) \;/\;\chi(A_n)|$ tends a.s. to
   infinity. From the expression of $\chi$ in terms of $\chi_\rho$, this is equivalent to say that for every
    $\alpha \in \Theta_\rho$, $|\alpha(A_n)|$ tends to infinity.

%Let $S$ be this sum.

$$\Box$$
The following theorem shows that the ratio between the first two components in the Cartan decomposition is exponentially
small.
\begin{theo} [Exponential contraction in $KAK$]\label{iwabis}
 With the same hypotheses as in Theorem \ref{iwa}, there exists $\lambda>0$ such that for all $\epsilon>0$:
 $$\limsup_{n \rightarrow \infty} \big[\E (|\frac{a_i(n)}{a_1(n)}|^\epsilon) \big]^{\frac{1}{n}}< 1-\lambda\epsilon\;\;\;\;\;;\;\;\;\;\;i=2,...,d$$
\end{theo}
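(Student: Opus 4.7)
The approach is to derive the Cartan statement from its Iwasawa analog, Theorem \ref{iwa}, via the Cartan--Iwasawa comparison of Proposition \ref{comparaison2}. Since the fixed norm on $V$ is $(\rho,A,K)$-good in a basis of weights, both $\rho(A_n)$ and $\rho(\widetilde A_n)$ are diagonal with $a_i(n)=\chi_i(A_n)$, $\widetilde{a_i(n)}=\chi_i(\widetilde A_n)$, and $\chi_1=\chi_\rho$. I would begin from the telescoping identity
\[
\frac{a_i(n)}{a_1(n)}\;=\;\frac{\chi_i(A_n)}{\chi_i(\widetilde A_n)}\cdot\frac{\widetilde{a_i(n)}}{\widetilde{a_1(n)}}\cdot\frac{\chi_\rho(\widetilde A_n)}{\chi_\rho(A_n)},
\]
which displays the quantity of interest as the product of a Cartan-to-Iwasawa correction at weight $\chi_i$, the Iwasawa ratio (already bounded by Theorem \ref{iwa}), and the reverse correction at the highest weight.

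Hölder's inequality with three equal exponents reduces the bound on the $\epsilon$-th moment to controlling each of the three factors raised to the power $3\epsilon$. The middle factor is at most $(1-3\lambda\epsilon)^{n}$ for some fixed $\lambda>0$ (and $\epsilon$ small, $n$ large) by Theorem \ref{iwa}. For the two substitution factors, I would expand each weight involved as $\chi=\chi_\rho\prod_{\alpha\in\Pi}\alpha^{-s_\alpha}$ with $s_\alpha\in\N$, writing the corresponding ratio as a finite product of terms of the form $\alpha(A_n)/\alpha(\widetilde A_n)$, $\alpha(\widetilde A_n)/\alpha(A_n)$, or $\chi_\rho(A_n)/\chi_\rho(\widetilde A_n)=\|\rho(S_n)\|\,\|x_\rho\|/\|\rho(S_n)x_\rho\|$ (or its reciprocal). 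A further Hölder step then bounds each such factor by $(1+c\gamma\epsilon)^n$ via Proposition \ref{comparaison2} (using both inequalities of \eqref{kitir1} together with the remark, recorded in its proof, that handles the case $\chi=\chi_\rho$ through Proposition \ref{largedeviations}), where $c$ depends only on $\rho$ and $\gamma>0$ can be taken arbitrarily small at the cost of making $\epsilon$ smaller.

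Combining, the $n$-th root of the three-factor product has the form $[(1+3c\gamma\epsilon)^{2}(1-3\lambda\epsilon)]^{1/3}=1+\epsilon(2c\gamma-\lambda)+O(\epsilon^{2})$. Fixing $\gamma$ small enough in terms of the already-fixed $\lambda$ (e.g.\ with $c\gamma<\lambda/4$) and then restricting $\epsilon$ further yields a bound of the form $1-\lambda'\epsilon$ for some $\lambda'>0$, as required. The one subtlety — but not really an obstacle — is the order of quantifiers: $\lambda$ is fixed first by Theorem \ref{iwa}, and only afterwards is a sufficiently small $\gamma$ (hence a restricted range of $\epsilon$) chosen in Proposition \ref{comparaison2}. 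Since that proposition permits $\gamma$ arbitrarily small, this causes no difficulty, and the proof amounts to a clean assembly of the two previously established estimates.
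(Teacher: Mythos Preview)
Your proposal is correct and follows essentially the same route as the paper: the identical three-factor decomposition $\frac{a_i(n)}{a_1(n)}=\frac{a_i(n)}{\widetilde{a_i(n)}}\cdot\frac{\widetilde{a_1(n)}}{a_1(n)}\cdot\frac{\widetilde{a_i(n)}}{\widetilde{a_1(n)}}$, H\"older with three equal exponents, Theorem~\ref{iwa} for the Iwasawa ratio, and Proposition~\ref{comparaison2} for the two correction factors, followed by choosing $\gamma$ small relative to the fixed $\lambda$. The only cosmetic difference is that the paper invokes the consolidated bound~\eqref{utile} directly rather than re-expanding each weight ratio into simple roots as you do (that expansion is precisely how~\eqref{utile} is proved), and it notes at the outset that it suffices to treat small $\epsilon$ since $|a_i(n)/a_1(n)|\le 1$.
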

%\label{ratiokak}
%In particular, there exists $\rho \in ]0,1[$ such that $$\limsup_{n \rightarrow \infty}{\big[\p (S_n \textrm{is not $\rho^n$-contracting}\big]}^{\frac{1}{n}} <1$$\label{expoo}\end{theo}
\begin{proof} Let $i\in \{2,...,d\}$. Since $|a_i\left(\rho(a)\right)|\leq |a_1\left(\rho(a)\right)|$ for every $a\in A^+$, it suffices to show the theorem for all $\epsilon>0$ small enough.  Write
$$\frac{a_i(n)}{a_1(n)}=\frac{a_i(n)}{\widetilde{a_i(n)}}\;\times\;\frac{\widetilde{a_1(n)}}{{a_1(n)}}\;\times\;\frac{\widetilde{a_i(n)}}{\widetilde{a_1(n)}}$$
Fix $\gamma>0$. By Propositions \ref{comparaison2} and \ref{iwa} and Holder inequality, we have for some $\lambda>0$, every $0<\epsilon<Min\{\epsilon(\gamma);\frac{1}{3\lambda}\}$ and $n>n(\gamma)$:
\begin{eqnarray}\E (|\frac{a_i(n)}{a_1(n)}|^\epsilon) \;\leq\; {(1+3\gamma\epsilon)}^{\frac{1}{3}} {(1+3\gamma \epsilon)}^{\frac{1}{3}} (1-3\lambda\epsilon)^{\frac{1}{3}}\;\leq\; (1+\gamma\epsilon)^2 (1-\lambda\epsilon) \;\leq\; 2(1+\gamma^2 \epsilon)(1-\lambda\epsilon)\nonumber\end{eqnarray}
We have used the inequality $(1+x)^r \leq 1+rx$ true for every $x\geq -1$ and $r\in ]0,1[$ and the inequality $(x+y)^2\leq 2(x^2+y^2)$ true for every $x,y\in \R$.
 It suffices to choose $\gamma=\frac{\sqrt{\lambda}}{2}$ for instance.
\end{proof}

We will see in Section \ref{subsequi} that in order to work with non Zariski-connected algebraic groups, it is convenient to work with the Cartan decomposition of the ambient group $SL_d(k)$ (see Section \ref{uff}). The following corollary will be useful. It is the analog of Theorem \ref{iwabis} for the KAK decomposition in $SL_d(k)$ (rather than in $G$).

\begin{corollaire}[Ratio in the $A$-component for the KAK decomposition of $SL_d(k)$]\label{hardini} For $g\in SL_d(k)$, we denote by $g=\widehat{k(g)} \widehat{a(g)} \widehat{u(g)}$ an arbitrary but fixed Cartan decomposition of $g$ in $SL_d(k)$ as described in Section \ref{uff}. We write $\widehat{a(g)}=diag\left(\widehat{a_1(g)},...,\widehat{a_d(g)} \right) $ in the canonical basis of $k^d$. With this notations and with the same assumptions as in Theorem \ref{iwa}, we have for some $\lambda>0$ and every $\epsilon>0$,

$$\limsup_{n\rightarrow \infty} \Big[{\E \left(\big|\frac{\widehat{a_i\left(\rho(S_n) \right)}}{\widehat{a_1\left(\rho(S_n) \right)}} \big|^\epsilon\right) } \Big]^{\frac{1}{n}} \leq 1-\gamma \epsilon \;\;\;\;\;;\;\;\;\;\; i=2,...,d$$
\end{corollaire}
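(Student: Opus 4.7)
The plan is to reduce this to Theorem \ref{iwabis} by comparing the KAK decomposition of $\rho(S_n)$ in $SL_d(k)$, which is defined using the canonical basis and norm on $k^d$, with the KAK decomposition of $S_n$ in $G$, which involves the $(\rho,A,K)$-good norm on $V$ fixed at the beginning of Section \ref{subsestimate}. Both decompositions produce ``singular values'' of the same linear map $\rho(S_n)\in SL(V)$, but with respect to two different norms on the same vector space $V=k^d$.

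First I would observe that on the finite-dimensional $k$-vector space $V$, any two $k$-norms are equivalent (standard in the archimedean case; in the non-archimedean case both norms induce the unique locally compact topology of $V$, hence are equivalent). So there exists $C>0$, depending only on $\rho$, such that $C^{-1}\|\cdot\|_{\mathrm{can}}\le \|\cdot\|_{\rho}\le C\|\cdot\|_{\mathrm{can}}$. Since $\rho(K)$ acts by isometries in the $(\rho,A,K)$-good norm (Theorem \ref{emilia}), the products of the largest singular values of $\rho(g)$ in this norm are given by $\|\Lambda^{i}\rho(g)\|_{\rho}$, which equals the product of the $i$ largest $|\chi(a(g))|$ over the weights $\chi$ of $\rho$ counted with multiplicity; similarly $|\widehat{a_1(\rho(g))}|\cdots|\widehat{a_i(\rho(g))}|=\|\Lambda^i\rho(g)\|_{\mathrm{can}}$. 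Applying the equivalence of the induced norms on each exterior power $\Lambda^i V$ and inducting on $i$, one obtains a constant $D>0$ independent of $g$ such that
$$D^{-1}\sigma_i^{\rho}(g)\le |\widehat{a_i(\rho(g))}|\le D\,\sigma_i^{\rho}(g),\qquad i=1,\dots,d,$$
where $\sigma_i^{\rho}(g)$ denotes the $i$-th largest of $\{|\chi(a(g))|\}_{\chi}$.

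Next, the dominance of the highest weight on $A^+$, namely $|\chi(a)|\le |\chi_{\rho}(a)|$ for every weight $\chi$ and every $a\in A^+$ (since $\chi=\chi_{\rho}/\prod_{\alpha\in\Pi}\alpha^{n_{\alpha}}$ with $n_{\alpha}\in\mathbb{N}$ and $|\alpha(a)|\ge 1$ on $A^+$), gives $\sigma_1^{\rho}(S_n)=|a_1(n)|$ and $\sigma_2^{\rho}(S_n)\le \max_{i\ge 2}|a_i(n)|$. Combining the comparison above with Theorem \ref{iwabis}, for $\epsilon>0$ small enough and $n$ large,
$$\E\!\left(\Bigl|\widehat{a_2(\rho(S_n))}/\widehat{a_1(\rho(S_n))}\Bigr|^{\epsilon}\right) \le D^{2\epsilon}\sum_{i=2}^{d}\E\bigl(|a_i(n)/a_1(n)|^{\epsilon}\bigr)\le D^{2\epsilon}(d-1)(1-\lambda\epsilon)^{n},$$
which yields exponential decay of the $\limsup^{1/n}$ to a value strictly less than $1$. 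Finally, for $i\ge 2$ one has $|\widehat{a_i(\rho(S_n))}|\le |\widehat{a_2(\rho(S_n))}|$ by the choice of $A^+$ in Section \ref{uff}, so the statement for $i=2,\dots,d$ follows.

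The only mild subtlety is the passage from equivalence of norms on $V$ to a uniform multiplicative comparison of the sorted singular values; the exterior-power formulation above makes this straightforward, and the constants obtained depend on $\rho$ only, not on $g$, so everything passes through the expectation with essentially no loss.
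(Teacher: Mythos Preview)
Your proof is correct and follows essentially the same route as the paper's: both arguments compare the two KAK decompositions via the identity $|\widehat{a_2}/\widehat{a_1}|=\|\bigwedge^2\rho(S_n)\|/\|\rho(S_n)\|^2$, use the equivalence of the canonical norm and the $(\rho,A,K)$-good norm on $V$ and on $\bigwedge^2 V$ (the paper phrases this as compactness of the two $K$-groups), bound the resulting ratio by $\sum_{i\ge 2}|a_i(n)/a_1(n)|^\epsilon$, and then invoke Theorem \ref{iwabis} together with the monotonicity $|\widehat{a_i}|\le|\widehat{a_2}|$. Your formulation via exterior powers and the induction giving $D^{-1}\sigma_i^\rho\le|\widehat{a_i}|\le D\,\sigma_i^\rho$ is a slightly cleaner packaging of the same comparison, but there is no substantive difference in strategy.
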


 \begin{proof}  To simplify notations we omit $\rho$, so that $G$ is seen as a linear algebraic subgroup of $SL_d(k)$.
 Let $S_n=K_nA_nU_n$ be the Cartan decomposition of $S_n$ in $G$ (Section \ref{preliminaries}) and
  $S_n=\widehat{K_n}\widehat{A_n}\widehat{U_n}$ its Cartan decomposition in $SL_d(k)$ (Section \ref{uff}).
  Recall that $A_n$ is a diagonal matrix  $diag\left(a_1(n),...,a_d(n) \right)$ in the basis of weights
  while  $\widehat{A_n}$ is a diagonal matrix $diag\left(\widehat{a_1(n)},...,\widehat{a_d(n)} \right)$
  in the canonical basis of $k^d$. We will use the canonical basis and norm of $k^d$ (Section \ref{gene}).
 % with $|\widehat{a_1(n)}|\geq....\geq |\widehat{a_d(n)}|$ %with $|a_1(n)|\geq |a_i(n)|$ for every $i=2,...,d$. In the canonical basis of $k^d$, $\widehat{A_n}$ is a diagonal matrix $diag\left(\widehat{a_1(n)},...,\widehat{a_d(n)} \right)$ with $|\widehat{a_1(n)}|\geq....\geq |\widehat{a_d(n)}|$. We will use the canonical basis and norm of $V=k^d$ (see Section \ref{gene}).

% We write two expressions of $S_n$: its  Cartan decomposition in $G$, $S_n=K_nA_nU_n$ and its Cartan decomposition in $SL_d(k)$, $S_n=\widehat{K_n}\widehat{A_n}\widehat{U_n}$.

 %Recall that we are using the canonical basis and norm on $k^d$ (see Section \ref{gene}). For every $n\in \N^*$,
% one can write for $\rho(S_n)$ a KAK decomposition in $SL_d(k)$ (Section \ref{generating}); we denote then $\rho(S_n)=K_nA_nU_n$ such a decomposition and also a Cartan decomposition in the group $\rho(G^0)=\rho(K)\rho(A^+)\rho(K)$ (Section \ref{preliminaries}); we denote $\rho(S_n)=\widehat{K_n}\widehat{A_n}\widehat{U_n}$ such a decomposition.
%Let $A_n=diag\left(a_1(n),...,a_d(n)\right)$. Recall that $\widehat{A_n}$ is a diagonal matrix in the basis of weights, hence if $P$ denotes the transition matrix from the canonical basis to the basis of weights, we can write $\widehat{A_n}=P\; diag\left(\widehat{a_1(n)},...,\widehat{a_d(n)}\right) P^{-1}$.
 Theorem \ref{iwabis} shows that for some $\lambda>0$, every $\epsilon>0$ and all large $n$,
\begin{equation} \E \left(\big|\frac{{a_i}(n)}{{a_1}(n)} \big|^\epsilon \right) \leq (1-\gamma \epsilon)^n \;\;\;;\;\;\;i=2,...,d\label{veronica}\end{equation}
Since $K_n,\widehat{K_n}$ belong to compact subgroups in both decompositions, there exist $C_1,C_2>0$ such that for every $n$: $C_2||{A_n}||\leq||\widehat {A_n}||\leq C_1||{A_n}||$  and  $C_2||\bigwedge^2
{A_n}||\leq||\bigwedge^2 \widehat{A_n}||\leq C_1||\bigwedge^2 {A_n}||$.\\

%Now we compare between $A_n$ and $\widehat{A_n}$. Since $S_n=K_nA_nU_n=\widehat{K_n}\widehat{A_n}\widehat{U_n}$ and

\noindent By the definition of the  KAK decomposition in $SL_d(k)$,  we have a.s.
 $||\widehat{A_n}||=|\widehat{a_1(n)}|$ and $||\bigwedge^2 \widehat{A_n}||=|\widehat{a_1(n)}\widehat{a_2(n)}|$. For KAK in $G$,
  there exists a constant $C_3>0$ such that:
 %$\rho(A^+)\subset P\big\{diag\Big(\; \chi_1\left(g_{A^+}\right),\chi_2\left(g_{A^+}\right),...,\chi_d\left(g_{A^+})\right)\;\Big);g\in G^0\big\}P^{-1}$, where $g_{A^+}$ is the $A^+$-component of $g\in G^0$ in the Cartan decomposition of $G^0$ and $P$ is the transition matrix from the basis of weights to the canonical basis. Since  $|\alpha(A^+)|\in [1;+\infty[$  and for every $\alpha \in \Pi$, $i\neq 2$, $\chi_1/\chi_i$ is of the form $\prod_\alpha \alpha^{s_\alpha}$ with $s_\alpha \in \N$, we see that, for $C_3=Max\{||P||\;||P^{-1}||;||\bigwedge^2 P||\; ||\bigwedge^2 P^{-1}||\}$,
   $\frac{1}{C_3}|{a_1(n)}|\leq||{A_n}||\leq C_3|{a_1(n)}|$ and for $\p$-almost every $\omega$ there exists $i(\omega)\in \{2,...,l\}$ such that: $$\frac{1}{C_3}|{a_1(n)}{a_{i(\omega)}(n)}|\leq\;||\bigwedge^2{A_n(\omega)}||\leq\; C_3|{a_1(n)}{a_{i(\omega)}(n)}|$$
     Hence
$$ \E \left(\big|\frac{\widehat{a_2(n)}}{\widehat{a_1(n)}} \big|^\epsilon \right) =\E \big[\left(\frac{||\bigwedge^2 S_n||}{||S_n||^2}\right)^\epsilon\big]\;= \E \big[\left(\frac{||\bigwedge^2 \widehat{A_n}||}{||\widehat{A_n}||^2}\right)^\epsilon\big]\;\leq(C_1 C_3^3/C_2^2)^\epsilon\;\sum_{i=2}^d {E \left(\big|\frac{{a_i}(n)}{{a_1}(n)}\big|^\epsilon\right)} $$
By (\ref{veronica}), this is less or equal than $constant \times (1-\gamma\epsilon)^n$.
Since $|\widehat{a_2(g)}|\geq |\widehat{a_i(g)}|$ for $i>2$ and every $g\in SL_d(k)$, the proof is complete. \end{proof}

\subsubsection{Exponential convergence and asymptotic independence in KAK}
 We recall that the norm on $V$ we are working with is $(\rho,A,K)$-good (it is the one given by Theorem \ref{emilia}).
We recall also that the direct sum $V=\oplus_\chi V_\chi$ is good. When $k$ is archimedean, this norm is induced by a scalar product so that we can choose an orthonormal basis in each
 $V_\chi$. Let $(e_1,...,e_d)$ be the corresponding basis of $V$, $e_1$ is in particular a highest weight vector.
  Then, the norm on $V$ becomes $||x||^2=\sum_{i=1}^d{|x_i|^2}$, $x=\sum_{i=1}^d{x_ie_i}\in V$.
  When $k$ is non archimedean, one can choose a basis in each $V_\chi$ such that the norm induced becomes the Max norm. If $(e_1,...,e_d)$ is the corresponding basis of $V$, then $||x||= Max\{||x_i||; i=1,...,d\}$ for every
  $x=\sum_{i=1}{x_ie_i}\in V$. \smallskip

    % We refine the ``good basis'' $V$ by choosing a privileged basis in each $V_\chi$ such that the norm in
%  exach of it is the max norm when $k$ is non archimedean and the Euclidean one when $k$ is archimedean.
 % We denote by $(e_1,...,e_d)$ this basis, recall that $e_1$ is always a vector of the  highest weight $\chi_1=\chi_\rho$.
  % We denote by $(e_1^*,...,e_d^*)$ the dual basis. With this choice, the $(\rho,A,K)$-good norm becomes  $||x||=Max\{|e_i^* (x)|; i=1,...,d\}, \;\;\; x\in V$ when $k$ is non archimedean and $||x||=\sum_{i=1}^d {|<x,e_i>|^2}, \;\;\; x\in V$ if $k$ is archimedean.
 Let $\rho^*: G \longrightarrow GL(V^*)$ be the contragredient representation of $G$ on $V^*$, that is  $\rho^*(g)(f)(x)=f\left( \rho(g^{-1})x \right)$ for every $g\in G$, $f\in V^*$, $x\in V$. For $g\in G$ and $f\in V^*$, $g.f$ will simply refer to $\rho^*(g)(f)$.
 Consider the norm operator on $V^*$, it is easy to see that it is $(\rho^*,A,K)$-good.
%Recall also that (for this "good" norm), $K$ acts by isometries on $V$; it is easy then to check that $K$ acts by isometries as well on $V^*$.
As explained in the preliminaries, $||.||$ induces a distance $\delta(.,.)$ on the projective space $P(V)$. The same holds for $P(V^*)$.\smallskip

Finally we recall the following notations: $M_n=X_1...X_n$, $S_n=X_n...X_1$ where ${X_i; i\geq 1}$ are independent random variables of law $\mu$. The KAK decomposition of $S_n$ in $G$ is denoted by $S_n=K_nA_nU_n$ with $K_n,U_n\in K$ and $A_n\in A^+$ (we have fixed a privileged way to construct the Cartan decomposition). We write $\rho(A_n)=diag\left(a_1(n),...,a_d(n) \right)$ in the basis of weights.  When it comes to the random walk $\{M_n;n\in \N^*\}$ we simply write $M_n=k(M_n)a(M_n)u(M_n)$ its KAK decomposition.

\begin{theo} [Exponential convergence in KAK]
Suppose that $\mu$ has an exponential local moment and that $\rho(\Gamma_\mu)$ is contracting.
Denote by $x_\rho$ a highest
weight vector ($e_1$ for example), then for all $\epsilon>0$:
 $$\limsup_{n \rightarrow \infty} \big[\E (\delta^\epsilon (k(M_n)[x_\rho],Z_1)) \big]^{\frac{1}{n}}< 1\;\;;\;\;
 \limsup_{n \rightarrow \infty} \big[\E (\delta^\epsilon (U_n^{-1}.[x_{\rho}^*],Z_2)) \big]^{\frac{1}{n}}< 1$$
 \label{expokak}
 where $Z_1$ (resp. $Z_2$) is a random variable on $P(V)$ (resp. $P(V^*)$)  with law $\nu$ (resp. $\nu^*$) -the unique $\mu$ (resp. $\mu^{-1}$) -invariant probability measure. \end{theo}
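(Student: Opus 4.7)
The plan is to compare $k(M_n)[x_\rho]$ with $M_n[y]$ for a cleverly chosen non-zero $y \in V$ and then to invoke Theorem \ref{direction} on the resulting second piece. The key observation is that, thanks to Theorem \ref{emilia}, the product $\rho(k(M_n))\,\rho(a(M_n))\,\rho(u(M_n))$ is a genuine $KAK$ decomposition of $\rho(M_n)$ inside $SL(V)$ when $V$ is equipped with the $(\rho,A,K)$-good norm and the weight basis, and moreover the attracting direction of this decomposition is precisely $k(M_n)[x_\rho]$. In particular, the argument inside the proof of Lemma \ref{cruciallemme} yields, for every non-zero $y \in V$,
\begin{equation*}
\delta\bigl(k(M_n)[x_\rho],\, M_n[y]\bigr) \;\leq\; \max_{j\geq 2}\Bigl|\tfrac{a_j(M_n)}{a_1(M_n)}\Bigr|\,\cdot\,\frac{1}{\delta([y],H_{M_n})},
\end{equation*}
where $H_{M_n}$ is the associated repelling hyperplane and the $a_j(M_n)$ are the diagonal entries of $\rho(a(M_n))$ in the weight basis.

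The main obstacle is the random factor $1/\delta([y], H_{M_n})$, which I would handle by choosing $y$ adaptively among the vectors $e_1,\ldots,e_d$ of the weight basis. Indeed, since $H_{M_n} = \ker\bigl(e_1^*\circ \rho(u(M_n))\bigr)$ and since $\rho(K)$ acts by isometries for the good norm, the first row of $\rho(u(M_n))$ in the weight basis has norm one, so at least one of its entries $(\rho(u(M_n)))_{1,i}$ satisfies $|(\rho(u(M_n)))_{1,i}|\geq c$, where $c>0$ depends only on $d$ and the local field (for instance $c=1/\sqrt{d}$ in the archimedean case, $c=1$ in the non-archimedean case, in the spirit of Lemma \ref{eftakartfdelt}). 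Setting $A_i := \{\delta([e_i],H_{M_n})\geq c\}$ produces a finite cover $\Omega = A_1 \cup \cdots \cup A_d$, and on $A_i$ the previous display specialises to
\begin{equation*}
\delta\bigl(k(M_n)[x_\rho], M_n[e_i]\bigr) \;\leq\; c^{-1}\,\max_{j\geq 2}\Bigl|\tfrac{a_j(M_n)}{a_1(M_n)}\Bigr|.
\end{equation*}

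The conclusion will then follow by combining this deterministic bound with the triangle inequality, the subadditivity of $t\mapsto t^\epsilon$ on $[0,1]$, and integration over $\Omega$ via $\mathbf{1} \leq \sum_i \mathbf{1}_{A_i}$:
\begin{equation*}
\E\bigl(\delta^\epsilon(k(M_n)[x_\rho], Z_1)\bigr) \;\leq\; d\,c^{-\epsilon}\sum_{j\geq 2}\E\Bigl(\Bigl|\tfrac{a_j(M_n)}{a_1(M_n)}\Bigr|^\epsilon\Bigr) \;+\; \sum_{i=1}^d \E\bigl(\delta^\epsilon(M_n[e_i],Z_1)\bigr).
\end{equation*}
The first sum decays exponentially by Theorem \ref{iwabis} (applied to $M_n$, which shares its law with $S_n$), while the second decays exponentially by Theorem \ref{direction}. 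The dual statement about $U_n^{-1}.[x_\rho^*]$ will follow by the same reasoning applied to the contragredient representation $\rho^*$ and the reversed walk $S_n^{-1}=X_1^{-1}\cdots X_n^{-1}$, using that $\rho^*(\Gamma_{\mu^{-1}})$ acts strongly irreducibly and contractingly on $V^*$ and that $U_n^{-1}$ is the left $K$-factor of the Cartan decomposition of $S_n^{-1}$ in $G$.
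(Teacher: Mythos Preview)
Your proposal is correct and follows essentially the same approach as the paper's proof. Both arguments compare the $K$-direction with an orbit point via the bound from Lemma \ref{cruciallemme}, handle the random denominator by the same finite cover $\Omega=\bigcup_i A_i$ with $A_i=\{|e_1^*(\rho(u(M_n))e_i)|\geq c\}$, and then invoke Theorem \ref{iwabis} for the ratio terms and Theorem \ref{direction} for the orbit terms; the only cosmetic difference is that the paper writes out the $U_n^{-1}.[x_\rho^*]$ side first (working with $S_n^{-1}.[f]$ on $V^*$) and says ``in the same way'' for $k(M_n)[x_\rho]$, whereas you do the reverse.
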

 \begin{remarque}
% If one fixes a basis on $V$, $U_n^{-1}.[x_\rho^*]$ can be simply replaced with $U_n^t[x_\rho]$ where $U_n^t$ is the  transpose matrix of $U_n$. \\
From the previous theorem, we deduce by applying the Borel Cantelli lemma that $k(M_n)[x_\rho]$ converges almost surely while  $K_n[x_\rho]=k(S_n)[x_\rho]$ converges only in law. This can  also be directly derived from Corollary \ref{essentiell}. \end{remarque}
\begin{proof}
%$\Gamma_{\mu^{-1}}$ acts strongly irreducibly on $V^*$ and $\rho^*(\Gamma_{\mu^{-1}})$ is contracting if and only if
%$\Gamma_\mu$ acts strongly irreducibly on $V$ and $\rho(\Gamma)$ is contracting. Moreover, $\mu \star \rho$ has
%an exponential local moment if and only if $\mu^{-1}\star \rho^*$ has an exponential local moment. Consequently,
%it suffices to show the first inequality.
%It is easy to see that the norm operator on $V^*$ is $(\rho^*,A,K)$- good.
 For simplicity, we write $S_n$, $K_n$,$A_n$,$U_n$ instead of $\rho(S_n)$, $\rho(A_n)$,
 $\rho(U_n)$. By the canonical identification between $V$ and $(V^*)^*$, ${(e_1^*)}^*$ will refer to $e_1$. Let $Z\in P(V^*)$ be the almost sure limit of $S_n^{-1}.[f]$, for every $[f]\in P(V^*)$,   obtained by Theorem \ref{direction}. Since for every $i=1,...,d$, $A_n^{-1}.e_i^*=a_i(n)e_i^*$
 and $S_n=K_nA_nU_n$, we have for every $f\in V^*$ of norm one, such that
$e_1(K_n^{-1}.f) \neq 0$,

$$S_n^{-1}.f= e_1 (K_n^{-1}.f) \;a_1(n) \;U_n^{-1}.e_1^* \;+ \sum_{i=2}^d {O(a_i (n))}$$
$$U_n^{-1}.e_1^* = \frac{1}{e_1 (K_n^{-1}.f)}\frac{S_n^{-1}.f}{a_1(n)}
\;+ \; \frac{1}{e_1 (K_n^{-1}.f) } \sum_{i=2}^d {O(\frac{a_i (n)}{a_1(n)})}$$
Recall that $\delta([x],[y])=\frac{||x \wedge y ||}{||x||||y||}$; $[x],[y]\in P(V^*)$. Hence
 $$\delta (U_n^{-1}.[e_1^*],Z)\leq \frac{1}{|e_1(K_n^{-1}.f)|} \left(\frac{||S_n^{-1}.f||}
{|a_1(n)|}\;\delta(S_n^{-1}.[f],Z) +\sum_{i=2}^d {O(|\frac{a_i (n)}{a_1(n)}|)} \right)$$
Since $|a_1(n)|=||S_n||$  and $||f||=1$, \; $||S_n^{-1}.f||=Sup_{||x||=1}\;{|f(S_nx)|}\;\leq |a_1(n)|$.
%$||S_n^{-1}.f||=Sup_{||x||=1}\;{|f(S_nx)|}\;\leq \;Sup_{||x||=1}\;{||S_nx||}= ||S_n||=|a_1(n)|$.
Hence
 \begin{equation}\delta (U_n^{-1}.[e_1^*],Z) \leq   \frac{1}{|e_1(K_n^{-1}.f)|}\left(
 \delta(S_n^{-1}.[f],Z) +\sum_{i=2}^d O(|\frac{a_i(n)}{a_1(n)}|) \right) \label{lll}
\end{equation}
Let $C(k)=\frac{1}{\sqrt{d}}$ (resp. $C(k)=1$) when $k$ is archimedean (resp. non archimedean). The choice
 of the norm on $V$ implies that a.s. there exists $i=i(n,\omega)\in\{1,...,d\}$, such that
$|e_1(K_n^{-1}.e_{i}^*)| \geq C(k)$. Indeed, in the non archimedean case,
$1=||K_n.e_1||= Max\{|K_n.e_1(e_i^*)|; i=1,...,d\}$. Hence for some random $i=i(n,\omega)$, $|e_1(K_n^{-1}.e_i^*)|=|K_n.e_1(e_i^*)|= 1$ and in the archimedean case,
$1=||K_n.e_1||= \sum_{i=1}^d {|K_n.e_1 (e_i^*)|^2}=\sum_{i=1}^d{|e_1(K_n^{-1}.e_i^*)|^2}$.
Hence one can write for every $\epsilon>0$:
 \begin{equation}\label{chita}\E (\delta^\epsilon (U_n^{-1}.[e_1^*],Z)) \leq \sum_{i=1}^d {\E \left(\delta^\epsilon (U_n^{-1}. [e_1^*],Z)\;;\; \mathds{1}_{|e_1 (K_n^{-1}
. e_{i}^*)| \geq C(k)}\right)}\end{equation}
In (\ref{chita}), for every $i=1,...,d$, on the event ``$ |e_1 (K_n^{-1}
. e_{i}^*)| \geq C(k) $'', we apply (\ref{lll}) with $f=e_i$. Since $\epsilon>0$ can be taken smaller than one, $C(k)^\epsilon\geq C(k)$ and $(x+y)^\epsilon\leq x^\epsilon +y^\epsilon$ for every $x,y\in {\R}_{+}$.
 We get then:
\begin{equation}\E (\delta^\epsilon (U_n^{-1}.[e_1^*],Z)) \leq \frac{1}{C(k)}\sum_{i=1}^d \E (\delta^{\epsilon} (S_n^{-1}. [e_i^*],Z))+ \frac{1}{C(k)}{ \sum_{i=2}^d\E(|\frac{a_i(n)}{a_1(n)}|^\epsilon)}\end{equation}

\noindent Theorem \ref{iwabis} shows that: $\E (|\frac{a_i(n)}{a_1(n)}|^\epsilon)$ is sub-exponential for $i=2,...,d$.\\
Theorem \ref{direction} shows that for  every $i=1,...,d$, $ \E (\delta^{\epsilon} (S_n^{-1}.
 [e_i^*],Z))$ is sub-exponential. In the same way, we show the exponential convergence of $k(M_n)[x_\rho]$.
\end{proof}

We have shown that  $U_n^{-1}.[x_\rho^*]$ converges a.s. and $K_n[x_\rho]$ in law. In the following theorem, we show that these two variables become independent at infinity, with exponential ``speed''. This is Theorem \ref{marie-therese} from the introduction. We recall its statement.
\begin{theo}[Asymptotic independence in the KAK decomposition]

With the same assumptions as in Theorem \ref{expokak}, there exist \textbf{independent random variables} $Z\in P(V^*)$ and $T\in P(V)$  such that for every $\epsilon>0$,  every $\epsilon$-holder (real) function $\phi$ on $P(V^*)\times P(V)$ and all large $n$:
$$\big|\E\left(\phi([U_n^{-1}.x_{\rho}^*],[K_nx_{\rho}]) \right) - \E \left( \phi(Z,T) \right) \big|\leq ||\phi||_{\epsilon} \rho(\epsilon)^n $$
where $$||\phi||_\epsilon= Sup_{[x],[y],[x'],[y']}\;{\frac{\big|\phi([x],[x'])-\phi([y],[y'])\big|}{\delta^{\epsilon}([x],[y])+\delta^{\epsilon}([x'],[y'])}}$$
\label{independence}\end{theo}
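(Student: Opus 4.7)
The plan is to split the reversed walk $S_n=X_n\cdots X_1$ into two independent blocks and to exploit the structural fact that the ``$U$-component'' of the $KAK$ decomposition is controlled by the early increments of $S_n$ while the ``$K$-component'' is controlled by the late ones. Set $m=\lfloor n/2\rfloor$, write $S_n=Y\cdot R$ with $R=S_m=X_m\cdots X_1$ and $Y=X_n\cdots X_{m+1}$, so that $R$ and $Y$ are independent and distributed as $S_m$ and $S_{n-m}$. Define the approximants
$$Z_R := u(R)^{-1}.[x_\rho^*]\in P(V^*),\qquad T_Y := k(Y)[x_\rho]\in P(V).$$
By construction $Z_R$ is a function of $R$ alone and $T_Y$ a function of $Y$ alone, hence they are independent. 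The goal is to show that $(U_n^{-1}.[x_\rho^*],K_n[x_\rho])$ is exponentially close in the $\delta^\epsilon$-metric to $(Z_R,T_Y)$, and then to replace the marginals of $(Z_R,T_Y)$ by $\nu^*$ and $\nu$.

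The dual coordinate is immediate from Theorem \ref{expokak}: both $U_n^{-1}.[x_\rho^*]$ and $U_m^{-1}.[x_\rho^*]=Z_R$ converge a.s.\ to the common limit $Z_2\sim\nu^*$ from Theorem \ref{direction}, and the exponential rate given by Theorem \ref{expokak} combined with the triangle inequality for $\delta^\epsilon$ (valid for $\epsilon\in(0,1]$) yields $\E\bigl(\delta^\epsilon(U_n^{-1}.[x_\rho^*],Z_R)\bigr)\leq 2\rho_1^m$. The other coordinate is the technical core. A short $KAK$ computation in the good norm of Theorem \ref{emilia} shows that for any $g\in G$ and unit $y\in V$,
$$\delta(g.[y],\,k(g)[x_\rho]) \;\leq\; C\,\frac{|a_2(g)|}{|a_1(g)|}\cdot \frac{1}{|\langle u(g).y,\,e_1\rangle|},$$
and the estimate $||g.y||^2 \leq |a_1(g)|^2|\langle u(g).y,e_1\rangle|^2 + |a_2(g)|^2$ lower-bounds the denominator on the intersection of the high-probability events $\{||g.y||/||g||\geq e^{-\eta n}\}$ (by Proposition \ref{largedeviations}) and $\{|a_2(g)/a_1(g)|\leq e^{-\lambda n}\}$ (by Theorem \ref{iwabis}), for $\eta$ small enough. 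Applying this with $g=Y$ and $y=R.[x_\rho]$, a unit direction which is a function of $R$ alone and hence \emph{independent} of $Y$, then conditioning on $R$ and using the uniform-in-$y$ version of Proposition \ref{largedeviations}, gives $\E\bigl(\delta^\epsilon(Y.[R.x_\rho],\,T_Y)\bigr)\leq \rho^{n-m}$ by decomposing on the good event before applying Hölder. The same estimate with $g=S_n$, $y=x_\rho$ gives $\E\bigl(\delta^\epsilon(K_n[x_\rho],\,S_n.[x_\rho])\bigr)\leq\rho^n$, and since $S_n.[x_\rho]=Y.(R.[x_\rho])$ the two bounds combine to yield $\E\bigl(\delta^\epsilon(K_n[x_\rho],\,T_Y)\bigr)\leq O(\rho^{n/2})$.

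To conclude, the Hölder hypothesis on $\phi$ turns the two $\delta^\epsilon$-approximations into
$$\bigl|\E\phi(U_n^{-1}.[x_\rho^*],K_n[x_\rho])-\E\phi(Z_R,T_Y)\bigr| \;\leq\; ||\phi||_\epsilon\cdot O(\rho^{n/2}).$$
By independence of $R$ and $Y$, $\E\phi(Z_R,T_Y)$ is the integral of $\phi$ against the product of the laws of $Z_R$ and $T_Y$. The marginal of $Z_R$ is $\rho_1^m$-close in Hölder duality to $\nu^*$, again by Theorem \ref{expokak} (convergence $Z_R\to Z_2$). The marginal of $T_Y=k(S_{n-m})[x_\rho]$ has the same law as $k(M_{n-m})[x_\rho]$ (by the symmetry between $(X_1,\dots,X_{n-m})$ and its reverse), which converges a.s.\ to $Z_1\sim\nu$ with exponential rate by Theorem \ref{expokak}; so this marginal is $\rho^{n-m}$-close to $\nu$ in the same sense. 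Defining $(Z,T)$ as an independent pair with marginals $(\nu^*,\nu)$ yields the claim with $\rho(\epsilon)$ any number in $(\max(\rho_1,\rho)^{1/2},1)$. The main obstacle is the uniform-in-$[y]$ estimate in the second coordinate: one must trade the exponential decay of $|a_2/a_1|^\epsilon$ from Theorem \ref{iwabis} against the possible blow-up of $|\langle u(Y).y,e_1\rangle|^{-\epsilon}$, which forces $\epsilon$ to be taken small relative to the spectral gap $\lambda_1-\lambda_2$ and requires cutting on the probabilistically good event before invoking Hölder.
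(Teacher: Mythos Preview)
Your argument is correct and uses the same two-block split $S_n=Y\cdot R$ with $R=S_m$, $Y=X_n\cdots X_{m+1}$, and the same treatment of the $U$-coordinate via Theorem~\ref{expokak}. The genuine difference is in the $K$-coordinate. You control $\delta^\epsilon\bigl(K_n[x_\rho],\,k(Y)[x_\rho]\bigr)$ by the pointwise geometric bound
\[
\delta\bigl(g.[y],\,k(g)[x_\rho]\bigr)\;\le\;\frac{|a_2(g)|}{|a_1(g)|}\cdot\frac{1}{|\langle u(g)y,e_1\rangle|},
\]
then bound the denominator on a high-probability event using Proposition~\ref{largedeviations} and the numerator via Theorem~\ref{iwabis}, passing through the intermediate point $S_n.[x_\rho]=Y.(R.[x_\rho])$. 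The paper bypasses all of this with a single distributional symmetry: since $(X_1,\dots,X_n)$ and $(X_n,\dots,X_1)$ have the same law,
\[
\E\bigl[\delta^\epsilon\bigl(K_n[x_\rho],\,k(X_n\cdots X_{\lfloor n/2\rfloor})[x_\rho]\bigr)\bigr]
\;=\;
\E\bigl[\delta^\epsilon\bigl(k(M_n)[x_\rho],\,k(M_{n-\lfloor n/2\rfloor+1})[x_\rho]\bigr)\bigr],
\]
and both arguments on the right converge almost surely to the same limit $Z_1$ with exponential rate by Theorem~\ref{expokak}, so one triangle inequality suffices. This avoids the denominator control, the event-cutting, and the intermediate projective point entirely. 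Your route is more hands-on and re-derives from Proposition~\ref{largedeviations} and Theorem~\ref{iwabis} what the paper has already packaged into Theorem~\ref{expokak}; the paper's route is shorter but exploits that Theorem~\ref{expokak} gives \emph{almost sure} convergence of $k(M_n)[x_\rho]$, not merely convergence in law. Incidentally, once you do cut on the good event as you describe, the bound on $\delta^\epsilon$ is exponentially small for \emph{every} $\epsilon>0$, so the restriction on $\epsilon$ you flag at the end is not actually needed.
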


\begin{proof}
Let $\epsilon>0$. The analog of Theorem \ref{expokak} for $U_n^{-1}.[x_\rho^*]$ does not hold for $K_n[x_\rho]$ because it converges  only in law.
  However, we have the following nice estimate:
%Recall that for every $n\in \N$, $U_n^*e_1$ converges almost surely and $K_ne_1$ only in law (\ref{convergence}). Hence we cannot write a similar estimate for $K_ne_1$ as we did for $U_n^*e_1$ in Theorem (\ref{expokak}). However we have the following nice estimate: \\
for some $\rho(\epsilon)\in ]0,1[$ and all $n$ large enough:
\begin{equation}\label{original}\E\big[\delta^{\epsilon}\left(K_n[x_\rho] \;,\; k(X_n...X_{\lfloor \frac{n}{2} \rfloor})[x_\rho]\right) \big]\leq \rho(\epsilon)^n\end{equation}
 Indeed, by independence ${(X_1,...,X_n)}$ has the same law as ${(X_n,...,X_1)}$ for every $n\in \N^*$. Therefore, for every $n\in \N^*$: $$\E\big[\delta^{\epsilon} \left(K_n[x_\rho]\;,\; k(X_n...X_{\lfloor \frac{n}{2} \rfloor})[x_\rho] \right) \big]= \E\big[\delta^{\epsilon} \left(k(M_n)[x_\rho] \;,\;k(M_{n-\lfloor \frac{n}{2} \rfloor +1})[x_\rho] \right)\big]$$
 It suffices now to apply twice the first convergence of Theorem \ref{expokak} and the triangle inequality.\\

  %But by definition of the decomposition $KAK$,  $k(Y_1...Y_n)={u^*(Y_n^*...Y_1^*)}$ hence:
 %$$\E\big[\delta^{\alpha} \left(k_ne_1, k(Y_n...Y_{\lfloor \frac{n}{2} \rfloor})e_1 \right) \big]=\E\big[\delta^{\alpha} \left({u^*(Y_1^*,...,Y_n^*)}e_1 \;,\; {u^*(Y_1^*...Y_{n-\lfloor \frac{n}{2} \rfloor +1}^*)}e_1 \right) \big]$$ This is sub-exponential by the Theorem (\ref{expokak}) applied to the measure $\mu^*$ (the distribution of $Y_1^*$) which clearly satisfies the same hypothesis of strong irreducibility and contraction as $\mu$. This proves the assertion (\ref{original}).\\

 Now let $\phi$ be an $\epsilon$-holder function on $P(V^*) \times P(V)$, $(X'_n)_{n\in \N}$ increments with law $\mu$ independent from $(X_n)_{n\in \N}$. We similarly write $M'_n=X'_1...X'_n$. \\Let $Z=\lim U_n^*[x_\rho]$ and $T=\lim k(M'_n) [x_\rho]$ (a.s. limits given by Theorem \ref{expokak}). \textbf{The random variables $T$ and $Z$ are in particular independent}.   We write:
 $$E\left(\phi(U_n^{-1}.[x_\rho^*],K_n[x_\rho]) \right) -\E\left(\phi(Z,T) \right)= \Delta_1 + \Delta_2 + \Delta_3+\Delta_4$$
 where $$\Delta_1=\E\left(\phi(U_n^{-1}.[x_\rho^*],K_n[x_\rho]) \right) - \E\left(\phi(U_{\lfloor \frac{n}{2} \rfloor}^{-1}.[x_\rho^*],K_n[x_\rho]) \right)$$ $$\Delta_2=\E\left(\phi(U_{\lfloor \frac{n}{2}  \rfloor}^{-1}.[x_\rho^*],K_n[x_\rho]) \right)- \E\left(\phi(U_{\lfloor \frac{n}{2} \rfloor}^{-1}.[x_\rho^*],k(X_n...X_{\lfloor \frac{n}{2}  \rfloor +1})[x_\rho]) \right) $$
  \begin{equation}\Delta_3= \E\left(\phi(U_{\lfloor \frac{n}{2} \rfloor}^{-1}.[x_\rho^*],k(M'_{n-\lfloor \frac{n}{2}  \rfloor} ).[x_\rho] ) \right) - \E\left(\phi(Z,k(M'_{n-\lfloor \frac{n}{2}  \rfloor} )[x_\rho] \right)  \nonumber\end{equation}
  $$\Delta_4=\E\left(\phi(Z,k(M'_{n-\lfloor \frac{n}{2}  \rfloor} )[x_\rho]) \right) - \E\left(\phi(Z,T) \right)  $$

    In $\Delta_3$, we have replaced $k(X_n...X_{\lfloor \frac{n}{2}  \rfloor +1})$  with $ k(M'_{n-\lfloor \frac{n}{2}  \rfloor})$ because, on the one hand they have the same law and on the other hand, the processes $k(X_n...X_{\lfloor \frac{n}{2}  \rfloor +1})$ and $U_{\lfloor \frac{n}{2} \rfloor}$ that appear in the last term of the right hand side of $\Delta_2$ are independent. \\

  $\bullet$ By Theorem \ref{expokak}, there exist $\rho_1(\epsilon),\rho_2(\epsilon) \in ]0,1[$ such that: $|\Delta_1| \preceq ||\phi||_\epsilon\;\rho_1(\epsilon)^n+||\phi||_\epsilon\;\rho_1(\epsilon)^{\frac{n}{2}}$;\; $|\Delta_3| \preceq ||\phi||_\epsilon\;\rho_1(\epsilon)^{\frac{n}{2}}$ \; and \;$|\Delta_4|\preceq ||\phi||_\epsilon \;\rho_2(\epsilon)^{\frac{n}{2}}$.\\

  $\bullet$ By (\ref{original}), $\Delta_2 \preceq ||\phi||_\epsilon\; \rho_3(\epsilon)^n$.\\
 % $\bullet$ $|\Delta_4|\preceq ||\phi|| \rho_4^n$. In fact, By independence of $Z$ and the process $Y'$ and of the increments $(Y'_i)_{i\in \N^*}$, we can replace $k(Y'_n...Y'_{\lfloor \frac{n}{2}  \rfloor +1})$ by $k(Y'_{n - {\lfloor \frac{n}{2}  \rfloor}}...Y'_{1}) $. Since the latter equals ${u(Y'^*_{n-\lfloor \frac{n}{2}  \rfloor}...Y_1^*)}^*$, it suffices now to apply Theorem (\ref{expokak}) (same as the estimate of $\Delta_3$ applied to $\mu^*$).

\end{proof}

\subsection{Estimates in the Cartan decomposition - the non-connected case}
\label{subsequi} Recall that $k$ is a local field, $\mathbf{G}$  a $k$-algebraic group, $G$ its $k$-points which we assume to be $k$-split. We denote by $\mathbf{G}^0$ its Zariski-connected component which we assume to be semi-simple and by $G^0$ its $k$-points. Finally, $\rho$ is a $k$-rational representation of $G$ into some $SL_d(k)$. We write $V=k^d$ and $P(V)$ the projective space. \\
In other terms, we consider the same situation as in Section \ref{preliminaries} except that $\mathbf{G}$ \textbf{is no longer assumed connected},
  a fortiori $\rho(G)$.
 The $KAK$ and $KAN$ decompositions do not necessarily hold for the algebraic groups  $G$,
  $\rho(G)$ but  are valid for $G^0$ or $\rho(G^0)$. However, one can still use the KAK decomposition of the ambient group $SL(V)$.\\
  We use then the notations and conventions of Section \ref{uff} regarding the Cartan decomposition in $\mathbf{SL_d}$.
  % and of
  %Section \ref{preliminaries} concerning decompositions in $\mathbf{G^0}$.\\
  We consider the canonical basis $(e_1,...,e_d)$ and canonical norm on $V=k^d$ (see Section \ref{gene}).
  For each $g\in SL_d(k)$, we denote by $g=k(g)a(g)u(g)$ an arbitrary but fixed
  Cartan decomposition in $SL_d(k)$ and write $a(g)=diag\left(a_1(g),...,a_d(g)\right)$. \\
  We consider a probability measure $\mu$  on $G$ such that $\Gamma_\mu$ is Zariski dense in $G$. As usual, we denote by  $S_n=X_n...X_1 $  the  right random walk. \\

  %denote by $g=\widehat{k}(g)\widehat{a}(g)\widehat{u}(g)$ a privileged Cartan decomposition in the algebraic group
  %$\rho(G^0)=\rho(K)\rho(A^+)\rho(K)$ and write $\widehat{a}(g)=diag\left(\widehat{a_1}(g),...,\widehat{a_d}(g)\right)$ in the basis of weights.\\
   The  aim of this section is to prove that the main results of   Section \ref{subsestimate}
   hold for the Cartan decomposition in $SL_d(k)$ rather than merely in G.
   Our first task will be to prove the following theorem, which is the analog of Theorem
\ref{iwabis} for the $KAK$ decomposition in $SL_d(k)$.
    %or also the equivalent of the previous Lemma (\ref{tamhide}) for $G$.
\begin{theo} Assume that the representation $\rho|_{ G^0}$ is irreducible. Let $\mu$ be a probability measure on $G$ having
an exponential local moment (see Definition \ref{moshader}) and such that $\rho(\Gamma_\mu)$ is contracting.  Then for every $\epsilon>0$, $$\limsup_{n \rightarrow \infty}  \big[\E \left(\big|\frac{a_2\left(\rho(S_n)\right)}{a_1\left(\rho(S_n)\right)}\big|^\epsilon\right)\big]^{\frac{1}{n}} < 1$$   \label{ratiosld} \end{theo}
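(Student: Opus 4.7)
The plan is to reduce Theorem \ref{ratiosld} to the Zariski-connected case already handled by Corollary \ref{hardini}, via an induced walk on $G^0$. Let $\pi\colon G\to G/G^0$ denote the projection onto the (finite) group of connected components. Zariski density of $\Gamma_\mu$ in $G$ forces $\pi(\Gamma_\mu)=G/G^0$ (any subsemigroup of a finite group is a subgroup, and $\pi^{-1}$ of a proper subgroup would be a proper Zariski-closed subgroup containing $\Gamma_\mu$), so $\pi(M_n)$ is an irreducible random walk on a finite group. Define the return times $\tau_0=0$ and $\tau_{k+1}=\inf\{n>\tau_k:\pi(M_n)=e\}$, and the excursions $Y_k=X_{\tau_{k-1}+1}\cdots X_{\tau_k}\in G^0$. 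By the strong Markov property the pairs $(\tau_k-\tau_{k-1},Y_k)$ are i.i.d., and $\tau_1$ has exponential moments by classical finite-state Markov chain theory. Let $\mu'$ be the common law of the $Y_k$ on $G^0$ and set $M'_n=Y_1\cdots Y_n$.

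I would then verify the hypotheses of Corollary \ref{hardini} for $\mu'$ on $G^0$. First, $||\rho(Y_1)||\leq \prod_{i=1}^{\tau_1}||\rho(X_i)||$, and splitting on $\{\tau_1=m\}$ together with H\"older and the exponential moments of $\tau_1$ and of $||\rho(X_1)||$ gives a local exponential moment for $\mu'$. Irreducibility of $\rho|_{G^0}$ is an assumption. The main algebraic point is Zariski density of $\Gamma_{\mu'}$ in $G^0$: one shows that the semigroup of words in the support of $\mu$ whose image in $G/G^0$ is $e$ has Zariski closure $G^0$, using that the Zariski closure of $\Gamma_\mu$ is $G$ and that $G/G^0$ is finite. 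For contraction, $\rho|_{G^0}$ irreducible with $G^0$ connected yields strong irreducibility of $\rho(G^0)$, hence of $\rho(\Gamma_\mu)$ by Zariski density; together with the contracting hypothesis, Lemma \ref{proximal} provides a proximal element $\gamma=\rho(g_1\cdots g_m)\in\rho(\Gamma_\mu)$. Setting $N=|G/G^0|$, the element $(g_1\cdots g_m)^N$ lies in $G^0\cap\Gamma_\mu$ and decomposes as a concatenation of $N$ excursions, so $\gamma^N\in\rho(\Gamma_{\mu'})$; since powers of proximal elements are proximal, $\rho(\Gamma_{\mu'})$ is contracting. Corollary \ref{hardini} then produces $\lambda>0$ such that $\E[\,|a_2(\rho(M'_n))/a_1(\rho(M'_n))|^\epsilon\,]\leq (1-\lambda\epsilon)^n$ for all small $\epsilon>0$ and all large $n$.

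The final step is to transfer the decay from $M'_n$ to $M_n$, equivalently to $S_n$ whose law coincides with that of $M_n$ by Remark \ref{reversed}. Write $M_n=M'_{k(n)}R_n$ with $k(n)=\max\{k:\tau_k\leq n\}$ and $R_n=X_{\tau_{k(n)}+1}\cdots X_n$. The identity $|a_2(g)/a_1(g)|=||\wedge^2 g||/||g||^2$ for $g\in SL_d(k)$, combined with submultiplicativity and the bound $||g^{-1}||\leq ||g||^{d-1}$ for $g\in SL_d(k)$, yields
$$\big|a_2(\rho(M_n))/a_1(\rho(M_n))\big|\;\leq\; ||\rho(R_n)||^{2d}\cdot \big|a_2(\rho(M'_{k(n)}))/a_1(\rho(M'_{k(n)}))\big|.$$
A Cram\'er-type large deviation for the i.i.d.\ increments $\tau_k-\tau_{k-1}$ gives that with exponentially high probability $k(n)\in[(1-\delta)n/\E[\tau_1],(1+\delta)n/\E[\tau_1]]$; on this typical event I apply the decay for $\mu'$ at rank $k(n)$ and control $||\rho(R_n)||^{2d\epsilon}$ using the exponential moment of $n-\tau_{k(n)}$ together with the exponential local moment of $\mu$ (splitting on the value of $n-\tau_{k(n)}$ and using H\"older). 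Outside the typical event the a priori bound $|a_2/a_1|\leq 1$ combined with the exponentially small probability suffices. Combining both contributions produces $\E[\,|a_2(\rho(S_n))/a_1(\rho(S_n))|^\epsilon\,]\leq \sigma(\epsilon)^n$ with $\sigma(\epsilon)<1$.

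The most delicate step is the algebraic assertion that the Zariski closure of $\Gamma_{\mu'}$ equals $G^0$: this does not follow purely formally from Zariski density of $\Gamma_\mu$ in $G$, and requires an explicit construction showing that enough excursions land in $G^0$ to prevent the Zariski closure from dropping to a proper closed subgroup. Everything else — the local exponential moment of $\mu'$, finding a proximal element inside $\rho(\Gamma_{\mu'})$, and the concentration/remainder argument relating $M_n$ to $M'_{k(n)}$ — is essentially bookkeeping once the algebraic input is in hand.
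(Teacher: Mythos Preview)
Your proposal is correct and follows essentially the same route as the paper: induce the walk on $G^0$ via the return times, verify the hypotheses of Corollary \ref{hardini} for the induced measure (exponential moment via Lemma \ref{amar}, Zariski density, contraction via a proximal power), and then transfer the estimate back using a large deviation for $\tau(n)/n$ together with the trivial bound $|a_2/a_1|\leq 1$ off the typical event. The only cosmetic differences are that the paper works with $S_n$ and fixes $n=\lfloor N/\ell\rfloor$ to compare $S_N$ with $S_{\tau(n)}$, whereas you work with $M_n$ and the dual quantity $k(n)=\max\{k:\tau_k\leq n\}$; and the Zariski density step you flag as delicate is handled in the paper by the clean identity $\Gamma_\eta=\Gamma_\mu\cap G^0$ (using that $G^0$ is open in $G$), which immediately gives density since $G^0$ is Zariski open.
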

Our next task will be to adapt the proof of Theorem \ref{expokak} (exponential convergence in the KAK decomposition) and Theorem \ref{independence} (asymptotic independence in the KAK decomposition) to the Cartan decomposition of $SL_d(k)$. This can be done easily using  Theorem \ref{ratiosld}. Indeed it will be sufficient to replace
 $x_\rho$, highest weight of $\rho$,  with $e_1$ (which is the highest weight for the natural representation of $SL_d(k)$ on $k^d$) and $KAK$ in $G$ with $KAK$ in $SL_d(k)$. By writing the Cartan decomposition of $\rho(S_n)$ in $SL_d(k)$ as $\rho(S_n)=K_nA_nU_n$, we obtain:

\begin{theo} With the same assumptions as in Theorem \ref{ratiosld}, there exist random variables $Z_1\in P(V)$ and $Z_2\in P(V^*)$ such that
 $$\limsup_{n \rightarrow \infty} \big[\E (\delta^\epsilon (k(M_n)[e_1],Z_1)) \big]^{\frac{1}{n}}< 1\;\;;\;\;\limsup_{n \rightarrow \infty} \big[\E (\delta^\epsilon (U_n^{-1}.[e_1^*],Z_2)) \big]^{\frac{1}{n}}< 1$$
%The same holds for $k(M_n)=k(X_1...X_n)$
\label{direction1} \end{theo}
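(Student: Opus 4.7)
The plan is to adapt the proof of Theorem \ref{expokak} almost verbatim, replacing the role of the highest weight vector $x_\rho$ for the representation $\rho$ in $G$ by the vector $e_1$, which plays the role of a highest weight vector for the natural action of $SL_d(k)$ on $k^d$ (recall that the Cartan decomposition in $SL_d(k)$ is arranged so that $|a_1(g)| \geq \cdots \geq |a_d(g)|$). The key input is Theorem \ref{ratiosld}, which replaces Theorem \ref{iwabis} and gives the exponentially fast decay of $|a_2(\rho(S_n))/a_1(\rho(S_n))|^\epsilon$ in expectation, for every small enough $\epsilon > 0$.

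First I would check that the hypotheses of the exponential convergence in direction (Theorem \ref{direction}) apply to $\rho(\Gamma_\mu)$ acting on $V = k^d$. Strong irreducibility is the only point that needs attention in the non-connected setting: since $\Gamma_\mu$ is Zariski dense in $G$, $\Gamma_\mu \cap G^0$ is Zariski dense in the connected group $\mathbf{G}^0$, and hence any finite index subgroup of $\rho(\Gamma_\mu)$ contains a Zariski dense subgroup of $\rho(G^0)$; since $\rho|_{G^0}$ is assumed irreducible, such a subgroup acts irreducibly on $V$. Combined with the assumed contraction of $\rho(\Gamma_\mu)$, Theorem \ref{direction} produces random variables $Z_1 \in P(V)$ and $Z_2 \in P(V^*)$ (with respective laws the unique $\mu$-invariant and $\mu^{-1}$-invariant probability measures) and gives the exponential convergence in expectation of $\delta^\epsilon(S_n^{-1}.[f], Z_2)$ and $\delta^\epsilon(M_n[x], Z_1)$, uniformly in $[f] \in P(V^*)$ and $[x] \in P(V)$.

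Next I would carry out the decomposition step. Write $\rho(S_n) = K_n A_n U_n$ with $A_n = \mathrm{diag}(a_1(n),\dots,a_d(n))$ in the canonical basis. For $f \in V^*$ with $\|f\|=1$ and $e_1(K_n^{-1}.f) \neq 0$, I expand
\[
S_n^{-1}.f \;=\; e_1(K_n^{-1}.f)\, a_1(n)\, U_n^{-1}.e_1^{*} \;+\; \sum_{i=2}^{d} O(a_i(n)),
\]
which, after dividing through and using $\|S_n^{-1}.f\| \leq |a_1(n)|$, yields
\[
\delta(U_n^{-1}.[e_1^{*}], Z_2) \;\leq\; \frac{1}{|e_1(K_n^{-1}.f)|}\Bigl(\delta(S_n^{-1}.[f], Z_2) + \sum_{i=2}^{d} O\bigl(|a_i(n)/a_1(n)|\bigr)\Bigr).
\]
To control the denominator, observe that since $K_n e_1$ has norm one and we are using the canonical (Euclidean/Max) norm on $k^d$, there is always a random index $i(n,\omega) \in \{1,\dots,d\}$ with $|e_1(K_n^{-1}.e_i^{*})| = |K_ne_1(e_i^{*})| \geq C(k)$ for $C(k) = 1/\sqrt d$ (archimedean) or $C(k)=1$ (non-archimedean). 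Taking $f = e_i^*$ on each event $\{|e_1(K_n^{-1}.e_i^{*})| \geq C(k)\}$, summing over $i$, and using $(x+y)^\epsilon \leq x^\epsilon + y^\epsilon$ for $\epsilon \in (0,1)$, I obtain
\[
\mathbb{E}\bigl(\delta^\epsilon(U_n^{-1}.[e_1^{*}], Z_2)\bigr) \;\leq\; \frac{1}{C(k)^\epsilon}\sum_{i=1}^{d} \mathbb{E}\bigl(\delta^\epsilon(S_n^{-1}.[e_i^{*}], Z_2)\bigr) + \frac{1}{C(k)^\epsilon}\sum_{i=2}^{d} \mathbb{E}\bigl(|a_i(n)/a_1(n)|^\epsilon\bigr).
\]
The first sum decays exponentially by Theorem \ref{direction}, and the second sum decays exponentially by Theorem \ref{ratiosld} (using $|a_i(n)| \leq |a_2(n)|$ for $i \geq 2$). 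This gives the second half of the statement.

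For the first half, concerning $k(M_n)[e_1]$, I would run the symmetric argument applied to the reversed walk on $V$ (replacing $V^*$ by $V$ throughout and using the contragredient viewpoint): the same reasoning, now exploiting $U_n^{-1}.e_1^{*}$ in place of $K_n e_1$ and the action on $P(V)$ in place of $P(V^*)$, yields the analogous exponential convergence to $Z_1$. The only conceptual obstacle is the verification that all our hypotheses (strong irreducibility, contraction, exponential local moment) survive under passage from $G$ to $\rho(G)$ and under the switch from the $\rho$-adapted good norm to the canonical norm on $k^d$; but the first two are algebraic and invariant under such changes, while the exponential local moment is preserved by Lemma \ref{expomementrep}. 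Everything else is a mechanical transcription of the proof of Theorem \ref{expokak}.
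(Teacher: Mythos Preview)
Your proposal is correct and follows exactly the route the paper indicates: the paper does not give a separate proof of Theorem \ref{direction1} but simply notes (just before its statement) that it suffices to rerun the proof of Theorem \ref{expokak} with $e_1$ in place of $x_\rho$, the $KAK$ decomposition in $SL_d(k)$ in place of that in $G$, and Theorem \ref{ratiosld} in place of Theorem \ref{iwabis}. Your explicit check of strong irreducibility via $\rho|_{G^0}$ is a welcome clarification that the paper leaves implicit.
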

%Similarly, we have the following analog of Theorem \ref{independence} for the KAK decomposition in $SL_d(k)$:
\begin{theo} With the same hypotheses as in Theorem \ref{ratiosld}, there exists \textbf{independent random variables} $Z\in P(V^*)$ and $T\in P(V)$, $\rho \in ]0,1[$, $n_0>0$ such that, for every $\epsilon>0$, every $\epsilon$-holder (real) function $\phi$ on $P(V^*)\times P(V)$, every $n>n_0$ we have:
$$\big|\E\left(\phi([U_n^{-1}.e_1^*],[K_ne_1]) \right) - \E \left( \phi(Z,T) \right)\big| \leq ||\phi||_{\epsilon} \rho^n $$
where $$||\phi||_\epsilon= Sup_{[x],[x'],[y],[y']}{\frac{|\phi([x],[x'])-\phi([y],[y'])|}{\delta^{\epsilon}([x],[y])+\delta^{\epsilon}([x'],[y'])}}$$
\label{independence1}\end{theo}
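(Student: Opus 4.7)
The plan is to transport, essentially verbatim, the proof of Theorem \ref{independence} to the non-connected setting, substituting $x_\rho$ by the first canonical vector $e_1$, the KAK decomposition of $\rho(S_n)$ in $G$ by its KAK decomposition in $SL_d(k)$, Theorem \ref{expokak} by Theorem \ref{direction1}, and Theorem \ref{iwabis} by Theorem \ref{ratiosld}. All these replacements are already available as stated results, so no substantially new idea is required.

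First I would introduce the two limiting random variables. By Theorem \ref{direction1}, $U_n^{-1}.[e_1^*]$ converges almost surely exponentially fast to some $Z \in P(V^*)$, and $k(M_n)[e_1]$ converges almost surely exponentially fast to some random variable on $P(V)$. To realize $T$ independently of $Z$, I would take an independent copy $(X_n')_{n\geq 1}$ of $(X_n)_{n\geq 1}$, set $M_n' = X_1' \cdots X_n'$, and define $T := \lim_{n\to\infty} k(M_n')[e_1]$. Then $T$ is $\sigma((X_n')_{n\geq 1})$-measurable while $Z$ is $\sigma((X_n)_{n\geq 1})$-measurable, hence $Z$ and $T$ are independent; moreover $T$ has the same law as the weak limit of $K_n[e_1]$ since $(X_1,\ldots,X_n)$ and $(X_n,\ldots,X_1)$ are equidistributed.

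The key intermediate step, analogous to estimate (\ref{original}) in the proof of Theorem \ref{independence}, is the bound
$$\E\bigl[\delta^\epsilon\bigl(K_n[e_1],\, k(X_n \cdots X_{\lfloor n/2\rfloor + 1})[e_1]\bigr)\bigr] \leq \rho(\epsilon)^n$$
for some $\rho(\epsilon) \in ]0,1[$ and all large $n$. Using $(X_1,\ldots,X_n) \sim (X_n,\ldots,X_1)$ this expectation equals $\E[\delta^\epsilon(k(M_n)[e_1], k(M_{n - \lfloor n/2\rfloor})[e_1])]$, which is exponentially small by two applications of Theorem \ref{direction1} and the triangle inequality.

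Finally I would split the error as $\E(\phi(U_n^{-1}.[e_1^*], K_n[e_1])) - \E(\phi(Z,T)) = \Delta_1 + \Delta_2 + \Delta_3 + \Delta_4$ following the proof of Theorem \ref{independence}: $\Delta_1$ replaces $U_n^{-1}$ by $U_{\lfloor n/2 \rfloor}^{-1}$ (bounded via Theorem \ref{direction1}); $\Delta_2$ replaces $K_n[e_1]$ by $k(X_n\cdots X_{\lfloor n/2 \rfloor +1})[e_1]$ (bounded via the intermediate estimate); $\Delta_3$ exploits that $X_n\cdots X_{\lfloor n/2 \rfloor+1}$ and $U_{\lfloor n/2 \rfloor}$ are independent to replace $k(X_n\cdots X_{\lfloor n/2 \rfloor+1})[e_1]$ by $k(M_{n-\lfloor n/2\rfloor}')[e_1]$ (same joint law) and then replace $U_{\lfloor n/2 \rfloor}^{-1}.[e_1^*]$ by its limit $Z$ (bounded via Theorem \ref{direction1}); $\Delta_4$ replaces $k(M_{n-\lfloor n/2\rfloor}')[e_1]$ by $T$ (bounded via Theorem \ref{direction1} applied to the primed walk). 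The Hölder hypothesis on $\phi$ turns each of these four displacements into a contribution of the form $\|\phi\|_\epsilon \cdot \rho_i(\epsilon)^{n/2}$; taking $\rho = \max_i \rho_i(\epsilon)^{1/2}$ concludes. The one point that could a priori be an obstacle is that $K_n[e_1]$, unlike $k(M_n)[e_1]$, only converges in law, not almost surely; but this is precisely what is absorbed by the decoupling trick of passing to $k(X_n\cdots X_{\lfloor n/2 \rfloor + 1})[e_1]$ via the reversal of increments, so no genuine difficulty arises beyond what was handled in Theorem \ref{independence}.
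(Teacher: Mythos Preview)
Your proposal is correct and matches the paper's approach exactly: the paper does not give a separate proof of Theorem \ref{independence1}, but simply remarks that the proof of Theorem \ref{independence} carries over verbatim upon replacing $x_\rho$ by $e_1$, the KAK decomposition in $G$ by that in $SL_d(k)$, and Theorem \ref{expokak} by Theorem \ref{direction1}. Your write-up of the $\Delta_1+\Delta_2+\Delta_3+\Delta_4$ splitting and the decoupling via the independent copy $(X_n')$ is precisely this transcription.
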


Before proving Theorem \ref{ratiosld}, we give some easy but important facts.\\

\begin{defi} Let $\tau= inf \{n\in \N^*;\; S_n \in G^0\}$ i.e. the  first time the random walk $(S_n)_{n\in \N^*}$ hits $G^0$. Recursively,  for every $n\in \N$,  $\tau(n+1)=inf\{k> \tau(n); S_{k}\in G^0\}$  \end{defi}
For every $n\in\N^*$, $\tau(n)$ is a.s. finite. Indeed, by the Markov property it suffices to show that $\tau$ is almost surely finite: let $\pi$ be the projection $G \rightarrow G/G^0$, $\tau$ is then the first time the \textbf{finite} states Markov chain $\pi(S_n)$ -it is in fact a random walk because $G^0$ is normal in $G$ - returns to identity.
\begin{lemme} If $\mu$ is a probability measure on $G$ with an exponential local moment (see Definition \ref{moshader}), then the distribution $\eta$ of $S_\tau$  also has an exponential local moment.\label{amar}
\end{lemme}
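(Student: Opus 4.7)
The plan is to combine an exponential tail bound for $\tau$ with submultiplicativity of the operator norm. Since $G^0$ has finite index in $G$ and is normal, the projection $\pi\colon G \to G/G^0$ is a group homomorphism onto a finite group, and $\pi(S_n) = \pi(X_n) \cdots \pi(X_1)$ is a random walk on $G/G^0$. Zariski density of $\Gamma_\mu$ in $G$ forces $\pi(\Gamma_\mu) = G/G^0$, so this induced walk is irreducible on a finite state space. A standard finite-state Markov chain argument then yields constants $C > 0$ and $\rho_0 \in (0,1)$ with $\p(\tau > n) \leq C \rho_0^n$: concretely, irreducibility gives $N$ and $p > 0$ such that from any state one hits the identity within $N$ steps with probability at least $p$, and iterating via the Markov property gives $\p(\tau > kN) \leq (1-p)^k$.

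Next, submultiplicativity of the norm gives $\|S_\tau\| \leq \prod_{i=1}^\tau \|X_i\|$, so for any $\tau' > 0$, using Cauchy-Schwarz and independence of the $X_i$'s,
$$\E(\|S_\tau\|^{\tau'}) = \sum_{n=1}^\infty \E\Big(\prod_{i=1}^n \|X_i\|^{\tau'} \mathds{1}_{\tau = n}\Big) \leq \sum_{n=1}^\infty \E(\|X_1\|^{2\tau'})^{n/2}\, \p(\tau = n)^{1/2}.$$
Using the exponential tail of $\tau$, the right-hand side is bounded by $C^{1/2} \sum_n \bigl(\E(\|X_1\|^{2\tau'}) \rho_0\bigr)^{n/2}$.

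It remains to pick $\tau'$ small enough that this geometric series converges. Since every $g \in SL_d(k)$ satisfies $\|g\| \geq 1$ (from $|\det g| = 1 \leq \|g\|^d$, valid in both the archimedean and ultrametric cases), the map $\tau' \mapsto \E(\|X_1\|^{2\tau'})$ is continuous at $0$ by dominated convergence, with an integrable dominator provided by the exponential local moment of $\mu$, and tends to $1$ as $\tau' \to 0^+$. Hence one can choose $\tau' > 0$ with $\E(\|X_1\|^{2\tau'}) \rho_0 < 1$, which makes the series converge and gives $\E(\|S_\tau\|^{\tau'}) < \infty$, i.e.\ the exponential local moment of $\eta$. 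The only step that is not entirely routine is the Markov-chain tail bound for $\tau$, but irreducibility on a finite state space is all that is needed.
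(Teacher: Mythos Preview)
Your proof is correct and follows essentially the same route as the paper: split $\E(\|S_\tau\|^{\alpha})$ over $\{\tau=n\}$, apply Cauchy--Schwarz to separate the norm from the hitting time, bound $\E(\|S_n\|^{2\alpha})$ via submultiplicativity and independence, and use an exponential tail for $\tau$ coming from the finite quotient walk. The only cosmetic differences are that the paper invokes Perron--Frobenius for the tail of $\tau$ and Jensen's inequality (rather than dominated convergence) to make $\E(\|X_1\|^{2\alpha})$ small; also, your displayed ``$=$'' should be ``$\leq$'' since $\|S_n\|\leq\prod_i\|X_i\|$ is only an inequality.
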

\begin{proof} We identify $G$ with a closed subgroup of $GL_r(k)$. For every $\alpha>0$:
\begin{equation}\E \left( ||S_\tau||^\alpha \right)=\sum_{k\in \N^*}\;{\E \left(||S_k||^\alpha\;;\;\mathds{1}_{\tau = k } \right)}\;\leq \sum_{k\in \N^*}\;\sqrt {\E (||S_k||^{2\alpha})}\;\sqrt{\p (\tau=k)}\label{kilmii}\end{equation}
where we used the Cauchy-Schwartz inequality on the right hand side.  Since $\mu$ has an exponential moment, there exists $\alpha_0>0$ such that: $1\leq \E (||X_1||^{2\alpha_0})=C< \infty $. Impose $\alpha < \alpha_0$. Since $x \mapsto x^{\frac{\alpha_0}{\alpha}}$ is convex,  the Jensen inequality gives:\;\;
 $\E (||X_1||^{2\alpha}) \leq \E (||X_1||^{2\alpha_0})^{\frac{\alpha}{\alpha_0}} = C^{\frac{\alpha}{\alpha_0}}$.
The norm being sub-multiplicative, we have by independence: $\E (||S_k||^{2\alpha})\leq \big[\E (||X_1||^{2\alpha}) \big]^k$  for every $k\in \N^*$.  Hence
\begin{equation}\E (||S_k||^{2\alpha}) \leq (C^{\frac{1}{\alpha_0}})^{\alpha k}\;\;\;; \;\;k\in \N^*\label{ghassan}\end{equation}
%Together with (\ref{kilmii}) gives, with $D= C^{\frac{1}{\alpha_0}}$:
%\begin{equation}\label{cava}\E \left( ||S_\tau||^\alpha \right) \leq \sum_{k\in \N^*} {D^{k\alpha} \p (\tau=k)}\end{equation}
On the other hand, recall that $\tau$ is the first time the finite states Markov chain $\pi(S_n)$ returns to identity.
The Perron-Frobenius theorem implies that $\pi(S_n)$ becomes equidistributed exponentially fast so that $\p (\tau >k)$ is exponentially decaying. In particular, there exists a constant $\lambda>0$ such that \begin{equation}\p (\tau=k)\leq exp(-\lambda k )\label{badi3a}\end{equation}
Combining (\ref{kilmii}), (\ref{ghassan}) and (\ref{badi3a}) gives with $D=C^{\frac{1}{\alpha_0}}$: $$\E \left( ||S_\tau||^\alpha \right) \leq \sum_{k\in \N^*} {D^{k\alpha/2}\; exp(-\lambda k/2 )}$$
 It suffices to choose $\alpha>0$ small enough such that the latter sum is finite $(\alpha < \frac{\lambda}{\log (D)}$ works).\end{proof}
\begin{corollaire} Suppose that $\mu$ has an exponential local moment, $\rho |_ {G^0}$
is irreducible and $\rho(\Gamma_\mu)$ is contracting. Then for every $\epsilon>0$,  $$\limsup_{n \rightarrow \infty} \Big[\E \left(\;\big|\frac{{a_2\left(\rho(S_{\tau(n)})\right)}}{{a_1\left(\rho(S_{\tau(n)})\right)}}\big|^\epsilon\;\right) \Big]^{\frac{1}{n}}< 1$$ \label{coro1}\end{corollaire}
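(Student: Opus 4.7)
The plan is to apply Corollary \ref{hardini} to the induced random walk $S_{\tau(n)}$ on the connected semi-simple $k$-split group $G^0$. By the strong Markov property at the successive return times $\tau(k)$, the blocks $Y_k := X_{\tau(k)} X_{\tau(k)-1} \cdots X_{\tau(k-1)+1}$ are i.i.d.\ with common law $\eta$ (the law of $S_\tau$), and $S_{\tau(n)} = Y_n Y_{n-1} \cdots Y_1$ is precisely the random walk on $G^0$ driven by $\eta$. Thus it suffices to verify the three hypotheses of Corollary \ref{hardini} for $\eta$ on $G^0$: an exponential local moment, Zariski density of $\Gamma_\eta$ in $G^0$, and contraction of $\rho(\Gamma_\eta)$.

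The exponential local moment of $\eta$ is exactly Lemma \ref{amar}. For the Zariski density, the key observation is the inclusion $\Gamma_\mu \cap G^0 \subseteq \Gamma_\eta$: since $G^0$ is open and closed in $G$ for the analytic topology (being a finite-index algebraic subgroup over a local field), any element $h \in \Gamma_\mu \cap G^0$ is a limit of words $w_n = g^{(n)}_{k_n} \cdots g^{(n)}_1$ in $\operatorname{supp}(\mu)$, and one has $w_n \in G^0$ for all large $n$. For such $w_n$, decomposing along the first-return times of the partial products $g^{(n)}_j \cdots g^{(n)}_1$ to $G^0$ expresses $w_n$ as a product of elements of $\operatorname{supp}(\eta)$ (normality of $G^0$ forces each block to lie in $G^0$); hence $w_n \in \Gamma_\eta$, and $h \in \Gamma_\eta$ since $\Gamma_\eta$ is closed. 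As $\Gamma_\mu$ is Zariski dense in $G$, its intersection with the algebraic component $G^0$ is Zariski dense in $G^0$, and so therefore is $\Gamma_\eta$.

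To verify contraction of $\rho(\Gamma_\eta)$, note that $\rho(\Gamma_\mu)$ acts irreducibly on $V$ (because $\rho|_{G^0}$ is irreducible and $\Gamma_\mu$ is Zariski dense in $G$) and is contracting by assumption. The second half of Lemma \ref{proximal} then yields a proximal element $\rho(g)$ with $g \in \Gamma_\mu$. Setting $m = [G : G^0]$, we have $g^m \in \Gamma_\mu \cap G^0 \subseteq \Gamma_\eta$; positive integer powers preserve proximality (the unique maximal-modulus eigenvalue $\lambda$ of $g$ yields $\lambda^m$ as the unique maximal-modulus eigenvalue of $g^m$), so $\rho(g^m)$ is a proximal element of $\rho(\Gamma_\eta)$, and the first half of Lemma \ref{proximal} produces the contraction.

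With the three hypotheses in hand, Corollary \ref{hardini} applied to the random walk $(S_{\tau(n)})_{n \geq 1}$ on $G^0$ driven by $\eta$ yields the desired exponential decay. The main conceptual step is the Markov-property reduction to the connected case, while the main technical hurdle is the inclusion $\Gamma_\mu \cap G^0 \subseteq \Gamma_\eta$—everything else is a routine transfer of the hypotheses from $\mu$ to $\eta$.
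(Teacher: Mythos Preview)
Your proof is correct and follows essentially the same route as the paper: reduce to the connected case by recognizing $(S_{\tau(n)})_n$ as the random walk on $G^0$ driven by $\eta$, then verify the hypotheses of Corollary~\ref{hardini} via Lemma~\ref{amar} (moment), the inclusion $\Gamma_\mu \cap G^0 \subseteq \Gamma_\eta$ (Zariski density), and Lemma~\ref{proximal} with the power $g^{[G:G^0]}$ (contraction). The only cosmetic difference is that the paper obtains the full equality $\Gamma_\eta = \Gamma_\mu \cap G^0$ via the neighborhood characterization of the closed semigroups, whereas you argue the needed inclusion by an explicit return-time decomposition of words; both are valid and the inclusion is all that is required.
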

\begin{proof}
 The variables $\{\tau(i+1)- \tau(i);\;i \geq 1\}$ are independent and have the same law $\tau=\tau(1)$. Hence, the process $\left(S_{\tau(n)}\right)_{n\in \N^*}$ has the same law as the usual right random walk on $G^0$ associated to the probability measure $\eta$.\\
$\bullet$ First we show that $\Gamma_\eta$ is Zariski dense in $G^0$. We claim that $\Gamma_\eta = \Gamma_\mu \cap G^0$. Indeed, recall that $\Gamma_\eta$ is the smallest closed  semigroup (for the natural topology of $End_d(k)$ induced by that of $k$) in $G^0$ containing the support of $\eta$. Hence, $M\in \Gamma_\eta$ if and only if for every neighborhood $O$ of $M$ in $G^0$,  $\p (\exists n \in \N^*; S_{\tau(n)} \in O ) >0 $. On the other hand, $G^0$ is open in $G$ because $G/G^0$ is finite. Thus, $M\in \Gamma_\mu \cap G^0$ if and only if for every neighborhood $O$ of $M$ in $G^0$, $\p (\exists n \in \N^*; S_{n} \in O) >0$  or equivalently $\p (\exists n \in \N^*; S_{\tau(n)} \in O) >0$. This shows indeed that $\Gamma_\eta = \Gamma_\mu \cap G^0$.
 \\Since $\Gamma_\mu$ is Zariski-dense in $G$ and $G^0$ is Zariski-open in $G$, we deduce that $\Gamma_\eta$ is Zariski dense in $G^0$.\\
  %show that $\Gamma_\eta$ is Zariski dense in $G^0$. Indeed, it is easy to see that $\Gamma_\eta = \Gamma_\mu \cap G^0$. The inclusion $\ov{\Gamma_\eta} \subset G^0$ is trivial.
%But $[G/G^0]< \infty$ $\Rightarrow$ $[\Gamma_\mu / \Gamma_\eta]< \infty$ $\Rightarrow $ $[\ov{\Gamma_\mu} / \ov{\Gamma_\eta}] = [G/ \ov{\Gamma_\eta}]< \infty$.
%By well known properties of the identity component in algebraic groups - \cite{Humphreys}-,  $G^0 \subset \ov{\Gamma_\eta}$. This shows   indeed that $\ov{\Gamma_\eta}=G^0$.
$\bullet$ Next, we show that $\rho(\Gamma_\eta)$ is contracting. Indeed, by Lemma \ref{proximal}, $\rho(\Gamma_\mu)$ has a proximal element, say $\rho(\gamma)$ with $\gamma\in \Gamma_\mu$,  then $\rho(\gamma)^{[G/G^0]}=\rho(\gamma^{[G/G^0]})$ is also proximal with $\gamma^{[G/G^0]}$ in $\Gamma_\mu \cap G^0=\Gamma_\eta$. Hence $\rho(\Gamma_\eta)$ is proximal whence, again by Lemma \ref{proximal}, contracting.\\
In consequence, we are in the following situation: $G^0$ is the group of $k$-points of a connected algebraic group  and $\eta$ is a probability measure on  $G^0$ such that the semigroup $\Gamma_\eta$ is Zariski dense in $G^0$. Moreover, by Lemma \ref{amar}, $\eta$ has an exponential local moment. Finally $\rho|_{G^0}$ is an irreducible representation of $G^0$ such that $\rho|_{G^0}(\Gamma_\eta)$ is contracting. An appeal to Corollary  \ref{hardini} ends the proof.
\end{proof}

\begin{lemme} Let $\ell=\E(\tau)$.\\
(i) The Lyapunov exponent associated to the random walk $\rho(S_{\tau(n)})$ (or in other terms to the distribution $\rho(\eta)$) is $\ell \lambda_1 $, where $\lambda_1$ is the first Lyapunov exponent associated to $\rho(S_n)$.\\
(ii)  For every $\epsilon>0$, there exist $\rho(\epsilon)\in ]0,1[$, $n(\epsilon)\in \N^*$ such that for $n>n(\epsilon)$:
 $$\p (|\frac{1}{n}\tau(n) - \ell | >\epsilon )\leq \rho(\epsilon)^{n}$$
\label{taun}\end{lemme}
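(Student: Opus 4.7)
The plan is to exploit two facts that are already in hand: first, the increments $\tau(i+1)-\tau(i)$ are i.i.d.\ with the distribution of $\tau$, as noted just before Corollary \ref{coro1}; second, the proof of Lemma \ref{amar} established via Perron--Frobenius that $\p(\tau=k)\leq e^{-\lambda k}$ for some $\lambda>0$, so $\tau$ has all exponential moments, in particular $\ell=\E(\tau)<\infty$ and the moment generating function $M(t)=\E(e^{t\tau})$ is finite on a neighborhood of $0$.

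For part (i), I would invoke Kingman's subadditive ergodic theorem (as recalled in Definition \ref{lyaponuv}) to get $\tfrac{1}{n}\log\|\rho(S_n)\|\to \lambda_1$ almost surely. Since $\tau(n)\to\infty$ a.s.\ (as each $\tau(i+1)-\tau(i)$ is a.s.\ finite), one may restrict to the subsequence indexed by $\tau(n)$ and obtain
\[
\frac{1}{\tau(n)}\log\|\rho(S_{\tau(n)})\|\;\xrightarrow[n\to\infty]{\text{a.s.}}\;\lambda_1.
\]
Then I would apply the strong law of large numbers to the i.i.d.\ sum $\tau(n)=\sum_{i=1}^n\bigl(\tau(i)-\tau(i-1)\bigr)$ (with the convention $\tau(0)=0$) to conclude $\tau(n)/n\to \ell$ a.s. Multiplying the two limits gives $\tfrac{1}{n}\log\|\rho(S_{\tau(n)})\|\to \ell\lambda_1$ a.s., which is by definition the top Lyapunov exponent of the law of $\rho(S_\tau)$.

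For part (ii), the argument is the classical Chernoff/Cram\'er large deviation bound applied to the i.i.d.\ sum $\tau(n)$. Fix $\epsilon>0$. For $t>0$ small enough that $M(t)<\infty$, Markov's inequality gives
\[
\p\bigl(\tau(n)-n\ell>n\epsilon\bigr)\leq e^{-t(n\ell+n\epsilon)}M(t)^n=\bigl(e^{-t(\ell+\epsilon)}M(t)\bigr)^n.
\]
Because $\tfrac{d}{dt}\log M(t)\big|_{t=0}=\ell<\ell+\epsilon$, one can choose $t>0$ small so that $e^{-t(\ell+\epsilon)}M(t)<1$, yielding exponential decay. The symmetric tail $\p(\tau(n)-n\ell<-n\epsilon)$ is handled in the same way using $M(-t)$ for small $t>0$ (finite by the exponential tail of $\tau$). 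Summing the two bounds and taking the maximum of the two bases gives a single $\rho(\epsilon)\in]0,1[$ working for $n\geq n(\epsilon)$.

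Neither step presents a real obstacle: (i) is a routine combination of Kingman and SLLN once one notes $\tau(n)\to\infty$, and (ii) is exactly the Chernoff bound, whose only prerequisite, the finiteness of $M(t)$ near $0$, is already embedded in Lemma \ref{amar}'s proof.
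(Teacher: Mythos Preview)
Your proposal is correct and follows essentially the same approach as the paper: part (i) combines the a.s.\ convergence $\tfrac{1}{\tau(n)}\log\|\rho(S_{\tau(n)})\|\to\lambda_1$ with the SLLN $\tau(n)/n\to\ell$, and part (ii) is exactly the Cram\'er/Chernoff bound for the i.i.d.\ sum $\tau(n)$, the required exponential moment of $\tau$ being supplied by the Perron--Frobenius estimate from Lemma~\ref{amar}. The paper simply packages (ii) as an appeal to the standard large deviations lemma for i.i.d.\ sequences rather than writing out the Chernoff computation.
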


\begin{proof}
The stopping time $\tau(n)$ is the sum of the independent, $\tau$-distributed random variables $\{\tau(i+1)-\tau(i);i\geq 1\}$. By the usual strong law of large numbers, a.s. $\lim \frac{\tau(n)}{n} = \ell$, so that, $\frac{1}{n} {\log ||S_{\tau(n)}||}=\frac{\log ||S_{\tau(n)}||}{\tau(n)} \times \frac{\tau(n)}{n}$ converges almost surely towards $\lambda_1 \ell$. Item (ii) is an application of a classical large deviation inequality for i.i.d sequences: Lemma \ref{largeiid} below. To apply the latter, we should check that for some $\xi>0$, $\E \left(exp(\xi \tau) \right)< \infty $. Indeed, by (\ref{badi3a}), there exists $\xi>0$ such that for every $y\in \R_+$: $\p (\tau > y ) \leq exp(-\xi y)$. Hence, for every $t>0$, write: $$\E\left(exp(t \tau) \right)=\int_{0}^\infty {\p \left(exp(t \tau) > x \right)\; dx} = 1+\;\int_{1}^\infty {\p \left(\tau > \frac{\log(x)}{t} \right)\; dx}\leq 1+ \int_{1}^\infty {exp(-\xi\frac{\log(x)}{t})\;dx}$$
 The latter is finite as soon as $t < \xi$.
\end{proof}
The following lemma is classical in the theory of large deviations and is a particular case of the well-known Cramer Theorem. One can see \cite{stroo}, Lemma 3.4 Chapter 3 for example.
 \begin{lemme} [Large deviations theorem for i.i.d. sequences]
 Let $(X_n)_{n\in \N}$ be a sequence of independent, identically distributed real random variables. If for some $\xi >0$, $\E \left(exp(\xi |X_1|) \right) < \infty$, there exists a positive function $\phi$ on $\R^*$ such that for every $\epsilon>0$:
 $$\p \left( |\frac{1}{n} \sum_{i=1}^n {X_i} - \E(X_1) | \geq \epsilon\right) \leq exp\left(-n\phi(\epsilon) \right)$$
Moreover, one can take $\phi(\epsilon)=Sup_{0<t<\xi}\{t\epsilon - \psi(t) \}$ where $\psi(t)=\log \Big(\E\big[exp\left(t (X_1-\E(X_1) ) \right) \big] \Big)$.
 \label{largeiid}\end{lemme}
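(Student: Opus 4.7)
The plan is to follow the classical exponential Chebyshev (Chernoff) argument that underlies Cramér's theorem. After centering by setting $Y_i = X_i - \E(X_1)$, the hypothesis $\E(\exp(\xi|X_1|)) < \infty$ together with $\E(\exp(\xi|\E(X_1)|)) < \infty$ (which is trivial, a constant) implies that the moment generating function $M(t) = \E(\exp(tY_1))$ is finite on a real neighborhood of zero, say $(-\xi', \xi')$ with $\xi' > 0$. By dominated convergence, $M$ is smooth on this interval, so the cumulant $\psi(t) = \log M(t)$ satisfies $\psi(0) = 0$ and $\psi'(0) = \E(Y_1) = 0$.

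For the upper tail, for any $t \in (0, \xi')$ the Markov inequality applied to the non-negative random variable $\exp(t \sum_{i=1}^n Y_i)$, combined with independence of the $X_i$'s, gives
$$\p\left(\tfrac{1}{n}\sum_{i=1}^n Y_i \geq \epsilon\right) \leq e^{-tn\epsilon} \, \E(e^{t Y_1})^n = \exp\bigl(-n(t\epsilon - \psi(t))\bigr).$$
Optimizing over $t \in (0, \xi')$ yields an exponential bound with rate $\phi_+(\epsilon) = \sup_{0 < t < \xi'}\{t\epsilon - \psi(t)\}$. For the lower tail, the same argument applied to $-Y_i$ (whose cumulant generating function is also finite near zero) produces an analogous bound with rate $\phi_-(\epsilon)$. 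Taking $\phi(\epsilon)$ to be the minimum of these two rates gives the desired two-sided inequality for $\p(|\tfrac{1}{n}\sum Y_i| \geq \epsilon)$, up to a harmless factor of $2$ which can be absorbed by slightly shrinking $\phi$.

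The only point requiring a short argument is the strict positivity of $\phi(\epsilon)$ for $\epsilon > 0$. This follows from the Taylor expansion $\psi(t) = \tfrac{1}{2}\sigma^2 t^2 + o(t^2)$ near zero (where $\sigma^2 = \mathrm{Var}(Y_1)$, possibly zero, but then the argument is trivial): for any $\epsilon > 0$, choosing $t > 0$ sufficiently small gives $t\epsilon - \psi(t) \geq \tfrac{1}{2} t\epsilon > 0$, hence $\phi(\epsilon) > 0$. The main (mild) subtlety is thus analytic rather than probabilistic: one must justify the smoothness of $\psi$ near zero and the interchange of differentiation and integration, both of which are standard consequences of the exponential moment assumption via dominated convergence.
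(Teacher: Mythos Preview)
Your proof is correct and is precisely the classical Chernoff (exponential Markov) argument underlying Cram\'er's theorem. The paper does not supply its own proof of this lemma; it merely records the statement as well known and refers the reader to Stroock's book \cite{stroo}. So there is nothing to compare beyond noting that you have written out the standard argument that the reference would give.

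One small remark: the precise formula $\phi(\epsilon)=\sup_{0<t<\xi}\{t\epsilon-\psi(t)\}$ stated in the lemma literally controls only the upper tail $\p(\tfrac{1}{n}\sum Y_i\geq\epsilon)$; as you correctly observe, the two-sided bound requires taking the minimum with the analogous lower-tail rate $\sup_{0<t<\xi}\{t\epsilon-\psi(-t)\}$ and absorbing a factor $2$. This is a minor imprecision in the paper's statement rather than a defect of your argument, and for the paper's application (Lemma~\ref{taun}(ii)) either tail suffices.
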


 %\begin{proof}: Let $Z_n=\sum_{i=1}^n {X_i} - n\E(X_1)$ and $Y=X_1 - \E(X_1)$.
 %By the Markov inequality, for every $t>0$, $\p (Z_n \geq n \epsilon) \leq exp(-n\epsilon t) \E\left(exp(tZ_n)\right)$.
 %But $\E\left(exp(tZ_n)\right)= {[\E(exp(tY)]^n}$, then   $\p (Z_n \geq n \epsilon) \leq exp\left(-n \phi(\epsilon) \right)$, where $\phi(\epsilon)=Sup_{0<t<\xi}\{t\epsilon - \psi(t) \}$, $\psi(t)=\log \left(\E[exp(t Y)] \right)$.\\
%For $0<t<\xi$, $\psi(t)$ is finite,  differentiable satisfying $\psi(0)=0$ and, since $\E(Y)=0$, $\psi'(0)=0$.
%We conclude that there exists a neighborhood of $0$ where  $t \mapsto t \epsilon - \psi(t) $ is positive, whence $\phi(\epsilon)>0$. \end{proof}

 \textbf{Proof of Theorem \ref{ratiosld}:}
To simplify  notations we omit $\rho$, so that $G$ in seen as a subgroup of $SL_d(k)$.
 Let $N\in \N^*$, $\epsilon>0$, $0<\epsilon'<l$ to be chosen in terms of $\epsilon$. By definition of the $KAK$ decomposition in $SL_d(k)$, what we want to prove is that for all $\epsilon>0$ small enough
$$ \limsup_{N\rightarrow \infty}{\E \big[\left( \frac{||\bigwedge^2 S_N||}{||S_N||^2}  \right)^\epsilon\big]} < 1$$
 Let $n=\lfloor\frac{N}{\ell}\rfloor$, so that for $N\geq N_1(\epsilon')=\frac{l(l+\epsilon')}{\epsilon'}$, \; $n(l-\epsilon')\leq N\leq n(l +\epsilon')$. We wish to have $\tau(n)$ and $N$ in the same interval  with high probability. \\
 Let $A_n$ be the event ``$\{\tau(n) \in [n(l-\epsilon');n(l+\epsilon')]\}$''. By Lemma \ref{taun}, there exists $\rho(\epsilon')\in ]0,1[$ such that $\p (A_n)\geq 1-\rho(\epsilon')^n$.
 We have then:
% Hence with probability greater than $1-\rho(\epsilon')^n$, $N,\tau(n)\in [n(l-\epsilon');n(l+\epsilon')]$.

$$\E \big[\left( \frac{||\bigwedge^2 S_N||}{||S_N||^2}  \right)^\epsilon\big]\;\leq\;\E \left( \frac{||\bigwedge^2 S_N||^\epsilon}{||S_N||^{2\epsilon}} \mathds{1}_{A_n}\right)\; +\; \p (\Omega \setminus A_n) \;
\leq \;\underset{(I)}{\underbrace{ \E \left( \frac{||\bigwedge^2 S_N||^\epsilon}{||S_N||^{2^\epsilon}}  \;\mathds{1}_{A_n}\right)}} + \rho(\epsilon')^n$$
The first inequality is due to the fact that  $\frac{||\bigwedge^2 S_N||^\epsilon}{||S_N||^{2\epsilon}}\leq 1$. Since $n\geq N/(l+\epsilon') \geq N/2l$,\; $\rho(\epsilon')^n \leq \left(\rho(\epsilon')^{\frac{1}{2l}}\right)^N$. Hence it suffices to estimate (I).
$$(I) \leq \underset{(II)}{\underbrace{\E \left(\frac{||\bigwedge^2(X_N...X_{\tau(n)+1}\;S_{\tau(n)})||^\epsilon}{||X_N...X_{\tau(n)+1}S_{\tau(n)}||^{2\epsilon}} \;\mathds{1}_{N\geq \tau(n);{A_n}} \right)}}+\underset{(III)}{\underbrace {{\E\left(\frac{||\bigwedge^2(X_{N+1}^{-1}...X_{\tau(n)}^{-1}\;S_{\tau(n)})||^{\epsilon}}{||X_{N+1}^{-1}...X_{\tau(n)}^{-1}\;S_{\tau(n)}||^{2\epsilon}} \;;\;\mathds{1}_{N< \tau(n);A_n} \right)}}}$$

 $(III)$ is treated similarly as $(II)$. Since $||\bigwedge^2 g ||\leq ||g||^2 $; $\frac{1}{||g||} \leq ||g^{-1}||$; $||g^{-1}||\leq ||g||^{d-1}$  for every $g\in SL_d(k)$, we have:
 $$(II) \leq \E \left((||X_N||...||X_{\tau(n)+1}||)^{2d\epsilon}\; \frac{||\bigwedge^2 S_{\tau(n)}||^\epsilon}{||S_{\tau(n)}||^{2\epsilon}}\;;\;\mathds{1}_{N\geq \tau(n);A_n} \right) $$

\begin{eqnarray}(II)^2 &\leq& \E \left((||X_N||...||X_{\tau(n)+1}||)^{4d\epsilon}\;;\;\mathds{1}_{N\geq \tau(n);\;A_n} \right)  \; \E \left(\frac{||\bigwedge^2 S_{\tau(n)}||^{2\epsilon}}{||S_{\tau(n)}||^{4\epsilon}} \right)\label{tesada2}\\
 &= & \sum_{k=0}^{\infty} {\E \left((||X_N||...||X_{k+1}||)^{4d\epsilon}\;;\;\mathds{1}_{N\geq k;\;A_n}\;\mathds{1}_{\tau(n)=k} \right)}\;\E \left(\frac{||\bigwedge^2 S_{\tau(n)}||^{2\epsilon}}{||S_{\tau(n)}||^{4\epsilon}} \right)\nonumber\\
&\leq & \sum_{k=n(l-\epsilon')}^{n(l+\epsilon')} {\E \left((||X_N||...||X_{k+1}||)^{4d\epsilon}\right)}\;\E \left(\frac{||\bigwedge^2 S_{\tau(n)}||^{2\epsilon}}{||S_{\tau(n)}||^{4\epsilon}}\right) \label{ex1}\\
&\leq& \sum_{k=n(l-\epsilon')}^{n(l+\epsilon')} {\Big[\E \left(||X_1||^{4d\epsilon}\right)\Big]}^{|N-k|}\;\E \left(\frac{||\bigwedge^2 S_{\tau(n)}||^{2\epsilon}}{||S_{\tau(n)}||^{4\epsilon}}\right)\label{ex2} \end{eqnarray}

The bound (\ref{tesada2}) is obtained by the Cauchy-Schwartz inequality,  (\ref{ex1}) follows from the fact that on the event $A_n$,\; $\tau(n)\in [n(l-\epsilon');n(l+\epsilon')]$. Finally (\ref{ex2}) is due to the sub-multiplicativity of the norm and the independence of $X_N,...,X_{k+1}$.\\
Since $\mu$ has an exponential local moment, for $\epsilon$ small enough, $1\leq \E \left(||X_1||^{4d\epsilon}\right)=C(\epsilon) < \infty $. Moreover,  $n(l-\epsilon')<N< n(l+\epsilon')$, hence  $\sum_{k=n(l-\epsilon')}^{n(l+\epsilon')} {\big[\E \left(||X_1||^{4d\epsilon}\right)\big]^{|N-k|} }\leq 2n\epsilon' C(\epsilon)^{2n\epsilon'}\leq C(\epsilon)^{3n\epsilon'}$, for $n\geq n(\epsilon')$ large enough.  Hence,
$$(II)^2 \leq C(\epsilon)^{3n\epsilon'} \E \left(\frac{||\bigwedge^2 S_{\tau(n)}||^{2\epsilon}}{||S_{\tau(n)}||^{4\epsilon}} \right)$$
Finally, by Corollary \ref{coro1}, there exists $\rho(\epsilon)\in ]0,1[$ such that for all $n$ large enough:
$$\E \left(\frac{||\bigwedge^2 S_{\tau(n)}||^{2\epsilon}}{||S_{\tau(n)}||^{4\epsilon}}\right) = \E \left( \big| \frac{a_2(\rho(S_{\tau(n)}))}{a_1
(\rho(S_{\tau(n)}))} \big|^{2\epsilon}\right) \leq \rho(\epsilon)^n$$
Choose $0<\epsilon ' < \frac{-\log(\rho(\epsilon))}{3\;\log(C(\epsilon))}$ so that for $\rho=C(\epsilon)^{3\epsilon'}\rho(\epsilon) \in ]0,1[$, $(II)^2 \leq \rho^n \leq ({\rho^{\frac{1}{2l}}})^N$. $$\Box$$

\section{Proof of Theorem \ref{main1}}
\label{secproof}
In this section, we complete the proof of Theorem  \ref{main1} and Corollary \ref{corexpo}.\\

Now let $\mu$ be a probability measure on $SL_d(k)$ such that $\Gamma_{\mu}$ is a strongly irreducible and contracting closed subgroup. We denote by $G$ the $k$-Zariski closure of $\Gamma_\mu$ in $SL_d(k)$, which we assume to be  $k$-split and its Zariski-connected component semi-simple.
We can apply the results of the previous Section \ref{subsequi} with this $G$ and $\rho$ the natural action of $G$ on $V=k^d$.
 We use the same notation and conventions as in Section \ref{generating}, regarding attracting points and repelling hyperplanes.

 We will show that \begin{equation}\limsup_{n \rightarrow \infty} {\frac{1}{n} \log \;\p \left(\langle S_n,S'_n\rangle  \textrm{do not form a ping-pong pair}\right) }<0\label{oulna}.\end{equation}  Applying lemma \ref{ping-pong}, this will end the proof of Theorem \ref{main1}.
 It will follow from the following two propositions.

\begin{prop} There exists $\epsilon \in ]0,1[$ such that for every $r\in ]\epsilon,1[$:
 $$\limsup_{n \rightarrow \infty}{\frac{1}{n} \log \; \p \left(S_n, S'_n \textrm{are not $(\epsilon^n,
 r^n)$- very proximal} \right) }<0$$ \label{bastanak}\end{prop}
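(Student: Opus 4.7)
To prove $S_n$ (and similarly $S'_n$) is $(r^n,\epsilon^n)$-very proximal with exponentially high probability, we must verify for each of $S_n, S_n^{-1}, S'_n, (S'_n)^{-1}$ the two conditions of $(r^n,\epsilon^n)$-proximality: $\epsilon^n$-contracting, and attracting point at distance at least $r^n$ from the repelling hyperplane. A union bound reduces the problem to bounding each of these eight failure probabilities. We focus on $S_n$; the argument for $S'_n$ is identical with $\mu'$ in place of $\mu$, and the argument for $S_n^{-1}$ (whose singular values are the inverses of those of $S_n$) is obtained by applying the same reasoning to the contragredient walk on $V^*$, whose hypotheses are equivalent to those on $V$ since $\Gamma_\mu$ is a group.

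For the $\epsilon^n$-contraction of $[S_n]$, Lemma \ref{cruciallemme} shows it is enough to control $\p(|a_2(S_n)/a_1(S_n)|>\epsilon^{2n})$. Theorem \ref{ratiosld} provides, for every small $\epsilon'_0>0$, a constant $\rho_0(\epsilon'_0)\in ]0,1[$ with $\E(|a_2(S_n)/a_1(S_n)|^{\epsilon'_0})\leq\rho_0(\epsilon'_0)^n$. Markov's inequality yields
$$\p(|a_2(S_n)/a_1(S_n)|>\epsilon^{2n})\leq(\rho_0(\epsilon'_0)/\epsilon^{2\epsilon'_0})^n,$$
which decays exponentially once $\epsilon^{2\epsilon'_0}>\rho_0(\epsilon'_0)$. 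Inspection of the proof of Theorem \ref{iwabis} shows that $\rho_0(\epsilon'_0)=1-\lambda\epsilon'_0+o(\epsilon'_0)$ for some $\lambda>0$; hence as $\epsilon'_0\to 0$ the threshold for $\epsilon$ approaches $e^{-\lambda/2}<1$, so we may fix any $\epsilon\in ]e^{-\lambda/2},1[$.

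For the distance bound, the Cartan decomposition $S_n=K_nA_nU_n$ in $SL_d(k)$ gives $v_{S_n}=K_n[e_1]$ and $H_{S_n}=\ker(U_n^{-1}.e_1^*)$, whence $\delta(v_{S_n},H_{S_n})=\psi(K_n[e_1],U_n^{-1}.[e_1^*])$ with $\psi([x],[f]):=|f(x)|/(\|x\|\,\|f\|)$. The function $\psi$ is jointly Lipschitz on $P(V)\times P(V^*)$. Fix $r\in ]\epsilon,1[$ and let $\phi_n$ equal $1$ on $\{\psi\leq r^n\}$, $0$ on $\{\psi\geq 2r^n\}$, and be linear in between: then $\phi_n$ is Lipschitz with constant $O(1/r^n)$, hence $\epsilon_1$-Holder with $\|\phi_n\|_{\epsilon_1}\leq C r^{-n\epsilon_1}$ for any $\epsilon_1\in ]0,1]$. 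Theorem \ref{independence1} applied to $\phi_n$ gives
$$\p(\delta(v_{S_n},H_{S_n})\leq r^n)\leq \p(\psi(T,Z)\leq 2r^n)+C\bigl(\rho(\epsilon_1)/r^{\epsilon_1}\bigr)^n,$$
where $T\in P(V)$ and $Z\in P(V^*)$ are the independent limits of laws $\nu,\nu^*$. Choosing $\epsilon_1$ small enough that $r^{\epsilon_1}>\rho(\epsilon_1)$ makes the last term exponentially small. For the main term, conditioning on $T=[x]$ expresses $\{\psi(T,Z)\leq 2r^n\}$ as the event that $Z$ lies in the $2r^n$-neighborhood of the hyperplane $\{[f]\in P(V^*):f(x)=0\}$, and Theorem \ref{hausdweak} applied to the $\mu^{-1}$-walk on $V^*$ (whose hypotheses follow from those on $\mu$ by duality) bounds this, uniformly in $[x]$, by $\tau^n$ for any $\tau\in ]r,1[$.

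Combining the two steps yields $\p(S_n\text{ not }(r^n,\epsilon^n)\text{-proximal})\leq e^{-\alpha n}$ for some $\alpha>0$, and the same bound holds for $S_n^{-1}, S'_n, (S'_n)^{-1}$; a union bound concludes the proposition. The main obstacle is the distance step: converting the qualitative asymptotic independence of Theorem \ref{independence1} into a sharp distance estimate requires a Holder approximation of an indicator, and one must balance the Holder exponent $\epsilon_1$ against the rate $\rho(\epsilon_1)$ and the radius $r^n$, which is ultimately what dictates the lower bound on the admissible $\epsilon$.
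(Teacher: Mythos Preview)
Your proof is correct and follows essentially the same route as the paper: the paper splits the statement into a contraction part (Proposition \ref{contracfin}, which uses Theorem \ref{ratiosld} and Markov exactly as you do) and a distance part (Lemma \ref{klo}, which uses the Lipschitz approximation of the indicator together with Theorem \ref{independence1} and Theorem \ref{hausdweak}, exactly as in your second step). Two small remarks: for $S_n^{-1}$ the cleanest phrasing is not the contragredient action on $V^*$ but simply the $\mu^{-1}$-walk on $V$ (since $\Gamma_{\mu^{-1}}=\Gamma_\mu$ is still strongly irreducible and contracting on $V$), and your closing comment is slightly off---the paper's Lemma \ref{klo} shows the distance estimate holds for \emph{every} $t\in]0,1[$ (by monotonicity once it holds for $t>\rho$), so it is the contraction step alone that dictates the lower bound on $\epsilon$.
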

\begin{prop} For every $t\in ]0,1[$; $$\limsup_{n \rightarrow \infty}{\frac{1}{n} \log \;\p \left(\delta(v_{{S_n}^{\pm 1}},H_{{S'_n}^{\pm 1}})\leq t^n \right)}\;< 0 $$
\label{fatetsenin}\end{prop}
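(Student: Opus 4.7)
The four sign combinations $(v_{S_n^{\pm 1}},H_{S'_n^{\pm 1}})$ are handled by a single template; I carry it out for $(v_{S_n},H_{S'_n})$, the remaining cases following by the same argument with $\mu$ or $\mu'$ replaced by $\mu^{-1}$ or $(\mu')^{-1}$. This substitution is legitimate because $\Gamma_{\mu^{-1}}=\Gamma_\mu$ remains a strongly irreducible and contracting subgroup of $SL_d(k)$ with the same Zariski closure, and $\mu^{-1}$ inherits the exponential local moment of $\mu$; in particular Theorems \ref{direction1} and \ref{hausdweak} apply to $\mu^{-1}$ as well.

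First I remove the randomness of $H_{S'_n}$. Writing $H_g=\mathrm{Ker}(u(g)^{-1}.e_1^*)$ shows that $H_{S'_n}$ is a function of $S'_n$, hence independent of $S_n$. Conditioning on $S'_n$ therefore gives
\begin{equation*}
\p\bigl(\delta(v_{S_n},H_{S'_n})\leq t^n\bigr)\;\leq\;\sup_{H}\;\p\bigl(\delta(v_{S_n},H)\leq t^n\bigr),
\end{equation*}
the supremum ranging over all hyperplanes $H$ of $V$. Since $(X_1,\ldots,X_n)$ and $(X_n,\ldots,X_1)$ have the same joint law, $v_{S_n}=k(S_n)[e_1]$ has the same distribution as $k(M_n)[e_1]$. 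Theorem \ref{direction1} provides a random variable $Z_1$ on $P(V)$ of law $\nu$ (the unique $\mu$-invariant probability measure on $P(V)$), together with some small $\epsilon>0$ and $\rho_1\in\;]0,1[$ such that $\E(\delta^\epsilon(k(M_n)[e_1],Z_1))\leq \rho_1^n$ for all large $n$.

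The triangle inequality then yields, for every hyperplane $H$,
\begin{equation*}
\p\bigl(\delta(v_{S_n},H)\leq t^n\bigr)\;\leq\;\p\bigl(\delta(Z_1,H)\leq 2t^n\bigr)+\p\bigl(\delta(k(M_n)[e_1],Z_1)>t^n\bigr).
\end{equation*}
The first term is bounded uniformly in $H$ by Theorem \ref{hausdweak} applied to any $s\in\;]t,1[$ (so that $2t^n\leq s^n$ eventually), yielding an exponential bound. Markov's inequality controls the second term by $\rho_1^n/t^{\epsilon n}$, which decays exponentially once $\epsilon$ is chosen small enough that $t^\epsilon>\rho_1$. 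The only real subtlety is that $k(S_n)[e_1]$ does not converge almost surely but only in distribution; the argument works around this by passing through the distributional identification $v_{S_n}\stackrel{\mathrm{d}}{=}k(M_n)[e_1]$ coming from reversal of increments, whose a.s. limit $Z_1$ is precisely the invariant random variable needed to invoke the regularity estimate of Theorem \ref{hausdweak}.
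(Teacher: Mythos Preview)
Your argument follows the same route as the paper's proof: pass from $S_n$ to $M_n$ by reversal of increments, use the independence of the two walks to reduce to a supremum over deterministic hyperplanes, compare $k(M_n)[e_1]$ to its a.s.\ limit $Z_1$ via the triangle inequality, and conclude with Theorem~\ref{hausdweak}. The handling of the $\pm1$ cases via $\mu\leftrightarrow\mu^{-1}$ is also exactly what the paper does.

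There is one small imprecision in your Markov step. You split at the threshold $t^n$ and then write ``once $\epsilon$ is chosen small enough that $t^\epsilon>\rho_1$''; but $\rho_1=\rho_1(\epsilon)$ comes from Theorem~\ref{direction1} and may tend to $1$ as $\epsilon\to 0$, so this choice is not a priori possible for small $t$. The paper sidesteps this by fixing once and for all (via Markov with any fixed exponent) constants $\rho_1,\rho_2\in\,]0,1[$ with $\p\big(\delta(k(M_n)[e_1],Z_1)\geq \rho_1^{\,n}\big)\leq \rho_2^{\,n}$ and splitting at $\rho_1^{\,n}$ rather than at $t^n$; one then gets $\sup_H \p\big(\delta(Z_1,H)\leq t^n+\rho_1^{\,n}\big)+\rho_2^{\,n}$, and Theorem~\ref{hausdweak} applies since $t^n+\rho_1^{\,n}\leq 2\max(t,\rho_1)^n$. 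An equivalent patch for your version is to observe that $t\mapsto \p(\delta(v_{S_n},H)\leq t^n)$ is nondecreasing, so it suffices to prove the estimate for $t$ close to $1$, where your inequality $t^\epsilon>\rho_1$ is immediate for a fixed $\epsilon$.
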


%\paragraph{Connection with previous Section \ref{subsequi}\\}
%$G=\ov{\Gamma_\mu}$ is a reductive algebraic group. Indeed, let $N$ be normal unipotent subgroup of $G$, then there exist a vector $v\in V$ fixed by $N$. Since $N$ is normal then for evert $g\in G$, $g.v$ is fixed by $N$, whence the vector space $Span\{g.v;g\in G\}$ is included in the vector space $L$ of fixed points of $N$. But $G$ acts irreducibly on $V$ then $L=V$ and $N$ is trivial. \\Consequently, we can apply the previous section with this $"G"$ and $\rho$ the natural action of $G$   on $V=k^d$.\\\\

\textbf{Proof of Proposition \ref{bastanak}}:
it will follow from Proposition \ref{contracfin} and Lemma \ref{klo}.
First, we recall Lemma \ref{cruciallemme} which says that a large ratio between the first two diagonal components in the
 $KAK$ decomposition  implies contraction. More precisely, let $\epsilon >0$. If $|\frac{a_2(g)}{a_1(g)}|\leq {\epsilon^2}$, then $[g]$ is $\epsilon$-contracting. Moreover, one can take $v_g = [k(g)e_1]$ to be the attracting point and $H_g$, the projective hyperplane spanned by $u^{-1}(g)e_i$ for $i=2,...,d$, to be the repelling hyperplane. \\
 We deduce the following proposition:

\begin{prop}\label{contracfin} There exists $\epsilon_0 \in ]0,1[$ such that for every $\epsilon \in ]\epsilon_0,1[$,

$$\limsup_{n \rightarrow \infty}{\frac{1}{n} \log \;\p \left(S_n \;\textrm{and $S'_n$}\;\textrm{are not $\epsilon^n$- very contracting} \right)}<0$$

%Moreover, on the event where $S_n$ is $\rho^n$-very contracting, one can take $v_{S_n}=[K_ne_1]$, $v_{{S_n}^{-1}}=[U_n^{-1} e_d]$ as the attracting points of $S_n$, respectively ${S_n}^{-1}$ and $H_{S_n}=[Span({U_n}^{-1}e_2,...,{U_n}^{-1}e_d)]$; $H_{{S_n}^{-1}}=[Span(K_ne_1,...,K_ne_{d-1})]$ as   repelling hyperplanes.

\end{prop}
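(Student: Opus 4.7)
The goal is to replace the ``$\epsilon^n$-very contracting'' condition by a quantitative ratio bound via Lemma \ref{cruciallemme}, and then to invoke the exponential decay of this ratio furnished by Theorem \ref{ratiosld}, applied both to $S_n$ and to $S_n^{-1}$ (and symmetrically for $S'_n$).

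First I would verify that the hypotheses of Theorem \ref{main1} transfer to the inverse measure $\mu^{-1}$. Since $\Gamma_\mu$ is assumed to be a \emph{subgroup}, one has $\Gamma_\mu = \Gamma_\mu^{-1}$; the two symmetric inclusions (one side: $\Gamma_\mu$ is a closed subsemigroup containing $\mathrm{supp}(\mu)^{-1}$; the other: the analogous inclusion obtained by inversion) yield $\Gamma_{\mu^{-1}} = \Gamma_\mu$, so strong irreducibility, contraction and Zariski density all pass to $\mu^{-1}$. The exponential local moment transfers using $\|g^{-1}\| \leq \|g\|^{d-1}$ valid on $SL_d(k)$. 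Finally, since $(X_1,\dots,X_n)$ and $(X_n,\dots,X_1)$ have the same joint law, $S_n^{-1} = X_1^{-1}\cdots X_n^{-1}$ has the same distribution as the reversed random walk $X_n^{-1}\cdots X_1^{-1}$ associated to $\mu^{-1}$.

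Applying Theorem \ref{ratiosld} to $\mu$ and to $\mu^{-1}$ (and likewise $\mu'$, $\mu'^{-1}$), one obtains a common $\alpha > 0$ and some $\rho_0 \in ]0,1[$ such that for $n$ large enough
$$\E\bigl[|a_2(S_n)/a_1(S_n)|^\alpha\bigr] \leq \rho_0^n, \qquad \E\bigl[|a_2(S_n^{-1})/a_1(S_n^{-1})|^\alpha\bigr] \leq \rho_0^n,$$
with analogous bounds for $S'_n$ and $(S'_n)^{-1}$. Set $\epsilon_0 = \rho_0^{1/(2\alpha)} \in ]0,1[$. For any $\epsilon \in ]\epsilon_0,1[$, Markov's inequality gives
$$\p\bigl(|a_2(S_n)/a_1(S_n)| > \epsilon^{2n}\bigr) \leq \epsilon^{-2\alpha n}\,\rho_0^n = (\rho_0\,\epsilon^{-2\alpha})^n,$$
which is genuinely exponentially small by the choice of $\epsilon_0$, and the same bound holds for the three other quantities. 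By Lemma \ref{cruciallemme}, on the complementary events both $[S_n]$ and $[S_n^{-1}]$ are $\epsilon^n$-contracting (with attracting point $v_{S_n}$, $v_{S_n^{-1}}$ and repelling hyperplane $H_{S_n}$, $H_{S_n^{-1}}$ read off from the $KAK$ decomposition); likewise for $S'_n$. A union bound over the four exceptional events yields the proposition.

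The main obstacle is the transfer of the full set of hypotheses (strong irreducibility, contraction, exponential moment, Zariski density) from $\mu$ to $\mu^{-1}$: it is here that the assumption that $\Gamma_\mu$ be a genuine \emph{group} (and not merely a semigroup) is essential, since otherwise $\Gamma_{\mu^{-1}}$ could easily fail to be Zariski dense or contracting. Once this is granted, the rest is essentially a one-line Markov argument, the price being that $\epsilon^n$-very contracting is achieved only for $\epsilon$ close enough to $1$.
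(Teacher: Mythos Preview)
Your proposal is correct and follows essentially the same route as the paper: apply Theorem \ref{ratiosld} to get exponential decay of $\E\bigl[|a_2/a_1|^\alpha\bigr]$, convert via Markov's inequality and Lemma \ref{cruciallemme} into an exponential bound on $\p(S_n\text{ not }\epsilon^n\text{-contracting})$, transfer the hypotheses to $\mu^{-1}$ using that $\Gamma_\mu$ is a group (so $\Gamma_{\mu^{-1}}=\Gamma_\mu$), and finish with a union bound over the four walks. The paper's proof is the same argument, only with the exponent $\alpha$ implicitly taken equal to $1$ and the threshold written as $\epsilon_0=\max\{\sqrt{\epsilon_i}\}$ rather than your $\rho_0^{1/(2\alpha)}$.
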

\begin{proof} It suffices to consider $S_n,S'_n$ and $S_n^{-1},{S'_n}^{-1}$ separately and show the corollary without the word ``very''. \\
$\bullet$ For the random walk $(S_n)$ Theorem \ref{ratiosld} shows that there exists $\epsilon_1\in ]0,1[$ such that for all large $n$ we have $ \E \left(\big|\frac{a_2(n)}{a_1(n)}\big|\right)\leq \epsilon_1^n $.  \;

By the Markov inequality, for every $\epsilon \in ]\epsilon_1,1[$, $$\p \left(\;\big|\frac{a_2(n)}{a_1(n)} \big|\;\geq \epsilon^n \right) \leq (\frac{\epsilon_1}{\epsilon})^n$$
 By Lemma \ref{cruciallemme}, for every $\epsilon\in ]\sqrt{\epsilon_1},1[$ we have
  $\p (S_n \textrm{\;is not $\epsilon^n$-  contracting}) \leq (\frac{\epsilon_1}{\epsilon^2})^n$.\\
%The same holds for the random walk $(S'_n)_{n\in \N^*}$.\\
$\bullet$ For the random walk $(S_n^{-1})$: The assumption
 $\Gamma_\mu$ is a group implies that
  $\Gamma_{\mu^{-1}}=\Gamma_\mu=\Gamma$ so that the action of $\Gamma_{\mu^{-1}}$ on $V$ is strongly irreducible and contracting. In consequence,
   we can apply the same reasoning as the previous paragraph by replacing $\mu$ with $\mu^{-1}$.
%We already know that the action of $\Gamma_{\mu^{-1}}$ on $V^*$ is strongly irreducible and contracting. The fact that we imposed $\Gamma_\mu$ to be a group forces its action on $V$ itself to be strongly irreducible and contracting; indeed, $\Gamma_{\mu^{-1}}={\Gamma_\mu}^{-1}=\Gamma$. Now let $T_n=X_n^{-1}...X_1^{-1}$ the random walk associated to $\mu^{-1}$. Theorem \ref{ratiosld} applied to $\mu^{-1}$ gives some $\epsilon_2\in ]0,1[$ such that:  $$\E \left(\big|\frac{a_2(T_n)}{a_1(T_n)}\big|\right)\leq {\epsilon_2}^n $$
%Since $T_n$ and $S_n^{-1}=X_1^{-1}...X_n^{-1}$ have the same law, this holds also for $S_n^{-1}$. Now we repeat the same reasoning as above to obtain  for every $\epsilon\in ]\sqrt{\epsilon_2},1[$,  $\p ({S_n}^{-1} \textrm{\;is not $\epsilon^n$- very contracting}) $ is sub-exponential.
This gives $\epsilon_2\in ]0,1[$ such that for every $\epsilon \in ]\sqrt{\epsilon_2},1[$,  $\p (S_n^{-1} \textrm{\;is not $\epsilon^n$-  contracting})$ is sub-exponential.  \\
 Similarly if we denote by $\epsilon_3$, $\epsilon_4$ the quantities relative to $S'_n$ and ${S'_n}^{-1}$, then it suffices to choose $\epsilon_0=Max\{\sqrt{\epsilon_i}; i=1,...,4\}$

\end{proof}
Recall that for $g\in SL_d(k)$, $v_g=k(g)e_1$ and $H_g=\big[Span \langle u(g)^{-1}e_2,...,u(g)^{-1}e_d \rangle \big]$.

\begin{lemme} For every $t \in ]0,1[$, $$\limsup_{n \rightarrow \infty}{\frac{1}{n}\log\;\p \left(\delta(v_{S_n},H_{S_n}) \leq t^n \right)}<0$$
The same holds for ${S_n}^{-1}$, ${S'_n}$ and ${S'_n}^{-1}$.
\label{klo}\end{lemme}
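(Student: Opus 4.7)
The plan is to combine three ingredients: the identity expressing $\delta(v_{S_n},H_{S_n})$ as a jointly Lipschitz function of the pair $(K_n[e_1], U_n^{-1}.[e_1^*])$; the asymptotic independence of this pair (Theorem \ref{independence1}); and the hyperplane regularity estimate of Theorem \ref{hausdweak} applied to the $\mu$-invariant measure $\nu$.

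Since $K_n, U_n \in K$ act by isometries on $V$ and $V^*$,
\[\delta(v_{S_n}, H_{S_n}) \;=\; \Phi\bigl([K_n e_1],\,[U_n^{-1}.e_1^*]\bigr),\qquad \Phi([y],[f])\;:=\;\frac{|f(y)|}{\|f\|\,\|y\|},\]
and $\Phi:P(V)\times P(V^*)\to[0,1]$ is jointly Lipschitz. Given $t\in\,]0,1[$, I pick an auxiliary $s\in\,]t,1[$ and approximate the indicator of $\{\Phi\le t^n\}$ from above by a Lipschitz function $\phi_n:P(V)\times P(V^*)\to[0,1]$ that equals $1$ on $\{\Phi\le t^n\}$ and $0$ on $\{\Phi\ge s^n\}$; a routine estimate yields $\|\phi_n\|_\epsilon\le C\,s^{-n\epsilon}$ for every $\epsilon\in\,]0,1]$ and $n$ large.

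By Theorem \ref{independence1}, for each $\epsilon>0$ there exist a rate $\rho(\epsilon)\in\,]0,1[$ and independent variables $T\sim\nu$, $Z\sim\nu^*$ such that
\[\p\bigl(\delta(v_{S_n},H_{S_n})\le t^n\bigr)\;\le\;\E[\phi_n(K_n e_1,U_n^{-1}.e_1^*)]\;\le\;\E[\phi_n(T,Z)]\;+\;C\bigl(\rho(\epsilon)/s^\epsilon\bigr)^n.\]
Since $\rho(\epsilon)^{1/\epsilon}$ is non-decreasing in $\epsilon$ (by Jensen) with infimum some $e^{-c}<1$, one first picks $s$ in the non-empty interval $]\max(t,e^{-c}),1[$ and then $\epsilon$ small enough to force $\rho(\epsilon)/s^\epsilon<1$, making the error term sub-exponential. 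For the main term, using independence of $T$ and $Z$ and conditioning on $Z$,
\[\E[\phi_n(T,Z)]\;\le\;\p\bigl(\delta([T],\ker Z)\le s^n\bigr)\;\le\;\sup_H\p\bigl(\delta([T],H)\le s^n\bigr),\]
with supremum over projective hyperplanes $H$ of $V$; this is sub-exponential in $n$ by Theorem \ref{hausdweak} applied to $T\sim\nu$. Combining the two bounds gives the desired estimate for $S_n$.

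The three remaining cases $S_n^{-1}$, $S'_n$, ${S'_n}^{-1}$ reduce to the same argument. Indeed, $S_n^{-1}=X_1^{-1}\cdots X_n^{-1}$ has the law of a random walk for $\mu^{-1}$, and the group assumption $\Gamma_{\mu^{-1}}=\Gamma_\mu=\Gamma$ ensures that $\mu^{-1}$ satisfies exactly the same hypotheses (strong irreducibility, contraction, exponential local moment) used above, so Theorems \ref{independence1} and \ref{hausdweak} apply verbatim; the $\mu'$-walks are handled identically. The main obstacle is the simultaneous control of the Hölder error rate $\rho(\epsilon)/s^\epsilon$ against the hyperplane regularity rate, which is always achievable for $t\in\,]0,1[$ thanks to the combined freedom to pick $s$ close to $1$ and $\epsilon$ close to $0$.
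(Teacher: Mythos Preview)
Your approach is essentially the same as the paper's: write $\delta(v_{S_n},H_{S_n})=\Phi([K_ne_1],[U_n^{-1}.e_1^*])$ with $\Phi$ Lipschitz, sandwich the indicator of $\{\Phi\le t^n\}$ by a Lipschitz/H\"older bump, apply the asymptotic independence Theorem~\ref{independence1}, then condition on the $P(V^*)$-component and invoke Theorem~\ref{hausdweak}. The handling of $S_n^{-1}$, $S'_n$, ${S'_n}^{-1}$ is also the same.

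One point to clean up: your claim that ``$\rho(\epsilon)^{1/\epsilon}$ is non-decreasing in $\epsilon$ (by Jensen)'' is neither justified nor needed. In fact Theorem~\ref{independence1}, as stated in the paper, gives a single $\rho\in\,]0,1[$ valid for \emph{every} $\epsilon>0$, so the whole optimization over $\epsilon$ is superfluous. More to the point, the paper avoids this issue entirely by a monotonicity observation you overlooked: since $t\mapsto \p(\delta(v_{S_n},H_{S_n})\le t^n)$ is nondecreasing, it suffices to prove the bound for $t$ close to $1$. The paper therefore fixes $\epsilon=1$ (plain Lipschitz), uses the bump $\phi_{t^n}$ with Lipschitz constant $\asymp t^{-n}$, obtains an error term $C(k)(\rho/t)^n$, and concludes directly for all $t>\rho$---hence a fortiori for every $t\in\,]0,1[$. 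This is both shorter and removes the need for any control of how $\rho(\epsilon)$ behaves as $\epsilon\to 0$.
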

\begin{proof} Consider the random walk $(S_n)_{n\in \N^*}$.
 Let $t \in ]0,1[$. Recall that if $H=Ker f$, $f\in V^*$ then for any non zero vector $x$ of $V$, $\delta([x],[H])= \frac{|f(x)|}{||f|| ||x||}$.
 Since $H_{S_n}=Ker (U_n^{-1}.e_1^*)$, we must show that for every $t\in ]0,1[$,
  \begin{equation}\label{hatem}\limsup_{n \rightarrow \infty} \frac{1}{n}\log\; \p (||U_n^{-1}.e_1^* (K_ne_1)  || \leq t^n)< 0\end{equation}

$\bullet$ For every $\epsilon>0$, let $\psi_\epsilon$ be  the  function defined on $\R$ by $\psi_\epsilon(x)= 1$ on $[-\epsilon,\epsilon]$; affine on $[-2\epsilon;-\epsilon[ \cup ]\epsilon,2\epsilon]$ and zero otherwise, for every $x\in \R$.\\
One can easily verify that $\psi_\epsilon$ is $\frac{1}{\epsilon}$-Lipschitz. \\
Note also that \begin{equation}\mathds{1}_{[-\epsilon,\epsilon]} \leq \psi_\epsilon \leq \mathds{1}_{[-2\epsilon,2\epsilon]}\label{yy}\end{equation}

$\bullet$ Let $\eta$ be the function on $P(V)\times P(V^*)$ defined by $\eta ([x],[f])=\delta\left([x],Ker(f) \right)=\frac{|f(x)|}{||f|| ||x||}$. \\
We consider the following metric on $P(V) \times P(V^*)$:  $d \left(([x],[f]), ([y],[g]) \right) = \delta([x],[y])+\delta([f],[g])$ for every $[x],[y]\in P(V)$ and $[f],[g]\in P(V^*)$. \\
Let $C(k)=\sqrt{2}$ when $k$ is  archimedean and $C(k)=1$ when $k$ is non archimedean. We claim that $\eta$ is $C(k)$-Lipschitz. Indeed, let $[x],[y]\in P(V)$, $[f],[g]\in P(V^*)$. By Lemma \ref{facile} below there exist suitable representatives  $x,y\in V$, $f,g\in V^*$  in the unit sphere such that $||x - y||\leq C(k) \delta([x],[y]) $ and $||f-g||\leq C(k) \delta([f],[g])$. But by the triangle inequality,
$\big |\eta ([x],[f]) - \eta ([y],[g])\big| \leq || f(x) - g(y)||\leq ||f -g|| + ||x - y||\leq C(k)
\left(\delta([f],[g])+\delta ([x],[y])\right)$.\\
Define for $\epsilon>0$, $\phi_\epsilon=\psi_\epsilon \circ \eta$. By the previous remarks, $\phi_\epsilon$ is $\frac{C(k)}{\epsilon}$- Lipschitz.\\
Theorem \ref{independence1} gives a $\rho \in ]0,1[$ and independent random variables $Z\in V$ and $T\in V^*$ such that for every Lipschitz  function $\phi$ on $P(V)\times P(V^*)$, and $n$ large enough
\begin{equation}\big|\E \left(\phi([K_ne_1],[U_n^{-1}.e_1^*])\right) - \E \left( \phi(Z,T)\right) \big|\leq ||\phi||\;\rho^n \label{imp} \end{equation}
where $||\phi||$ is the Lipschitz constant of $\phi$ as it was defined in Theorem \ref{independence1}.\\

 Now we prove (\ref{hatem}). For any $t\in ]0,1[$
\begin{eqnarray}\p (||U_n^{-1}.e_1^* (K_n e_1)|| \leq t^n ) & \leq & \E \left(\phi_{t^n} ([K_ne_1],[{U_n}^{-1}.e_1^*]) \right) \label{wiss}\\
& \leq & \E \left( \phi_{t^n} (Z,T) \right) + ||\phi_{t^n}||\;\rho^n \label{explique}\\
& \leq & \p (\frac{|T(Z)|}{||T|| ||Z||} \leq 2t^n ) + C(k)\frac{\rho^n}{t^n} \label{explique1}\\
&\leq & Sup\{\p \left(\delta (Z,[H]) \leq 2t^n \right);\;\textrm{$H$ hyperplane of $V$} \} + C(k)\frac{\rho^n}{t^n} \label{sallemouli}
\end{eqnarray}
The bound (\ref{explique}) follows from (\ref{imp}), while (\ref{wiss}) and (\ref{explique1}) use (\ref{yy}). Finally to get (\ref{sallemouli}) we used the independence of $Z$ and $T$. \\
By Theorem \ref{hausdweak},  (\ref{sallemouli}) is sub-exponential and the lemma is proved if $t>\rho$, a fortiori for every $t\in ]0,1[$.  $\Gamma_\mu$ being a group,  the action of $\Gamma_{\mu^{-1}}$ on $V$ is strongly irreducible and contracting, hence the same proof as above holds for $S_n^{-1}$. The roles of $S_n$ and $S'_n$ are interchangeable.
 %by independence one can replace $v_{S_n^{-1}}$ and $H_{S_n^{-1}}$ with $v_{T_n^{-1}}$ and $H_{T_n^{-1}}$, where $T_n=X_n^{-1}...X_1^{-1}$. Since $\Gamma_\mu$ is a group, the action $\Gamma_{\mu^{-1}}$ on the dual $V^*$ is strongly irreducible and contracting;  the same proof as above is then valid.
\end{proof}

\begin{lemme}\label{facile} Let $C(k)=\sqrt{2}$ when $k$ is archimedean and $C(k)=1$ when $k$ is not. Then for any $[x],[y]\in P(V)$, there exist representatives in the unit sphere such that
$$\delta([x],[y])\leq || x -y|| \leq C(k) \delta ([x],[y])$$
(In particular, in the non archimedean case these are equalities). The same holds for $V^*$.\end{lemme}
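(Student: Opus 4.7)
The plan is to start from any unit-norm representatives $x$, $y$ of $[x],[y]$ and modify one of them by multiplication by a scalar $\lambda\in k$ with $|\lambda|=1$ (which preserves both the norm and the projective class). The two inequalities $\delta([x],[y])\leq\|x-y\|$ and $\|x-y\|\leq C(k)\delta([x],[y])$ will then be verified case-by-case, and in each case the inequality $\|x\wedge y\|\leq\|x\|\cdot\|y-x\|$, applied to the identity $x\wedge y=x\wedge(y-x)$ together with $\|x\|=1$, immediately gives $\delta([x],[y])\leq\|x-y\|$.

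In the archimedean case, the good norm comes from a scalar (resp.\ Hermitian) product $\langle\cdot,\cdot\rangle$. Choose $\lambda\in k$ of modulus $1$ so that $\langle\lambda x,y\rangle=|\langle x,y\rangle|\geq 0$, and replace $x$ by $\lambda x$. A straightforward computation then gives
$$\|x-y\|^{2}=2\bigl(1-|\langle x,y\rangle|\bigr),\qquad \|x\wedge y\|^{2}=1-|\langle x,y\rangle|^{2},$$
where the second equality uses the Gram determinant formula. The ratio
$\|x-y\|^{2}/\|x\wedge y\|^{2}=2/(1+|\langle x,y\rangle|)$ lies in $[1,2]$, which yields $\delta([x],[y])\leq\|x-y\|\leq\sqrt{2}\,\delta([x],[y])$.

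In the non-archimedean case, work in a basis $(e_{1},\dots,e_{d})$ in which the good norm is $\|\sum x_{i}e_{i}\|=\max_{i}|x_{i}|$ and the induced good norm on $\bigwedge^{2}V$ is $\|\sum c_{ij}\,e_{i}\wedge e_{j}\|=\max_{i<j}|c_{ij}|$. Pick indices $i_{0},j_{0}$ with $|x_{i_{0}}|=|y_{j_{0}}|=1$. If $|y_{i_{0}}|=1$ as well, rescale $x$ by $\lambda=y_{i_{0}}/x_{i_{0}}$, which has modulus $1$ and makes the $i_{0}$-th coordinates of $x$ and $y$ agree. For $i\neq i_{0}$, multiplying $\lambda x_{i}-y_{i}$ by $x_{i_{0}}$ and using $\lambda x_{i_{0}}=y_{i_{0}}$ gives $|\lambda x_{i}-y_{i}|=|x_{i_{0}}y_{i}-x_{i}y_{i_{0}}|$, which is (up to sign) a coefficient of $x\wedge y$. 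Taking the maximum over $i\neq i_{0}$ yields $\|\lambda x-y\|\leq\|x\wedge y\|$, hence equality by the reverse inequality above. If instead $|y_{i_{0}}|<1$, then by ultrametricity $|x_{i_{0}}y_{j_{0}}-x_{j_{0}}y_{i_{0}}|=|x_{i_{0}}y_{j_{0}}|=1$, so $\|x\wedge y\|=1$, while trivially $\|x-y\|\leq\max(\|x\|,\|y\|)=1$, giving equality again.

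The main obstacle is the bookkeeping for the non-archimedean case: one has to separate the situation where $x$ and $y$ share a coordinate at which both attain their maximum modulus from the one where they do not, and to verify that the ultrametric inequality forces $\|x\wedge y\|=1$ in the latter. The archimedean computation is purely a Gram-matrix manipulation, and the reverse inequality in both cases is a one-line consequence of $x\wedge y=x\wedge(y-x)$.
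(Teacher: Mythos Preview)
Your proof is correct and follows essentially the same route as the paper's: the archimedean case is handled by the same Gram-type computation after rotating one representative so that $\langle x,y\rangle\geq 0$, and the non-archimedean case is handled by the same two-case analysis (a common coordinate of modulus one versus none), with the lower bound $\delta\leq\|x-y\|$ coming from $x\wedge y=x\wedge(y-x)$ rather than the paper's coordinate-wise ultrametric estimate. The only cosmetic difference is that your non-archimedean case split is tied to the particular index $i_0$ you chose for $x$ rather than to the existence of any common index, but your second-case argument ($\|x\wedge y\|=1$) covers all situations where $|y_{i_0}|<1$ regardless, so nothing is lost.
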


\begin{proof} Let $x$ and $y$ be representatives of norm one of $[x]$ and $[y]$.
When $k=\C$, denote by $<.,.>$ the canonical scalar product on $k^d$. Then  $\delta([x],[y])^2 = 1 - |<x,y>|^2=  \left(1-Re(<x,y>)\right)\left(1+Re(<x,y>) \right)$. One can choose $x$ and $y$ in such a way that $<x,y>\in \R$ and $Re(<x,y>)\geq 0$. The identity  $||x - y||^2= 2\left(1-Re(<x,y>) \right)$ ends the proof.
The case $k=\R$ is similar. \\
%Then, the following  inequality holds: $$|<x,y>|^2 + ||x \wedge y ||^2 = 1$$  This implies that $\delta([x],[y])^2 = 1 - |<x,y>|^2$. When $k=\R$, $||x-y||^2= 2(1-<x,y>)$ and $\delta([x],[y])^2 = (1 - <x,y>)(1+<x,y>)$. It suffices then to choose  $x$ and $y$  in such a way that $<x,y>\geq 0$. When $k=\C$, up to multiplying $x$ by a complex number of modulus one, we can suppose that $<x,y>\in \R$ (in other terms, we can find $\theta\in [0,2\pi]$ such that $<exp(i\theta ) x, y> =exp(i\theta)<x,y>$ is real). We can also assume that $Re(<x,y>)\geq 0$. This ends the proof because in this case,  $||x - y||^2= 2\left(1-Re(<x,y>) \right)$ and $\delta([x],[y])^2= \left(1-Re(<x,y>)\right)\left(1+Re(<x,y>) \right)$.  \\
When $k$ is non archimedean, recall that by definition:
$\delta([x],[y]) = Max \{|x_i y_j - x_j y_i|; i \neq j \} \label{kk}$.
%Since $x$ and $x'$ are of norm one, there exist $i_0,j_0$ such that $x_{i_0}\in \Omega_k^*$ and $y_{j_0}\in \Omega_k^*$. By multiplying if necessary $x$ and $y$ by an invertible, one can suppose $x_{i_0}=y_{j_0}=1$. If $i=j$. We distinguish two cases:
The norm being ultrametric,  for any $i,j$,  $|x_iy_j - x_j y_i|= |y_j (x_i-y_i) +y_i(y_j-x_j)|\leq ||x - y||$. Hence $\delta([x],[y])\leq ||x - y||$. For the other inequality, we distinguish two cases:\\
$\bullet$ Suppose that there is an index $m$ such that $x_m$ and $y_m$ are of norm one (i.e. in $\Omega_k^*$). By rescaling if necessary $x$ and $y$, one can suppose that $x_{m}=y_{m}=1$.
Without loss of generality we can assume
that $m=1$.  Hence, $\delta([x],[y])\geq Max\{|x_i -y_i|;\;i\geq 2\}= ||x - y||$.\\
%Let $C=||x - y||$. Consider a pair of indices $(i,j)$. If $|x_i|\leq C$, then $|y _i|= |y_i - x_i + x_i|\leq C$ so  $|x_iy_j-y_jx_i|\leq C$. Similarly if  $|y_i|\leq C$ then $|x_iy_j-y_jx_i|\leq C$.
$\bullet$ Suppose that there is no index $m$ such that $x_m$ and $y_m$ are of norm one. Let $i_0$ (resp. $j_0$) be an index such that $x_{i_0}$ (resp. $y_{j_0}$ ) is invertible: such indices exist because $x$ and $y$ are on the unit sphere. $i_0 \neq j_0$ and neither $x_{j_0}$ nor $y_{i_0}$ is of norm one.  Hence, $|x_{i_0} y_{j_0} - y_{i_0} x_{j_0}| =  1$ and $\delta([x],[y])= 1 = ||x - y||$.

\end{proof}

\textbf{Proof of Proposition \ref{fatetsenin}:}

Let $t>0$. On the one hand for every given $n$  $S_n$ and $M_n$ have the same law and on the other hand  $(X_1,...,X_n)$ and $(X'_1,...,X'_n)$ are independent, hence
\begin{eqnarray}
\p \left(\delta(v_{S_n},H_{{S'_n}^{\pm 1}})\leq t^n \right)&=& \p \left(\delta\left(k(M_n)[e_1],H_{{S'_n}^{\pm 1}}\right)\leq t^n\right)\label{etapes} \\
&\leq & Sup\{\p \left(\delta\left(k(M_n)[e_1],H\right)\leq t^n \right); \;\textrm{$H$ hyperplane of $V$}\}\label{kkl}\end{eqnarray}
By Theorem \ref{direction1} and the Markov inequality, there exist $\rho_1,\rho_2\in ]0,1[$, a random variable $Z$ in $P(V)$ such that:
 \begin{equation}\label{markov}\p \left(\delta (k(M_n)[e_1],Z)\geq \rho_1^n \right) \leq \rho_2^n \end{equation}
(\ref{kkl}), (\ref{markov}) and the triangle inequality give:
$$\p \left(\delta([v_{S_n}],[H_{S'_n}])\leq t^n \right) \leq Sup\{ \p \left( \delta(Z,[H])\leq t^n +\rho_1^n \right);\,\textrm{$H$ hyperplane of $V$}\} + \rho_2^n $$
Theorem \ref{hausdweak} shows that the latter is exponentially small.  We may of course exchange the roles of $S_n$ and $S'_n$. When we  consider $S_n^{-1}$ instead of $S_n$ the same estimates hold. Indeed, as explained in the proof of Proposition \ref{contracfin}, $\Gamma_{\mu^{-1}}$ acts strongly irreducibly on $V$ and contains a contracting sequence.
  %Moreover,  step (\ref{etapes}) is not needed since $v_{{S_n}^{-1}}= k({S_n^{-1}})=k(X_1^{-1}...X_n^{-1})$ converges in direction (by Theorem \ref{direction1}).
\begin{flushright} $\Box$ \end{flushright}

 \textbf{Proof of Corollary \ref{corexpo}}: let $l\in \N^*$ and $(M_{n,1})_{n\in \N^*}$,...,$(M_{n,l})_{n\in \N^*}$ be $l$ independent random walks associated to $\mu$.
Propositions \ref{bastanak} and \ref{fatetsenin} give $\epsilon,r,\rho\in ]0,1[$, $n_0\in \N^*$ such that for every $n>n_0$ and $i,j\in \{1,...,l\}$, $\p (A_{n,i,j})\leq \rho^n$ and $\p ({B_{n,i,j}})\leq \rho^n$, where $A_{n,i,j}$ is the event `$`M_{n,i}$ and $M_{n,j}$ are not $(r^n,\epsilon^n)$-very proximal'' and $B_{n,i,j}$ is the union of the $4$ events: the attracting point of $M_{n,i}^{\pm 1}$ is at most $\epsilon^n$-apart from the repelling hyperplane of $M_{n,j}^{\pm 1}$. Hence for every $l\in \N^*$ and $n>n_0$: $$\p (\textrm{$M_{n,1}$,...,$M_{n,l}$ do not form a ping-pong
$l$-tuple}) \leq \sum_{i<j}{\p(A_{i,j})+\p(B_{i,j})}\leq l(l-1)\rho^n$$ Fix $n>n_0$ and let $\rho'\in ]\rho,1[$, $l_n=\lfloor\frac{1}{\rho'^n}\rfloor$. The previous estimate shows that if $(M_{k,1})_{k\in \N^*}$,...,$(M_{k,l_n})_{k\in\N^*}$ are $l_n$ independent and identically distributed random walks, then the probability\\ $\p (\textrm{$M_{n,1}$,...,$M_{n,l_n}$ do not form a ping-pong $l_n$-tuple})$ decreases exponentially fast.

\begin{flushright} $\Box$ \end{flushright}

%\nocite{*}
%\bibliographystyle{alpha}
%\bibliography{bib2}
\end{document}